\documentclass[11pt,a4paper]{article}
\usepackage[T1]{fontenc}
\usepackage[utf8]{inputenc}
\usepackage{lmodern}
\usepackage[margin=28mm]{geometry}
\usepackage{amsmath,amssymb,amsthm,mathtools,mathrsfs}
\usepackage{microtype}
\usepackage{enumitem,booktabs,longtable,array}
\usepackage[numbers,sort&compress]{natbib}
\usepackage{hyperref}
\hypersetup{colorlinks=true,linkcolor=blue,citecolor=blue,urlcolor=blue,
  pdftitle={Long-time action stability for perturbed KdV near finite-gap tori}}
\newcommand{\R}{\mathbb R}
\newcommand{\C}{\mathbb C}
\newcommand{\N}{\mathbb N}
\newcommand{\Z}{\mathbb Z}
\newcommand{\T}{\mathbb T}
\newcommand{\eps}{\varepsilon}
\newtheorem{theorem}{Theorem}[section]
\newtheorem*{geometricmaintheorem}{Geometric main theorem}
\newtheorem{lemma}[theorem]{Lemma}
\newtheorem{proposition}[theorem]{Proposition}
\newtheorem{corollary}[theorem]{Corollary}
\theoremstyle{definition}

\newtheorem{assumption}[theorem]{Assumption}
\theoremstyle{remark}
\newtheorem{remark}[theorem]{Remark}
\numberwithin{equation}{section}
\setlist{topsep=4pt,itemsep=3pt}

\title{Long-time action stability for perturbed KdV\\
near finite-gap tori}
\author{Guan HUANG}
\date{}

\begin{document}
\maketitle
\begin{abstract}
We prove action and spectral stability near finite-gap tori of the periodic
Korteweg--de Vries equation under autonomous analytic Hamiltonian perturbations
of size $\varepsilon$. The estimates are uniform over prescribed compact
families, without small-amplitude or nonresonance assumptions, and allow nearby
initial data with infinitely many open gaps. Assuming a strong solution and a
uniform higher Sobolev bound on $[0,T_0/\varepsilon]$, we permit one-derivative
loss in the perturbing Hamiltonian's normal gradient. For sufficiently small
$\varepsilon$ and initial normal Birkhoff size $r_0$, the tangential action
variations and weighted sum of normal actions are bounded by
$C_\delta(r_0^2+\varepsilon^{2\delta})$, while the spectral-gap variation in
$\ell^2$ is bounded by $C_\delta(r_0+\varepsilon^\delta)$, for every
$0<\delta<1/4$. A key ingredient is coercivity of the negative Hessian of the
nonlinear KdV Hamiltonian on the full $\ell^2$ action space at every finite-gap
action. We combine this with resonance analysis and corrected action observables
evaluated along the solution. The hypotheses are verified for several local and
nonlocal Hamiltonian perturbations.
\end{abstract}

\section{Introduction}\label{sec:introduction}

Hamiltonian perturbations of an integrable system raise two related
questions: whether invariant tori persist and how far the unperturbed action
variables can drift over long times. KAM theory addresses the first under
suitable nondegeneracy and nonresonance assumptions, while averaging and
normal-form methods provide tools for the second. Action stability can hold
without persistence of the underlying tori or closeness in phase to an
unperturbed trajectory.

The periodic Korteweg--de Vries (KdV) equation admits finite-dimensional
invariant tori consisting of finite-gap potentials \cite{KP2003,HK2014}.
We study long-time action and spectral stability near prescribed compact
families of these tori under autonomous analytic Hamiltonian perturbations.
The reference families are fixed independently of the perturbation size and
need not have small amplitude. Our estimates are uniform in the initial
phases and require no nonresonance condition. Nearby initial data may have
infinitely many open spectral gaps. Although the full angular average of the
perturbative action drift vanishes, its time average along a resonant
unperturbed orbit need not. To control this resonant drift uniformly over
the reference family, we combine normal-form averaging with a geometric
confinement argument based on the curvature of the nonlinear KdV Hamiltonian.
\subsection{KdV, finite-gap coordinates, and initial data}
\label{subsec:intro-framework}

We work on the period-one circle $\T=\R/\Z$ and denote its real, mean-zero
Sobolev space by $H^s_0$. An autonomous Hamiltonian perturbation of periodic
KdV has the form
\[
 w_t=-w_{xxx}+6ww_x+\eps\partial_x\nabla_w\mathcal F(w),
\]
Here $\mathcal F$ is the perturbing functional, and $\nabla_w\mathcal F$
denotes its $L^2$ gradient modulo spatial constants on the mean-zero phase
space. Our main stability theorem is formulated directly in local canonical
coordinates and does not require a globally defined functional $\mathcal F$
in physical space.

The unperturbed KdV flow preserves the periodic and antiperiodic spectra of
the Hill operator $L_w=-\partial_x^2+w$ \cite{KP2003}. We write
$\gamma(w)=(\gamma_j(w))_{j\ge1}$ for its nonnegative spectral gap lengths,
indexed as in Section~\ref{sec:setup}. A potential $w$ is called \emph{finite-gap}
if only finitely many of these gaps are open ($\gamma_j(w)>0$). We measure
spectral stability by the variation of $\gamma(w(t))$ from its initial value.

Birkhoff coordinates connect this spectral description with Sobolev
regularity. For $q\in\R$, define the weighted space of real coordinate pairs
by
\begin{equation}
 h^q=\left\{(x_j,y_j)_{j\ge1}:
    \sum_{j\ge1}j^{2q}(x_j^2+y_j^2)<\infty\right\},
 \qquad
 \|(x,y)\|_q^2=\sum_{j\ge1}j^{2q}(x_j^2+y_j^2).
 \label{setup:sequence-norm}
\end{equation}
When restricting a sequence to a subset of modes, we retain the original
indices $j$ and weights $j^{2q}$. For $s\ge0$, the canonical Birkhoff map
$\mathcal B(w)=(x_j(w),y_j(w))_{j\ge1}$ defines a compatible family of
real-analytic diffeomorphisms, with physical action variables normalized by
\[
 \mathcal B:H^s_0\longrightarrow h^{s+1/2},\qquad
 I_j(w)=\frac{x_j(w)^2+y_j(w)^2}{2};
\]
see \cite[Theorem~1.3]{KST2013}. The actions are conserved by unperturbed KdV,
and $I_j(w)=0$ precisely when the $j$-th spectral gap is closed. Finite-gap
potentials therefore have finitely supported actions and are smooth. Fixing
the nonzero actions and varying the corresponding phases yields a
finite-dimensional isospectral torus \cite{KP2003,KST2013,KMMT2016}.

To set up the stability problem, fix an integer $N\ge3$, set
$\sigma=N+1/2$, and choose a finite tangential index set $A\subset\N$ together
with a nonempty compact set $\mathcal K\Subset(0,\infty)^A$. Each
$\xi\in\mathcal K$, extended by zero outside $A$, specifies a reference
finite-gap torus. We consider initial data $w_0\in H^N_0$ with tangential
actions $I_A(w_0)=\xi\in\mathcal K$. Their tangential phases are arbitrary,
and their normal actions $I_j(w_0)$, $j\notin A$, may be nonzero. Along a
solution $w(t)$ with $w(0)=w_0$, we denote the tangential action deviations
and the normal actions by
\[
 p_a(t)=I_a(w(t))-\xi_a\quad(a\in A),\qquad
 J_j(t)=I_j(w(t))\quad(j\notin A),\qquad
 |p|_1=\sum_{a\in A}|p_a|,
\]
so that $p(0)=0$. Writing $z(t)$ for the Birkhoff pairs with indices
$j\notin A$, we measure their norm and initial size by
\[
 \|z(t)\|_\sigma^2=2\sum_{j\notin A}j^{2N+1}J_j(t),
 \qquad
 r_0^2:=\|z(0)\|_\sigma^2=2\sum_{j\notin A}j^{2N+1}I_j(w_0).
\]
Thus $r_0=0$ precisely when $w_0$ lies on the reference torus, while nearby
data with $r_0>0$ may have infinitely many open normal gaps. In a sufficiently
small neighborhood of the reference family, the tangential actions are bounded
away from zero, so we use action--angle coordinates on the modes $a\in A$.
On the normal modes, we retain Cartesian coordinates $z$ to keep the chart
regular when gaps are closed. The precise local chart is given in
Section~\ref{sec:setup}.
\subsection{Main stability result and longer-time consequences}
\label{subsec:intro-results}

\paragraph{Analytic and evolution hypotheses.}
In the local coordinates of Section~\ref{sec:setup}, write
$H_\xi=h(I)+\eps f_\xi$, where $h$ is the integrable KdV Hamiltonian.
Assumption~\ref{ass:contract} imposes local analyticity and an independent
evolution hypothesis. With the finite tangential variables treated as
parameters, the represented normal gradient of $f_\xi$ satisfies
\[
 \nabla_zf_\xi:h^\sigma\longrightarrow h^{\sigma-1},\qquad
 \nabla_zf_\xi:h^{\sigma+1}\longrightarrow h^\sigma,
\]
with the uniform local holomorphic and derivative bounds specified in
Assumption~\ref{ass:contract}(1). For a fixed slow-time horizon $T_0>0$,
we also assume that each admitted initial datum generates a strong solution
in Cartesian Birkhoff coordinates on $[0,T_0/\eps]$, with the uniform bound
\[
 \sup_{0\le t\le T_0/\eps}\|z(t)\|_{\sigma+1}\le H_+.
\]
Existence on this interval and the bound $H_+$ must be supplied independently;
they do not follow from the analyticity of $f_\xi$ or the smallness of $r_0$.

\paragraph{Main result (Theorem~\ref{thm:main}).}
Under Assumption~\ref{ass:contract}, fix $0<\delta<1/4$. For sufficiently
small $r_0\ge0$ and $\eps>0$, with independent thresholds, every admitted
trajectory remains in the coordinate tube and satisfies, for all
$0\le t\le T_0/\eps$,
\begin{equation}\label{eq:intro-stability}
 \begin{gathered}
  \|\gamma(w(t))-\gamma(w_0)\|_{\ell^2}
                        \le C_\delta(r_0+\eps^\delta),\\
  |p(t)|_1+2\sum_{j\notin A}j^{2N+1}J_j(t)
                        \le C_\delta(r_0^2+\eps^{2\delta}).
 \end{gathered}
\end{equation}
The second estimate controls both the tangential action drift $|p(t)|_1$
and the squared normal norm $\|z(t)\|_\sigma^2$, whose initial value is
$r_0^2$. Since $|J_j(t)-J_j(0)|\le J_j(t)+J_j(0)$, increasing $C_\delta$ if
necessary gives the same bound for the weighted action variation
$|p(t)|_1+\sum_{j\notin A}j^{2N+1}|I_j(w(t))-I_j(w_0)|$.

The constant $C_\delta$ and the smallness thresholds depend only on $\delta$,
the reference data $(N,A,\mathcal K)$, the fixed coordinate neighborhoods and
admitted normal radius, and the uniform analytic and evolution bounds in
Assumption~\ref{ass:contract}, including $T_0$ and $H_+$. They are uniform over
$\xi\in\mathcal K$, the initial phases, and all admitted normal tails;
$C_\delta$ is independent of $\eps$ and $r_0$.

In physical space, \eqref{eq:intro-stability} implies that $w(t)$ stays within
$H^N$ distance $O(r_0+\eps^\delta)$ of the reference torus with tangential
actions $\xi=I_A(w_0)$. This is a distance estimate to the torus and does not
assert phase shadowing of a particular unperturbed orbit. The case
$A=\varnothing$ is also included: the reference torus is the zero potential,
and the initial data are small in $H^N_0$.

For shrinking initial neighborhoods, Corollary~\ref{cor:shrinking-data}
gives the bounds
\[
 \begin{gathered}
 |p(t)|_1\le\eps^{2\delta},\qquad
 \sum_{j\notin A}j^{2N+1}J_j(t)\le\eps^{2\delta},\\
 \|\gamma(w(t))-\gamma(w_0)\|_{\ell^2}\le\eps^\delta,
 \qquad 0\le t\le T_0/\eps,
 \end{gathered}
\]
whenever $r_0\le c_\delta\eps^\delta$, for a suitable fixed $c_\delta>0$ and
all sufficiently small $\eps$. The constants are absorbed by applying
Theorem~\ref{thm:main} with a slightly larger exponent. This includes exact
finite-gap data ($r_0=0$) and all shrinking neighborhoods with
$r_0=O(\eps^\kappa)$ for a fixed $\kappa>\delta$.

\paragraph{Longer intervals.}
Extensions beyond a fixed slow-time horizon require the uniform
long-interval hypotheses of Section~\ref{subsec:long-time-budgets}. These
require a supplied strong solution on $[0,T_\eps]$, with the same analytic
and higher-norm bounds as above. The coordinate neighborhoods, angular strips,
and constants in these bounds must be independent of $\eps$, the admitted
initial datum, and the slow-time length $\eps T_\eps$. Under these hypotheses,
Theorem~\ref{thm:long-logarithmic} shows that, for exact finite-gap data or
$r_0=O(\eps^\kappa)$ with $\kappa>\delta$, the three bounds above hold
throughout
$[0,\min\{T_\eps,c\eps^{-1}\log(1/\eps)\}]$
for sufficiently small $\eps$ and a suitable constant $c>0$ independent of
$\eps$.

An additional uniform bound on the normal component $z(t)$ in
$h^{\sigma+2}$ permits a polynomial extension. This requires one more
derivative of the supplied solution than the $h^{\sigma+1}$ bound in
Assumption~\ref{ass:contract}, while the two analytic perturbation levels
remain unchanged. Theorem~\ref{thm:long-polynomial} gives a two-parameter
estimate on intervals whose length depends on both $\eps$ and $r_0$.
In particular, for exact finite-gap data or $r_0=O(\eps^{1/4})$,
Corollary~\ref{cor:long-polynomial-rates} gives the same three bounds on
$[0,\eps^{-1-\alpha}]$ for
$0<\alpha<\min\{1/10,1/2-2\delta\}$,
provided the strong solution and all stated uniform bounds are supplied on
that interval.
\subsection{Relation to earlier work}
\label{subsec:intro-prior-work}

This work was motivated by an unpublished manuscript of Kappeler and
Kuksin \cite{KappelerKuksin2009Private}, kindly communicated to the author by
S.~Kuksin. An earlier qualitative averaging result for small finite-gap
initial data was announced by Kuksin
\cite[Section~3, Theorem~2]{Kuksin2008ThreeTheorems}. In the unpublished
manuscript, Kappeler and Kuksin state a spectral-gap stability estimate on
time intervals of order $\varepsilon^{-1}$ with the same strict exponent
range $0<\delta<1/4$, assuming exact finite-gap initial data whose $H^1$ norm
lies below a fixed $\varepsilon$-independent smallness threshold and whose
open gaps are bounded away from zero. Their framework also assumes a
two-level analytic perturbation losing at most one derivative, together
with an independently supplied solution and higher Sobolev bound on
$[0,T_0/\varepsilon]$.

Under Assumption~\ref{ass:contract}, Theorem~\ref{thm:main} establishes uniform
stability near any prescribed compact family of finite-gap tori with a fixed
open-gap set, at arbitrary fixed amplitude. It also allows nearby initial
data with infinitely many open normal gaps and tracks the contribution of
the initial normal size $r_0$. The geometric input is coercivity of the
negative Hessian of the nonlinear part of the KdV Hamiltonian on the full
action space $\ell^2$ at every finite-gap action.
Appendix~\ref{sec:finitegap-hessian} proves this property, which yields
uniform curvature bounds near the reference family.

Bobenko and Kuksin \cite{BobenkoKuksin1991} proved the nondegeneracy of
periodic finite-gap KdV families needed for KAM persistence and averaging
results. For smooth semilinear Hamiltonian perturbations of KdV with local
density $f(x,u)$, Kappeler and Montalto \cite[Theorem~1.1]{KM2021} prove
existence, uniqueness, and $O(\eps)$ orbital stability in sufficiently high
Sobolev regularity for times of order $\eps^{-2}$. Their theorem applies to
initial data within distance $O(\eps)$ of finite-gap tori of arbitrary fixed
amplitude whose frequencies satisfy quantitative Diophantine and Melnikov
conditions through order three. Where both sets of hypotheses apply and
the initial distance is $O(\eps)$, their result gives a longer stability
interval and a smaller orbital error than Theorem~\ref{thm:main}. Our
theorem covers the prescribed torus family without arithmetic exclusions
and allows a small initial normal size $r_0$ independent of $\eps$,
conditional on the supplied evolution bounds.

For the Benjamin--Ono equation, Bambusi and G\'erard
\cite[Theorem~2.2]{BambusiGerard2024} establish exponentially long action
stability near tori whose first finitely many gaps are open and whose
remaining gaps are closed. Their result includes resonant tori of arbitrary
fixed amplitude. It assumes that the perturbing Hamiltonian and its vector
field are analytic and bounded on bounded sets near the finite-gap family,
with the vector field taking values in the energy space $H^{1/2}$. Our
proof likewise combines resonant normal forms with energy confinement,
using the KdV Hessian coercivity described above. To accommodate the
permitted loss of one derivative, we use corrected action observables
along the supplied trajectory, without requiring a flow for the
normal-form generator.

Other long-time results for perturbed KdV address different regimes.
For smoothing perturbations, Huang \cite{Huang2013} proves averaging on
sets of asymptotically full measure with respect to suitable Gaussian
measures in Birkhoff coordinates. Under the hypotheses of
\cite[Definition~0.1 and Theorem~0.2, preprint numbering]{HuangQuasi2013},
he extends this framework to regular measures that are
$\eps$-quasi-invariant. These typical-data conclusions do not by themselves
give uniform estimates over a prescribed finite-gap family. Such a family
is a null set for every regular measure in Huang's sense: it lies in the
zero set of a nonzero transverse Birkhoff coordinate.

Small-amplitude results include the normal-form analysis of generic small
initial data by Bernier and Gr\'ebert \cite{BernierGrebert2021}, the
averaging theorem of Yuan and Zhang \cite{YZ2016}, and the stability theorem
of Yang and Zeng \cite{YangZeng2025}. These results concern the
small-amplitude regime near the zero potential; in our finite-gap setting,
the nonzero reference actions are fixed independently of $\eps$.
\subsection{Hamiltonian applications}
\label{subsec:intro-applications}

Section~\ref{sec:application} verifies the analytic and evolution hypotheses
for several local and nonlocal autonomous Hamiltonian perturbations, using
Lemma~\ref{lem:app-physical-contract}. The perturbations are fixed
independently of $\eps$. All initial data below are smooth, and the stated
initial Sobolev bounds are uniform over the prescribed family,
independently of $\eps$.

\paragraph{Every fixed slow-time horizon.}
For a fixed smooth profile $a\in C^\infty(\T;\R)$, the linear functional
$\mathcal F(w)=\int_\T a(x)w(x)\,dx$ generates the additive forcing
\begin{equation}\label{eq:intro-additive}
 w_t=-w_{xxx}+6ww_x+\eps a_x(x).
\end{equation}
Combining high-regularity well-posedness for forced KdV \cite{Huang2013}
with an inductive estimate on the KdV conservation laws,
Theorem~\ref{thm:additive} gives action and spectral stability for exact
finite-gap initial data on $[0,T_0/\eps]$ for every prescribed $T_0>0$.

Proposition~\ref{prop:app-transport} extends this conclusion to
$\partial_x(Vw)+\mathcal R(w)$, where $V\in C^\infty(\T;\R)$ and
$\mathcal R$ is generated by a real-analytic Hamiltonian on $L^2_0$.
For each nonnegative integer $m$, the field maps $L^2_0$ analytically
into $H^m_0$, with its value and first derivative bounded on $L^2$ balls;
it also satisfies the radial estimate \eqref{app:smoothing-radial}.
No sign condition on $V$ or spatial analyticity of $a$ or $V$ is required.
Examples include nonlinear finite-rank feedback with at most linear
growth, smooth symmetric-kernel interactions, and the nonlinear spatially
smoothed functionals described in Section~\ref{subsec:app-examples}.

For these classes, Corollary~\ref{cor:app-nearby}(1) gives the two-parameter
bounds \eqref{eq:intro-stability} on the same intervals for nearby data
with a uniform initial $H^{N+2}$ bound and sufficiently small $r_0,\eps$.
The initial data may have infinitely many open gaps. The constants and
smallness thresholds may depend on $T_0$.
For the same initial Sobolev bound, exact finite-gap data or a fixed
shrinking family $r_0\le C_0\eps^\kappa$ with $\kappa>\delta$ satisfy
the bounds of Corollary~\ref{cor:shrinking-data} uniformly on
$[0,\mathsf S(\eps)/\eps]$, by the diagonal argument in
Corollary~\ref{cor:app-growing-horizon}. Here $C_0>0$ is fixed and
$\mathsf S(\eps)\to\infty$ as $\eps\downarrow0$; no logarithmic or
polynomial growth rate for $\mathsf S$ is asserted.

\paragraph{A fixed short slow-time horizon.}
For a local density $\mathcal F_P(w)=\int_\T P(x,w(x))\,dx$, assume that
$P$ is smooth in $x$ and real analytic in $w$, with the uniform local
holomorphic bounds specified in Section~\ref{subsec:app-local-densities}.
Its Hamiltonian vector field is $P_{xw}(x,w)+P_{ww}(x,w)w_x$.
Global $L^2$ bounds need not be available for this general class.
Instead, controlling the KdV conservation laws through Sobolev order
$N+2$ on a closed energy window, Corollaries~\ref{cor:app-local-density}
and~\ref{cor:app-nearby}(2) give smooth existence and a uniform
$H^{N+2}$ bound on $[0,T_*/\eps]$.
Here $T_*>0$ depends on $N$, the fixed density, and the uniform initial
$H^{N+2}$ bound, but is independent of $\eps$ and the individual datum.
For sufficiently small $r_0,\eps$, Theorem~\ref{thm:main} then gives
the action and spectral estimates \eqref{eq:intro-stability} on this
interval.

\paragraph{The integrable Gardner case.}
The defocusing quartic perturbation
$\mathcal F_{\rm G}(w)=\frac12\int_\T w^4\,dx$
generates the modified KdV nonlinearity $6w^2w_x$.
For initial data satisfying a uniform $H^{N+3}$ bound and sufficiently
small $\eps>0$ and $r_0$, conservation of the positive-quartic Gardner
energy and the Gardner-to-KdV transformation yield a uniform
$H^{N+2}$ bound for all $t\ge0$.
Proposition~\ref{prop:app-gardner} and
Corollary~\ref{cor:app-nearby}(3) therefore verify
Assumption~\ref{ass:contract} on $[0,T_0/\eps]$ for every fixed $T_0>0$.

Integrability also gives sharper all-time estimates. Comparing the
mean-zero Gardner transform with the fixed spatial translate
$w(\cdot+\sqrt\eps,t)$ gives an $O(\eps)$ difference in $H^N$.
Using translation invariance of the KdV actions,
Theorem~\ref{thm:app-gardner-alltime} bounds the full action variation
by $O(\eps)$ and the normal action variation by $O(\eps(r_0+\eps))$,
both with weight $j^{2N+1}$, uniformly for all $t\ge0$.
The $\ell^2$ spectral-gap variation is $O(r_0+\eps)$.
For exact finite-gap data ($r_0=0$), the weighted normal occupation is
$O(\eps^2)$ and the spectral-gap error is $O(\eps)$.
\subsection{Proof strategy and organization}
\label{subsec:intro-strategy}

The contributions proceed from geometry to dynamics. First, we establish
coercivity in the full $\ell^2$ action space at every finite-gap action.
Combined with averaging, this yields uniform confinement near prescribed
compact families of arbitrary fixed amplitude, including resonant tori.
Keeping the initial
normal occupation explicit then yields the two-parameter estimate for
nearby data with infinitely many open gaps. Conditional longer-time
estimates and Hamiltonian applications complete the dynamical results.
The all-time Gardner estimate is independent of the averaging argument:
it follows from the Gardner-to-KdV transformation and conservation of
the transformed KdV actions.

\paragraph{Curvature in the full action space.}
Write $h(I)=\sum_{j\ge1}(2\pi j)^3I_j+h_{\rm nl}(I)$ and
$\ell^2_+=\{I\in\ell^2_{\R}:I_j\ge0\text{ for all }j\}$.
The function $-h_{\rm nl}$ is convex on $\ell^2_+$
\cite{KorotyaevKuksin2011} and extends analytically to a complex
$\ell^2$ neighborhood, with $-D^2h_{\rm nl}(0)=6\operatorname{Id}$
\cite{KMMT2016}. Its Hessian is coercive on a relatively open dense
subset \cite[Theorem~1.3]{KappelerMolnar2018}, but this does not cover
every prescribed finite-gap action. Likewise, nondegeneracy on the
open modes \cite{BikbaevKuksin1993,Kuksin2000} does not control
directions that open closed gaps. Our geometric result does:

\begin{geometricmaintheorem}[Theorem~\ref{thm:finitegap-coercivity}]
For every $\xi\in\ell^2_+$ with finite support, there exists $m_\xi>0$
such that
\[
 -D^2h_{\rm nl}(\xi)[v,v]\ge m_\xi\|v\|_{\ell^2}^2
 \qquad(v\in\ell^2_{\R}).
\]
The constant is uniform when $\xi$ ranges over a compact subset of
$\ell^2_+$ consisting entirely of finite-gap actions.
\end{geometricmaintheorem}

Appendix~\ref{sec:finitegap-hessian} derives a decomposition of this
Hessian into a diagonal operator and a nonnegative finite-rank operator
from period normalizations on the reduced spectral curve. The diagonal
entries are at least $6$ on closed-gap indices; open-stratum frequency
nondegeneracy supplies the remaining positivity. Analyticity and
compactness then give the uniform energy defect of
Corollary~\ref{cor:finitegap-tube}.

\paragraph{Averaging along the actual solution.}
The finite-gap projection $\xi$ fixes the coordinate chart, whereas the
energy expansion and frozen frequencies are centered at the actual
initial action $I^0=I(0)$, including its normal tail. With
$\mathcal D=I-I^0$ and $\nu=Dh(I^0)$, energy conservation gives
$\nu\cdot\mathcal D+Q_0(\mathcal D)=O(\eps)$, where
$-Q_0(\mathcal D)$ controls $\|\mathcal D\|_{\ell^2}^2$.
Concavity is used along physical action segments, not projected ones.
To allow the stated derivative loss, we differentiate corrected action
observables along the supplied strong solution; no flow of a
normal-form generator is required.

A weighted quadratic Taylor expansion and angular truncation first
separate high normal terms by cubic dispersion. The diagonal terms
$a_j(\varphi)J_j$ are split into the frozen angular contribution
$a_j(\varphi)J_j(0)$ and the action increment. On the remaining finite
window, sparse signed monomial labels give a polynomial conditioning
bound. A compatible scale hierarchy selects a near-resonant space $V$
at $I^0$ and a divisor bound outside $V$, without arithmetic exclusions.

Finite-order averaging retains all normal-linear modes and the frozen
angular contribution. Treating the running action $I$ as an independent
coefficient parameter, an auxiliary Poisson algebra permits iteration
modulo normal degree at least two, avoiding general higher-order normal
divisor arithmetic (cf.\ \cite[Section~2]{KappelerKramer2024}). Restoring
the discarded terms along the full trajectory yields
$\mathcal D=A_V+E$, with $A_V\in V$. For a divided label $q$, the ratio
$[\nu\cdot q]/[Dh(I)\cdot q]$ stays bounded on the compatible domain.
This saves one inverse-divisor factor in the frozen scalar pairing,
which, together with separate scalar remainder estimates, closes the
energy bootstrap for every $0<\delta<1/4$.

The top-weight normal estimate follows by interpolating the positive
excess $(J_j(t)-J_j(0))_+$ between weights $2\sigma-1$ and
$2\sigma+2$, then applying Gronwall's inequality. The upper moment
comes from the independently supplied $h^{\sigma+1}$ bound. Separating
initial occupation throughout preserves the $r_0^2$ contribution.

\paragraph{Organization of the paper.}
Section~\ref{sec:setup} states the coordinate and evolution hypotheses,
the main theorem, and the chain-rule and spectral-transfer lemmas.
Sections~\ref{sec:window}--\ref{sec:filtered} establish, respectively,
the finite-window reduction, resonance geometry, and observable
averaging. Section~\ref{sec:closure} closes the bootstrap and derives the
conditional longer-time estimates. Section~\ref{sec:application} verifies
Hamiltonian examples and proves the independent Gardner estimate.
Appendix~\ref{sec:finitegap-hessian} supplies the geometric theorem.

\section{Coordinates, assumptions and the stability statement}
\label{sec:setup}

We first introduce the local canonical coordinates and action increments,
and record the frequency and curvature properties of the integrable KdV
Hamiltonian. We then state the analytic and evolution hypotheses and the
main stability theorem. The remaining subsections establish the action
chain rule and conservation of the perturbed Hamiltonian for the supplied
strong trajectories, together with estimates that convert action bounds
into control of spectral gaps and distance to the reference torus.

\subsection{Local canonical coordinates and action increments}
\label{local-coordinates}

We use the real mean-zero Sobolev spaces $H^s_0$, the weighted sequence spaces
$h^q$, and the canonical Birkhoff map $\mathcal B$ introduced in
Section~\ref{sec:introduction}. The sequence norm is given by
\eqref{setup:sequence-norm}; for complex scalar sequences $(c_j)_{j\ge1}$,
we replace $x_j^2+y_j^2$ by $|c_j|^2$. Writing $w$ for the physical
potential, the KdV Hamiltonian and Gardner bracket are
\begin{equation}
 \mathcal H_{\rm KdV}(w)
 =\int_0^1\left(\frac12 w_x^2+w^3\right)\,dx,
 \qquad
 \{F,G\}_{\rm G}=\int_0^1\nabla F\,\partial_x\nabla G\,dx,
 \label{setup:physical-hamiltonian}
\end{equation}
so that the unperturbed KdV equation takes the form $w_t=-w_{xxx}+6ww_x$.

Retain the integer $N\ge3$, the weight $\sigma=N+1/2$, the finite
tangential set $A\subset\N$, and the compact family $\mathcal K\Subset(0,\infty)^A$
from Section~\ref{sec:introduction}. Set $d=|A|$ and
$\mathcal N=\N\setminus A$. For each base action $\xi\in\mathcal K$, extended
by zero on $\mathcal N$, we use local coordinates $b=(\varphi,p,z)$.
Here $\varphi\in(\R/2\pi\mathbb Z)^A$ and $p\in\R^A$ are the tangential
angles and action deviations, while $z=(z_j)_{j\in\mathcal N}\in h^\sigma$
consists of real Cartesian pairs $z_j=(u_j,v_j)$ on the normal modes.
We choose the coordinate tube uniformly for $\xi\in\mathcal K$ so that
the tangential actions $\xi_a+p_a$ are bounded away from zero on its
real part. The tube and the constants below may depend on $\mathcal K$.

On the real phase space, we identify each Birkhoff pair $(x_j,y_j)$ with
the normalized complex coordinate $(x_j-iy_j)/\sqrt2$. This is
$1/\sqrt2$ times the corresponding positive-index coordinate of
Kappeler--Schaad--Topalov \cite[Theorem~1.3]{KST2013}, with the same
physical action $I_j=(x_j^2+y_j^2)/2$. The resulting complex Birkhoff
sequence $\mathcal Z$ is given by
\begin{equation}
 \mathcal Z_a=\sqrt{\xi_a+p_a}\,e^{i\varphi_a}\quad(a\in A),
 \qquad
 \mathcal Z_j=\zeta_j=\frac{u_j-iv_j}{\sqrt2}\quad(j\in\mathcal N).
 \label{setup:polar-map}
\end{equation}
The square roots are positive on the real tube and use the corresponding
holomorphic branches on its complexification. On the real phase space,
$\mathcal B^{-1}\mathcal Z$ denotes the inverse Birkhoff map applied via
the above complex identification. The physical actions and the action
projection $\pi$ are
\begin{equation}
 \begin{aligned}
  I_a&=\xi_a+p_a,\qquad
       I_j=J_j=|\zeta_j|^2=\frac{u_j^2+v_j^2}{2},\\
  \mathcal I&=(p,J),\qquad I=\xi+\mathcal I,\qquad
       \pi(\varphi,p,z)=(p,J),
 \end{aligned}
 \label{setup:actions}
\end{equation}
where $J_j=|\zeta_j|^2$ is understood on the real phase space.
On complexified Cartesian neighborhoods, we write
$\bar\zeta_j=(u_j+iv_j)/\sqrt2$ and regard $\zeta_j$ and $\bar\zeta_j$
as independent holomorphic coordinates. Thus
$J_j=\zeta_j\bar\zeta_j=(u_j^2+v_j^2)/2$ is the holomorphic extension
of the normal action; $\bar\zeta_j$ is the complex conjugate of
$\zeta_j$ only on the real phase space. Our Poisson bracket and
Hamiltonian vector field conventions are
\begin{equation}
 \{\varphi_a,p_b\}=\delta_{ab},\qquad
 \{u_j,v_k\}=\delta_{jk},\qquad
 \{\zeta_j,\bar\zeta_k\}=i\delta_{jk},\qquad
 X_HF=\{F,H\}.
 \label{setup:poisson}
\end{equation}
If $D_zF$ is represented by $\nabla_zF$ in the unweighted sequence
pairing, then $X_F^z=\mathcal J\nabla_zF$, where
$\mathcal J(u,v)=(v,-u)$ on each normal pair.

For each admitted initial datum, we choose $\xi=I_A(0)$ and keep this
base fixed along the trajectory. Thus $p(0)=0$, while the initial normal
sequence $z^0=z(0)$ may be nonzero. We retain the unshifted normal
coordinates $z(t)$, with the reference torus given by $p=z=0$.
Writing $I^0=I(0)$, we distinguish between the action displacement
$\mathcal I(t)=I(t)-\xi$ from this
base and the signed action increment $\mathcal D(t)=I(t)-I^0$ from the
actual initial action:
\begin{equation}\label{setup:initial-increment}
 \begin{gathered}
 J_j^0=\tfrac12|z_j^0|^2,\qquad I^0=\xi+(0,J^0),\qquad
 \mathcal I^0=(0,J^0),\\
 \mathcal D(t)=I(t)-I^0=\mathcal I(t)-\mathcal I^0
                     =(p(t),J(t)-J^0),\\
 r_0=\|z^0\|_\sigma,\qquad W_0=r_0^2.
 \end{gathered}
\end{equation}
The normal actions $J_j(t)$ are nonnegative, whereas the increments
$J_j(t)-J_j^0$ may have either sign. In particular, $\mathcal D(0)=0$
even when $J^0$ has infinite support. Since $j^{2\sigma}\ge1$, the
initial action offset satisfies
$\|I^0-\xi\|_{\ell^2}\le\sum_{j\in\mathcal N}J_j^0\le r_0^2/2$.

For $q\in\R$, we measure signed real action vectors in the weighted spaces
\[
 \ell^1_q(\mathcal N)
   =\left\{a=(a_j)_{j\in\mathcal N}:
                   \sum_{j\in\mathcal N}j^q|a_j|<\infty\right\},
  \qquad \mathcal A_q=\R^A\times\ell^1_q(\mathcal N).
\]
We equip $\mathcal A_q$ with its weighted $\ell^1$ norm and use two
quantities to measure the trajectory:
\begin{equation}
 \begin{aligned}
 \|(p,a)\|_{\mathcal A_q}
   &=|p|_1+\sum_{j\in\mathcal N}j^q|a_j|,
       \qquad B=\|\mathcal D\|_{\ell^2},\\
 W&=|p|_1+\|z\|_\sigma^2
   =|p|_1+2\sum_{j\in\mathcal N}j^{2N+1}J_j,
 \end{aligned}
 \label{setup:action-norms}
\end{equation}
Here $\ell^2=\R^A\times\ell^2(\mathcal N)$ carries its usual real Hilbert
norm, including the tangential components. Thus $B(t)$ measures the
unweighted action drift from $I^0$, while $W(t)$ combines the tangential
action drift with the weighted normal occupation. Initially, $B(0)=0$
and $W(0)=W_0=r_0^2$.

When $A=\varnothing$, we set $\mathcal K=\{0\}$ and omit the tangential
variables $(\varphi,p)$ and their positive-action condition.

\subsection{Integrable frequencies and finite-gap curvature}

In these coordinates, the integrable KdV Hamiltonian decomposes as
\begin{equation}
 h(I)=\sum_{j\ge1}(2\pi j)^3I_j+h_{\rm nl}(I).
 \label{setup:integrable-hamiltonian}
\end{equation}
The full Hamiltonian is evaluated on actions in the domain of $h_{\rm nl}$
for which $\sum_{j\ge1}j^3|I_j|<\infty$. This summability holds on the
coordinate tube, since the normal coordinates belong to $h^\sigma$ and
$2\sigma=2N+1\ge7$.

The nonlinear part $h_{\rm nl}$ extends holomorphically to a complex
$\ell^2$ neighborhood of the positive action cone. Its gradient
$Dh_{\rm nl}$ is an analytic $\ell^2$-valued map with
$Dh_{\rm nl}(0)=0$ and $D^2h_{\rm nl}(0)=-6\operatorname{Id}$
\cite[Theorem~1.1]{KMMT2016}; see also
\cite[Theorem~1.2(iv)]{KappelerMolnar2018}.

By
Theorem~\ref{thm:finitegap-coercivity}, the negative Hessian
$-D^2h_{\rm nl}(\xi)$ is strictly coercive on $\ell^2_{\mathbb R}$ at every
finite-gap action $\xi$. Operator-norm continuity of $D^2h_{\rm nl}$ and
compactness of $\mathcal K$ give a fixed complex $\ell^2$ neighborhood of
$\mathcal K$ on which the Hessian has a uniform operator-norm bound.
At real points of this neighborhood, $-D^2h_{\rm nl}$ also has a uniform
positive lower bound; see Corollary~\ref{cor:finitegap-tube} and its proof.

We shrink the coordinate tube and choose a fixed admitted normal radius
$r_{\rm adm}>0$ so that, for $r_0\le r_{\rm adm}$, the initial action $I^0$
and every current action before exit from the tube lie in a common small
$\ell^2$ ball about the fixed base $\xi$, contained in the curvature
neighborhood. Their physical segments $[I^0,I(t)]$ therefore remain in
that neighborhood for as long as the trajectory stays in the tube.
The initial action $I^0$ may have infinitely many open gaps: the curvature
bound follows from its proximity to the compact finite-gap family.

The coordinate chart and the Taylor expansion of the perturbation remain
centered at $\xi$, while we expand the integrable energy at the actual
initial action $I^0$. Write $\nu_j=\partial_{I_j}h(I^0)$, using $Dh(I^0)$
to denote this frequency sequence. The nonlinear remainder $Q_0$ and its
bounds, with uniform constants $C_h<\infty$ and $\mu>0$, are
\begin{equation}
 \begin{split}
 \nu&=Dh(I^0),\qquad
 Q_0(\mathcal D)=h_{\rm nl}(I^0+\mathcal D)
       -h_{\rm nl}(I^0)-Dh_{\rm nl}(I^0)[\mathcal D],\\
 h(I^0+\mathcal D)-h(I^0)&=\nu\cdot\mathcal D+Q_0(\mathcal D),
       \qquad Q_0(0)=DQ_0(0)=0,\\
 \|D^2Q_0\|&\le C_h,\qquad
 D^2Q_0(\mathcal D)[v,v]\le-\mu\|v\|_{\ell^2}^2
       \quad(v\in\ell^2_{\mathbb R}).
 \end{split}
 \label{setup:concavity}
\end{equation}
The remainder is defined whenever $I^0+\mathcal D$ lies in the chosen
complex $\ell^2$ neighborhood. Its derivatives are Fr\'echet derivatives
on $\ell^2$, and $\|D^2Q_0\|$ denotes the bilinear operator norm.
The Hessian upper bound is uniform there; the concavity inequality is
used at real physical actions and in real directions. The full-energy
identity in \eqref{setup:concavity} is evaluated on the weighted action
domain specified above.

The integrable frequencies are
\begin{equation}
 \Omega_j(I)=(2\pi j)^3+\beta_j(I),\qquad
 \beta(I)=Dh_{\rm nl}(I)\in\ell^2,
 \label{setup:frequencies}
\end{equation}
with tangential components indexed by $A$. On a fixed smaller complex
neighborhood of $\mathcal K$, analyticity and compactness give uniform
$\ell^2$ bounds for $\beta$ and uniform multilinear operator bounds for
its action derivatives of each fixed finite order.

The frozen sequence $\nu=\Omega(I^0)$ has components
$\nu_j=(2\pi j)^3+\beta_j(I^0)$. Thus $\nu\notin\ell^2$, and $Dh(I^0)$
is not an $\ell^2$ Fr\'echet derivative. It defines a bounded linear
functional on $\mathcal A_q$ for every $q\ge3$, so the pairing
$\nu\cdot\mathcal D$ converges absolutely for the admitted action increments.

\subsection{Perturbation hypotheses and the main stability theorem}

In the local coordinates of Section~\ref{local-coordinates}, we write the autonomous
perturbed Hamiltonian as
\begin{equation}
 H_\xi(\varphi,p,z)=h(\xi+\pi(\varphi,p,z))
                      +\eps f_\xi(\varphi,p,z),
 \label{setup:perturbed-hamiltonian}
\end{equation}
suppressing the subscript $\xi$ when there is no ambiguity. Any fixed physical
Hamiltonian perturbation expressed in the local charts around $\mathcal K$
defines such a family. Adding a constant such as $-h(\xi)$ does not affect
the Hamiltonian vector field.

\begin{assumption}[Two-level analyticity and supplied strong trajectories]
\label{ass:contract}
Consider a collection of initial data satisfying $I_A(0)=\xi\in\mathcal K$
and $r_0=\|z(0)\|_\sigma\le r_{\rm adm}$, with arbitrary initial phases and
possibly infinitely supported initial normal actions $J^0$. The collection
is also subject to the higher Sobolev bound in~(2) below and need not contain
an entire $h^\sigma$ ball. We assume that the following properties hold
uniformly over this collection and all sufficiently small $\eps>0$:
\begin{enumerate}
\item \emph{Two-level local analyticity.}
The scalar perturbation $f_\xi(\varphi,p,z)$ is real analytic and extends
holomorphically to fixed complex coordinate neighborhoods with a fixed positive
angular strip width. Its normal derivative is represented with respect to the
unweighted sequence pairing by an analytic map at two adjacent regularity
levels,
\begin{equation}
 D_zf(b)[v]=\sum_{j\in\mathcal N}(\nabla_zf(b))_j\cdot v_j,
 \qquad
 \nabla_zf:h^\sigma\longrightarrow h^{\sigma-1},
 \quad
 \nabla_zf:h^{\sigma+1}\longrightarrow h^\sigma,
 \label{setup:two-level-gradient}
\end{equation}
with compatible restrictions and joint analytic dependence on
$(\varphi,p,z)$. On the fixed smaller coordinate neighborhoods used in the
proof, $f_\xi$, the represented normal gradient maps, and their derivatives of each
fixed finite order are uniformly bounded in the corresponding scalar,
sequence, and multilinear operator norms. At the upper level $h^{\sigma+1}$,
these neighborhoods include a uniform neighborhood of the supplied stopped
trajectories and their radial Taylor segments. The bounds are local to these
specified neighborhoods; no bounds on arbitrary bounded subsets of
$h^\sigma$ or $h^{\sigma+1}$ are assumed.

\item \emph{Supplied strong trajectories and higher normal bound.}
There exist fixed constants $T_0>0$ and $H_+<\infty$, independent of $\eps$,
such that each datum in the collection generates a trajectory $\mathcal Z(t)$
in Cartesian Birkhoff coordinates on $[0,T_0/\eps]$, with $I_A(0)=\xi$ (so $p(0)=0$)
and $z(0)=z^0$ (so $W(0)=r_0^2$), satisfying
\begin{equation}
 \begin{gathered}
 \mathcal Z\in C([0,T_0/\eps];h^\sigma)
       \cap C^1([0,T_0/\eps];h^{\sigma-3}),\\
 \sup_{0\le t\le T_0/\eps}\|z(t)\|_{\sigma+1}\le H_+.
 \end{gathered}
 \label{setup:strong-solution}
\end{equation}
Before exit from the fixed positive-action coordinate tube, the local path
$b(t)=(\varphi(t),p(t),z(t))$ defined by \eqref{setup:polar-map} satisfies
Hamilton's equations, with $(\varphi,p)\in C^1$ and the normal equation
understood in $h^{\sigma-3}$. All local estimates are made before this exit.
For sufficiently small $r_0$ and $\eps$, Theorem~\ref{thm:main} proves that
the trajectory remains in the tube throughout the supplied interval
$[0,T_0/\eps]$.
\end{enumerate}
\end{assumption}

The coordinate condition \eqref{setup:two-level-gradient} holds for a
real-analytic physical Hamiltonian perturbation whose vector field has
compatible analytic mappings $H^N_0\to H^{N-1}_0$ and
$H^{N+1}_0\to H^N_0$ with corresponding uniform local holomorphic bounds.
Apply the compatible differentials of $\mathcal B$ at levels $N-1$ and $N$,
then the inverse differential of the finite polar chart. The latter is
uniformly analytic on the chosen tube because of its positive-action margin.
Canonicity gives
$D_zf(b)[v]=\langle-\mathcal J(X_f)^z(b),v\rangle_{\ell^2}$ for finitely
supported real variations $v$, and the two mapping bounds extend this
unweighted pairing by density to all $v\in h^\sigma$ and $v\in h^{\sigma+1}$,
respectively; see \eqref{app:representedgradient}.

These mapping bounds allow a loss of at most one derivative. The existence
interval $[0,T_0/\eps]$, the strong Cartesian regularity, and the
$h^{\sigma+1}$ bound $H_+$ in \eqref{setup:strong-solution} must be supplied
independently; they do not follow from \eqref{setup:two-level-gradient}.
No additional Sobolev mapping level for $\nabla_zf_\xi$ is assumed.

To state the spectral stability estimate, let $(\lambda_j(w))_{j\ge0}$ be
the periodic eigenvalues of $-\partial_x^2+w$ on $[0,2]$ for a real potential
$w\in H^N_0$, counted with multiplicity. These comprise
the periodic and antiperiodic band edges on $[0,1]$ and satisfy
\cite[arXiv version, pp.~7--8]{KST2013}
\[
 \begin{gathered}
 \lambda_0(w)<\lambda_1(w)\le\lambda_2(w)
   <\lambda_3(w)\le\lambda_4(w)<\cdots,\\
 \gamma(w)=(\gamma_j(w))_{j\ge1},\qquad
 \gamma_j(w)=\lambda_{2j}(w)-\lambda_{2j-1}(w)\ge0,
 \end{gathered}
\]
where $\gamma_j(w)$ is the $j$-th spectral gap length, which vanishes
precisely when the $j$-th gap is closed.

\begin{theorem}[Two-parameter stability near finite-gap tori]
\label{thm:main}
Fix $N\ge3$, $A$, and $\mathcal K$ as above, and suppose a collection of
supplied trajectories satisfies Assumption~\ref{ass:contract}. For each fixed
$0<\delta<1/4$, there exist constants $r_\delta>0$, $\eps_\delta>0$, and
$C_\delta\ge1$ such that every trajectory in the collection with
$r_0=\|z(0)\|_\sigma\le r_\delta$ and $0<\eps\le\eps_\delta$ remains in the
fixed coordinate tube throughout $[0,T_0/\eps]$ and satisfies, for all
$0\le t\le T_0/\eps$,
\begin{equation}\label{setup:near-actions}
 W(t)=|p(t)|_1+2\sum_{j\in\mathcal N}j^{2N+1}J_j(t)
       \le C_\delta\bigl(r_0^2+\eps^{2\delta}\bigr),
\end{equation}
\begin{equation}\label{setup:near-increments}
 |p(t)|_1+\sum_{j\in\mathcal N}j^{2N+1}|J_j(t)-J_j^0|
       \le C_\delta\bigl(r_0^2+\eps^{2\delta}\bigr),
\end{equation}
and
\begin{equation}\label{setup:near-gaps}
 \|\gamma(w(t))-\gamma(w(0))\|_{\ell^2}^2
       \le C_\delta\bigl(r_0^2+\eps^{2\delta}\bigr),
\end{equation}
where $p_a(t)=I_a(t)-I_a(0)$, $J_j^0=I_j(0)$, and
$w(t)=\mathcal B^{-1}\mathcal Z(t)$. Since $C_\delta\ge1$, the spectral
estimate also gives
$\|\gamma(w(t))-\gamma(w(0))\|_{\ell^2}\le C_\delta(r_0+\eps^\delta)$.
The constants $r_\delta,\eps_\delta,C_\delta$ are
uniform over $\xi\in\mathcal K$, all initial phases, and all admitted initial
normal tails, without any nonresonance exclusions; they depend only on
$\delta$, $N,A,\mathcal K$ (and its positive-action margin), the fixed
neighborhood radii and $r_{\rm adm}$, the uniform analytic bounds on $f_\xi$,
and $T_0,H_+$.
\end{theorem}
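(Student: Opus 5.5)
The proof will be a continuity (bootstrap) argument on $[0,T_0/\eps]$ for the two quantities $B(t)=\|\mathcal D(t)\|_{\ell^2}$ and $W(t)$. Let $t_*$ be the supremum of times up to which the trajectory stays in the coordinate tube and satisfies $B\le 2K(r_0^2+\eps^{2\delta})^{1/2}$ and $W\le 2K(r_0^2+\eps^{2\delta})$, with $K$ to be fixed. On $[0,t_*]$ we will improve both bounds to constant $K$. Continuity of $\mathcal Z$ in $h^\sigma$ then gives $t_*=T_0/\eps$, and since the bootstrap box is far inside the tube for small $r_0,\eps$, the trajectory never exits.

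The first input is energy conservation for the supplied strong solution, via the chain-rule lemma of Section~\ref{sec:setup}. Combined with \eqref{setup:concavity}, it gives
\[
 \nu\cdot\mathcal D(t)+Q_0(\mathcal D(t))=-\eps\bigl(f(b(t))-f(b(0))\bigr)=O(\eps),
 \qquad -Q_0(\mathcal D)\ge\tfrac{\mu}{2}B^2 .
\]
So the whole problem reduces to showing that the linear term $\nu\cdot\mathcal D(t)$ is small, namely $O(\eps^{2\delta}+r_0^2)$ up to terms absorbable into $\tfrac{\mu}{4}B^2$. I would obtain this by averaging along the actual solution, following the strategy of Section~\ref{subsec:intro-strategy}, in four steps.

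First, the finite-window reduction: Taylor-expand $f$ in $z$ to second order, truncate the angular Fourier series at order $K_\eps\sim\log(1/\eps)$ using the strip width, and separate normal modes $j>M_\eps$. Cubic dispersion makes the high modes nonresonant with divisor $\gtrsim j^3$, so they contribute negligibly.

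Second, the resonance geometry: on the finite set of labels $q$ with $|q|\le K_\eps$, a scale hierarchy selects a near-resonant subspace $V$ with $|\nu\cdot q|\ge\eps^{\theta}$ for $q\notin V$. This needs no arithmetic exclusion, because the number of scales is bounded polynomially.

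Third, the corrected observables: for each action component, form $I_j+\eps\chi_j$, where $\chi_j$ solves the homological equation for the nonresonant harmonics. Differentiate along the strong solution using the two-level gradient bounds \eqref{setup:two-level-gradient} together with $H_+$ to absorb the one-derivative loss. Integrating over $[0,t]$ gives $\mathcal D=A_V+E$ with $A_V\in V$ and $|E|\lesssim\eps^{1-\theta}+\eps t\cdot(\text{remainders})$.

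Fourth, closing the scalar pairing: since resonant labels have small $\nu\cdot q$, the resonant part satisfies $\nu\cdot A_V\approx0$ to the needed order. On the nonresonant part, the ratio $\nu\cdot q/Dh(I)\cdot q$ stays bounded, which saves one inverse divisor. This yields $B^2\lesssim \eps^{2\delta}+r_0^2$ provided $\delta<1/4$. This threshold comes from balancing $\eps t\cdot\eps^{-2\theta}$ against the truncation error at $t\sim\eps^{-1}$.

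For the weighted bound \eqref{setup:near-actions}, $|p|_1\lesssim B$ is immediate. For the normal part, I would compute $\frac{d}{dt}\sum j^{2\sigma}(J_j-J_j^0)_+$. Its derivative is $\eps$ times a pairing that loses one derivative. I would interpolate between weight $2\sigma-1$, which is controlled by $B$ through the low modes and by $W$, and weight $2\sigma+2$, which is bounded by $H_+^2$, and then apply Gronwall on $[0,T_0/\eps]$. Keeping $J^0$ separate preserves the $r_0^2$ term. Estimate \eqref{setup:near-increments} follows from \eqref{setup:near-actions} by the triangle inequality. The gap bound \eqref{setup:near-gaps} follows from the spectral-transfer lemma of Section~\ref{sec:setup}: by analyticity of the Birkhoff map, $\gamma$ is Lipschitz in the action square roots near the reference family, and $\gamma_j^2\sim j^{-\cdots}I_j$ controls the normal gaps.

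I expect the main obstacle to be the fourth step, controlling $\nu\cdot A_V$ uniformly without nonresonance assumptions, including resonant tori. I would first try to project energy conservation onto $V$ and use the coercivity of Corollary~\ref{cor:finitegap-tube} restricted to $V$. If needed, I would fall back on Bambusi--Gérard-style energy confinement, in which the resonant drift lies in $V$ and concavity bounds its size in terms of $\eps$-small quantities.
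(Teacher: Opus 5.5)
Your outline (window reduction, resonance hierarchy, corrected observables with the one-divisor gain in the scalar pairing, interpolation against $H_+$ plus Gronwall, spectral transfer) follows the paper's architecture. However, your bootstrap is set at the wrong scale for $B$, and the argument does not close.

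You propagate $B\le 2K(r_0^2+\eps^{2\delta})^{1/2}$ and aim for $B^2\lesssim r_0^2+\eps^{2\delta}$. But $B=\|I-I^0\|_{\ell^2}$ is an action quantity, homogeneous with $W$, not with $\|z\|_\sigma$. Put $\lambda=\max\{\eps,r_0^{1/\delta}\}$. Your step ``$|p|_1\lesssim B$'' then gives only $|p|_1\lesssim\lambda^{\delta}$. The low-mode part of the transfer, $|p|_1+2\sum_{j\in S}j^{2\sigma}(J_j-J_j^0)_+\le K_SB$, is likewise only $O(K_S\lambda^\delta)$. So neither \eqref{setup:near-actions} nor an improvement of your hypothesis $W\le 2K\lambda^{2\delta}$ follows. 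What you need is $B=o(\lambda^{2\delta})$, with room to absorb the polynomial factor $K_{S_M}$.

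This is also where $\delta<1/4$ comes from. The energy identity carries an $O(\eps)$ error, so concavity cannot bound $B$ by anything smaller than $\eps^{1/2}$, and $\eps^{1/2}\ll\lambda^{2\delta}$ forces $4\delta<1$. At your scaling, the energy error alone would permit any $\delta<1/2$, so the source of the threshold you state is not the operative one.

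The paper proceeds as follows:
\begin{itemize}
\item it fixes $2\delta<b<1/2$ and runs the hierarchy at $I^0$ with initial scale $\lambda^b$;
\item it stops at $B=\rho\approx\lambda^{b}$ and at $W=s=C_s\lambda^{2\delta}$;
\item it obtains $B\le\rho/2$ from \eqref{geo:energyclosure};
\item it then gets $W\le s/2$ from \eqref{rw:transfer}, because $K_{S_M}\rho=o(\lambda^{2\delta})$.
\end{itemize}

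At the correct scale, the vector error must be $o(\rho)$ and its frequency pairing $o(\rho^2)$, with $\rho\ll\lambda^{2\delta}$. Three ingredients missing from your sketch then become indispensable.

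\begin{itemize}
\item \textbf{Scalar pairing of the Taylor remainder.} The Taylor remainder of the jet is $O(s^{3/2})=O(\lambda^{3\delta})$, and its naive pairing with $\nu$ is not $o(\rho^2)$. The paper obtains $|\nu\cdot E_R|\lesssim\eps+s^{5/2}$ from the identity $\{R,\nu\cdot\mathcal D\}=-\nu\cdot a_R$. This converts the pairing into endpoint values plus brackets with $Q_0$ and $\eps g$.
\item \textbf{Higher-order averaging.} In this scheme, a single corrected observable $I+\eps\chi$ leaves a terminal error of order $\eps\Delta^{-2}$. Requiring $\eps\Delta^{-2}=o(\rho)$ with $\rho\approx\Delta$ forces $b<1/3$, hence only $\delta<1/6$. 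Covering every $\delta<1/4$ requires averaging to an order $L$ with $L(1-2b)>b$. The paper makes this feasible by computing in the coefficient--action bracket modulo explicit normal degree at least two.
\item \textbf{The confinement mechanism in step four.} It is neither a projection of energy conservation onto $V$ nor coercivity restricted to $V$. The mechanism is $|\nu\cdot A_V|\le d_V\|A_V\|_2$ with $d_V\le\mu\rho/8$, combined with coercivity of $-Q_0$ on the full $\ell^2$ action space along the physical segment $[I^0,I(t)]$. Full-space coercivity is needed because the increment has components outside $V$, including infinitely many normal ones.
\end{itemize}

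The hierarchy must moreover be chosen, via the ratio $R\ge 32C_hB_Q\kappa/\mu$, so that $|d_q(I)|>\Delta/2$ for $q\notin V$ persists at the running action on the same ball to which the energy closure confines $I(t)$. Your Bambusi--G\'erard fallback points in this direction, but that compatibility is exactly what has to be established.
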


Theorem~\ref{thm:main} is proved in Section~\ref{subsec:closure-fixed}.
The quantities $r_0$ and $\eps$ are independent small parameters: a fixed sufficiently
small $r_0>0$ is permitted as $\eps\to0$. Since $W(0)=r_0^2$, the initial-size
term in \eqref{setup:near-actions} is necessary on an $\eps$-independent
neighborhood of the torus. This observation concerns the total normal
occupation and does not preclude sharper drift-only bounds under stronger
assumptions.

For shrinking initial neighborhoods, estimates with unit constants follow
by applying the theorem with an exponent $\delta_1\in(\delta,1/4)$ and
choosing a sufficiently small initial-radius prefactor. The target exponent
remains $\delta$:

\begin{corollary}[Exact finite-gap data and shrinking initial neighborhoods]
\label{cor:shrinking-data}
Under Assumption~\ref{ass:contract}, fix $0<\delta<1/4$. There exist
$c_\delta>0$ and $\eps_0>0$ such that whenever $r_0\le c_\delta\eps^\delta$ and
$0<\eps<\eps_0$,
\begin{equation}
 |p(t)|_1\le\eps^{2\delta},\qquad
 \sum_{j\in\mathcal N}j^{2N+1}J_j(t)\le\eps^{2\delta},
 \qquad 0\le t\le T_0/\eps,
 \label{setup:main-actions}
\end{equation}
and
\begin{equation}
 \|\gamma(w(t))-\gamma(w(0))\|_{\ell^2}\le\eps^\delta,
 \qquad 0\le t\le T_0/\eps.
 \label{setup:main-gaps}
\end{equation}
In particular, \eqref{setup:main-actions}--\eqref{setup:main-gaps} hold for
exact finite-gap initial data ($r_0=0$) and for any shrinking neighborhood
satisfying a uniform bound $r_0=O(\eps^\kappa)$ with a fixed $\kappa>\delta$; thus an
$O(\eps^{1/4})$ initial normal neighborhood admits every strict output
exponent $0<\delta<1/4$.
\end{corollary}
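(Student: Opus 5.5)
The corollary follows from Theorem~\ref{thm:main} applied with a slightly larger exponent. No new dynamical input is needed. Fix $0<\delta<1/4$ and choose $\delta_1\in(\delta,1/4)$, for instance $\delta_1=(\delta+1/4)/2$. Theorem~\ref{thm:main} with exponent $\delta_1$ supplies constants $r_{\delta_1},\eps_{\delta_1},C_{\delta_1}\ge1$. For every admitted trajectory with $r_0\le r_{\delta_1}$ and $0<\eps\le\eps_{\delta_1}$, it shows that the trajectory stays in the tube on $[0,T_0/\eps]$ and that \eqref{setup:near-actions} and \eqref{setup:near-gaps} hold with right-hand side $C_{\delta_1}(r_0^2+\eps^{2\delta_1})$.

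Next we fix the prefactor $c_\delta$. Set $c_\delta=(4C_{\delta_1})^{-1/2}$. If $r_0\le c_\delta\eps^\delta$, then $C_{\delta_1}r_0^2\le\eps^{2\delta}/4$. Since $\delta_1>\delta$, we also have $C_{\delta_1}\eps^{2\delta_1}=C_{\delta_1}\eps^{2(\delta_1-\delta)}\eps^{2\delta}\le\eps^{2\delta}/4$ once $\eps^{2(\delta_1-\delta)}\le(4C_{\delta_1})^{-1}$. We therefore take $\eps_0$ to be the minimum of $\eps_{\delta_1}$, this threshold, and a third threshold ensuring $c_\delta\eps_0^\delta\le r_{\delta_1}$. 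The third threshold guarantees that the smallness hypothesis $r_0\le r_{\delta_1}$ of the theorem is automatic. For $0<\eps<\eps_0$ this gives $C_{\delta_1}(r_0^2+\eps^{2\delta_1})\le\eps^{2\delta}/2$.

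Now we read off the estimates. From \eqref{setup:near-actions}, both $|p(t)|_1$ and $2\sum_{j\in\mathcal N}j^{2N+1}J_j(t)$ are bounded by $W(t)\le\eps^{2\delta}/2$. This gives \eqref{setup:main-actions}, with a factor $1/2$ to spare on the weighted sum. From \eqref{setup:near-gaps}, $\|\gamma(w(t))-\gamma(w(0))\|_{\ell^2}^2\le\eps^{2\delta}/2\le\eps^{2\delta}$, and taking square roots gives \eqref{setup:main-gaps}.

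The particular cases are then immediate. For $r_0=0$ the hypothesis $r_0\le c_\delta\eps^\delta$ holds trivially. If $r_0\le C_0\eps^\kappa$ with $\kappa>\delta$, then $C_0\eps^\kappa\le c_\delta\eps^\delta$ once $\eps^{\kappa-\delta}\le c_\delta/C_0$, so it suffices to shrink $\eps_0$ further. Taking $\kappa=1/4$ covers every $\delta<1/4$.

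The only point needing care is bookkeeping rather than analysis. All constants are fixed before $\eps$ is chosen, and they inherit the uniformity of Theorem~\ref{thm:main} over $\xi$, phases and normal tails. In particular, $c_\delta$ must depend only on $\delta$, through $\delta_1$ and $C_{\delta_1}$, and not on $\eps$.
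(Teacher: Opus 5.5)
Your proposal is correct and matches the paper's proof step for step. You apply Theorem~\ref{thm:main} with an exponent $\delta_1\in(\delta,1/4)$, choose $c_\delta$ so that $C_{\delta_1}c_\delta^2\le1/4$, and shrink $\eps_0$ so that $c_\delta\eps_0^\delta\le r_{\delta_1}$ and $C_{\delta_1}\eps^{2(\delta_1-\delta)}\le1/4$; this bounds the right-hand sides by $\eps^{2\delta}/2$, and the $r_0=O(\eps^\kappa)$ case is handled by the same further reduction of $\eps_0$.
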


\begin{proof}
Choose $\delta_1\in(\delta,1/4)$ and apply Theorem~\ref{thm:main} with
exponent $\delta_1$. Choose $c_\delta>0$ so that
$C_{\delta_1}c_\delta^2\le1/4$, and then choose $\eps_0\in(0,\eps_{\delta_1}]$
small enough that $c_\delta\eps_0^\delta\le r_{\delta_1}$ and
$C_{\delta_1}\eps^{2(\delta_1-\delta)}\le1/4$ for all $0<\eps<\eps_0$. Then the
right-hand sides of \eqref{setup:near-actions} and \eqref{setup:near-gaps} are
at most $\eps^{2\delta}/2$, which yields
\eqref{setup:main-actions}--\eqref{setup:main-gaps}. If
$r_0\le C_0\eps^\kappa$ with fixed $C_0>0$ and $\kappa>\delta$, the condition
$r_0\le c_\delta\eps^\delta$ holds after reducing $\eps_0$ so that
$C_0\eps_0^{\kappa-\delta}\le c_\delta$.
\end{proof}

\subsection{Strong chain rules and conserved physical energy}

\begin{lemma}[Exact action equation]
\label{lem:action-chain}
For a trajectory satisfying Assumption~\ref{ass:contract}, the action paths
$\mathcal I(t)$ and $\mathcal D(t)$ are $C^1$ with values in
$\mathcal A_{2\sigma-1}$ (and hence in $\ell^2$) on every interval before
exit from the coordinate tube, and satisfy
\begin{equation}
 \mathcal D'(t)=\mathcal I'(t)=\eps a_f(b(t)),\qquad
 a_F=D\pi X_F
   =\left(-F_\varphi,
          (u_jF_{v_j}-v_jF_{u_j})_{j\in\mathcal N}\right).
 \label{setup:action-chain}
\end{equation}
For a Hamiltonian $F$ with represented normal gradient in $h^{\sigma-1}$,
$D\pi X_F$ denotes the continuous extension given by the displayed component
formula and justified by \eqref{setup:weighted-product} below. The Cartesian
formula remains valid when any or all normal coordinates vanish.
\end{lemma}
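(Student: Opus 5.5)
The plan is to treat the tangential and normal components separately and to pass from the finite-dimensional chain rule to the weighted action space by a density and summability argument. The key auxiliary estimate is the weighted product bound referred to as \eqref{setup:weighted-product}. For $z\in h^\sigma$ and $g\in h^{\sigma-1}$, Cauchy--Schwarz with weights $j^{\sigma}$ and $j^{\sigma-1}$ gives
\[
 \sum_{j\in\mathcal N} j^{2\sigma-1}\,|z_j|\,|g_j|\le \|z\|_\sigma\,\|g\|_{\sigma-1}.
\]
Hence the normal components $u_jF_{v_j}-v_jF_{u_j}$ of $a_F$ define an element of $\ell^1_{2\sigma-1}(\mathcal N)$. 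The map $b\mapsto a_F(b)$ is continuous from the tube, which is open in $h^\sigma$, into $\mathcal A_{2\sigma-1}$. This uses the bilinear bound together with the continuity of $\nabla_zf:h^\sigma\to h^{\sigma-1}$ from Assumption~\ref{ass:contract}(1). Since $2\sigma-1\ge0$, the space $\mathcal A_{2\sigma-1}$ embeds continuously into $\ell^1\subset\ell^2$.

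\textbf{Tangential components.} By Assumption~\ref{ass:contract}(2), $p\in C^1$ and Hamilton's equations hold. With the bracket conventions \eqref{setup:poisson}, the integrable part $h(\xi+\pi)$ does not depend on $\varphi$. Therefore $p_a'=\{p_a,H\}=-\eps\,\partial_{\varphi_a}f$, which is the first block of \eqref{setup:action-chain}.

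\textbf{Normal components.} The normal equation holds only in $h^{\sigma-3}$, so I cannot differentiate $\sum_j j^{2\sigma-1}J_j$ directly. Instead I argue componentwise. Each $J_j=(u_j^2+v_j^2)/2$ is a scalar $C^1$ function, because the coordinate functional $z\mapsto z_j$ is continuous on $h^{\sigma-3}$ and $z\in C^1([0,T];h^{\sigma-3})$. The normal vector field is $\mathcal J\nabla_z H$, and the integrable part of $\nabla_z H$ is $(\Omega_j(I)u_j,\Omega_j(I)v_j)$. This part contributes $u_j\Omega_jv_j-v_j\Omega_ju_j=0$ to $J_j'$, and this holds also when $z_j=0$. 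Only the perturbation survives, giving $J_j'=\eps(u_jf_{v_j}-v_jf_{u_j})$ for each $j$, with the sign fixed by $\mathcal J(u,v)=(v,-u)$.

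\textbf{Upgrading to $C^1$ in $\mathcal A_{2\sigma-1}$.} Integrate componentwise to get $J_j(t)-J_j(s)=\eps\int_s^t(a_f)_j(b(\tau))\,d\tau$. Then $\tau\mapsto a_f(b(\tau))$ is continuous into $\mathcal A_{2\sigma-1}$, since it is a composition of $b\in C([0,T];h^\sigma)$ with the continuous map above. The polar chart is smooth on the tube because of the positive-action margin, so $b$ inherits continuity from $\mathcal Z$. Consequently the Bochner integral $\eps\int_s^t a_f(b)\,d\tau$ exists in $\mathcal A_{2\sigma-1}$. Its coordinates agree with the componentwise integrals, so $\mathcal I(t)-\mathcal I(s)$ equals this integral in $\mathcal A_{2\sigma-1}$. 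By the fundamental theorem of calculus for continuous Banach-valued integrands, $\mathcal I$ is $C^1$ with derivative $\eps a_f(b)$. Finally, $\mathcal D=\mathcal I-\mathcal I^0$ differs from $\mathcal I$ by a constant.

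\textbf{Main obstacle.} The delicate point is the regularity mismatch: the normal equation lives in $h^{\sigma-3}$, while the conclusion is asserted in $\mathcal A_{2\sigma-1}$. The componentwise argument combined with the weighted product bound resolves this. The only care needed is that $\nabla_zf$ is identified with the Cartesian partials $f_{u_j},f_{v_j}$ through the unweighted pairing \eqref{setup:two-level-gradient}. Because the formula is written in Cartesian coordinates, no division by $|z_j|$ occurs, so vanishing normal coordinates cause no difficulty.
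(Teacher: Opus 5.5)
Your proposal is correct and follows the paper's proof essentially step by step. Both arguments differentiate $J_j$ coordinatewise, use that the rotation term vanishes, and invoke the weighted Cauchy--Schwarz bound to get continuity of $t\mapsto a_f(b(t))$ in $\mathcal A_{2\sigma-1}$; both then identify the Bochner integral with the componentwise increments. The one point the paper states explicitly and you leave implicit is that $\mathcal I(t_0)$ and $\mathcal D(t_0)$ already lie in $\mathcal A_{2\sigma}\subset\mathcal A_{2\sigma-1}$, since $z(t_0),z^0\in h^\sigma$; this is what makes the paths themselves, and not just their increments, $\mathcal A_{2\sigma-1}$-valued.
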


\begin{proof}
The tangential equations give $\dot p=-\eps f_\varphi$. By
\eqref{setup:strong-solution}, each normal coordinate $z_j=(u_j,v_j)$ is
continuously differentiable in time. Differentiating $J_j=(u_j^2+v_j^2)/2$
coordinatewise yields $J_j'(t)=\eps(u_jf_{v_j}-v_jf_{u_j})$, since the
unperturbed rotation $\Omega_j(I)\mathcal Jz_j$ is orthogonal to $z_j$ and
contributes zero. For any sequence $\eta\in h^{\sigma-1}$, Cauchy--Schwarz gives
the bilinear estimate
\begin{equation}
 \sum_{j\in\mathcal N}j^{2\sigma-1}
      |z_j\cdot\mathcal J\eta_j|
 \le\|z\|_\sigma\|\eta\|_{\sigma-1}.
 \label{setup:weighted-product}
\end{equation}
Since $z\in C(h^\sigma)$ and $\nabla_zf(b(t))\in C(h^{\sigma-1})$ by
Assumption~\ref{ass:contract}, \eqref{setup:weighted-product} implies that
$t\mapsto a_f(b(t))$ is continuous with values in $\mathcal A_{2\sigma-1}$.
Fix $t_0$ in the interval. Since $z(t_0),z^0\in h^\sigma$, the vectors
$\mathcal I(t_0)$ and $\mathcal D(t_0)$ lie in
$\mathcal A_{2\sigma}\subset\mathcal A_{2\sigma-1}$. The Bochner integral
$G(t):=\eps\int_{t_0}^t a_f(b(s))\,ds$ therefore defines a $C^1$ path in
$\mathcal A_{2\sigma-1}$, and coordinatewise integration of the equations of
motion identifies $G(t)$ with $\mathcal I(t)-\mathcal I(t_0)
=\mathcal D(t)-\mathcal D(t_0)$. The continuous embedding
$\mathcal A_{2\sigma-1}\subset\ell^2$ yields the $C^1(\ell^2)$ assertion.
This argument does not require differentiating the highest-weight action sum
$\|z(t)\|_\sigma^2$.
\end{proof}

\begin{lemma}[Energy identity in the supplied strong class]
\label{lem:energy}
Under Assumption~\ref{ass:contract}, suppose the supplied trajectory remains
in the coordinate tube on an initial interval $[0,T]$. Then the perturbed
Hamiltonian $H_\xi(b(t))$ is conserved on $[0,T]$, and, for $0\le t\le T$,
\begin{equation}\label{setup:initial-energy-identity}
 \nu\cdot\mathcal D(t)+Q_0(\mathcal D(t))
             =\eps\bigl(f(b(0))-f(b(t))\bigr).
\end{equation}
In particular, if $|f|\le F_0$ on the coordinate tube, then
\begin{equation}
 |\nu\cdot\mathcal D(t)+Q_0(\mathcal D(t))|
       \le2F_0\eps,\qquad
 -Q_0(\mathcal D(t))\ge\frac\mu2 B(t)^2.
 \label{setup:energy-coercivity}
\end{equation}
\end{lemma}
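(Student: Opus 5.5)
The plan is to prove conservation of $H_\xi(b(t))$ first, and then obtain \eqref{setup:initial-energy-identity} by evaluating the conserved quantity at times $0$ and $t$ and expanding $h$ at $I^0$.

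For conservation, I would not differentiate $H_\xi$ directly, since $\|z\|_\sigma^2$ and $h$ involve weights $j^3$ and the normal equation only holds in $h^{\sigma-3}$. Instead I would split $H_\xi=h(I)+\eps f$ and use Lemma~\ref{lem:action-chain}. The action path $I(t)=\xi+\mathcal I(t)$ is $C^1$ in $\mathcal A_{2\sigma-1}$, and $2\sigma-1=2N\ge 6\ge3$. The map $I\mapsto h(I)$ is $C^1$ on $\mathcal A_{q}$ for $q\ge3$ near the relevant actions. Its linear part $\sum(2\pi j)^3 I_j$ is a bounded functional on $\mathcal A_3$, and $h_{\rm nl}$ is analytic on $\ell^2\supset\mathcal A_{2\sigma-1}$. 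Hence $\frac{d}{dt}h(I(t))=\Omega(I)\cdot\mathcal I'=\eps\,\Omega(I)\cdot a_f(b)$, with the pairing converging absolutely.

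For $f$, I would use the chain rule along the strong path. The tangential part $(\varphi,p)$ is $C^1$. For the normal part, $z\in C^1(h^{\sigma-3})$ only, so I would use the upper level. Since $z(t)\in h^{\sigma+1}$ is bounded by $H_+$, interpolation gives $z\in C(h^{\sigma+1-\theta})$ weakly. The pairing $D_zf[\dot z]=\langle\nabla_z f,\dot z\rangle$ with $\nabla_zf\in h^{\sigma}$ and $\dot z=\mathcal J(\Omega z+\eps\nabla_z f)\in h^{\sigma-2}$ converges; more simply, $\dot z\in h^{-\sigma}$ suffices.

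To avoid subtleties with the regularity of $t\mapsto\nabla_zf(b(t))$ in $h^\sigma$, I would justify the chain rule through Galerkin truncation. Let $f_n$ denote $f$ with the normal variables beyond index $n$ frozen along $z(t)$; better still, approximate by finite-dimensional projections $P_n$ and pass to the limit using the uniform analytic bounds. Then
\[
\tfrac{d}{dt}f(b)=f_\varphi\dot\varphi+f_p\dot p+\langle\nabla_zf,\dot z\rangle
=\Omega_A\cdot(-f_\varphi)\cdot(-1)\ldots
\]
Computed with \eqref{setup:poisson}, this gives $\frac{d}{dt}f(b)=\{f,h\}+\eps\{f,f\}=\{f,h\}$. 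The identity $\{f,h\}=-\Omega(I)\cdot a_f$ follows from the component formula in \eqref{setup:action-chain}, because $h$ depends only on actions: $\{f,h\}=\sum_j\Omega_j\{f,I_j\}$ and $\{I_j,f\}=(a_f)_j$. Therefore $\frac{d}{dt}[h(I)+\eps f(b)]=\eps\,\Omega\cdot a_f-\eps\,\Omega\cdot a_f=0$.

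The main obstacle is the rigorous justification of $\frac{d}{dt}f(b(t))=\langle\nabla_zf,\dot z\rangle$ when $\dot z$ only lies in $h^{\sigma-3}$. I would first try an integral formulation. Write $f(b(t))-f(b(s))=\int_0^1 Df(b_\tau)[b(t)-b(s)]\,d\tau$ along the segment. Use the upper level: $\nabla_zf(b_\tau)\in h^\sigma\subset h^{3-\sigma}$ is uniformly bounded, since the segments stay in the $h^{\sigma+1}$ neighborhood granted by Assumption~\ref{ass:contract}(1). Use also $b(t)-b(s)=\int_s^t\dot b$ in $h^{\sigma-3}$. Dividing by $t-s$ and using continuity of $\nabla_zf(b_\tau)$ in $h^{\sigma-1}$, I would pair this against $\dot z\in h^{\sigma-3}$ through duality weights ($\sigma-1+\sigma-3\ge0$ suffices since $\sigma\ge 7/2$). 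For the limit, the weak $h^{\sigma+1}$ bound provides equicontinuity. The unperturbed rotation term $\langle\nabla_zf,\mathcal J\Omega z\rangle=\sum\Omega_j\, z_j\cdot\mathcal J\nabla_{z_j}f$ is summable because $\Omega_j\sim j^3$, $z\in h^{\sigma}$ and $\nabla_zf\in h^{\sigma-1}$ at the upper level (by \eqref{setup:weighted-product}, since $2\sigma-1\ge3$). This matches $-\Omega_{\mathcal N}\cdot(a_f)_{\mathcal N}$. The $\eps$ term $\langle\nabla_z f,\mathcal J\nabla_zf\rangle$ vanishes by antisymmetry.

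Given conservation, $h(I(t))+\eps f(b(t))=h(I^0)+\eps f(b(0))$, and \eqref{setup:concavity} yields \eqref{setup:initial-energy-identity}. The bound $|\cdot|\le2F_0\eps$ is immediate. For coercivity, apply Taylor's formula with integral remainder along the real physical segment $[I^0,I(t)]$, which lies in the curvature neighborhood by the choice of tube:
\[
Q_0(\mathcal D)=\int_0^1(1-s)D^2Q_0(s\mathcal D)[\mathcal D,\mathcal D]\,ds\le-\tfrac\mu2\|\mathcal D\|_{\ell^2}^2=-\tfrac\mu2B^2 .
\]
Here I use $Q_0(0)=DQ_0(0)=0$.
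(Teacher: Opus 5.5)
Your proof is correct, but it organizes the conservation step differently from the paper.

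\textbf{How the two routes differ.} The paper applies the finite-dimensional chain rule to the whole Hamiltonian along the projected paths $(\varphi,p,P_Rz)$ and passes to the limit with the full represented gradient $\nabla_zH_\xi$. Because of the term $(\nu_jz_j)_j$, this gradient lies only in $h^{\sigma-3}$, and it is paired against $\dot z\in h^{\sigma-3}$ using $\sigma-3\ge1/2$. The cancellation is then termwise, $(\nabla_zH)_j\cdot\mathcal J(\nabla_zH)_j=0$, together with the tangential Hamilton equations.

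You instead split $H_\xi=h(I)+\eps f$:
\begin{itemize}
\item You get $\frac{d}{dt}h(I(t))=\eps\,\Omega(I)\cdot a_f$ directly from Lemma~\ref{lem:action-chain}, since $I\in C^1(\mathcal A_{2\sigma-1})$ and $\Omega(I)$ is bounded on $\mathcal A_3$.
\item You get $\frac{d}{dt}f(b(t))=\{f,h\}=-\Omega(I)\cdot a_f$ from a scalar chain rule that only pairs $\nabla_zf\in h^{\sigma-1}$ with $\dot z\in h^{\sigma-3}$. This is the content of the paper's later Lemma~\ref{rw:lem-chain}, and your mean-value argument at this lower level does prove it.
\end{itemize}
The two contributions then cancel inside the absolutely convergent scalar $\Omega\cdot a_f$, rather than summand by summand.

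What each route buys: yours reuses the action chain rule and never pairs the cubically weighted gradient $(\nu_jz_j)_j$ of $h$ against $\dot z$, so it needs only $\sigma\ge2$. The paper's route treats $H_\xi$ as a single functional and handles convergence of the endpoint energies inside the same projection argument. The remaining steps coincide with the paper's: subtracting the initial energy, expanding at $I^0$ via \eqref{setup:concavity}, and the Taylor integral along $[I^0,I(t)]$. For the last step, say explicitly why the segment is physical: its normal components are convex combinations of $J_j^0\ge0$ and $J_j(t)\ge0$, and its tangential components keep the positive margin.

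\textbf{Cautions on the write-up.} The upper-level remarks are unnecessary and should be dropped. These are the interpolation, the claim that $\dot z\in h^{-\sigma}$ suffices, equicontinuity from the $h^{\sigma+1}$ bound, and chords $[b(s),b(t)]$ lying in the upper-level neighborhood. The last claim is not supported by Assumption~\ref{ass:contract}, which grants upper-level neighborhoods only of the trajectories and their radial Taylor segments. Since $z$ is continuous only in $h^\sigma$, the distance $\|z(t)-z(s)\|_{\sigma+1}$ need not be small. The lower-level argument needs only continuity of $\nabla_zf$ into $h^{\sigma-1}$ near the compact path.

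Also replace the garbled display by the explicit computation
\[
 \frac{d}{dt}f(b)=f_\varphi\cdot(\omega+\eps f_p)-\eps f_p\cdot f_\varphi
 +\sum_j\nabla_{z_j}f\cdot\mathcal J\bigl(\Omega_jz_j+\eps\nabla_{z_j}f\bigr)
 =-\Omega(I)\cdot a_f .
\]
Splitting this normal series is legitimate because each part converges absolutely: the rotation part by \eqref{setup:weighted-product} with $j^3\le j^{2\sigma-1}$, and the $\eps$ part since $\nabla_zf\in\ell^2$.
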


\begin{proof}
Fix $t\in[0,T]$, and let $P_R$ denote the projection onto normal indices
$j\le R$. The continuous path $z:[0,t]\to h^\sigma$ has compact image, so
$P_Rz\to z$ uniformly in $h^\sigma$. The full path has compact image in the
open coordinate tube; hence its projections
$b_R(s)=(\varphi(s),p(s),P_Rz(s))$ lie in that tube for all sufficiently
large $R$. Apply the finite-dimensional chain rule to $H_\xi(b_R(s))$ and
integrate over $[0,t]$. Hamilton's equations will be used only after passing
to the limit along these projected paths.

With $I^0$ and $\nu$ fixed, the decomposition \eqref{setup:concavity} gives
the represented normal gradient
\[
 \nabla_zH_\xi(b)
   =(\nu_jz_j)_{j\in\mathcal N}
     +\bigl((DQ_0(\mathcal D))_jz_j\bigr)_{j\in\mathcal N}
     +\eps\nabla_zf(b),
\]
where $\mathcal D=\xi+\pi(b)-I^0$. The first term belongs to
$h^{\sigma-3}$ because $\nu_j=O(j^3)$.
The second belongs to $h^\sigma$ because $DQ_0$ is locally bounded in
$\ell^2\subset\ell^\infty$, and the third belongs to $h^{\sigma-1}$ by
Assumption~\ref{ass:contract}.

On $[0,t]$, the strong regularity also gives $P_R\dot z\to\dot z$ uniformly
in $h^{\sigma-3}$, while the projected actions converge uniformly in
$\ell^2$. Continuity of $DQ_0$ and $\nabla_zf$ near the compact path,
together with the bound for multiplication by $\nu$, therefore gives
$\nabla_zH_\xi(b_R(s))\to\nabla_zH_\xi(b(s))$ uniformly in
$h^{\sigma-3}$. The tangential derivatives converge uniformly as well.
Because $\sigma-3=N-5/2\ge1/2$, any
two sequences $a,c\in h^{\sigma-3}$ satisfy
\begin{equation}
 \sum_{j\in\mathcal N}|a_j\cdot c_j|
      \le\|a\|_{\sigma-3}\|c\|_{\sigma-3}.
 \label{setup:energy-pairing}
\end{equation}
This estimate justifies convergence of the integrands. The endpoint energies
also converge: the weighted normal sum $\sum_{j\in\mathcal N}j^3J_j$ is
continuous on $h^\sigma$, and the finite tangential and nonlinear terms are
continuous. Passing to the limit $R\to\infty$ in the integrated chain rule
therefore yields
\begin{equation}
 H_\xi(b(t))-H_\xi(b(0))
  =\int_0^t\left(
    H_\varphi\cdot\dot\varphi+H_p\cdot\dot p
      +\sum_{j\in\mathcal N}(\nabla_zH)_j\cdot\dot z_j\right)\,ds=0,
 \label{setup:energy-chain}
\end{equation}
where all Hamiltonian derivatives are evaluated at $b(s)$. The tangential
terms cancel by Hamilton's equations, and each normal summand in the
absolutely convergent series is
$(\nabla_zH)_j\cdot\mathcal J(\nabla_zH)_j=0$.

Subtracting the initial energy and using $\mathcal D(0)=0$ gives
\eqref{setup:initial-energy-identity}, which immediately implies the first
bound in \eqref{setup:energy-coercivity}. For the second bound, note that for
every $\theta\in[0,1]$, the action $I^0+\theta\mathcal D(t)
=(1-\theta)I^0+\theta I(t)$ has nonnegative normal components
$(1-\theta)J_j^0+\theta J_j(t)\ge0$ and tangential components within the
positive-action margin. Thus the entire segment $[I^0,I(t)]$ is physical.
By the choice of the coordinate tube preceding \eqref{setup:concavity}, it
also lies in the uniform concavity neighborhood. Taylor's integral formula
and \eqref{setup:concavity} then yield
\[
 Q_0(\mathcal D)=\int_0^1(1-\theta)
 D^2h_{\rm nl}(I^0+\theta\mathcal D)
       [\mathcal D,\mathcal D]\,d\theta
 \le-\frac\mu2\|\mathcal D\|_{\ell^2}^2,
\]
completing the proof.
\end{proof}

\subsection{A local action-to-gap estimate}

For the chart based at $\xi$, write $w(b)=\mathcal B^{-1}\mathcal Z$, with
$\mathcal Z$ as in \eqref{setup:polar-map}. The following estimate transfers
action bounds to the spectral gaps.

\begin{lemma}[Spectral transfer near a compact finite-gap family]
\label{lem:spectral}
After reducing the fixed coordinate tube, there is a constant $C_\gamma>0$
such that, for every $\xi\in\mathcal K$ and every real point
$b=(\varphi,p,z)$ in the tube,
\begin{equation}
 \|\gamma(w(b))-\gamma(w_\xi)\|_{\ell^2}^2
 \le C_\gamma\left(|p|_1^2+
                    \sum_{j\in\mathcal N}jJ_j\right)
 \le C_\gamma W\quad\hbox{if }W\le1.
 \label{setup:spectral-transfer}
\end{equation}
Here $w_\xi$ is any point of the finite-gap torus with action $\xi$;
its phase need not equal $\varphi$.
\end{lemma}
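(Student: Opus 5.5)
The plan is to compare gap lengths with Birkhoff coordinates through the known asymptotics of the Birkhoff map. The key input is a local Lipschitz-type relation $\gamma_j^2\asymp I_j\cdot(\text{something bounded})$. More precisely, \cite{KST2013} and the standard theory of \cite{KP2003} give $\gamma_j(w)^2=8\pi j\,I_j(w)\,(1+\rho_j(w))$ (up to the normalization constant), where $\rho_j$ is real analytic on $L^2_0$ and $\rho(w)\in\ell^2$ depends analytically on $w$, so $\sup_j|\rho_j|$ is locally bounded. Note that $\gamma(w_\xi)$ depends only on $\xi$, since the torus is isospectral, so the choice of phase of $w_\xi$ is irrelevant.

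\emph{Normal indices.} For $j\in\mathcal N$ we have $\gamma_j(w_\xi)=0$. Hence $\gamma_j(w(b))^2=8\pi j J_j(1+\rho_j)\le Cj J_j$ uniformly on a bounded tube, and summing gives the contribution $C\sum_{j\in\mathcal N}jJ_j$.

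\emph{Tangential indices.} The finitely many $a\in A$ need a different argument. The map $b\mapsto\gamma_a(w(b))$ is smooth on the tube because $\gamma_a>0$ there: the positive-action margin keeps $I_a$, hence $\gamma_a$, bounded away from zero, and an open simple gap has analytic edges. The obvious estimate $|\gamma_a(w(b))-\gamma_a(w_\xi)|\le C(|p|_1+\|z\|_{\sigma})$ is too weak. We need the difference controlled by $|p|_1+\sum_{j\in\mathcal N} jJ_j$, i.e.\ quadratically in $z$.

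To get this, I would use that $\gamma_a$ is a spectral invariant. It is therefore a function of the actions alone, $\gamma_a=g_a(I)$, and $g_a$ is real analytic in $I$ near $\xi$ in a weighted $\ell^1$ action norm. This follows because the periodic spectrum is determined by the actions and the gap edges are analytic in $w$. Equivalently, $\gamma_a$ Poisson-commutes with all actions, so its Taylor expansion in $z$ at $z=0$ contains only monomials $\zeta^\alpha\bar\zeta^\alpha$. With this, the mean value theorem along the physical segment from $\xi$ to $I$ gives
\[
|\gamma_a(w(b))-\gamma_a(w_\xi)|\le C\Bigl(|p|_1+\sum_{j\in\mathcal N}jJ_j\Bigr),
\]
using a derivative bound $|\partial_{I_j}g_a|\le Cj$. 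That bound comes from the expansion $\partial_{J_j}$ of an analytic function of $z\in h^{1/2}$, with the weight matching the $h^{1/2}$ ($L^2$-level) Birkhoff regularity. Squaring, and using $\sum_{j\in\mathcal N} jJ_j\le\|z\|_\sigma^2/2$, which is bounded on the tube, gives $(\cdot)^2\le C(|p|_1^2+\sum_{j\in\mathcal N} jJ_j)$.

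\emph{Final inequality.} The second inequality is immediate. For $W\le1$ we have $|p|_1^2\le|p|_1\le W$ and $\sum_{j\in\mathcal N} jJ_j\le \sum_{j\in\mathcal N} j^{2N+1}J_j\le W/2$. Uniformity over $\xi\in\mathcal K$ follows from compactness and the uniform margin.

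\emph{Main obstacle.} The hard step is justifying that $\gamma_a$ depends on $z$ only through $J$, with the weighted derivative bound $|\partial_{I_j}g_a|\le Cj$, near points with infinitely many open gaps. If the action-function representation is awkward to make rigorous, I would first try the alternative of using invariance under the torus action $\zeta_j\mapsto e^{i\theta_j}\zeta_j$ on each normal mode. Averaging over $\theta$ kills the linear term in $z$, so the first-order Taylor term in $z$ of $\gamma_a\circ w$ vanishes. Taylor's formula to second order in $z$ in the $h^{1/2}$ norm, combined with smoothness in $p$, would then give the bound $C(|p|_1+\|z\|_{1/2}^2)$, which is the desired estimate.
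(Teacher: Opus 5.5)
Your proof goes through if you use the rotation-averaging version of the tangential step. It takes a more elaborate route than the paper, and the reason you give for needing it rests on a misreading.

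\textbf{Normal gaps.} This part is essentially the paper's argument: the asymptotics of $I_j/\gamma_j^2$ from \cite[Proposition~3.1]{KST2013}, plus a finite cover of the compact base family. The uniform upper bound on $1+\rho_j$ also needs the positivity-and-compactness step for the finitely many small $j$.

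\textbf{Open gaps.} For $a\in A$ the paper uses only first derivatives. The maps $\mathcal B^{-1}$ and the finite polar map are uniformly Lipschitz from $h^{1/2}$ to $L^2_0$ near the compact base set, and $\gamma_a$ is Lipschitz on an $L^2$ neighborhood. Hence $|\gamma_a(w(b))-\gamma_a(\widehat w)|\le C(|p|_1+\|z\|_{1/2})$ with $\widehat w=w(\varphi,0,0)$. The left side of the lemma is \emph{squared}, and $\|z\|_{1/2}^2=2\sum_{j\in\mathcal N}jJ_j$. So squaring this ``obvious'' linear estimate already gives exactly the required bound. It is too weak only if you measure $z$ in $\|z\|_\sigma$, as you wrote, rather than in the $L^2$-level norm $\|z\|_{1/2}$. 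The quadratic-in-$z$ control you set as your target, and the obstacle you flagged, are therefore not needed.

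\textbf{Your averaging argument.} Invariance of $\gamma_a\circ w$ under the normal torus action kills its first $z$-derivative at $z=0$, for every $(\varphi,p)$. A second-order Taylor expansion in $h^{1/2}$ then gives the sharper unsquared bound $|\gamma_a(w(b))-\gamma_a(w_\xi)|\le C(|p|_1+\|z\|_{1/2}^2)$. This is valid once you add a uniform bound on $D_z^2(\gamma_a\circ w)$ over an $h^{1/2}$ neighborhood of the compact base set, obtained by the same finite-cover argument, together with the radial segments $tz$. The extra sharpness buys nothing here, because the normal gaps already contribute $\sum_{j\in\mathcal N} jJ_j$ to the squared error.

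\textbf{Your first variant.} Writing $\gamma_a=g_a(I)$ as an analytic function of the actions in a weighted $\ell^1$ norm, with $|\partial_{I_j}g_a|\le Cj$ along the whole segment $[\xi,I]$, is not justified as stated. Analyticity in the actions across closed gaps, and derivative bounds away from $z=0$, are additional work. You should drop this variant.
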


\begin{proof}
Set $\widehat b=(\varphi,0,0)$ and
$\widehat w=w(\widehat b)$. The Cartesian Birkhoff coordinates of these
base points form a compact subset of $h^{1/2}$, parameterized by
$\mathcal K\times(\R/2\pi\Z)^A$. Their physical image under
$\mathcal B^{-1}$ is compact in $L^2_0$.
The inverse Birkhoff map has bounded first derivative on a common
neighborhood of the compact set of base coordinates in $h^{1/2}$.
Indeed, choose at each base coordinate a ball on which its derivative
is bounded, take a finite cover by smaller balls, and reduce the tube
so that each relevant segment lies in one of the original balls.
The finite polar map also has uniformly bounded first derivative,
since the tangential actions have a common positive lower bound.
The mean-value formula therefore gives
\begin{equation}
 \|w(b)-\widehat w\|_{L^2}
      \le C\bigl(|p|_1+\|z\|_{1/2}\bigr).
 \label{setup:inverse-lipschitz}
\end{equation}

By \cite[Proposition 3.1]{KST2013}, the quotient $I_j/\gamma_j^2$
has a nonvanishing analytic extension across closed gaps and is positive
on real potentials. For $a\in A$, the action $I_a$ is uniformly positive
near the compact physical base family. Thus $\gamma_a^2$ is analytic
there, and its uniformly positive square root $\gamma_a$ has a uniformly
bounded first derivative on a smaller $L^2$ neighborhood. Hence
\begin{equation}
 \sum_{a\in A}|\gamma_a(w(b))-\gamma_a(\widehat w)|^2
       \le C\|w(b)-\widehat w\|_{L^2}^2.
 \label{setup:open-gaps}
\end{equation}
For the normal gaps, the same proposition gives the locally uniform
asymptotic
\[
 8\pi j\,\frac{I_j(w)}{\gamma_j(w)^2}
       =1+j^{-1}r_j(w),
 \qquad (r_j(w))_{j\ge1}\hbox{ locally bounded in }\ell^2.
\]
The quotient is understood through its analytic extension. A finite
cover of the compact physical base family bounds $\|(r_j(w))\|_2$
uniformly on a common $L^2$ neighborhood. Thus
$|j^{-1}r_j(w)|\le1/2$ for every sufficiently large $j$.
The finitely many remaining extended quotients have a common positive
lower bound on a smaller real neighborhood, by positivity and compactness.
Since $I_j=J_j$ for $j\in\mathcal N$, this proves
\begin{equation}
 \gamma_j(w(b))^2\le CjJ_j\qquad(j\in\mathcal N)
 \label{setup:closed-gaps}
\end{equation}
with a constant independent of $j$.

Fixed-modulus Birkhoff fibers are isospectral tori
\cite[Corollary 5.7, with $s=0$ and $p=2$]{KMMT2016}.
Consequently $\gamma(\widehat w)=\gamma(w_\xi)$ and
$\gamma_j(\widehat w)=0$ for $j\in\mathcal N$. These identities hold
for every choice of the tangential phases, without any nonresonance
assumption. Together with \eqref{setup:inverse-lipschitz} and
$\|z\|_{1/2}^2=2\sum_{j\in\mathcal N}jJ_j$, the estimates
\eqref{setup:open-gaps} and \eqref{setup:closed-gaps} give the first
inequality in \eqref{setup:spectral-transfer}, after increasing
$C_\gamma$ to absorb the square-of-a-sum constant. The second follows from
$|p|_1^2\le|p|_1$ when $W\le1$ and $j\le j^{2N+1}$.
\end{proof}

\paragraph{Physical distance and the choice of reference torus.}
Let $\mathcal T_\xi$ be the unperturbed finite-gap torus with actions
$\xi$ and zero normal actions. For a real point $b=(\varphi,p,z)$,
compare $w(b)$ with $\widehat w=w(\varphi,0,0)\in\mathcal T_\xi$.
The same finite-cover argument for
$\mathcal B^{-1}:h^\sigma\to H^N_0$ gives, on a smaller common tube,
\begin{equation}\label{setup:torus-distance}
 \operatorname{dist}_{H^N}(w(b),\mathcal T_\xi)
       \le C\bigl(|p|_1+\|z\|_\sigma\bigr).
\end{equation}
The comparison point has the current tangential phases; no tracking
of a particular unperturbed trajectory is used. For trajectories covered
by Theorem~\ref{thm:main}, the distance is at most
$C_\delta(r_0+\eps^\delta)$ throughout $0\le t\le T_0/\eps$.

Suppose instead that $w_0$ is sufficiently close in $H^N$ to a specified
torus $\mathcal T_{\bar\xi}$ with $\bar\xi\in\mathcal K$.
Uniform local Lipschitz continuity of $\mathcal B:H^N_0\to h^\sigma$
gives $r_0\le C\operatorname{dist}_{H^N}(w_0,\mathcal T_{\bar\xi})$
and places $I_A(w_0)$ in a fixed compact positive enlargement of
$\mathcal K$. If $w_0$ belongs to a collection satisfying
Assumption~\ref{ass:contract} uniformly for this enlarged family,
Theorem~\ref{thm:main} applies with $\xi=I_A(w_0)$ and the corresponding
constants. Relative to $\bar\xi$, the initial tangential offset must be retained:
\[
 \sum_{a\in A}|I_a(t)-\bar\xi_a|
 \le\sum_{a\in A}|I_a(0)-\bar\xi_a|
                        +C_\delta(r_0^2+\eps^{2\delta}).
\]

An $H^N$ distance alone does not imply the higher bound in
Assumption~\ref{ass:contract}. In particular, a single high normal mode can have
fixed small $h^\sigma$ norm and arbitrarily large $h^{\sigma+1}$ norm.

% END sections/02_setup
% BEGIN sections/03_window
\section{Raw errors and reduction to a finite window}\label{sec:window}

Starting from Assumption~\ref{ass:contract}, we estimate action drift at
weight $2\sigma-1$ and its frozen-frequency pairing. Cubic dispersion
controls the high normal terms that change normal actions, while positivity
and the supplied higher norm transfer the estimates to the weight $2\sigma$ in $W$.
The resulting window identity retains initial normal occupation in the
angular jet and provides the error budgets for
Section~\ref{sec:filtered}. All identities hold along the supplied
strong trajectory.

Throughout this section we use the notation
\[
 \mathcal N=\N\setminus A,\qquad
 \mathcal I=(p,J),\qquad
 \mathcal D=I-I^0=(p,J-J^0),\qquad w_*=2\sigma-1.
\]
The exponent $w_*$ is distinct from the physical potential $w$.
Unless otherwise indicated, indices in sums involving $z_j$, $J_j$,
or their normal coefficients range over $\mathcal N$.
For a signed action vector $v=(v_A,v_{\mathcal N})\in\mathcal A_{w_*}$,
we use the norm
\begin{equation}\label{rw:actionspace}
 \|v\|_{\mathcal A_{w_*}}
 =|v_A|_1+\sum_{j\in\mathcal N}j^{w_*}|v_j|.
\end{equation}
Since $w_*=2\sigma-1\ge6$, this norm dominates the action Hilbert norm.
The frozen frequency sequence $\nu=Dh(I^0)$ defines a bounded linear
functional on $\mathcal A_{w_*}$: its tangential entries are uniformly
bounded and its normal entries satisfy $|\nu_j|\le Cj^3$.
Implicit constants may depend on the fixed $N,A,\mathcal K$, the
coordinate neighborhoods, analyticity widths and uniform analytic bounds,
and $T_0$.  Dependence on $H_+$ is indicated when it is used.
These constants are independent of $s,\eps$ and of the cutoffs satisfying
the conditions stated below.

We work on a stopped initial interval $[0,T]$, $T\le T_0/\eps$, on which
the supplied trajectory remains in the coordinate tube and
\begin{equation}\label{rw:stopped}
 W=|p|_1+\|z\|_\sigma^2\le s\le s_0\le1.
\end{equation}
Since $p(0)=0$, the bound at $t=0$ gives $W_0=W(0)\le s$.
Thus $B\le W+W_0/2\le2s$.  The same estimates hold on every shorter
initial interval.

The analytic bounds for the perturbation are taken on the fixed smaller
neighborhoods in Assumption~\ref{ass:contract}.  No uniform bound on arbitrary bounded
subsets of the sequence spaces is assumed.  Let $P_m$ retain the normal
indices $j\le m$.  Its contraction property on $h^\sigma$ allows $s_0$
to be chosen so that the points $(\varphi,p,P_mz)$ and the Taylor segments
used below lie in the fixed smaller $h^\sigma$ coordinate neighborhood,
uniformly in $m$.

\subsection{The raw Taylor remainder and its scalar pairing}

On \eqref{rw:stopped}, $|p|_1\le s$ and $\|z\|_\sigma\le\sqrt s$.
Accordingly, assign weight two to $p$ and weight one to $z$.  The Taylor
polynomial at $(p,z)=(0,0)$ containing all terms of weighted degree at
most two is
\begin{equation}\label{rw:jet}
 \begin{split}
 g(\varphi,p,z)={}&f(\varphi,0,0)
  +D_pf(\varphi,0,0)[p]+D_zf(\varphi,0,0)[z]\\
 &+\tfrac12D_z^2f(\varphi,0,0)[z,z],\qquad r=f-g.
 \end{split}
\end{equation}
All angular Fourier modes are retained in $g$.  Recall from
\eqref{setup:concavity} that
\[
 h(I)-h(I^0)=\nu\cdot\mathcal D+Q_0(\mathcal D),
 \qquad Q_0(0)=DQ_0(0)=0,\qquad \|D^2Q_0\|\le C_h.
\]
The Hessian bound holds on the fixed action neighborhood.  Since
$\|\mathcal D\|_2=B\le2s$, it gives
$\|DQ_0(\mathcal D)\|_2\le2C_hs$ on \eqref{rw:stopped}.

For a perturbing Hamiltonian $F$ whose normal derivative is represented
in the unweighted pairing by $\nabla_zF\in h^{\sigma-1}$, set
\begin{equation}\label{rw:actiondrift}
 a_F=D\pi X_F
 =\left(-F_\varphi,
   \bigl(u_jF_{v_j}-v_jF_{u_j}\bigr)_{j\in\mathcal N}\right).
\end{equation}
Here $D\pi X_F$ denotes the continuous extension described in
Lemma~\ref{lem:action-chain}, with values in $\mathcal A_{w_*}$.
The normal vector field may lie only in $h^{\sigma-1}$, and the
formula remains valid when normal coordinates vanish.

\begin{lemma}[Taylor estimates]\label{rw:lem-taylor}
On \eqref{rw:stopped} one has
\begin{equation}\label{rw:taylor}
 \begin{gathered}
 |r|+|r_\varphi|_1\le Cs^{3/2},\qquad
 |r_p|_\infty\le C\sqrt s,\qquad
 \|\nabla_zr\|_{\sigma-1}\le Cs,\\
 \|a_r\|_{\mathcal A_{w_*}}\le Cs^{3/2},\qquad
 |\{r,g\}|\le C\sqrt s,\qquad
 |\{r,Q_0\}|\le Cs^{5/2}.
 \end{gathered}
\end{equation}
The estimates hold with the same constants for finite normal restrictions.
\end{lemma}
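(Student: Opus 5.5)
The plan is to derive every bound in \eqref{rw:taylor} from Cauchy estimates on the holomorphic extension of $f$. The tool is the uniform local bounds of Assumption~\ref{ass:contract}(1), applied on the fixed complex neighborhood, which contains the points $(\varphi,p,P_mz)$ and the Taylor segments once $s_0$ is small. Since $r=f-g$ removes the weighted jet of order two, I will write $r$ through the integral form of Taylor's formula in the scaled variable $\lambda\mapsto(\varphi,\lambda^2p,\lambda z)$. The function $\phi(\lambda)=f(\varphi,\lambda^2p,\lambda z)$ is holomorphic in $\lambda$ on a disc of radius $\rho/\sqrt s$ for a fixed $\rho$, because $|p|_1\le s$ and $\|z\|_\sigma\le\sqrt s$. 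By construction, $g(\varphi,p,z)$ is exactly the Taylor polynomial of $\phi$ at $\lambda=0$ up to order two, with the $p$-quadratic and the $pz$ terms placed in the remainder. The Cauchy estimate for the third-order remainder of $\phi$ at $\lambda=1$ then gives $|r|\le C(\sqrt s/\rho)^3\sup|f|\le Cs^{3/2}$. The angular derivative is handled the same way: $r_\varphi$ is the analogous remainder of $f_\varphi$, which is controlled by the strip width through Cauchy in $\varphi$. This gives $|r_\varphi|_1\le Cs^{3/2}$.

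For the derivative bounds I differentiate the remainder representation. $r_p$ is the remainder of $f_p$ after removing its weighted jet of degree zero, namely $f_p(\varphi,0,0)$, so $|r_p|_\infty\le C\sqrt s$. Similarly, $\nabla_zr=\nabla_zf-\nabla_zf(\varphi,0,0)-D_z\nabla_zf(\varphi,0,0)[z]$ is the remainder of degree $\ge2$ of the map $\nabla_zf:h^\sigma\to h^{\sigma-1}$. Cauchy estimates in the Banach space $h^{\sigma-1}$, using the uniform multilinear bounds, give $\|\nabla_zr\|_{\sigma-1}\le Cs$. Next, $a_r$ has tangential part $-r_\varphi$ and normal part $u_jr_{v_j}-v_jr_{u_j}$. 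The weighted product bound \eqref{setup:weighted-product} yields $\sum j^{w_*}|\cdot|\le\|z\|_\sigma\|\nabla_zr\|_{\sigma-1}\le Cs^{3/2}$, so $\|a_r\|_{\mathcal A_{w_*}}\le Cs^{3/2}$.

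For the brackets, I write $\{r,g\}=r_\varphi\cdot g_p-r_p\cdot g_\varphi+\langle\nabla_zr,\mathcal J\nabla_zg\rangle$. The tangential terms are $O(s^{3/2})+O(\sqrt s)$, since $g_\varphi$ and $g_p$ are bounded. The normal pairing is the main obstacle, because both gradients live only in $h^{\sigma-1}$ and the unweighted pairing of two $h^{\sigma-1}$ sequences is bounded by $\|\cdot\|_{\sigma-1}\|\cdot\|_{\sigma-1}$ since $\sigma-1\ge0$. With $\|\nabla_zg\|_{\sigma-1}\le C$, this gives a bound $Cs\le C\sqrt s$, so the crude estimate suffices, and a plain Cauchy–Schwarz in $\ell^2$ is enough.

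For $\{r,Q_0\}$ I note that $Q_0$ depends only on the actions, so $\{r,Q_0\}=-DQ_0(\mathcal D)\cdot a_r$, with pairing in $\ell^2$ via $\mathcal A_{w_*}\subset\ell^2$. This gives $|\{r,Q_0\}|\le\|DQ_0(\mathcal D)\|_2\|a_r\|_2\le 2C_hs\cdot Cs^{3/2}=Cs^{5/2}$.

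Finally, every step uses only the contraction of $P_m$ on $h^\sigma$ and on $h^{\sigma-1}$, together with the fact that the restricted points stay in the same neighborhood. The constants are therefore unchanged for finite normal restrictions.
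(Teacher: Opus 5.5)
Your proposal is correct. Apart from one technical device, it follows the paper's proof. The weighted Cauchy--Schwarz bound for $a_r$, the crude unweighted pairing $\|\nabla_zr\|_0\|\nabla_zg\|_0\le Cs$ in $\{r,g\}$, the identity $\{r,Q_0\}=-DQ_0(\mathcal D)\cdot a_r$, and the handling of finite restrictions through $r(\varphi,p,P_mz)$ and contraction of $P_m$ all appear in the paper. The only change there is that the paper pairs $\|DQ_0\|_\infty$ with the weighted $\ell^1$ norm instead of using $\ell^2$ against $\ell^2$, which makes no difference.

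The genuine difference is how you bound the Taylor remainders.
\begin{itemize}
\item The paper expands first in $p$ at fixed $z$, then expands $f(\varphi,0,z)$ to second order in $z$. It bounds the $p^2$, $pz$, $z^3$ remainders by $\tfrac12\sup\|f_{pp}\|\,|p|_1^2$, $\sup\|f_{pz}\|\,|p|_1\|z\|_\sigma$ and $\tfrac16\sup\|f_{zzz}\|\,\|z\|_\sigma^3$. It uses the mean-value theorem for $r_p$, and a vector-valued Taylor expansion with $D_pA_f$ and $D_z^2A_f$ for $\nabla_zr$.
\item Your single Cauchy estimate along the weighted scaling $\lambda\mapsto(\varphi,\lambda^2p,\lambda z)$ replaces all of these at once. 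It needs only sup bounds for $f$, $f_\varphi$, $f_p$ and $\nabla_zf$ on a complex ball of fixed radius, together with the margin $\sqrt s\le\rho/2$. It also makes transparent why exactly the weighted-degree-two jet is removed.
\end{itemize}

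The paper's version has one advantage: it displays the pointwise form $\|\nabla_zr\|_{\sigma-1}\le C(|p|_1+\|z\|_\sigma^2)=CW$. This is reused as the constant $C_z$ in Proposition~\ref{prop:transfer}. Your argument yields it as well if you run the scaling at the point's own size, with $\sqrt W$ in place of $\sqrt s$. As written, however, it gives only $Cs$.
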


\begin{proof}
Suppress $\varphi$ in the notation.  Expand first in $p$ at fixed $z$,
then expand $f(0,z)$ to second order in $z$.  Integral Taylor formulas
bound the three remainders, of orders $p^2$, $pz$, and $z^3$, by
\[
 \tfrac12\sup\|f_{pp}\|\,|p|_1^2,
 \qquad \sup\|f_{pz}\|\,|p|_1\|z\|_\sigma,
 \qquad \tfrac16\sup\|f_{zzz}\|\,\|z\|_\sigma^3.
\]
Their sum is $O(s^{3/2})$ because $s\le1$.  Applying the same
expansions to $f_\varphi$ gives the angular estimate.  Since
$r_p=f_p(p,z)-f_p(0,0)$, the mean-value estimate gives
$|r_p|_\infty\le C(|p|_1+\|z\|_\sigma)\le C\sqrt s$.
Let $A_f=\nabla_zf$, viewed as the analytic map into
$h^{\sigma-1}$ specified in the hypotheses.  Compatibility of represented
gradients and differentiation gives
\[
 \nabla_zg=A_f(0,0)+D_zA_f(0,0)z,
\]
so vector-valued Taylor expansion gives
\[
 \begin{aligned}
 \|\nabla_zr\|_{\sigma-1}
 &\le \sup\|D_pA_f\|\,|p|_1
       +\tfrac12\sup\|D_z^2A_f\|\,\|z\|_\sigma^2\\
 &\le C\bigl(|p|_1+\|z\|_\sigma^2\bigr)=CW\le Cs.
 \end{aligned}
\]
These Taylor estimates use multilinear operator bounds on the fixed
coordinate neighborhoods.

Write $\mathcal J(u,v)=(v,-u)$.  Weighted Cauchy--Schwarz yields
\begin{equation}\label{rw:cs}
 \sum_j j^{2\sigma-1}|z_j\cdot\mathcal J(\nabla_zr)_j|
 \le \|z\|_\sigma\|\nabla_zr\|_{\sigma-1}.
\end{equation}
Together with the angular estimate this proves the action bound.  The
normal series in $\{r,g\}$ is absolutely convergent and bounded by
$\|\nabla_zr\|_0\|\nabla_zg\|_0\le Cs$; its two tangential products
are bounded by $C(s^{3/2}+\sqrt s)$.  Finally, since $Q_0$ depends only
on the actions, one has the absolutely convergent identity
\[
 \{r,Q_0(\mathcal D)\}=-DQ_0(\mathcal D)\cdot a_r.
\]
The bound $\|DQ_0\|_\infty\le2C_hs$ and the action estimate give
$|\{r,Q_0\}|\le Cs^{5/2}$.

For the restriction $f_m(\varphi,p,z)=f(\varphi,p,P_mz)$, the
weighted Taylor jet is $g_m(\varphi,p,z)=g(\varphi,p,P_mz)$.
Thus $r_m=f_m-g_m=r(\varphi,p,P_mz)$ and
\[
 \nabla_zr_m(\varphi,p,z)
 =P_m\nabla_zr(\varphi,p,P_mz).
\]
The projections are contractions at every Sobolev level, and the
projected Taylor segments lie in the same fixed neighborhoods.
The constants are therefore independent of $m$.
\end{proof}

\begin{lemma}[Scalar chain rule at the available regularity]
\label{rw:lem-chain}
Suppose $F$ is continuously differentiable on the $h^\sigma$ coordinate
tube and its normal derivative is represented in the unweighted pairing
by a continuous $h^{\sigma-1}$-valued gradient.  Along the supplied strong
path on $[0,T]$, the function $F(b(t))$ is continuously differentiable and satisfies
\begin{equation}\label{rw:chain}
 \frac{d}{dt}F(b(t))
 =F_\varphi\dot\varphi+F_p\dot p
     +\sum_j(\nabla_zF)_j\cdot\dot z_j.
\end{equation}
The series is absolutely convergent and its sum is continuous in time.
\end{lemma}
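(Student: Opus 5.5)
We will prove Lemma~\ref{rw:lem-chain} by the finite-projection argument already used for the energy identity in Lemma~\ref{lem:energy}. Fix $t$ in $[0,T]$ and let $P_R$ retain the normal indices $j\le R$. Set $b_R(s)=(\varphi(s),p(s),P_Rz(s))$.

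\textbf{Step 1: the projected paths.} Since $z\in C([0,T];h^\sigma)$ has compact image, $P_Rz\to z$ uniformly in $h^\sigma$. The path $b$ has compact image in the open tube, so $b_R$ lies in the tube for all large $R$. Each $b_R$ involves only finitely many normal coordinates. Its tangential parts are $C^1$ by Assumption~\ref{ass:contract}(2), and each $z_j$ is $C^1$ because $z\in C^1(h^{\sigma-3})$. The finite-dimensional chain rule therefore applies to $F(b_R(s))$. Integrating over $[0,t]$ gives
\[
F(b_R(t))-F(b_R(0))=\int_0^t\Bigl(F_\varphi\dot\varphi+F_p\dot p+\sum_{j\le R}(\nabla_zF)_j\cdot\dot z_j\Bigr)\Big|_{b_R(s)}\,ds .
\]
Here we use that the represented gradient of $F$ restricted to finitely many coordinates coincides with the partial derivatives, by the unweighted-pairing representation.

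\textbf{Step 2: passing to the limit.} By continuity of $F$ on the $h^\sigma$ tube, the left side converges to $F(b(t))-F(b(0))$. The tangential derivatives $F_\varphi$ and $F_p$ are continuous, so they converge uniformly along the compact path. For the normal series, $\nabla_zF$ is continuous with values in $h^{\sigma-1}$. Hence $\nabla_zF(b_R(s))\to\nabla_zF(b(s))$ uniformly in $s$ in $h^{\sigma-1}$, and these gradients are uniformly bounded. Also $P_R\dot z\to\dot z$ uniformly in $h^{\sigma-3}$. Since $(\sigma-1)+(\sigma-3)=2\sigma-4\ge0$, Cauchy--Schwarz gives
\[
\sum_j|a_j\cdot c_j|\le\|a\|_{\sigma-1}\|c\|_{-(\sigma-1)}\le\|a\|_{\sigma-1}\|c\|_{\sigma-3}.
\]
The truncated sums therefore converge uniformly to the full absolutely convergent series $\sum_j(\nabla_zF(b(s)))_j\cdot\dot z_j(s)$. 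Splitting the difference into a gradient-difference term and a tail term $(1-P_R)\dot z$, each controlled by the bilinear bound above, justifies this. Combining the two limits yields the integrated form of \eqref{rw:chain}.

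\textbf{Step 3: continuity in time.} The integrand $s\mapsto\nabla_zF(b(s))$ is continuous into $h^{\sigma-1}$, and $\dot z$ is continuous into $h^{\sigma-3}$. The same bilinear estimate shows that the series is continuous in $s$, and so is the whole integrand. The fundamental theorem of calculus then gives $F(b(\cdot))\in C^1$ together with \eqref{rw:chain}.

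The main obstacle is Step 2. It is the only place where the regularity budget matters: the one-derivative loss in $\nabla_zF$ must be paired against the three-derivative loss in $\dot z$. This pairing is exactly why $N\ge3$ is needed. One must also check that the uniform convergence of the gradients along $b_R$ follows from mere continuity of $\nabla_zF$ on the tube. Continuity alone gives this because the set $\{b_R(s)\}\cup\{b(s)\}$ is compact in $h^\sigma$ and $b_R\to b$ uniformly.
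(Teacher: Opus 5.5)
Your proposal is correct and follows the paper's proof essentially step for step. Both arguments use finite normal projections of the actual path and the finite-dimensional chain rule integrated in time. Both get uniform convergence of the evaluated gradients from continuity on a compact set, pass to the limit with the $h^{\sigma-1}\times h^{\sigma-3}$ pairing, and conclude $C^1$ from continuity of the limiting integrand.

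Pairing $\nabla_zF(b_R)$ with $P_R\dot z$ gives the same quantity as the paper's pairing of $P_R\nabla_zF(b_R)$ with $\dot z$. One small correction: the pairing only needs $2\sigma-4\ge0$, so your remark that it is exactly why $N\ge3$ is needed overstates the requirement.
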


\begin{proof}
For $x\in h^{\sigma-1}$ and $y\in h^{\sigma-3}$,
\[
 \sum_j|x_j||y_j|
 \le\|x\|_{\sigma-1}\|y\|_{\sigma-3},
\]
because $2\sigma-4\ge0$.  Fix a compact time interval on which the
path lies in the open coordinate tube.  The projections $P_mz$ converge
uniformly there in $h^\sigma$; hence the projected paths lie in the
tube for all sufficiently large $m$.  For such $m$, apply the ordinary
finite-dimensional chain rule to $F(\varphi(t),p(t),P_mz(t))$ and integrate.
Continuity on the compact image of the path, followed by a finite
neighborhood argument, implies uniform convergence of all the evaluated
derivatives.  In particular
\[
 P_m\nabla_zF(\varphi,p,P_mz)
       \longrightarrow\nabla_zF(\varphi,p,z)
       \quad\hbox{uniformly in }h^{\sigma-1}.
\]
The pairing estimate allows passage to the limit in the integrated
normal term against the continuous velocity of the actual path,
$\dot z\in h^{\sigma-3}$.  The endpoint values and tangential terms
also converge uniformly.  The limiting integrand is continuous, which
proves \eqref{rw:chain}.  The projected path is the projection of the
actual solution; it is not required to solve a projected equation.
\end{proof}

\begin{proposition}[The raw error]\label{rw:prop-raw}
Set $R=\eps r$.  The Bochner integral
\[
 E_R(t)=\eps\int_0^t a_r(b(v))\,dv
\]
exists in $\mathcal A_{w_*}$ for $0\le t\le T$ and satisfies
\begin{equation}\label{rw:rawbudget}
 \sup_{t\le T}\|E_R(t)\|_{\mathcal A_{w_*}}\le Cs^{3/2},
 \qquad
 \sup_{t\le T}|\nu\cdot E_R(t)|
 \le C\bigl(\eps s^{3/2}+s^{5/2}+\eps\sqrt s\bigr).
\end{equation}
\end{proposition}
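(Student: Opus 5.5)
The first bound in \eqref{rw:rawbudget} follows by direct integration, while the second requires integrating by parts in time through the scalar function $r$. By Lemma~\ref{lem:action-chain} and Lemma~\ref{rw:lem-taylor}, the map $t\mapsto a_r(b(t))$ is continuous with values in $\mathcal A_{w_*}$. This uses \eqref{rw:cs} together with continuity of $\nabla_z r=\nabla_z f-\nabla_z g$ into $h^{\sigma-1}$ and of $r_\varphi$. Hence the Bochner integral exists, and
\[
 \|E_R(t)\|_{\mathcal A_{w_*}}\le \eps t\,\sup\|a_r\|_{\mathcal A_{w_*}}\le T_0\,Cs^{3/2}
\]
because $t\le T\le T_0/\eps$.

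For the scalar pairing, the naive estimate $|\nu\cdot a_r|\le C\|a_r\|_{\mathcal A_{w_*}}$ only gives $O(s^{3/2})$ after integration, which is too weak. I would instead identify $\nu\cdot a_r$ with a Poisson bracket that appears in the time derivative of $r$. Since $a_F=D\pi X_F=\{\pi,F\}$ and $\nu$ is fixed, we have $\nu\cdot a_r=\{\nu\cdot\mathcal D,r\}$. This holds as an absolutely convergent series, because $\nu$ is bounded on $\mathcal A_{w_*}$ and $a_r\in\mathcal A_{w_*}$.

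Write the Hamiltonian as
\[
 H_\xi=\mathrm{const}+\nu\cdot\mathcal D+Q_0(\mathcal D)+\eps g+\eps r .
\]
Apply Lemma~\ref{rw:lem-chain} to $F=r$, which is admissible since $\nabla_z r$ is a continuous $h^{\sigma-1}$-valued map. Insert Hamilton's equations for $\dot\varphi,\dot p,\dot z$. Each velocity splits into the four contributions generated by $\nu\cdot\mathcal D$, $Q_0$, $\eps g$ and $\eps r$. This yields
\[
 \frac{d}{dt}r(b(t))=\{r,\nu\cdot\mathcal D\}+\{r,Q_0\}+\eps\{r,g\}+\eps\{r,r\},
\]
with $\{r,r\}=0$ and $\{r,\nu\cdot\mathcal D\}=-\nu\cdot a_r$. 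Therefore
\[
 \nu\cdot a_r(b(t))=-\frac{d}{dt}r(b(t))+\{r,Q_0\}+\eps\{r,g\}.
\]
Multiplying by $\eps$ and integrating over $[0,t]$ gives
\[
 |\nu\cdot E_R(t)|\le 2\eps\sup|r|+\eps t\bigl(\sup|\{r,Q_0\}|+\eps\sup|\{r,g\}|\bigr).
\]
Lemma~\ref{rw:lem-taylor} supplies $|r|\le Cs^{3/2}$, $|\{r,Q_0\}|\le Cs^{5/2}$ and $|\{r,g\}|\le C\sqrt s$. With $\eps t\le T_0$ this gives $C(\eps s^{3/2}+s^{5/2}+\eps\sqrt s)$, as claimed.

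The main obstacle is justifying the splitting of the normal series $\sum_j(\nabla_z r)_j\cdot\dot z_j$ into the four bracket series. Here $\dot z_j=\mathcal J\bigl(\nu_jz_j+(DQ_0)_jz_j+\eps(\nabla_zf)_j\bigr)$, and the first piece lies only in $h^{\sigma-3}$. I would check absolute convergence of each piece separately, which then permits the rearrangement.

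The $\nu$-piece equals $\sum_j\nu_j\,z_j\cdot\mathcal J(\nabla_zr)_j$ up to sign. It is dominated by $C\sum_j j^3|z_j||\nabla_zr|_j\le C\|z\|_\sigma\|\nabla_zr\|_{\sigma-1}$ via \eqref{rw:cs}, because $3\le 2\sigma-1$. This is exactly $\nu\cdot a_r$ restricted to the normal indices.

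The $Q_0$-piece is the absolutely convergent identity $\{r,Q_0\}=-DQ_0\cdot a_r$ from Lemma~\ref{rw:lem-taylor}. The $g$- and $r$-pieces are unweighted $h^0$ pairings of $h^{\sigma-1}$ sequences.

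The tangential terms are finite sums. They cancel against the corresponding tangential parts of the brackets, using $\dot\varphi=H_p$ and $\dot p=-H_\varphi$.
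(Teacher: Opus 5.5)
Your proposal is correct and follows the paper's proof essentially step for step. The vector bound comes from integrating the pointwise Taylor estimate, and the scalar bound from applying Lemma~\ref{rw:lem-chain} to $r$ in the splitting $H-h(I^0)=\nu\cdot\mathcal D+Q_0+\eps g+\eps r$, using $\{r,\nu\cdot\mathcal D\}=-\nu\cdot a_r$ and $\{r,r\}=0$; your piece-by-piece check of absolute convergence of the normal series makes explicit what the paper states as ``all normal brackets converge absolutely, and $\nu$ commutes with the Bochner integral.''
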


\begin{proof}
Continuity and \eqref{rw:taylor} justify the integral and give
$\|E_R(t)\|_{\mathcal A_{w_*}}\le C\eps t s^{3/2}$, hence the norm
bound.  Apply Lemma~\ref{rw:lem-chain} to $R$ in
$H-h(I^0)=\nu\cdot\mathcal D+Q_0+\eps g+R$.  Since
$\{R,\nu\cdot\mathcal D\}=-\nu\cdot a_R$ and $\{R,R\}=0$,
\begin{equation}\label{rw:rawidentity}
 \nu\cdot E_R(t)
 =-[R(b(t))-R(b(0))]
    +\int_0^t\{R,Q_0+\eps g\}(b(v))\,dv.
\end{equation}
All normal brackets converge absolutely, and the bounded functional
$\nu$ commutes with the Bochner integral.  By \eqref{rw:taylor}, the
endpoint term and the integrals involving $Q_0$ and $\eps g$ are bounded,
respectively, by $C\eps s^{3/2}$, $C\eps T s^{5/2}$, and
$C\eps^2T\sqrt s$.  Using $T\le T_0/\eps$ gives \eqref{rw:rawbudget}.
The improved power $s^{5/2}$ comes from the bracket with $Q_0$.
\end{proof}

\subsection{High spatial divisors and the mixed operator inverse}

Let $g_K$ be the angular Fourier truncation of the jet $g$ to
$|k|_1\le K$, $K\ge1$, and let $P_M$ retain the normal modes
$S_M=\mathcal N\cap[1,M]$.  The part of $g$ that depends on $z$ is
\begin{equation}\label{rw:normaljet}
 g^{\rm n}(\varphi,z)
   =\langle a(\varphi),z\rangle
        +\tfrac12\langle C(\varphi)z,z\rangle.
\end{equation}
Here $a=\nabla_zf(\varphi,0,0)$ and
$C=D_z\nabla_zf(\varphi,0,0)$; for fixed $\xi$, both depend only on
$\varphi$.  The Hessian $C$ is symmetric in the unweighted pairing.
By Assumption~\ref{ass:contract}, the normal Hamiltonian matrix
$B_f=\mathcal JC$ satisfies
\[
 B_f:h^\sigma\longrightarrow h^{\sigma-1},\qquad
 B_f:h^{\sigma+1}\longrightarrow h^\sigma.
\]
The subscript prevents confusion with the action size $B=\|\mathcal D\|_2$.

For a normal linear or quadratic monomial
$e^{ik\varphi}\zeta^\alpha\bar\zeta^\beta$, with finitely supported
nonnegative multi-indices $\alpha,\beta$, set
\[
 q=(k,\alpha-\beta),\qquad
 d_q(I)=k\cdot\omega(I)+(\alpha-\beta)\cdot\Omega(I).
\]
Here $\omega(I)=\partial_{I_A}h(I)$ and
$\Omega_j(I)=c_0j^3+\beta_j(I)$, $c_0=8\pi^3$.
Choose uniform bounds
\begin{equation}\label{rw:freqbounds}
 \|\omega\|_\infty\le C_\omega,
 \quad\|\beta\|_\infty\le C_\beta,
 \quad\|D_I\omega\|+\|D_I\beta\|\le L_\nu,
\end{equation}
where the derivative norms are operator norms from the full action
space $\ell^2$ to $\ell^\infty(A)$ and $\ell^\infty(\mathcal N)$,
respectively.  These bounds hold on the fixed complex action
neighborhood by \eqref{setup:frequencies} and the analytic bounds
following it.

Let $g^{\rm h}=g^{\rm h}_{K,M}$ consist of the following terms of $g_K$:
normal linear monomials with index greater than $M$, and normal
quadratic monomials with $\alpha-\beta\ne0$ and at least one
active normal index greater than $M$.  Conjugate terms are selected
together.  Thus every $e^{ik\varphi}J_j$, including those with $k\ne0$,
is excluded from $g^{\rm h}$.

\begin{lemma}[Uniform separation]\label{rw:lem-separation}
For a fixed $D\ge1$, sufficiently large in terms of
\eqref{rw:freqbounds}, let $K,M\ge1$ satisfy $M^2\ge D(K+1)$.
Then, for every $k\in\mathbb Z^A$ with $|k|_1\le K$ and normal
indices $j,\ell\in\mathcal N$, uniformly for $I$ in the fixed action
neighborhood, including its complex points, one has
\begin{equation}\label{rw:divisors}
 \begin{aligned}
 |k\cdot\omega\pm\Omega_j|&\ge(c_0/2)j^3
       &&(j>M),\\
 |k\cdot\omega\pm(\Omega_j-\Omega_\ell)|
       &\ge(c_0/2)|j^3-\ell^3|
       &&(j\ne\ell,\ \max(j,\ell)>M),\\
 |k\cdot\omega\pm(\Omega_j+\Omega_\ell)|
       &\ge(c_0/2)(j^3+\ell^3)
       &&(\max(j,\ell)>M).
 \end{aligned}
\end{equation}
The last case permits $j=\ell$.  For each such label,
\begin{equation}\label{rw:divderivative}
 |D_Id_q(I)[v]|\le L_\nu(|k|_1+2)\|v\|_2.
\end{equation}
\end{lemma}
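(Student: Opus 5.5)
The separation estimates follow from the cubic growth $\Omega_j=c_0j^3+\beta_j$ with $|\beta_j|\le C_\beta$ and $|k\cdot\omega|\le C_\omega|k|_1\le C_\omega K$. The condition $M^2\ge D(K+1)$ makes the cubic gaps dominate these bounded corrections once one index exceeds $M$. We take $D\ge 8(C_\omega+2C_\beta)/c_0$, possibly larger, and treat the three lines of \eqref{rw:divisors} separately. The derivative bound \eqref{rw:divderivative} is a direct consequence of \eqref{rw:freqbounds}.

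For the first line, with $j>M$ we have
\[
|k\cdot\omega\pm\Omega_j|\ge c_0j^3-C_\beta-C_\omega K .
\]
Since $j^3>M^3\ge M^2\ge D(K+1)$, the error $C_\beta+C_\omega K\le(C_\omega+C_\beta)(K+1)\le (C_\omega+C_\beta)j^3/D$. This is at most $(c_0/2)j^3$ by the choice of $D$.

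The third line is identical. We use $\Omega_j+\Omega_\ell\ge c_0(j^3+\ell^3)-2C_\beta$ in real part, which in the complex case we read as a modulus estimate. We also use $j^3+\ell^3>M^3\ge D(K+1)$, so the error $C_\omega K+2C_\beta$ is at most $(c_0/2)(j^3+\ell^3)$. The case $j=\ell$ causes no difficulty.

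The second line is the main point. Assume $j>\ell$ and $j>M$; the case $\ell>M\ge j$ is symmetric. Then $j^3-\ell^3\ge j^2+j\ell+\ell^2\ge j^2>M^2\ge D(K+1)$, using $j-\ell\ge1$. This is exactly why only $M^2$, and not $M^3$, is needed. The error is
\[
|k\cdot\omega|+|\beta_j-\beta_\ell|\le C_\omega K+2C_\beta\le (C_\omega+2C_\beta)(K+1)\le (C_\omega+2C_\beta)(j^3-\ell^3)/D ,
\]
which is at most $(c_0/2)(j^3-\ell^3)$. For complex $I$ in the fixed neighborhood, all bounds in \eqref{rw:freqbounds} are sup-norm bounds on moduli. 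The reverse triangle inequality $|x+y|\ge|x|-|y|$ with $x=\pm c_0(j^3-\ell^3)$ real then applies verbatim.

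Finally, $d_q(I)=k\cdot\omega(I)+(\alpha-\beta)\cdot\Omega(I)$. The $c_0j^3$ part is independent of $I$, so
\[
D_Id_q[v]=k\cdot D_I\omega[v]+(\alpha-\beta)\cdot D_I\beta[v].
\]
By \eqref{rw:freqbounds}, $|k\cdot D_I\omega[v]|\le|k|_1\|D_I\omega[v]\|_\infty\le |k|_1L_\nu\|v\|_2$. For linear and quadratic normal monomials $|\alpha-\beta|_1\le2$, so the second term is at most $2L_\nu\|v\|_2$. Summing gives \eqref{rw:divderivative}. The only care needed is that the constants are uniform over the complex action neighborhood, which is exactly what \eqref{rw:freqbounds} provides.
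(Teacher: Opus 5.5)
Your proof is correct and is essentially the paper's argument: bound the bounded correction by $KC_\omega+2C_\beta$, use $|j^3-\ell^3|\ge\max(j,\ell)^2$, $j^3+\ell^3\ge\max(j,\ell)^2$ and $j^3\ge j^2$ together with $M^2\ge D(K+1)$, choose $D$ a fixed multiple of $(C_\omega+2C_\beta)/c_0$ (the paper takes $D\ge\max\{1,2(C_\omega+2C_\beta)/c_0\}$), and read the derivative bound off \eqref{rw:freqbounds}. The only blemish is in the case split of the difference sector: the symmetric case should be $\ell>j$, not $\ell>M\ge j$, and the $\pm$ sign absorbs the swap of $j$ and $\ell$ anyway.
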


\begin{proof}
The bounded correction to the indicated cubic leading term has size
at most $KC_\omega+2C_\beta$.  If $m=\max(j,\ell)$ and $j\ne\ell$,
then $|j^3-\ell^3|\ge m^2$; also $j^3+\ell^3\ge m^2$ and
$j^3\ge j^2$.  Taking
$D\ge\max\{1,2(C_\omega+2C_\beta)/c_0\}$ makes the correction
at most half the leading magnitude.  Differentiating the divisor
gives \eqref{rw:divderivative}.  This reasoning imposes no condition
on the tangential resonances or on the low normal divisors.
\end{proof}

Let $c_q=c_q(\xi)$ be the coefficient of a selected monomial in
$g^{\rm h}$.  With $\xi$ fixed and $I$ treated as an independent
parameter, divide by the full divisor to define
\begin{equation}\label{rw:primitive}
 \chi(\varphi,I,z)
   =\sum_{q\ \text{selected}}\frac{c_q}{i d_q(I)}
                     e^{ik\varphi}\zeta^\alpha\bar\zeta^\beta.
\end{equation}
The infinite normal sum is defined by the operator construction below.
Write its normal Hamiltonian vector field at fixed $I$ as
\begin{equation}\label{rw:frozenfield}
 V(\varphi,I,z)=b_*(\varphi,I)+S_*(\varphi,I)z.
\end{equation}
After substituting $I=\xi+\pi(b)$, we also write $\chi$ for the
resulting function of $b$.  Its full normal vector field includes
derivatives through $I$.  The composed primitive satisfies the exact
homological identity
\begin{equation}\label{rw:homological}
 \{\chi,h\}=g^{\rm h},
\end{equation}
because $\{I_j,h\}=0$ for every action coordinate.

\begin{lemma}[Smoothing of the primitive]\label{rw:lem-mixed}
Let $K,M\ge1$ satisfy $M^2\ge D(K+1)$, with $D$ as in
Lemma~\ref{rw:lem-separation}.  On a fixed smaller angular strip and
the fixed complex action neighborhood, uniformly in $K,M$,
\begin{equation}\label{rw:smoothing}
 \|b_*\|_{\sigma+2}
  +\|S_*\|_{\sigma\to\sigma+1}
  +\|S_*\|_{\sigma+1\to\sigma+2}
  +\|S_*\|_{\sigma-1\to\sigma}\le C.
\end{equation}
The same bounds hold after up to two angular derivatives, alone or
combined with one action derivative $D_I[v]$; in the latter case the
right-hand side is $C\|v\|_2$.  These derivatives are continuous in
the displayed sequence and operator norms.  The primitive is real
on the real phase space.
\end{lemma}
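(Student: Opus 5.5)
We argue entry by entry at the level of the matrix coefficients of the selected monomials, and obtain the operator bounds from Schur-type estimates. Write $a(\varphi)=\nabla_zf(\varphi,0,0)$ and $C(\varphi)=D_z\nabla_zf(\varphi,0,0)$, expanded in angular Fourier modes $a^{(k)}$, $C^{(k)}$. Analyticity on a fixed strip gives exponential decay $e^{-\rho|k|_1}$ of these coefficients in the norms inherited from Assumption~\ref{ass:contract}(1). In particular $a^{(k)}\in h^{\sigma}$, since $\nabla_zf$ maps $h^{\sigma+1}\to h^\sigma$ and $(0,0)$ lies in the upper-level neighborhood. Likewise $B_f^{(k)}=\mathcal JC^{(k)}$ is bounded $h^\sigma\to h^{\sigma-1}$ and $h^{\sigma+1}\to h^\sigma$. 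By duality through the symmetric unweighted pairing, $B_f^{(k)}$ is also bounded $h^{-\sigma+1}\to h^{-\sigma}$ and $h^{-\sigma}\to h^{-\sigma-1}$, and interpolation fills in the intermediate levels.

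For the linear part of the frozen field, we have
\[
 b_*=\sum_{|k|_1\le K}e^{ik\varphi}\bigl(P_{>M}\,\text{divided }a^{(k)}\bigr).
\]
Here each component $j>M$ is divided by $k\cdot\omega\pm\Omega_j$. The first line of \eqref{rw:divisors} gives a factor $j^{-3}$, so $\|b_*\|_{\sigma+2}\le C\sum_k e^{-\rho|k|_1}\|a^{(k)}\|_{\sigma-1}$. This sum is uniform in $K,M$. A smaller strip absorbs the polynomial losses in $k$ from angular derivatives. Differentiating in $I$ via \eqref{rw:divderivative} produces $d_q^{-2}L_\nu(|k|_1+2)\|v\|_2$, which is even better.

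For $S_*$, the $(j,\ell)$ block of mode $k$ is the corresponding entry of $B_f^{(k)}$ divided by one of the divisors $k\cdot\omega\pm(\Omega_j\mp\Omega_\ell)$, restricted to pairs with $\max(j,\ell)>M$. The sum-type blocks are divided by at least $(c_0/2)(j^3+\ell^3)$. The difference-type blocks with $j\ne\ell$ are divided by at least $(c_0/2)|j^3-\ell^3|\ge (c_0/2)\max(j,\ell)^2$. Diagonal difference blocks $j=\ell$ correspond to $J_j$-type monomials and are excluded from $g^{\rm h}$.

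We therefore gain at least $\max(j,\ell)^{-2}$, and this is the main obstacle. The entries of $B_f^{(k)}$ are only known through operator bounds, not pointwise, so a naive Schur test loses. The plan is to factor the divided operator as $\Lambda^{-1}\,\mathcal M\circ B^{(k)}_f$. Here $\Lambda$ is diagonal with entries $\langle j\rangle$, and $\mathcal M$ is a Hadamard multiplier with entries
\[
 m_{j\ell}=\frac{\langle j\rangle}{d_q}.
\]
For sum divisors, and for difference divisors with $j,\ell$ far apart, $m_{j\ell}$ is bounded by $C\max(j,\ell)^{-1}$ times $\ell$-independent factors. Such a multiplier can be written as an absolutely convergent combination of diagonal operators on each side, e.g.\ via $\frac1{j^3+\ell^3}=\int_0^\infty e^{-tj^3}e^{-t\ell^3}dt$. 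Each piece preserves boundedness between the relevant $h^q$ levels.

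For difference divisors near the diagonal, we use the exact cubic identity $j^3-\ell^3=(j-\ell)(j^2+j\ell+\ell^2)$. The factor $(j^2+j\ell+\ell^2)^{-1}$ is again a positive-definite separable kernel. The factor $(j-\ell)^{-1}$ is a discrete Hilbert transform, which is bounded on all weighted $h^q$ by a Schur/Cotlar argument with polynomial weights. The frequency corrections $k\cdot\omega+\beta_j-\beta_\ell$ are handled by a Neumann expansion of $d_q^{-1}$ around the cubic leading term. Convergence follows from \eqref{rw:divisors}, since the correction is at most half the leading part.

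Combining the one-derivative loss of $B_f$ with the gain of two derivatives from $\mathcal M$ and $\Lambda^{-1}$ yields the three bounds on $\|S_*\|$ in \eqref{rw:smoothing}. These are $h^\sigma\to h^{\sigma+1}$ and $h^{\sigma+1}\to h^{\sigma+2}$, directly from the two levels of $B_f$, and $h^{\sigma-1}\to h^\sigma$ from the dual/interpolated level. All estimates are uniform in $K$, $M$ and $I$.

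Action derivatives fall on $d_q$ only, producing $-D_Id_q[v]/d_q^2$. By \eqref{rw:divderivative} this is an extra bounded, polynomially-in-$k$ weighted factor, so the same argument applies with right-hand side $C\|v\|_2$.

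Reality follows because conjugate monomials are selected together. On the real phase space, $\overline{d_q}=d_{-q}$, so the sum defining $\chi$ is invariant under conjugation.
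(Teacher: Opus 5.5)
\textbf{There is a genuine gap in the core step: turning the divisor bounds into operator bounds for $S_*$.} You need the entrywise (Schur/Hadamard) multiplier $j^2/d_q$ to preserve boundedness of $A_0=\Lambda^{r-1}(B_f)_k\Lambda^{-r}$ on $\ell^2$. You try to obtain this from exact structural representations of the cubic divisor, and several of these fail as stated.

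First, $(j-\ell)^{-1}$ is needed as an entrywise multiplier, not as an operator acting on sequences. Boundedness of the discrete Hilbert transform on weighted $h^q$ is therefore irrelevant. (It is in fact a bounded Toeplitz Schur multiplier, but because its sawtooth symbol is integrable.)

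More seriously, positive-definiteness of $(j^2+j\ell+\ell^2)^{-1}$ only gives boundedness of the symmetric multiplier $j\ell/(j^2+j\ell+\ell^2)$, that is, one gained derivative on each side. At the top level $h^{\sigma+1}\to h^{\sigma+2}$ there is no bound for $B_f$ from $h^{\sigma+2}$ to $h^{\sigma+1}$, so both derivatives must be gained on the output index. But $j^2/(j^2+j\ell+\ell^2)$ is not a bounded Schur multiplier. Restricted to indices $j=R^a$ with $R$ large, it is the Toeplitz matrix $\psi(a-b)$ with $\psi(m)=(1+R^{-m}+R^{-2m})^{-1}$. This differs from triangular truncation by a Toeplitz multiplier with summable symbol, so its norm on $n$ such indices grows like $\log n$.

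Further, the representation of the ``far apart'' region by diagonal operators is only asserted, and a sharp near/far split is itself not a bounded multiplier. The Neumann expansion raises the same Schur-norm question at every order.

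\textbf{The missing idea is that no structure of $d_q$ is needed.} Rowwise Cauchy--Schwarz, using $\sum_i|(A_0)_{ij}|^2=\|A_0e_j\|^2\le\|A_0\|^2$, gives
\[
 \Bigl\|\bigl(m_{ij}(A_0)_{ij}\bigr)_{i,j}\Bigr\|_{\ell^2\to\ell^2}
 \le\Bigl(\sup_i\sum_j|m_{ij}|^2\Bigr)^{1/2}\|A_0\|_{\ell^2\to\ell^2}.
\]
This is the paper's \eqref{rw:schur}; your remark that a Schur test loses is exactly where the proposal goes astray.

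By \eqref{rw:divisors}, $|j^2/d_q|\le C/|j-\ell|$ in the difference sector. In the sum sector $|j^2/d_q|\le C/\ell$, since $j^3+\ell^3\ge j^2\ell$. So every row is uniformly square-summable, with the frequency corrections already included and no Neumann series. Selection only sets entries to zero. An action derivative multiplies the magnitudes by $C(|k|_1+2)\|v\|_2/|d_q|$, which preserves row square-summability. Applying this at $r=\sigma,\sigma+1$ gives both upper bounds directly.

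\textbf{The remaining pieces.} For $h^{\sigma-1}\to h^\sigma$, your route is legitimate once this estimate is available. You dualize and interpolate $B_f$ via $B_f^{\mathsf T}=\mathcal JB_f\mathcal J$, then apply the estimate at $r=\sigma-1$. The paper instead dualizes $S_*$ through $S_*^{\mathsf T}=\mathcal JS_*\mathcal J$ and interpolates. Finally, you do not address continuity of the derivatives in operator norm. The paper obtains this from the second-order remainder of $1/(d+\delta d)$.
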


\begin{proof}
The uniform analytic bounds give Banach-valued Fourier estimates,
for some fixed $\rho>0$,
\begin{equation}\label{rw:fouriercoeff}
 \|a_k\|_{\sigma-1}
  +\|(B_f)_k\|_{\sigma\to\sigma-1}
  +\|(B_f)_k\|_{\sigma+1\to\sigma}
       \le Ce^{-\rho|k|_1}.
\end{equation}
Division of a linear coefficient by a divisor bounded below by $cj^3$
gains three derivatives.  For the quadratic part use doubled complex
coordinates $(\zeta_j,\bar\zeta_j)$ and let $\Lambda$ multiply both
signed coordinates at index $j$ by $j$.

We record the operator estimate that avoids normal coefficient
summability.  If $A_0$ is bounded on $\ell^2$ and
$\sup_i\sum_j|m_{ij}|^2\le C_m^2$, then on finite sequences
\begin{equation}\label{rw:schur}
 \begin{split}
 \sum_i\left|\sum_jm_{ij}(A_0)_{ij}x_j\right|^2
 &\le C_m^2\sum_j|x_j|^2\|A_0e_j\|^2\\
 &\le C_m^2\|A_0\|^2\|x\|^2.
 \end{split}
\end{equation}
The first inequality is rowwise Cauchy--Schwarz.  Output truncation,
monotone convergence, and density prove the infinite version.

For $r=\sigma,\sigma+1$, set
$A_0=\Lambda^{r-1}(B_f)_k\Lambda^{-r}$.
The normalized divided matrix
$\Lambda^{r+1}(S_*)_k\Lambda^{-r}$ is its entrywise product with a
multiplier whose magnitude, with output index $j$ and input index
$\ell$, is bounded in the difference and sum sectors by
\[
 C\frac{j^2}{|j^3-\ell^3|}\quad(j\ne\ell),
 \qquad C\frac{j^2}{j^3+\ell^3},
\]
respectively.  The factors $k\cdot\omega$ have already been accounted
for in \eqref{rw:divisors}.  The first expression is at most
$C/|j-\ell|$; the second is at most $C/\ell$, since
$j^3+\ell^3\ge j^2\ell$.  Both squared row sums are uniformly bounded
by multiples of $\sum_{n\ge1}n^{-2}$.  Entries excluded by the selection
have multiplier zero and cannot increase these sums.

The selected matrix before division is also bounded at the two original
one-loss levels.  Let $(B_f)_{K,\mathrm d}$ be the diagonal part of
$(B_f)_K$ in doubled coordinates, with $P_M$ acting on both signs.
Then the selected matrix is
\[
 (B_f)_{K,M}^{\rm h}
   =(B_f)_K-P_M(B_f)_KP_M-(1-P_M)(B_f)_{K,\mathrm d}.
\]
Coordinate projection and diagonal extraction are contractions in the
normalized one-loss operator norms.  Its norm is therefore at most
$3\|(B_f)_K\|$ at either one-loss level.  Opposite signs at the same
physical index belong to the divided sum sector.  Identical signed
indices form the action diagonal, corresponding to the
$e^{ik\varphi}J_j$ terms excluded from $g^{\rm h}$.

An angular derivative contributes a factor of $|k|_1$.  An action
derivative $D_I[v]$ of the reciprocal multiplies the above bounds by
at most $C(|k|_1+2)\|v\|_2$, by \eqref{rw:divderivative} and the
uniform lower bound on each selected divisor.  All polynomial factors
from the stated mixed derivatives are summable against
\eqref{rw:fouriercoeff} on a fixed narrower strip, independently of $K$.
To verify Fr\'echet differentiation in operator norm, take a small
action increment $v$ with $[I,I+v]$ inside the fixed complex action
neighborhood, and set $d=d_q(I)$ and
$\delta d=d_q(I+v)-d_q(I)$.  The uniform second action derivative
bounds for the frequencies give
\[
 |\delta d-Dd_q(I)[v]|
 \le C(|k|_1+2)\|v\|_2^2.
\]
Consequently the actual derivative remainder is
\[
 \frac1{d+\delta d}-\frac1d+\frac{Dd_q(I)[v]}{d^2}
 =-\frac{\delta d-Dd_q(I)[v]}{d^2}
   +\frac{(\delta d)^2}{d^2(d+\delta d)}.
\]
By \eqref{rw:divisors} and $M^2\ge D(K+1)$, the increment can be
chosen small independently of $K,M$ so that $|\delta d|\le |d|/2$.
Both terms have the same row-square bound times $\|v\|_2^2$, with
the additional polynomial Fourier factors absorbed as above.
Applying the same estimates to differences of the differentiated
reciprocal proves continuity of the derivative.  The linear
coefficients satisfy the analogous componentwise estimates.
This proves the mixed derivative bounds at the two upper levels.

At real parameters, conjugation replaces $q,d_q$ by $-q,-d_q$, so the
selected primitive is real.  Its frozen quadratic Hessian is symmetric
for the complex bilinear unweighted pairing in the Cartesian variables
$z=(u,v)$, including at complex parameters.  Denoting the transpose for
this pairing by $^{\mathsf T}$,
we therefore have
\[
 S_*^{\mathsf T}=\mathcal J S_*\mathcal J.
\]
This is first an identity on finite test sequences, so it does not
presuppose an unweighted operator bound.  Dualizing with respect to
this bilinear pairing, the bounds at $r=\sigma,\sigma+1$ give the
same matrix at levels $r=-\sigma-1,-\sigma-2$, with one derivative
of gain.  Hilbert-scale
interpolation between the endpoint bounds supplies all intermediate
levels.  In particular, interpolate from
\[
 S_*:h^{-\sigma-1}\longrightarrow h^{-\sigma}
 \qquad\hbox{to}\qquad
 S_*:h^\sigma\longrightarrow h^{\sigma+1}
\]
with $\theta=2\sigma/(2\sigma+1)$.  The input index is
$(1-\theta)(-\sigma-1)+\theta\sigma=\sigma-1$, and the output
index is $\sigma$.  Thus $S_*:h^{\sigma-1}\to h^\sigma$, as required.
Angular and action derivatives satisfy the same transpose identity
by differentiation, so the same duality and interpolation argument
applies to them.  This proves
\eqref{rw:smoothing}.

For fixed $(\varphi,I)$, the construction defines the continuous scalar
polynomial
\[
 \chi=\langle-\mathcal Jb_*,z\rangle
       +\tfrac12\langle-\mathcal JS_*z,z\rangle,
\]
whose frozen gradient is $-\mathcal J(b_*+S_*z)$.
Normal compression gives $P_mb_*$ and $P_mS_*P_m$.
The vectors converge in $h^{\sigma+2}$, and the operators converge
strongly at the stated levels.  The same holds after the indicated
derivatives, with the action direction fixed.  Uniform boundedness
makes the operator convergence uniform on compact input sets.
No operator-norm convergence at the full gain is required.
\end{proof}

\subsection{An action primitive on the actual trajectory}

We combine the tangential and all normal action components of $\chi$
in the vector-valued primitive
\begin{equation}\label{rw:wholeprimitive}
 \mathcal U(\varphi,I,z)
  =\left(-\partial_\varphi\chi,
      \bigl(z_j\cdot V_j\bigr)_{j\in\mathcal N}\right).
\end{equation}
After substituting $I=\xi+\pi(b)$, it equals $a_\chi$.
Indeed, $\{p_a,I_\ell\}=\{J_j,I_\ell\}=0$ for every action coordinate,
so the derivatives of the coefficients through $I$ contribute zero to
these action brackets.

Partial derivatives of $\mathcal U$ are taken with
$(\varphi,I,z)$ independent.
After composition, the dependence on $p$ is carried by $I$.
The partial derivative $D_z\mathcal U$, initially
defined on $h^\sigma$, is used on $h^{\sigma-1}$ through the continuous
extension proved below.  The notation $\mathcal U(b)$ means evaluation
at $(\varphi,\xi+\pi(b),z)$.

\begin{lemma}[Action primitive]\label{rw:lem-primitive}
Let $K,M\ge1$ satisfy $M^2\ge D(K+1)$, with $D$ as in
Lemma~\ref{rw:lem-separation}.  At real parameters in the fixed
neighborhoods, put $Z=\|z\|_\sigma$.  The primitive and its derivatives
satisfy
\begin{equation}\label{rw:ubounds}
 \begin{aligned}
 \|\mathcal U\|_{\mathcal A_{w_*}}
 +\|D_\varphi\mathcal U\|_{\mathcal L(\R^A,\mathcal A_{w_*})}
       &\le C(Z+Z^2),\\
 \|D_I\mathcal U\|_{\ell^2\to\mathcal A_{w_*}}
       &\le C(Z+Z^2),\\
 \|D_z\mathcal U[v]\|_{\mathcal A_{w_*}}
       &\le C(1+Z)\|v\|_{\sigma-1}.
 \end{aligned}
\end{equation}
Set
\begin{equation}\label{rw:uresidual}
 \mathcal R
   =D_\varphi\mathcal U[f_p]
       +D_z\mathcal U[X_f^z]+D_I\mathcal U[a_f].
\end{equation}
Here the partial derivatives of $\mathcal U$ are evaluated at
$(\varphi,\xi+\pi(b),z)$, and $f_p,X_f^z,a_f$ at $b$.
On a stopped interval $[0,T]$ of the supplied strong trajectory,
where \eqref{rw:stopped} holds, one has
$\|\mathcal R\|_{\mathcal A_{w_*}}\le C$
and, for $0\le t\le T$,
\begin{equation}\label{rw:highidentity}
 E_{\rm h}(t):=\eps\int_0^t a_{g^{\rm h}}(b(v))\,dv
   =\eps[\mathcal U(b(t))-\mathcal U(b(0))]
         -\eps^2\int_0^t\mathcal R(b(v))\,dv.
\end{equation}
Both integrals are Bochner integrals in $\mathcal A_{w_*}$.
In particular, if $T\le T_0/\eps$, then
\begin{equation}\label{rw:highbudget}
 \sup_{t\le T}\bigl(\|E_{\rm h}(t)\|_{\mathcal A_{w_*}}
                 +|\nu\cdot E_{\rm h}(t)|\bigr)\le C\eps.
\end{equation}
\end{lemma}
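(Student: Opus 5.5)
The plan has three stages: the static bounds \eqref{rw:ubounds} from Lemma~\ref{rw:lem-mixed}, the identity \eqref{rw:highidentity} by a chain rule for the vector observable $\mathcal U(b(t))$, and the budget \eqref{rw:highbudget} from the energy-type bracket structure.

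\emph{Static bounds.} The tangential component of $\mathcal U$ is $-\partial_\varphi\chi=\langle\mathcal J\partial_\varphi b_*,z\rangle+\tfrac12\langle\mathcal J\partial_\varphi S_*z,z\rangle$. By \eqref{rw:smoothing}, this is $O(Z+Z^2)$. The normal components are $z_j\cdot V_j$, and the weighted estimate \eqref{rw:cs} gives
\[
\sum_j j^{w_*}|z_j\cdot V_j|\le\|z\|_\sigma\|V\|_{\sigma-1}\le Z(\|b_*\|_{\sigma-1}+C\|z\|_{\sigma-1}).
\]
The same holds after one angular derivative or one action derivative $D_I[v]$, using the corresponding bounds of Lemma~\ref{rw:lem-mixed}. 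For $D_z\mathcal U[v]$ with $v\in h^{\sigma-1}$, the normal part is $v_j\cdot V_j+z_j\cdot(S_*v)_j$. The second term is controlled by $Z\|S_*v\|_{\sigma-1}\le CZ\|v\|_{\sigma-1}$. The first term is the delicate one: it needs $\sum j^{2\sigma-1}|v_j||V_j|\le\|v\|_{\sigma-1}\|V\|_{\sigma}$, and $\|V\|_\sigma\le C(1+Z)$ follows from $\|b_*\|_{\sigma+2}$ and $S_*:h^\sigma\to h^{\sigma+1}$. The tangential part is handled in the same way, using $S_*:h^{\sigma-1}\to h^\sigma$ for the pairing with $z$. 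This is exactly where the gain $S_*:h^{\sigma-1}\to h^\sigma$ is used.

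\emph{Chain rule and identity.} Next I would differentiate $\mathcal U(b(t))$, imitating Lemma~\ref{rw:lem-chain} componentwise with Galerkin projections $P_m$. I project $z\mapsto P_mz$, use the compressed primitives $P_mb_*$ and $P_mS_*P_m$, apply the finite-dimensional chain rule, and pass to the limit. Convergence in $\mathcal A_{w_*}$ uses the strong convergence at the stated levels, which is uniform on the compact path image. The tricky point is that $\dot z$ lies only in $h^{\sigma-3}$, so I would not pair $D_z\mathcal U$ directly with $\dot z$. Instead I split $\dot z=X_h^z+\eps X_f^z$. On the projected level, the contribution $D_\varphi\mathcal U[\omega]+D_z\mathcal U[X_h^z]$ (the action derivative vanishes for $h$) is exactly the action drift of $\{\chi,h\}=g^{\rm h}$, by \eqref{rw:homological} and the action-bracket argument showing $\mathcal U=a_\chi$. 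More precisely, the action-projected bracket identity $D\pi X_{\{\chi,h\}}$ gives $a_{g^{\rm h}}$ up to sign conventions. The remaining $\eps$-terms give $\eps\mathcal R$, where $\dot\varphi$ carries $\eps f_p$ and $\dot I=\eps a_f$ by Lemma~\ref{rw:lem-primitive}'s setting via Lemma~\ref{lem:action-chain}. I expect the main obstacle here. The $X_h^z$ term, being $\Omega_j\mathcal Jz_j$, is only in $h^{\sigma-3}$, so the cancellation must be performed at each finite $m$ (where the compressed homological equation holds exactly, since compression commutes with the diagonal rotation) before taking $m\to\infty$. Only the $\eps$-terms, whose inputs lie in $h^{\sigma-1}$, are then passed to the limit using the $D_z\mathcal U$ bound. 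Integrating gives \eqref{rw:highidentity}.

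\emph{Budgets.} With \eqref{rw:ubounds}, $|f_p|,\|X_f^z\|_{\sigma-1},\|a_f\|_{\mathcal A_{w_*}}\le C$ and $Z\le1$, we get $\|\mathcal R\|\le C$. Since $t\le T_0/\eps$, the integral term is then $O(\eps)$, and the boundary term is $O(\eps)$ as well. Finally, $\nu$ is bounded on $\mathcal A_{w_*}$, and this yields \eqref{rw:highbudget}.
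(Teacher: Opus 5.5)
Your proposal is correct and follows the paper's proof essentially step by step. It uses the static bounds from Lemma~\ref{rw:lem-mixed} via the weighted Cauchy--Schwarz splitting $w_*=\sigma+(\sigma-1)$, exact homological cancellation for the explicitly compressed primitive $V_m=P_mb_*+P_mS_*P_mz$ before letting $m\to\infty$, and the $O(\eps)$ budget from bounded endpoints plus an $O(\eps^2T)$ residual. One detail deserves to be stated explicitly: only the explicit normal variables are compressed, while every divisor keeps the full running action $I(t)$, as your use of $\dot I=\eps a_f$ already presupposes; a literal imitation of Lemma~\ref{rw:lem-chain} would evaluate the divisors at the truncated action $\xi+\pi(\varphi,p,P_mz)$ and would spoil the exactness of the cancellation at finite $m$.
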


\begin{proof}
For scalar or real-pair sequences,
\[
 \sum_jj^{w_*}|x_j||y_j|
       \le\|x\|_r\|y\|_{w_*-r}.
\]
Apply this to $z$ and $V=b_*+S_*z$, using
Lemma~\ref{rw:lem-mixed}.  It also proves the bounds for the normal
components of the parameter derivatives.  At fixed $I$,
\[
 D_z(z_j\cdot V_j)[v]
   =v_j\cdot(b_*+S_*z)_j+z_j\cdot(S_*v)_j.
\]
For the first product use $v\in h^{\sigma-1}$ and
$V\in h^{\sigma+1}\subset h^\sigma$.  For the second use
$z\in h^\sigma$ and $S_*v\in h^\sigma\subset h^{\sigma-1}$.
For the tangential components use the scalar linear and quadratic
forms for $\chi$ and its angular derivatives.  Their frozen normal
gradients are $-\mathcal JV$ and its corresponding derivatives, in
$h^{\sigma+1}$, so their pairing with $v\in h^{\sigma-1}$ is
continuous.  Their scalar sizes are $C(Z+Z^2)$.  The bounds in
Lemma~\ref{rw:lem-mixed}, including two angular derivatives and the
mixed angular/action derivatives, cover all these terms.
This proves \eqref{rw:ubounds}.  Density of $h^\sigma$ in
$h^{\sigma-1}$ gives the unique continuous extension of
$D_z\mathcal U$ asserted above.

The perturbation bounds in Assumption~\ref{ass:contract} give
\[
 \|a_f\|_{\mathcal A_{w_*}}
     \le |f_\varphi|_1+\|z\|_\sigma\|\nabla_zf\|_{\sigma-1}
     \le C.
\]
Together with $|f_p|\le C$, $\|X_f^z\|_{\sigma-1}\le C$, and the
embedding $\mathcal A_{w_*}\subset\ell^2$, this proves the residual
bound, since $Z\le\sqrt s\le1$.
The full $f=g+r$ occurs in \eqref{rw:uresidual}.

To handle the three-derivative loss in the integrable velocity, we
first differentiate finite normal compressions along the supplied path.
Lemma~\ref{lem:action-chain} gives $I\in C^1(\ell^2)$ and
$\dot I=\eps a_f$.  For $m\ge1$, set
\[
 \chi_m(\varphi,I,z)=\chi(\varphi,I,P_mz),\qquad
 g_m^{\rm h}(\varphi,z)=g^{\rm h}(\varphi,P_mz).
\]
Only the explicit normal variables are compressed; every denominator
retains the full parameter $I(t)$.  The frozen normal field of $\chi_m$ is
\[
 V_m=P_mb_*+P_mS_*P_mz.
\]
Define $\mathcal U_m$ by \eqref{rw:wholeprimitive} with $\chi_m,V_m$
in place of $\chi,V$.  Its normal components vanish for $j>m$.
Define $\mathcal R_m$ by \eqref{rw:uresidual} with $\mathcal U_m$
in place of $\mathcal U$ and normal input $P_mX_f^z$; all perturbation
quantities are evaluated at the full point $b(t)$.
The chain rule in the finite explicit coordinates and the
$C^1(\ell^2)$ parameter $I(t)$ gives
\[
 \frac{d}{dt}\mathcal U_m
       =a_{g^{\rm h}_m}+\eps\mathcal R_m.
\]
Indeed, the unperturbed angular and normal parts of the derivative
multiply each monomial by $id_q(I)$, and \eqref{rw:homological}
cancels that divisor exactly.  The action-parameter derivative is
multiplied by the full $\dot I=\eps a_f$.

Integrate this finite identity before passing to $m\to\infty$.
The integrand $a_{g^{\rm h}_m}$ converges to $a_{g^{\rm h}}$ in
$\mathcal A_{w_*}$ by the one-loss jet bounds and weighted
Cauchy--Schwarz.  Endpoints and residuals
converge strongly in the same space using \eqref{rw:smoothing},
\eqref{rw:ubounds}, and the continuity of the operator evaluations.
To see the uniformity along the actual path, set
$S(t)=S_*(\varphi(t),I(t))$ and $v(t)=X_f^z(b(t))$.  Then
\[
 \begin{aligned}
 P_mS(t)P_mv(t)-S(t)v(t)
   &=P_mS(t)\bigl(P_mv(t)-v(t)\bigr)\\
   &\quad +(P_m-1)S(t)v(t).
 \end{aligned}
\]
The first term tends uniformly to zero in $h^\sigma$ by the uniform
$h^{\sigma-1}\to h^\sigma$ bound and the compact image of the
continuous path $v$ in $h^{\sigma-1}$.  The second does so because
$t\mapsto S(t)v(t)$ is continuous into $h^\sigma$ and has compact
image there.  These compactness statements concern the continuous
paths on the fixed stopped interval at fixed $\eps$, not an entire
bounded Sobolev set; no operator-norm convergence of the compressions
is used.  For the action derivative terms, the operator path
$S_I(t)=D_IS_*(\varphi(t),I(t))[a_f(b(t))]$ is continuous in
$\mathcal L(h^\sigma,h^{\sigma+1})$.  Apply the same decomposition
to $S_I(t)z(t)$; the linear coefficients and angular derivatives
are handled similarly.  The limiting integrands are continuous in
$\mathcal A_{w_*}$, and the uniform bounds justify passage through
the Bochner integrals.  This gives \eqref{rw:highidentity}.

The endpoint contribution is at most $C\eps(\sqrt s+s)$, and the
residual contribution is at most $C\eps^2T$.  For $T\le T_0/\eps$,
the bound $s\le1$ gives a total contribution of at most $C\eps$.
Pairing with the bounded functional $\nu$ on $\mathcal A_{w_*}$ gives
\eqref{rw:highbudget}.
\end{proof}

The linear part of this construction will be used in finite-order
averaging, where every normal linear index is retained.

\begin{lemma}[The high linear integral]\label{lem:highlinear}
Let $g_{1,\rm h}$ be the normal linear part of $g_K$ with index
$j>M$.  On a stopped interval with $T\le T_0/\eps$, uniformly under
$M^2\ge D(K+1)$,
\begin{equation}\label{rw:highlinear}
 \sup_{t\le T}\left(
  \left\|\eps\int_0^t a_{g_{1,\rm h}}\,dv\right\|_{\mathcal A_{w_*}}
    +\left|\nu\cdot\eps\int_0^t a_{g_{1,\rm h}}\,dv\right|
                         \right)\le C\eps.
\end{equation}
The integrands are evaluated at $b(v)$, and the normal sum includes
every index greater than $M$.
\end{lemma}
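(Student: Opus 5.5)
The plan is to repeat the primitive construction of Lemma~\ref{rw:lem-primitive}, restricted to the linear part. Write
\[
 g_{1,\rm h}=\sum_{|k|_1\le K}\sum_{j>M}
 \bigl(c^+_{k,j}e^{ik\varphi}\zeta_j+c^-_{k,j}e^{ik\varphi}\bar\zeta_j\bigr).
\]
Every label here is of the form $q=(k,\pm e_j)$ with $j>M$. By the first line of \eqref{rw:divisors}, each divisor satisfies $|d_q(I)|\ge(c_0/2)j^3$, uniformly on the complex action neighborhood.

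Define the linear primitive $\chi_1$ by \eqref{rw:primitive}, keeping only these labels and treating $I$ as an independent parameter. Its frozen normal field is the pure vector $V_1=b_{*,1}(\varphi,I)$, with no $S_*$ part. By \eqref{rw:fouriercoeff}, $\|a_k\|_{\sigma-1}\le Ce^{-\rho|k|_1}$. Division by $cj^3$ gains three derivatives, so $\|b_{*,1}\|_{\sigma+2}\le C$, and the same holds after two angular derivatives and one action derivative, using \eqref{rw:divderivative}. This is exactly the linear half of Lemma~\ref{rw:lem-mixed}, and no Schur estimate is needed. The exact homological identity $\{\chi_1,h\}=g_{1,\rm h}$ holds because $\{I_j,h\}=0$.

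Next define $\mathcal U_1=(-\partial_\varphi\chi_1,(z_j\cdot V_{1,j})_j)$. As in the proof of Lemma~\ref{rw:lem-primitive}, it equals $a_{\chi_1}$ along the trajectory. It satisfies the bounds \eqref{rw:ubounds} with $Z+Z^2$ replaced by $Z$:
\[
 \|\mathcal U_1\|_{\mathcal A_{w_*}}\le CZ,\qquad
 \|D_z\mathcal U_1[v]\|_{\mathcal A_{w_*}}\le C\|v\|_{\sigma-1},
\]
and the angular and action derivatives are likewise $O(Z)$. The residual $\mathcal R_1$, built as in \eqref{rw:uresidual}, is therefore bounded in $\mathcal A_{w_*}$ by a uniform constant, using $\|a_f\|_{\mathcal A_{w_*}}+|f_p|+\|X^z_f\|_{\sigma-1}\le C$.

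The trajectory identity comes from differentiating the compressions $\mathcal U_{1,m}$, defined with $P_m b_{*,1}$, along the supplied path. Here $I\in C^1(\ell^2)$ with $\dot I=\eps a_f$ by Lemma~\ref{lem:action-chain}, and the divisor cancellation gives
\[
 \frac{d}{dt}\mathcal U_{1,m}=a_{g_{1,{\rm h}},m}+\eps\mathcal R_{1,m}.
\]
We integrate this identity and pass to the limit $m\to\infty$ exactly as before. The only difference is that the operator term $P_mSP_m$ is absent, so convergence follows from $P_mb_{*,1}\to b_{*,1}$ uniformly along the compact path image in $h^{\sigma+2}$. The result is
\[
 \eps\int_0^t a_{g_{1,\rm h}}\,dv=\eps[\mathcal U_1(b(t))-\mathcal U_1(b(0))]-\eps^2\int_0^t\mathcal R_1\,dv .
\]
The endpoint term is $O(\eps\sqrt s)$ and the residual term is $O(\eps^2T)=O(\eps T_0)$, so together they are bounded by $C\eps$ in $\mathcal A_{w_*}$. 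Pairing with the bounded functional $\nu$ on $\mathcal A_{w_*}$ gives \eqref{rw:highlinear}.

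The main point needing care is uniformity in $K$ and in the full index range $j>M$. Uniformity in $j>M$ holds because the separation $|d_q|\ge(c_0/2)j^3$ is independent of $k$ once $M^2\ge D(K+1)$. Uniformity in $K$ holds because the polynomial factors $(|k|_1+2)$ and $|k|_1^2$ coming from the derivatives are summable against $e^{-\rho|k|_1}$ on a narrower strip. Since these are the same ingredients already verified in Lemma~\ref{rw:lem-mixed}, one could alternatively observe that $g_{1,\rm h}$ corresponds to the $b_*$ component of that construction, and take the primitive's linear part directly.
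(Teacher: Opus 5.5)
Your proposal is correct and follows the paper's proof essentially step for step. You restrict the primitive \eqref{rw:primitive} to its linear part and use the three-derivative gain $\|b_*\|_{\sigma+2}\le C$ to improve \eqref{rw:ubounds} to $CZ$ and $C\|v\|_{\sigma-1}$; you then rerun the compressed chain-rule argument of Lemma~\ref{rw:lem-primitive} with $S_*=0$ and the full $f$ in the residual, and pass to all indices via $h^{\sigma+2}$ convergence. The endpoint bound $C\eps\sqrt s$, the residual bound $C\eps^2T$, and the boundedness of $\nu$ on $\mathcal A_{w_*}$ then give \eqref{rw:highlinear}, exactly as in the paper.
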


\begin{proof}
Use the linear part of \eqref{rw:primitive}.  Its frozen field is
$b_*$ and its coefficient gains three derivatives by
\eqref{rw:divisors}, so $\|b_*\|_{\sigma+2}\le C$, with the same
angular and action derivative bounds.  In
\eqref{rw:wholeprimitive} the normal component is $z_j\cdot b_{*,j}$
and the tangential component is the derivative of a continuous linear
form.  Consequently the first two bounds in \eqref{rw:ubounds} improve
to $CZ$, and the last to $C\|v\|_{\sigma-1}$.  Compress its explicit
normal indices while keeping the full $I(t)$ in the denominators.
The proof of \eqref{rw:highidentity}, now with $S_*=0$ and the full
$f$ in the residual, gives an endpoint contribution of size
$C\eps\sqrt s$ and a residual contribution of size $C\eps^2T$.
Passing to all normal indices is justified by the
$h^{\sigma+2}$ coefficient convergence and the same weighted
pairing estimates.  Use $s\le1$, $T\le T_0/\eps$, and the bounded
pairing with $\nu$ to obtain \eqref{rw:highlinear}.
\end{proof}

\subsection{Jet leakage and transfer to the highest action weight}

We first estimate the integrated jet drift outside the normal cutoff,
then use the supplied higher norm to transfer this control from weight
$2\sigma-1$ to weight $2\sigma$.
For a finite set $S\subset\mathcal N$ and $0\le t\le T$, define
\begin{equation}\label{rw:leakdefinition}
 \begin{split}
 G_j(t)&=\eps\int_0^t\{J_j,g\}(b(v))\,dv,\\
 \Gamma_S(t)&=\sup_{0\le v\le t}\sum_{j\notin S}
                                  j^{2\sigma-1}|G_j(v)|,\\
 K_S&=\left(d+4\sum_{j\in S}j^{4\sigma}\right)^{1/2},\qquad
 \mathcal B_t=\sup_{v\le t}B(v),\quad
 \mathcal W_t=\sup_{v\le t}W(v).
 \end{split}
\end{equation}
Here $G_j$ is the contribution of the full Taylor jet $g$ to the normal
action increment.  The sums over $j\notin S$ run over
$\mathcal N\setminus S$, and all running suprema are over $[0,t]$.
In $\Gamma_S$, absolute values are taken after time integration,
and the supremum is taken after the weighted sum.

\begin{proposition}[Exterior leakage]\label{rw:prop-leakage}
There are constants $a,C>0$ such that, for every $K,M\ge1$
satisfying $M^2\ge D(K+1)$ and every stopped interval
$[0,T]$ with $T\le T_0/\eps$,
\begin{equation}\label{rw:leakage}
 \Gamma_{S_M}(T)\le C(\eps+e^{-aK}).
\end{equation}
This estimate allows nonzero initial normal coordinates and does not
use the higher bound $H_+$.
\end{proposition}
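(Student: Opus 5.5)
We split the jet contribution $\{J_j,g\}$ for $j\notin S_M$ by angular and normal structure. The normal action bracket $\{J_j,g\}$ only sees the $z$-dependent part $g^{\rm n}$ of \eqref{rw:normaljet}. For an exterior index $j>M$, every monomial of $g^{\rm n}$ that contains an index $j$ is one of three kinds: a normal linear monomial with index $j$; a quadratic monomial with $\alpha-\beta\ne0$ and active index $j>M$; or a diagonal term $e^{ik\varphi}J_j$. The diagonal terms commute with $J_j$, so they contribute nothing. We write $g=g_K+(g-g_K)$. Within $g_K$, the first two kinds are exactly the monomials selected into $g^{\rm h}=g^{\rm h}_{K,M}$ that involve the index $j$. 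Hence, for $j\notin S_M$,
\[
 \{J_j,g\}=\{J_j,g^{\rm h}\}+\{J_j,g-g_K\}.
\]
A remaining subtlety is quadratic monomials $\zeta_j\bar\zeta_\ell$ with $\ell\le M<j$. These are selected because one active index exceeds $M$, so they are included. Consequently the exterior components of $a_{g^{\rm h}}$ coincide with those of $a_{g_K}$, and $\sum_{j\notin S_M}j^{2\sigma-1}|G_j(v)|$ is bounded by the $\mathcal A_{w_*}$ norm of $E_{\rm h}(v)$ plus the corresponding truncation tail.

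For the first piece we invoke Lemma~\ref{rw:lem-primitive} directly. By \eqref{rw:highbudget}, $\sup_{v\le T}\|E_{\rm h}(v)\|_{\mathcal A_{w_*}}\le C\eps$ on every stopped interval with $T\le T_0/\eps$, uniformly in $K,M$ with $M^2\ge D(K+1)$. Restricting to exterior normal components only decreases the norm. The time integral is taken before absolute values, matching the definition of $\Gamma_S$. This piece is the heart of the estimate, because the divisor lower bounds \eqref{rw:divisors} provide the $\eps$ gain without any nonresonance assumption. The construction already allows nonzero $z^0$, since only the endpoint values $\mathcal U(b(t))-\mathcal U(b(0))$ enter, and these are bounded by $C(Z+Z^2)$. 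No use of $H_+$ occurs.

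For the angular tail $g-g_K$ we estimate crudely. The Fourier estimate \eqref{rw:fouriercoeff} on a fixed strip gives $\|a_k\|_{\sigma-1}+\|(B_f)_k\|_{\sigma\to\sigma-1}\le Ce^{-\rho|k|_1}$. Hence the represented normal gradient of the tail satisfies $\|\nabla_z(g-g_K)\|_{\sigma-1}\le C\sum_{|k|_1>K}e^{-\rho|k|_1}(1+\|z\|_\sigma)\le Ce^{-aK}$ for any $a<\rho$, after absorbing the polynomial count of $k$. The weighted Cauchy--Schwarz inequality \eqref{rw:cs} then gives pointwise
\[
 \sum_j j^{2\sigma-1}|\{J_j,g-g_K\}|\le\|z\|_\sigma\,\|\nabla_z(g-g_K)\|_{\sigma-1}\le Ce^{-aK}.
\]
Integrating over a time $T\le T_0/\eps$ with the prefactor $\eps$ gives $CT_0e^{-aK}$. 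Adding the two pieces yields \eqref{rw:leakage}.

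The main obstacle is the exact bookkeeping of which exterior monomials lie in $g^{\rm h}$. We must check that conjugate pairs and the mixed low/high sectors are all selected, and that the only omitted exterior terms are diagonal in $J_j$, so that they drop out of $\{J_j,\cdot\}$. If some $k\ne0$ monomial $e^{ik\varphi}\zeta_j\bar\zeta_j$ with $j>M$ were mishandled, it would produce an $O(1)$ drift. It does not, because it Poisson-commutes with $J_j$.
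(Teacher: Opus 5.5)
Your proposal is correct and follows the paper's own proof. You use the same split $g=g_K+(g-g_K)$ and the same identity $\{J_j,g_K\}=\{J_j,g^{\rm h}\}$ for $j>M$, where the excluded $e^{ik\varphi}J_j$ diagonal commutes with $J_j$; you then apply \eqref{rw:highbudget} to the first piece and bound the angular tail by the Fourier estimates \eqref{rw:fouriercoeff}, weighted Cauchy--Schwarz and $\eps T\le T_0$, exactly as the paper does.
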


\begin{proof}
Split $g=g_K+(g-g_K)$.  For every normal index $j>M$,
$\{J_j,g_K\}=\{J_j,g^{\rm h}\}$: the terms outside $g^{\rm h}$
are independent of $z_j$, apart from the normal action diagonal, whose normal
action drift vanishes even when its coefficients depend on $\varphi$.
Restricting \eqref{rw:highbudget} to these components
bounds the integrated contribution of $g_K$ by $C\eps$.
The Fourier tail satisfies
\[
 \|\nabla_z(g-g_K)\|_{\sigma-1}
          \le Ce^{-aK}(1+\|z\|_\sigma)
\]
on a fixed narrower strip, by \eqref{rw:fouriercoeff}.  The polynomial
number of angular lattice points and fixed angular derivative factors
are absorbed by reducing the positive exponent $a$.  Weighted
Cauchy--Schwarz and $\|z\|_\sigma\le\sqrt s$ bound its integrated
exterior action drift by $C\eps T e^{-aK}(\sqrt s+s)$.
Since $\eps T\le T_0$ and $s\le1$, this contribution is at most
$Ce^{-aK}$.  Combining the two contributions proves
\eqref{rw:leakage}, with constants independent of $K,M$.
\end{proof}

\begin{proposition}[High-weight transfer with initial normal occupation]
\label{prop:transfer}
On a supplied stopped interval $[0,T]$, assume the uniform bound
\[
 \sup_{t\le T}\|z(t)\|_{\sigma+1}\le H_+,
 \qquad W_0=\|z(0)\|_\sigma^2.
\]
Here $W_0=W(0)$ since $p(0)=0$.  Choose $C_z$ so that
$\|\nabla_zr\|_{\sigma-1}\le C_zW$ on the fixed small tube.
This pointwise estimate is established in the proof of
Lemma~\ref{rw:lem-taylor}.
For every finite $S\subset\mathcal N$ and $0\le t\le T$,
\begin{equation}\label{rw:transferexact}
 \mathcal W_t\le e^{(4\sqrt2/3)H_+C_z\eps t}
 \left[2^{1/3}\bigl(W_0+K_S\mathcal B_t\bigr)
                  +2H_+^{2/3}\Gamma_S(t)^{2/3}\right].
\end{equation}
In particular, if $T\le T_0/\eps$ and $M^2\ge D(K+1)$, then
\begin{equation}\label{rw:transfer}
 \sup_{t\le T}W(t)\le C_{\rm tr}
 \left[W_0+K_{S_M}\sup_{t\le T}B(t)
                  +(\eps+e^{-aK})^{2/3}\right],
\end{equation}
where $C_{\rm tr}$ may depend on $H_+,C_z,T_0$ and the fixed
analytic data, and is independent of the cutoffs and the admitted
initial tail.  Here $B=\|I-I^0\|_2$ measures the action increment;
$W$ includes the total normal occupation.
\end{proposition}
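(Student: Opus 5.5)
The plan is to bound $W(t)$ pointwise by splitting the normal indices into the finite set $S$ and its complement. On $S$ we control the signed increments through $B$. Off $S$ we control only the positive excess $(J_j-J_j^0)_+$. On the complement we split the excess into a jet part, which is interpolated between the weights $2\sigma-1$ and $2\sigma+2$, and a Taylor-remainder part, which is handled directly at weight $2\sigma$ using $H_+$. The remainder part is linear in $W$, so Gronwall's inequality closes the estimate.

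\emph{Step 1 (decomposition).} Since $p(0)=0$ and $J_j=J_j^0+(J_j-J_j^0)$, we have
\[
 W(t)\le W_0+|p|_1+2\sum_{j\in S}j^{2\sigma}|J_j-J_j^0|
   +2\sum_{j\notin S}j^{2\sigma}(J_j-J_j^0)_+ .
\]
The middle group is a weighted $\ell^1$ norm of $\mathcal D$ over the $d+|S|$ coordinates in $A\cup S$. Cauchy--Schwarz bounds it by $(d+4\sum_{j\in S}j^{4\sigma})^{1/2}\|\mathcal D\|_2=K_SB$. This gives the term $W_0+K_S\mathcal B_t$.

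\emph{Step 2 (exterior splitting).} By Lemma~\ref{lem:action-chain} applied to $f=g+r$, we write $J_j(t)-J_j^0=G_j(t)+R_j(t)$, where $R_j=\eps\int_0^t\{J_j,r\}\,dv$. Since $J_j^0\ge0$, we have $(J_j-J_j^0)_+\le\min(J_j,|G_j|+|R_j|)\le m_j+|R_j|$, where $m_j=\min(J_j,|G_j|)$.

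For the remainder part we use the pointwise estimate $|\{J_j,r\}|\le|z_j|\,|(\nabla_zr)_j|$. Weighted Cauchy--Schwarz with weights $j^{\sigma+1}\cdot j^{\sigma-1}$ then gives
\[
 \sum_j j^{2\sigma}|R_j(t)|\le\eps\int_0^t\|z\|_{\sigma+1}\|\nabla_zr\|_{\sigma-1}\,dv
 \le H_+C_z\,\eps\int_0^t W(v)\,dv .
\]
This is linear in $W$; that is the key point.

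For the jet part we apply Hölder's inequality with $j^{2\sigma}=(j^{2\sigma-1})^{2/3}(j^{2\sigma+2})^{1/3}$:
\[
 \sum_{j\notin S}j^{2\sigma}m_j\le\Bigl(\sum_{j\notin S}j^{2\sigma-1}|G_j|\Bigr)^{2/3}\Bigl(\sum_j j^{2\sigma+2}J_j\Bigr)^{1/3}
 \le\Gamma_S(t)^{2/3}\bigl(H_+^2/2\bigr)^{1/3}.
\]
The second factor uses $2\sum_j j^{2\sigma+2}J_j=\|z\|_{\sigma+1}^2\le H_+^2$.

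\emph{Step 3 (Gronwall).} Taking suprema over $[0,t]$, the previous steps give
\[
 \mathcal W_t\le W_0+K_S\mathcal B_t+cH_+^{2/3}\Gamma_S(t)^{2/3}+2H_+C_z\eps\int_0^t\mathcal W_v\,dv .
\]
The first three terms are nondecreasing in $t$. Gronwall's inequality therefore yields a bound of the form \eqref{rw:transferexact}. To match the stated numerical constants ($2^{1/3}$, $2$, $4\sqrt2/3$) exactly, one may need a slightly different bookkeeping, for instance absorbing with $(a+b)^{2/3}\le a^{2/3}+b^{2/3}$ or distributing factors of $2$ in Young's inequality. The structure of the argument is unchanged, and only the exponential rate and prefactors are affected.

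\emph{Step 4 (the cutoff form).} For \eqref{rw:transfer}, take $S=S_M$ and insert Proposition~\ref{rw:prop-leakage}, which gives $\Gamma_{S_M}(T)\le C(\eps+e^{-aK})$. Then use $\eps t\le T_0$ to bound the exponential factor by a constant depending on $H_+,C_z,T_0$.

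The step I expect to be the main obstacle is Step 2. Interpolating the full excess would leave the remainder contribution raised to the power $2/3$, which is not linear in $W$ and cannot be closed by Gronwall. The fix is to keep $R_j$ outside the interpolation and estimate it at the top weight $2\sigma$ via the $h^{\sigma+1}$ bound. This is exactly where $H_+$ enters and why the Gronwall rate is proportional to $H_+C_z$. A further point to check is that the minimum trick $\min(J_j,|G_j|+|R_j|)\le m_j+|R_j|$ remains valid with nonzero initial occupation, so that no $J^0$ weight appears in the upper moment.
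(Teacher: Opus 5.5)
You reach a valid bound of the same shape by a different route, but not with the constants displayed in \eqref{rw:transferexact}.

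\textbf{How the paper argues.} The paper never separates the remainder from the jet. It bounds the whole positive excess at the lower weight,
\[
 L_S(v)=\sum_{j\notin S}j^{2\sigma-1}J_j^+(v)\le\Gamma_S(v)+C_z\eps\int_0^vW^{3/2},
\]
using only $\|z\|_\sigma\le W^{1/2}$ in \eqref{rw:cs}. It then interpolates all of $L_S$ against the $h^{\sigma+1}$ moment. It closes by Gronwall for $Y=W^{3/2}$, after using $(a+b)^{3/2}\le\sqrt2(a^{3/2}+b^{3/2})$.

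The remainder therefore enters as $(\eps\int W^{3/2})^{2/3}$, which is homogeneous of degree one in $W$. So your remark that interpolating the full excess ``cannot be closed by Gronwall'' is mistaken. That substitution is exactly what produces the prefactors $2^{1/3}$, $2$ and the rate $\frac{4\sqrt2}{3}H_+C_z$.

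\textbf{Your route.} Your top-weight pairing $\sum_jj^{2\sigma}|R_j|\le\eps\int_0^t\|z\|_{\sigma+1}\|\nabla_zr\|_{\sigma-1}$ is legitimate. It follows from Tonelli, the pointwise bound $|\{J_j,r\}|\le|z_j||(\nabla_zr)_j|$, and the hypothesis $\|z\|_{\sigma+1}\le H_+$. Your minimum trick is also correct. Together they give a more elementary linear Gronwall directly in $W$, with smaller prefactors. What the paper's arrangement buys is that $H_+$ enters only through the interpolation; this is the template reused in Lemma~\ref{lem:long-strong-transfer}.

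\textbf{The constants.} Concretely, you prove
\[
 \mathcal W_t\le e^{2H_+C_z\eps t}\bigl[W_0+K_S\mathcal B_t
      +2^{2/3}H_+^{2/3}\Gamma_S(t)^{2/3}\bigr].
\]
This is $2^{-1/3}e^{(2-4\sqrt2/3)H_+C_z\eps t}$ times the right side of \eqref{rw:transferexact}. It therefore implies the stated inequality only while $H_+C_z\eps t\le\ln2/(6-4\sqrt2)\approx2.02$.

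Rebookkeeping Young's inequality does not help. In your argument the Gronwall rate is exactly the coefficient $2H_+C_z$ of $\eps\int_0^tW$, and $2>\frac{4\sqrt2}{3}$.

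This does not affect \eqref{rw:transfer}, where $C_{\rm tr}$ is unspecified. Nor does it affect the later arguments: the constant $A_*$ in \eqref{long:critical-transfer} only enters the choice of $c$ in Theorem~\ref{thm:long-logarithmic}. But to obtain \eqref{rw:transferexact} as literally stated, you need the paper's route.
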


\begin{proof}
Define the positive excess $J_j^+(v)=(J_j(v)-J_j^0)_+$.
We estimate it through the integrated action equation, without
differentiating the positive part.  Lemma~\ref{lem:action-chain}
and $f=g+r$ give
\[
 J_j(v)-J_j^0
       =G_j(v)+\eps\int_0^v\{J_j,r\}(b(u))\,du.
\]
Take positive parts and use the triangle inequality and
\eqref{rw:cs}, with $\|z(u)\|_\sigma\le W(u)^{1/2}$.
Passing to the limit in the nonnegative partial sums gives
\begin{equation}\label{rw:loweraction}
 L_S(v):=\sum_{j\notin S}j^{2\sigma-1}J_j^+(v)
 \le\Gamma_S(v)+C_z\eps\int_0^vW(u)^{3/2}\,du.
\end{equation}
Since $0\le J_j^+(v)\le J_j(v)$, its upper moment obeys
$\sum_jj^{2\sigma+2}J_j^+(v)\le H_+^2/2$.
H\"older's inequality and
$2\sigma=\frac23(2\sigma-1)+\frac13(2\sigma+2)$ therefore give
\[
 2\sum_{j\notin S}j^{2\sigma}J_j^+(v)
       \le2^{2/3}H_+^{2/3}L_S(v)^{2/3}.
\]
For the finite part, Cauchy--Schwarz gives
\[
 |p(v)|_1+2\sum_{j\in S}j^{2\sigma}J_j^+(v)\le K_SB(v),
\]
because replacing a signed component of $J-J^0$ by its positive
part cannot increase the $\ell^2$ norm.  Since
$J_j(v)\le J_j^0+J_j^+(v)$, it follows that
\[
 W(v)\le W_0+K_SB(v)+2\sum_{j\notin S}j^{2\sigma}J_j^+(v).
\]
Fix $t\le T$, put $A_t=W_0+K_S\mathcal B_t$, and set
$Y(v)=W(v)^{3/2}$.  Using
$(a+b)^{3/2}\le\sqrt2(a^{3/2}+b^{3/2})$ yields, for $v\le t$,
\[
 Y(v)\le\sqrt2A_t^{3/2}+2\sqrt2H_+\Gamma_S(t)
              +2\sqrt2H_+C_z\eps\int_0^vY(u)\,du.
\]
Apply integral Gronwall on $[0,t]$, raise the resulting bound to the
power $2/3$, and use $(a+b)^{2/3}\le a^{2/3}+b^{2/3}$ for $a,b\ge0$.
Taking the supremum over $0\le v\le t$ gives
\eqref{rw:transferexact}.  If $T\le T_0/\eps$,
Proposition~\ref{rw:prop-leakage} then gives \eqref{rw:transfer}.
The initial contribution enters linearly through $W_0$, at weight $2\sigma$.
For $W_0=0$, this is the zero-tail transfer estimate.
\end{proof}

\subsection{The high normal diagonal and the complete window error}

We now treat the high normal action diagonal excluded from $g^{\rm h}$,
split off its initial occupation, and assemble the window error.  We
then rewrite the retained jet in the form used for averaging in
Section~\ref{sec:filtered}.  All time integrands below are evaluated
along the supplied trajectory $b(v)$.

Define the jet with both angular and normal cutoffs by
\[
 g_{K,M}^{\ell}(\varphi,p,z)=g_K(\varphi,p,P_Mz).
\]
The remaining terms of $g_K$ are $g^{\rm h}$ and the high normal
action diagonal:
\begin{equation}\label{rw:highdecomposition}
 g_K=g_{K,M}^{\ell}+g^{\rm h}+d_{\rm h},\qquad
 d_{\rm h}(\varphi,z)=\sum_{j>M}a_j^{\rm d}(\varphi)J_j.
\end{equation}
The coefficients $a_j^{\rm d}$ are real for real angles.  Diagonal
extraction from the one-loss operator and angular analyticity give
\begin{equation}\label{rw:diagonalbounds}
 |a_j^{\rm d}|+|\partial_\varphi a_j^{\rm d}|
       \le Cj,
 \qquad \|\nabla_zd_{\rm h}\|_{\sigma-1}\le C\|z\|_\sigma.
\end{equation}
Indeed, in signed complex coordinates, each diagonal entry of
$\Lambda^{\sigma-1}(B_f)_K\Lambda^{-\sigma}$ is bounded by the
operator norm; undoing the weights gives the bound $Cj$.
Every fixed number of further angular derivatives satisfies the same
estimate on a smaller fixed strip.  Since
$\sum_{j>M}jJ_j\le\|z\|_\sigma^2/2$, the scalar series and its
angular derivatives converge absolutely.  It defines a $C^1$ scalar
on the $h^\sigma$ tube, with continuous represented gradient in
$h^{\sigma-1}$.  The diagonal preserves every normal action, but its
tangential drift $a_{d_{\rm h}}=(-\partial_\varphi d_{\rm h},0)$
may be nonzero.

Split the high diagonal into its initial occupation and its increment:
\begin{equation}\label{rw:frozen-diagonal}
 \begin{split}
 c_{\rm h}^0(\varphi)&=\sum_{j>M}a_j^{\rm d}(\varphi)J_j^0,\\
 d_{\rm h}^{\rm inc}(\varphi,z)
       &=\sum_{j>M}a_j^{\rm d}(\varphi)(J_j-J_j^0),\qquad
 d_{\rm h}=c_{\rm h}^0+d_{\rm h}^{\rm inc}.
 \end{split}
\end{equation}
The values $J^0$ are fixed parameters when differentiating in the
running action $I$.  The Fourier support of $c_{\rm h}^0$ lies in
$|k|_1\le K$.  On a fixed smaller strip of width $a_1>0$, its
coefficients satisfy
\begin{equation}\label{rw:frozen-diagonal-bound}
 \sum_{|k|_1\le K}e^{a_1|k|_1}|(c_{\rm h}^0)_k|
       \le C\sum_{j>M}jJ_j^0\le CW_0,
\end{equation}
with the same bound for each fixed number of angular derivatives
after reserving strip width.  Indeed the diagonal coefficients have
$|(a_j^{\rm d})_k|\le Cj e^{-a_0|k|_1}$ for $a_0>a_1$.
Thus $c_{\rm h}^0$ is a well-defined trigonometric polynomial even
for an infinite initial tail.  We retain it in the averaging jet,
so its $O(W_0)$ size does not enter the window error.

\begin{proposition}[Finite-window reduction]\label{prop:window}
Suppose $p(0)=0$, $W_0\le s$, and \eqref{rw:stopped} holds on
$[0,T]$, with $T\le T_0/\eps$.  Let $K,M\ge1$ satisfy
$M^2\ge D(K+1)$, with $D$ as in Lemma~\ref{rw:lem-separation}.
For $0\le t\le T$, there is an exact decomposition
\begin{equation}\label{rw:windowidentity}
 \mathcal D(t)
      =\eps\int_0^t a_{g_{K,M}^{\ell}+c_{\rm h}^0}(b(v))\,dv
                +E_{\rm old}(t),
\end{equation}
where
\begin{equation}\label{rw:windowbudget}
 \begin{aligned}
 \sup_{t\le T}\|E_{\rm old}(t)\|_{\mathcal A_{w_*}}
       &\le C(\eps+e^{-aK}+s^{3/2}),\\
 \sup_{t\le T}|\nu\cdot E_{\rm old}(t)|
       &\le C(\eps+e^{-aK}+s^{5/2}).
 \end{aligned}
\end{equation}
Here $a,C>0$ are fixed independently of $K,M,\eps,s$ and the admitted
initial tail.  The scalar estimate is proved separately to obtain
the power $s^{5/2}$.
\end{proposition}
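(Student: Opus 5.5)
The decomposition will follow directly from the action equation and the splittings already in place. By Lemma~\ref{lem:action-chain}, $\mathcal D(t)=\eps\int_0^t a_f(b(v))\,dv$ as a Bochner integral in $\mathcal A_{w_*}$. Write $f=g+r$ and $g=g_K+(g-g_K)$. Then use \eqref{rw:highdecomposition} and \eqref{rw:frozen-diagonal} to write
\[
 g_K=g^{\ell}_{K,M}+c_{\rm h}^0+g^{\rm h}+d_{\rm h}^{\rm inc}.
\]
Since $a_F$ is linear in $F$, this gives \eqref{rw:windowidentity} with
\[
 E_{\rm old}=E_R+E_{\rm h}+E_{\rm tail}+E_{\rm inc},
\]
where $E_{\rm tail}=\eps\int_0^t a_{g-g_K}\,dv$ and $E_{\rm inc}=\eps\int_0^t a_{d_{\rm h}^{\rm inc}}\,dv$. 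The two continuous integrands are justified by \eqref{rw:diagonalbounds} and the Fourier-tail gradient bound used in Proposition~\ref{rw:prop-leakage}. I then bound each piece in the weighted norm and in the scalar $\nu$-pairing.

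\textbf{The first three pieces.} Proposition~\ref{rw:prop-raw} gives $C s^{3/2}$ for $E_R$ in norm and $C(\eps+s^{5/2})$ for $\nu\cdot E_R$. Lemma~\ref{rw:lem-primitive}, through \eqref{rw:highbudget}, gives $C\eps$ for $E_{\rm h}$ in both quantities. For the tail, the estimate $\|\nabla_z(g-g_K)\|_{\sigma-1}\le Ce^{-aK}(1+\|z\|_\sigma)$ and the analogous angular bound $|\partial_\varphi(g-g_K)|_1\le Ce^{-aK}$ control the integrand. Weighted Cauchy--Schwarz \eqref{rw:cs} bounds it in $\mathcal A_{w_*}$ by $Ce^{-aK}$, and $\eps T\le T_0$ then gives $\|E_{\rm tail}\|\le Ce^{-aK}$. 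Because $\nu$ is a bounded functional on $\mathcal A_{w_*}$, the same bound $Ce^{-aK}$ holds for $|\nu\cdot E_{\rm tail}|$.

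\textbf{The increment diagonal (main obstacle).} The term $d_{\rm h}^{\rm inc}$ preserves every normal action, so only its tangential drift $-\eps\int_0^t\sum_{j>M}\partial_\varphi a_j^{\rm d}(\varphi)(J_j-J_j^0)\,dv$ survives. Its integrand is bounded by $C\sum_{j>M}j|J_j-J_j^0|$, which is pointwise only $O(s)$, and integrating over times of order $1/\eps$ gives just $O(s)$. That is not enough. To improve this, I will use $j\le j^{-(w_*-1)}j^{w_*}$ with $w_*-1\ge 5$. This bounds the integrand by $CM^{-(w_*-1)}\sum_{j>M}j^{w_*}|J_j-J_j^0|$. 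I then control $\sum_{j>M}j^{w_*}|J_j-J_j^0|$ through the integrated action equation:
\[
 J_j-J_j^0=G_j+\eps\int_0^t\{J_j,r\}\,du .
\]
Proposition~\ref{rw:prop-leakage} bounds the exterior sum of $|G_j|$ by $C(\eps+e^{-aK})$, and \eqref{rw:cs} with Lemma~\ref{rw:lem-taylor} bounds the $r$ part by $Cs^{3/2}$. Integrating in time then gives
\[
 \|E_{\rm inc}\|\le C\eps T(\eps+e^{-aK}+s^{3/2})\le C(\eps+e^{-aK}+s^{3/2}),
\]
which suffices for the norm estimate.

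The scalar estimate for $E_{\rm inc}$ needs $s^{5/2}$ in place of $s^{3/2}$, and this is the delicate step. Here $\nu\cdot E_{\rm inc}$ involves only the bounded tangential entries of $\nu$, and the $r$-contribution above enters at order $s^{3/2}$. I would first try to mimic \eqref{rw:rawidentity}. The energy identity says $\nu\cdot(\text{total drift})$ is controlled by the endpoint terms plus brackets with $Q_0$. So I would derive the scalar bound for the whole $E_{\rm old}$ at once rather than piece by piece: write
\[
 \nu\cdot E_{\rm old}=\nu\cdot\mathcal D-\nu\cdot\eps\int_0^t a_{g^{\ell}_{K,M}+c_{\rm h}^0}\,dv .
\]
Then apply Lemma~\ref{rw:lem-chain} to the scalars $\eps(g^{\rm h}+d_{\rm h}^{\rm inc}+(g-g_K)+r)$ together with the identity $\{F,\nu\cdot\mathcal D\}=-\nu\cdot a_F$. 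This turns $\nu\cdot a_{\eps F}$ into $-\frac{d}{dt}\eps F$ plus brackets with $Q_0+\eps f$.

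For $F=d_{\rm h}^{\rm inc}$, the endpoint term is $O(\eps s)$. The $Q_0$ bracket is $-DQ_0\cdot a_{d_{\rm h}^{\rm inc}}$, bounded by $Cs\cdot M^{-5}(\ldots)$, which is of higher order. The $\eps f$ bracket contributes $O(\eps^2T)=O(\eps)$. This yields the stated $C(\eps+e^{-aK}+s^{5/2})$, and I would double-check that all brackets converge absolutely at the one-loss regularity, using \eqref{setup:energy-pairing}.
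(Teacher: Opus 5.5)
Your proposal is correct and follows the paper's proof essentially step for step: the same splitting $E_{\rm old}=E_R+E_{\rm h}+E_{\rm tail}+E_{\rm inc}$, the feedback bound $\sum_{j>M}j^{w_*}|J_j-J_j^0|\le C(\eps+e^{-aK}+s^{3/2})$ (from leakage plus the raw error) for the vector estimate, and the chain-rule identity for $d_{\rm h}^{\rm inc}$, whose $Q_0$-bracket supplies the extra factor $s$ in the scalar estimate. One caveat: apply the ``all at once'' chain-rule step only to $d_{\rm h}^{\rm inc}$ (and $r$), because for $g^{\rm h}$ the $Q_0$-bracket gives only $O(s^{3/2})$ uniformly in $M$, so there you must keep the primitive bound \eqref{rw:highbudget} as in your first paragraph; the factor $M^{-(w_*-1)}$ is harmless but unnecessary.
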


\begin{proof}
Put $e_K=e^{-aK}$.  Integrating the normal action equation with
$f=g+r$ gives
$J_j(t)-J_j^0=G_j(t)+(E_R(t))_j$.  Hence the exterior weighted sum
is bounded by $\Gamma_{S_M}(t)+\|E_R(t)\|_{\mathcal A_{w_*}}$.
The leakage and raw error bounds \eqref{rw:leakage} and
\eqref{rw:rawbudget} yield
\begin{equation}\label{rw:diagonalfeedback}
 L_H(t):=\sum_{j>M}j^{w_*}|J_j(t)-J_j^0|
     \le C(\eps+e_K+s^{3/2})=:C\Lambda_s.
\end{equation}
Absolute values are taken after time integration.  This controls
increments from $J^0$ without an initial-occupation term or a
low-action confinement estimate.  Since $j\le j^{w_*}$, the
coefficient bound \eqref{rw:diagonalbounds} gives, along the stopped
trajectory,
\begin{equation}\label{rw:diagonalscalar}
 |d_{\rm h}^{\rm inc}|+|\partial_\varphi d_{\rm h}^{\rm inc}|_1\le C\Lambda_s.
\end{equation}
The action drift of $d_{\rm h}^{\rm inc}$ has only tangential
components.  Thus
$E_d(t)=\eps\int_0^t a_{d_{\rm h}^{\rm inc}}\,dv$ satisfies
$\|E_d(t)\|_{\mathcal A_{w_*}}\le C\eps t\Lambda_s\le C\Lambda_s$.

For the sharper scalar estimate, apply Lemma~\ref{rw:lem-chain}
to $d_{\rm h}^{\rm inc}$ in the full equation.  Its normal gradient
equals that of $d_{\rm h}$, since $c_{\rm h}^0$ is independent of $z$,
so the regularity established above applies.  Using
$h-h(I^0)=\nu\cdot\mathcal D+Q_0$ gives
\begin{equation}\label{rw:diagonalidentity}
 \begin{split}
 \nu\cdot E_d(t)={}&-\eps[d_{\rm h}^{\rm inc}(b(t))-d_{\rm h}^{\rm inc}(b(0))]\\
 &+\eps\int_0^t\{d_{\rm h}^{\rm inc},Q_0\}\,dv
       +\eps^2\int_0^t\{d_{\rm h}^{\rm inc},f\}\,dv.
 \end{split}
\end{equation}
This exact identity is valid on every supplied stopped interval.
Since $(d_{\rm h}^{\rm inc})_p=0$ and $d_{\rm h}^{\rm inc}$ commutes
with every normal action, the feedback bound
\eqref{rw:diagonalscalar} and $\|DQ_0\|_2\le Cs$ give
\[
 \{d_{\rm h}^{\rm inc},Q_0\}
       =(d_{\rm h}^{\rm inc})_\varphi\cdot(DQ_0)_A,
 \qquad |\{d_{\rm h}^{\rm inc},Q_0\}|\le Cs\Lambda_s.
\]
For the remaining bracket, use the uniform tube bound
$|(d_{\rm h}^{\rm inc})_\varphi|\le C(W+W_0)\le Cs$.
The normal gradients of $d_{\rm h}^{\rm inc}$ and $f$ lie in
$h^{\sigma-1}\subset\ell^2$, with sizes $C\sqrt s$ and $C$,
respectively.  Their normal bracket is therefore absolutely
convergent, and
\[
 |\{d_{\rm h}^{\rm inc},f\}|
 \le |(d_{\rm h}^{\rm inc})_\varphi|\,|f_p|
       +\|\nabla_zd_{\rm h}^{\rm inc}\|_0\|\nabla_zf\|_0
 \le C\sqrt s.
\]
This bracket uses the full perturbation $f$.  For the endpoints,
$|d_{\rm h}^{\rm inc}|\le C(W+W_0)\le Cs$ and
$d_{\rm h}^{\rm inc}(b(0))=0$.  Inserting these estimates in
\eqref{rw:diagonalidentity} and using $\eps T\le T_0$ gives
\begin{equation}\label{rw:diagonalbudget}
 |\nu\cdot E_d(t)|
       \le C(\eps s+s\Lambda_s+\eps\sqrt s)
       \le C(\eps+e_K+s^{5/2}).
\end{equation}
The bracket with $Q_0$ supplies the extra factor $s$ multiplying
the feedback \eqref{rw:diagonalfeedback}.

The angular Fourier tail contributes
\[
 \sup_{t\le T}
 \left\|\eps\int_0^t a_{g-g_K}\,dv\right\|_{\mathcal A_{w_*}}
       \le Ce_K.
\]
Analytic Fourier estimates for $(g-g_K)_\varphi$ and weighted
Cauchy--Schwarz as in \eqref{rw:cs} bound its tangential and normal
action components by $Ce_K$.  Time integration gives
$C\eps T e_K\le Ce_K$.  Its scalar pairing obeys the same bound,
since $\nu$ is a bounded functional on $\mathcal A_{w_*}$.

Finally, the exact increment identity
$\mathcal D(t)=\eps\int_0^t a_f\,dv$,
\eqref{rw:highdecomposition}, and \eqref{rw:frozen-diagonal} give
\[
 E_{\rm old}=E_R+E_{\rm h}+E_d
                  +\eps\int_0^t a_{g-g_K}\,dv.
\]
Combine \eqref{rw:rawbudget}, \eqref{rw:highbudget},
\eqref{rw:diagonalbudget}, and the tail estimate.  Since $s\le1$,
the raw scalar terms $\eps s^{3/2}+\eps\sqrt s$ are bounded by
$2\eps$.  This proves \eqref{rw:windowbudget}.
\end{proof}

The averaging argument in Section~\ref{sec:filtered} uses the
pure-angle terms and all normal-linear modes as a single jet.
Let $g_{01}^{\rm raw}$ be this part of $g_K$, with every normal
index retained, and include the frozen initial diagonal by setting
\begin{equation}\label{rw:augmented-jet}
 g_{01}=g_{01}^{\rm raw}+c_{\rm h}^0.
\end{equation}
Let $g_2^{\rm low}$ consist of the terms linear in $p$, together
with those quadratic in $z$, in $g_{K,M}^{\ell}$.  Then
\[
 g_{K,M}^{\ell}+c_{\rm h}^0
       =g_{01}-g_{1,\rm h}+g_2^{\rm low}.
\]
Thus retaining every normal-linear mode in $g_{01}$ replaces the
error $E_{\rm old}$ by
\begin{equation}\label{rw:e0definition}
 E_0(t)=E_{\rm old}(t)-\eps\int_0^t a_{g_{1,\rm h}}\,dv.
\end{equation}
The window identity becomes
\begin{equation}\label{rw:alllinearidentity}
 \mathcal D(t)=\eps\int_0^t a_{g_{01}}\,dv
                +\eps\int_0^t a_{g_2^{\rm low}}\,dv+E_0(t),
\end{equation}
and Lemma~\ref{lem:highlinear} bounds the subtracted integral by $C\eps$
in both $\mathcal A_{w_*}$ and the frozen-frequency pairing.
Together with \eqref{rw:windowbudget}, it gives the separate bounds
\begin{equation}\label{rw:e0budget}
 \sup_{t\le T}\|E_0(t)\|_2\le C(\eps+e^{-aK}+s^{3/2}),\qquad
 \sup_{t\le T}|\nu\cdot E_0(t)|
                    \le C(\eps+e^{-aK}+s^{5/2}).
\end{equation}
The first bound also holds in $\mathcal A_{w_*}$, whose norm
dominates $\|\cdot\|_2$.  This is an exact reorganization of the
action increment along the supplied trajectory.  The initial high
diagonal remains in $g_{01}$, and the high-linear integral costs
only $O(\eps)$ in the error.  When $J^0=0$, the frozen coefficient
vanishes and the construction reduces to the exact finite-gap case.
% END sections/03_window
% BEGIN sections/04_geometry
\section{Signed resonances and compatible confinement scales}
\label{sec:geometry}

The finite-window monomials from Section~\ref{sec:window} have signed
labels with at most two nonzero normal entries.
Subsections~\ref{subsec:geo-drift}--\ref{subsec:geo-hierarchy} use this
structure to select a resonance space with compatible action radii and
divisor bounds, without arithmetic assumptions on the base frequency.
Subsection~\ref{subsec:geo-energy} then combines its frequency bound
with energy conservation and concavity along physical action segments.
These results supply the averaging domain and the energy estimate used
to close the full action bootstrap.

\subsection{Signed monomials and action drift}
\label{subsec:geo-drift}

Relabel the $d$ tangential and $m$ retained normal coordinates by
$a=1,\ldots,d$ and $j=1,\ldots,m$, respectively.  Use
$\zeta_j=(u_j-iv_j)/\sqrt2$ and $\eta_j=(u_j+iv_j)/\sqrt2$.
Thus $\eta_j=\overline{\zeta_j}$ on the real phase space, while
$\zeta_j,\eta_j$ are independent coordinates on its complexification.
With the convention $X_HF=\{F,H\}$ from \eqref{setup:poisson}, the
Poisson bracket is
\begin{equation}
 \{F,G\}=\sum_{a=1}^d
 (F_{\varphi_a}G_{p_a}-F_{p_a}G_{\varphi_a})
 +i\sum_{j=1}^m(F_{\zeta_j}G_{\eta_j}
                         -F_{\eta_j}G_{\zeta_j}).
 \label{geo:bracket}
\end{equation}
For $r\in\mathbb Z_{\ge0}^d$, $\alpha,\beta\in\mathbb Z_{\ge0}^m$,
and $k\in\mathbb Z^d$, define the monomial $M$, its signed label
$q(M)$, and its weighted degree $w(M)$ by
\begin{equation}
 M=e^{ik\cdot\varphi}p^r\zeta^\alpha\eta^\beta,
 \qquad q(M)=(k,\alpha-\beta),
 \qquad w(M)=2|r|_1+|\alpha|_1+|\beta|_1.
 \label{geo:monomial}
\end{equation}
Let $\pi=(p,J)$, where $J_j=\zeta_j\eta_j$, and restrict the frozen
frequency sequence $\nu$ to these retained action coordinates.
Direct differentiation gives the polynomial identities
\begin{equation}
 D\pi X_M=-iq(M)M,
 \qquad \{M,\nu\cdot\pi\}=i(\nu\cdot q(M))M.
 \label{geo:drift}
\end{equation}
In particular, these formulas remain valid when normal coordinates
vanish.  They do not use normal action--angle coordinates.

\begin{lemma}[Signed selection]\label{geo:lem-signed}
Let $G$ be a real finite Fourier--Taylor polynomial with like monomials
combined.  Partition its monomials into classes invariant under
\[
 (k,r,\alpha,\beta)\longmapsto(-k,r,\beta,\alpha).
\]
The sum of the terms in each class is real.  If its nonzero labels
span a real subspace $V\subset\R^{d+m}$, its action drift belongs to $V$ at every
real phase point.  Its integral along any continuous real path on a
compact interval also belongs to $V$.  Among monomials with $w(M)\le2$,
those with zero label are exactly $1,p_a,J_j$, and their action drift
vanishes.
\end{lemma}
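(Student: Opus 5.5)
The plan is to read everything off the polynomial identity \eqref{geo:drift}, $D\pi X_M=-iq(M)M$. The action drift is linear in the Hamiltonian, so for a class sum $G_c=\sum_{M\in c}c_MM$ we get
\[
 a_{G_c}=-i\sum_{M\in c}c_M\,q(M)\,M .
\]
Monomials with $q(M)=0$ contribute nothing to this sum. The remaining terms are scalar multiples of the labels $q(M)$, all of which lie in $V$ by definition of $V$ as the span of the nonzero labels. Hence $a_{G_c}(b)\in V\otimes\C$ at every point.

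The substantive part is showing that $a_{G_c}$ is real at real phase points. This is where the class structure enters. On the real phase space we have $\eta_j=\overline{\zeta_j}$, so the involution $(k,r,\alpha,\beta)\mapsto(-k,r,\beta,\alpha)$ sends $M$ to $\overline M$. Because $G$ is real and like monomials have been combined, the coefficient of $\overline M$ must be $\overline{c_M}$. This follows from uniqueness of Fourier--Taylor coefficients: compare $G$ with $\overline G$ as polynomials in $(e^{i\varphi},p,\zeta,\bar\zeta)$.

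Since each class is invariant under the involution, $G_c$ is a sum of terms of the form $c_MM+\overline{c_M}\,\overline M$, together with self-conjugate terms. Hence $G_c$ is real. For the drift, the label also flips sign under the involution: $q(\overline M)=(-k,\beta-\alpha)=-q(M)$. Therefore
\[
 -i\,c_Mq(M)M+\bigl(-i\,\overline{c_M}(-q(M))\overline M\bigr)=2\,\mathrm{Re}\bigl(-ic_MMq(M)\bigr),
\]
which is real. A self-conjugate monomial has $q=-q$, so $q=0$ and it contributes nothing. Consequently $a_{G_c}$ is a real vector in $V\otimes\C$, and $V\otimes\C\cap\R^{d+m}=V$ because $V$ is a real subspace.

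For the integral statement, observe that $V$ is a finite-dimensional, hence closed, subspace. Along a continuous real path on a compact interval, the integrand $a_{G_c}(b(t))$ is continuous with values in $V$. Its Riemann (equivalently Bochner) integral is therefore a limit of elements of $V$ and so lies in $V$.

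The last claim is a finite enumeration of the monomials with $w(M)\le 2$. Such a monomial is either pure angle, linear in a single $p_a$, linear in one normal coordinate, or quadratic in normal coordinates. The label vanishes exactly when $k=0$ and $\alpha=\beta$. With $|\alpha|_1+|\beta|_1\le2$ and $\alpha=\beta$, the only possibilities are $\alpha=\beta=0$ or $\alpha=\beta=e_j$. Combined with $k=0$ this leaves $1$, $p_a$ (using $r=e_a$ and $\alpha=\beta=0$), and $\zeta_j\eta_j=J_j$. Their drift is $-i\cdot0\cdot M=0$ by \eqref{geo:drift}, or directly because actions Poisson-commute with $\pi$. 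The converse direction, that these monomials have zero label, is immediate.

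There is no real analytic obstacle in this argument. The only care needed is the conjugation bookkeeping: confirming that the involution pairs coefficients as $\overline{c_M}$, and that the factor $-i$ combined with the sign flip of $q$ produces a real sum.
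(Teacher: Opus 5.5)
Your proposal is correct and follows essentially the same route as the paper's proof. Both use the same four steps: reality of $G$ pairs the coefficients of $M$ and $\overline M$; the label flip $q(\overline M)=-q(M)$ makes each pair's drift the real vector $2q(M)\operatorname{Im}(c_MM)$; closedness of $V$ handles the integral; and the zero-label monomials with $w(M)\le2$ are enumerated the same way. The only difference is cosmetic: you pass through $V\otimes\C$ and intersect with the reals, whereas the paper reads off directly that each pair contributes a real multiple of its label.
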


\begin{proof}
Since $G$ is real, the coefficient of the conjugate monomial $M^*$ is
the complex conjugate of the coefficient of $M$.  The conjugate label
is $-q(M)$, so selection preserves this relation.  A pair
$cM+\overline cM^*$ with $q(M)\ne0$ has action drift
\[
 -iq(M)cM+iq(M)\overline{cM}
      =2q(M)\operatorname{Im}(cM).
\]
This is a real multiple of its label, and zero-label terms contribute
no drift by \eqref{geo:drift}.  Summation and then integration prove
the span statements.  Finally, $q=0$ means $k=0$ and
$\alpha=\beta$.  The weighted-degree restriction leaves exactly the
listed monomials.  Formula~\eqref{geo:drift} shows that their drift is
zero.
\end{proof}

The same argument applies to real-analytic coefficient functions $c_M(I)$
of the full action parameter, provided
$c_{M^*}(I)=\overline{c_M(I)}$ for real $I$ treated as an independent
parameter.  After substituting $I=\xi+\pi$, with $\pi$ now the full
action map, the coefficient derivatives contribute zero because the
action components Poisson commute:
$\{\pi,c_M(I)M\}=c_M(I)\{\pi,M\}$.

For an infinite expansion, take finite truncations that preserve
conjugate pairs.  If all selected labels lie in a fixed finite-dimensional
real subspace $V$ and the truncated drifts converge in the action norm,
their limit belongs to $V$, since $V$ is closed.  Its integral also lies
in $V$ whenever the drift is Bochner integrable along the path.  The
coefficient estimates below give the required convergence and continuity
along the supplied trajectory.

The signs matter even at normal degree one.  Let $e_j$ be the $j$th
standard basis vector of $\R^m$ and put $\theta=k\cdot\varphi$.  Then
\[
 \cos\theta\,u_j
 =\frac{u_j\cos\theta+v_j\sin\theta}{2}
  +\frac{u_j\cos\theta-v_j\sin\theta}{2}.
\]
The two summands have label pairs $\pm(k,e_j)$ and $\pm(k,-e_j)$,
respectively, with divisors $k\cdot\omega(I)+\Omega_j(I)$ and
$k\cdot\omega(I)-\Omega_j(I)$.  One pair may be resonant while the
other is not.  Likewise, $e^{ik\cdot\varphi}J_j$ has label $(k,0)$.
It preserves every normal action for all $k$, and depends only on
actions precisely when $k=0$.

\subsection{Polynomial conditioning of signed quadratic labels}
\label{subsec:geo-conditioning}

Let $e_j$ denote the standard basis in $\R^m$.  Write
\begin{equation}
 \mathscr E_m=
 \{0,\ \pm e_j,\ \pm2e_j,\ \pm e_j\pm e_h:\ j\ne h\}.
 \label{geo:labelclass}
\end{equation}
For $m=0$, this set consists only of the zero vector.
The next lemma gives a coefficient bound for the resonance spaces used
below.

\begin{lemma}[Polynomial conditioning]\label{lem:conditioning}
Let $d,m$ be nonnegative integers and $K_*\ge0$.  Let $\mathscr Q$
be a finite subset of
$\mathbb Z^d\times\mathbb Z^m$ with
each $q=(k_q,\ell_q)$ satisfying $|k_q|_1\le K_*$ and
$\ell_q\in\mathscr E_m$.  Put
$V=\operatorname{span}_{\R}\mathscr Q$.
There is a subset $\mathscr Q_*\subset\mathscr Q$, with at most
$d+m$ elements, such that every $x\in V$ has a representation
\begin{equation}
 x=\sum_{q\in\mathscr Q_*}c_q q,
 \qquad \sum_{q\in\mathscr Q_*}|c_q|
 \le\kappa_{d,m,K_*}\|x\|_2,
 \qquad
 \kappa_{d,m,K_*}\le C_d(1+m+K_*)^{2d+2}.
 \label{geo:conditioning}
\end{equation}
The constant $C_d$ depends only on $d$.  The same bound holds for
every subset of $\mathscr Q$, with the spanning subset chosen anew.
Give $V$ the induced Euclidean norm and $V^*$ its dual norm.
Consequently, for any
$\nu\in\R^{d+m}$,
\begin{equation}
 \|\nu|_V\|_{V^*}
 \le\kappa_{d,m,K_*}\max_{q\in\mathscr Q}|\nu\cdot q|.
 \label{geo:restriction}
\end{equation}
If $\mathscr Q$ is empty, both sides in~\eqref{geo:restriction} are
interpreted as zero.
\end{lemma}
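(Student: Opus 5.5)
The plan is to prove the coefficient bound \eqref{geo:conditioning} by a structured choice of spanning labels. The restriction bound \eqref{geo:restriction} then follows at once: for $x\in V$ with representation \eqref{geo:conditioning},
\[
 |\nu\cdot x|\le\sum_{q\in\mathscr Q_*}|c_q|\,|\nu\cdot q|
   \le\kappa_{d,m,K_*}\|x\|_2\max_{q\in\mathscr Q}|\nu\cdot q|.
\]
Taking the supremum over $\|x\|_2\le1$ gives the dual norm estimate. The statement for subsets of $\mathscr Q$ is the same lemma applied to the subset, because the hypotheses are inherited. So everything reduces to the coefficient bound.

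A generic argument is not enough. Cramer's rule on an $r\times r$ row-selected minor of the label matrix, with $r=\dim V\le d+m$, has an integer, hence $\ge1$, denominator. The numerator, however, is only controlled by Hadamard's inequality as $\|x\|_2(K_*+2)^{r-1}$, which is exponential in $m$. The exponent must therefore be made independent of $m$ by exploiting that the normal parts $\ell_q\in\mathscr E_m$ have at most two nonzero entries. For this I would view $\mathscr Q$ as a signed multigraph $\mathcal G$ on the vertex set $\{1,\dots,m\}$:
\begin{itemize}
\item a label with $\ell_q=\pm e_j\pm e_h$ is an edge $\{j,h\}$;
\item a label with $\ell_q=\pm e_j$ or $\pm2e_j$ is a half-edge (loop) at $j$;
\item a label with $\ell_q=0$ is a purely tangential vector in $\mathbb Z^d$.
\end{itemize}
I would build $\mathscr Q_*$ in three layers. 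First, take a spanning forest of $\mathcal G$. Second, add at most one loop or odd-cycle-closing edge per component where this increases the rank; this is the usual basis structure of the even-cycle (signed-graphic) matroid. Third, add a bounded number, at most $d$, of further labels that are independent modulo the span of the first two layers. These are either cycle labels with nonzero tangential part or pure tangential labels, selected among all candidates by a maximal-volume rule.

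The coefficients for the first two layers are computed by peeling leaves. Given $x\in V$, the normal coordinate of $x$ at a leaf determines the coefficient of the edge to its parent up to a factor $1$ or $1/2$. Peeling the tree inward, each coefficient becomes a signed sum of normal coordinates of $x$ along a subtree, plus a correction linear in the coefficients of the third layer. This yields $|c_q|\lesssim(1+m)\|x\|_1$ on the forest part, and $\|x\|_1\le\sqrt{d+m}\,\|x\|_2$. After subtracting the forest contribution, a tangential residual in $\mathbb Z^d$ remains, with size $O((1+m)(1+K_*)\sqrt{d+m}\,\|x\|_2)$. I would resolve it against the at most $d$ third-layer vectors, whose reduced tangential parts are integer vectors of size $O((1+m)(1+K_*))$. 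A $d\times d$ Cramer argument with integer determinant $\ge1$ then costs $O\bigl(((1+m)(1+K_*))^{d}\bigr)$. Multiplying the forest, residual and Cramer factors gives a total of order $(1+m+K_*)^{2d+2}$ with $C_d$ depending only on $d$. The count $|\mathscr Q_*|\le d+m$ follows because the selected labels form a basis of $V$.

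The main obstacle is the interaction between the graph layers and the tangential layer. A cycle in $\mathcal G$ closed by a label with $\ell_q=\pm2e_j$, or an odd signed cycle, can make the normal system solvable only with fractional, $2^{-1}$-type coefficients. One must check that these do not compound along long paths, which would give $2^{m}$ growth. I would handle this by allowing at most one such closing label per component, so that the factor $2$ appears at most once per component. The reduced tangential vectors of the third layer then remain integral after multiplying by $2$, keeping the Cramer denominator a nonzero integer. If that bookkeeping fails, the fallback is to work with integer determinants of the whole third-layer block after multiplying through by $2$, at the cost of only a bounded power of $2$ depending on $d$.
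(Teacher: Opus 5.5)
Your plan is essentially the paper's proof. Both use a spanning tree in each component of the label graph and at most one pinning label per component (a loop $\pm e_j,\pm2e_j$ or an edge closing an unbalanced cycle). Hence every normal-elimination coefficient lies in $\tfrac12\Z$, with a single division by $h_C\in\{1,2\}$ per component. Both then apply Cramer's rule to the tangential residuals $h_q$, obtained by subtracting from each label the forest-and-pin expansion of its normal part, using the integer vectors $2h_q$.

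Two small remarks on the execution. Any invertible integer minor already has $|\det|\ge1$, so no maximal-volume selection is needed. Leaf-peeling must be directed toward the pin, and it stalls on the cycle when the pin is an edge. It is cleaner to fix the pin coefficient first, as the signed coordinate sum over the component divided by $h_C$, as the paper does. Every tree coefficient is then a signed subtree sum.

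The one point that needs correcting is the exponent count. You state three factors:
\begin{itemize}
\item a residual of size $O((1+m)^{3/2}(1+K_*))\|x\|_2$;
\item a Cramer cost $O(((1+m)(1+K_*))^d)$;
\item an extra factor $O(m)$, incurred when the normal parts of the third-layer labels are re-expanded in the forest basis.
\end{itemize}
Their product is $(1+m)^{d+5/2}(1+K_*)^{d+1}$, which is not $O((1+m+K_*)^{2d+2})$ when $K_*\asymp m$.

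The Cramer cost is really $O(B^{d-1})$ with $B=O((1+m)(1+K_*))$. The adjugate consists of $(d-1)\times(d-1)$ minors, and the denominator is at least one in absolute value. With this correction the total is $(1+m)^{d+3/2}(1+K_*)^{d}\le(1+m+K_*)^{2d+3/2}$, inside the stated bound. As a cross-check, the paper uses the cruder residual bound $(1+2m^2K_*)\|x\|_2$ together with the $(d-1)$-power cofactor bound, and lands exactly on $2d+2$.
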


\begin{proof}
We first select a basis for the normal projection of $V$.  Form a
labelled multigraph on the normal indices occurring in $\mathscr Q$.
Each full label with two nonzero normal entries gives an edge;
labels with one nonzero normal entry contribute vertices but no edges.
Choose a spanning tree in each connected component.  A diagonal
change of signs in the normal coordinates makes the normal parts of
the chosen tree labels differences $e_u-e_v$, up to overall sign.
Assign these coordinate signs successively from a root.  The change
preserves both the Euclidean and $\ell^1$ norms.  Reversing a full
label only reverses its coefficient, so the construction yields
representations in the original elements of $\mathscr Q$ even when
$\mathscr Q$ is not symmetric.

Consider one component $C$ with $n_C$ vertices and root $r_C$.  The
normal parts of its tree labels form a basis of
\[
 E_C=\{w\in\R^C:\ \sum_{j\in C}w_j=0\}.
\]
For $T_j=e_j-e_{r_C}$, the tree path expresses $T_j$ with coefficients
$\pm1$ of total absolute value at most $n_C-1$.
After the sign change, every remaining normal label, up to overall
sign, is $e_u-e_v$, $e_u+e_v$, $e_u$, or $2e_u$.

If all normal labels in the component have coordinate sum zero, their
span is $E_C$.  For $w\in E_C$, the identity
$w=\sum_{j\in C}w_jT_j$ gives tree coefficients of total absolute
value at most $(n_C-1)|w|_1$.
Otherwise choose a normal label $p_C$ whose coordinate sum is nonzero
and choose its overall sign so that
\[
 h_C=\sum_{j\in C}(p_C)_j\in\{1,2\},
 \qquad |p_C|_1=h_C.
\]
The normal parts of the tree labels, together with $p_C$, form a
basis of $\R^C$.  With
$s_C=\sum_{j\in C}w_j$,
\begin{equation}
 w=\frac{s_C}{h_C}p_C+
 \sum_{j\in C}\left(w_j-\frac{s_C}{h_C}(p_C)_j\right)T_j.
 \label{geo:pinning}
\end{equation}
Its coefficient sum in the chosen basis is at most
\[
 \frac{|s_C|}{h_C}+(n_C-1)
 \left(|w|_1+\frac{|s_C|}{h_C}|p_C|_1\right)
 \le(2n_C-1)|w|_1.
\]

Return to the original coordinate and label signs, and let
$P\subset\mathscr Q$ consist of the full labels selected by all trees
and the possible pinning labels.  Their normal parts are linearly
independent and span the normal projection $U$ of $V$, so
$|P|=\dim U\le m$.  The preceding
formulas define a linear coefficient map $A:U\to\R^P$ satisfying
\begin{equation}
 \sum_{p\in P}(Aw)_p\ell_p=w,
 \qquad \|Aw\|_1\le2m|w|_1\le2m^2\|w\|_2.
 \label{geo:normalinverse}
\end{equation}
For $m=0$, this is the zero map.  For $w\in U\cap\mathbb Z^m$,
the tree and pinning formulas give $Aw\in(\tfrac12\mathbb Z)^P$:
only division by $1$ or $2$ occurs in~\eqref{geo:pinning}, and
tree paths have integer coefficients.  Since $|\ell_q|_1\le2$,
for every $q\in\mathscr Q$ we obtain
\begin{equation}
 A\ell_q\in(\tfrac12\mathbb Z)^P,
 \qquad \|A\ell_q\|_1\le4m.
 \label{geo:half-integral}
\end{equation}

Eliminate the normal component of each full label by setting
\begin{equation}
 R_q=q-\sum_{p\in P}(A\ell_q)_p p=(h_q,0).
 \label{geo:residual}
\end{equation}
Then $2h_q\in\mathbb Z^d$ and
$|h_q|_1\le B_*:=(1+4m)K_*$.  This elimination is linear and fixes
vectors with zero normal component, so the $R_q$ span
$V\cap(\R^d\times\{0\})$.  Choose independent residuals
$h_{q_1},\ldots,h_{q_r}$ spanning all residuals; here $r\le d$.
For $r\ge1$, some $r$ coordinate rows of the integer columns
$2h_{q_i}$ form an invertible square matrix.  Its determinant has
absolute value at least one.  Each cofactor has absolute value at
most $(r-1)!(2B_*)^{r-1}$, by the determinant expansion.  For $y$
in the residual span, the corresponding system has right-hand side
equal to twice the selected coordinates of $y$.  Cramer's rule gives
\begin{equation}
 y=\sum_{i=1}^r\alpha_i h_{q_i},
 \qquad \sum_{i=1}^r|\alpha_i|
 \le C_d'(B_*)\|y\|_2,
 \qquad C_d'(B_*)=2d\,d!(1+2B_*)^{d-1}.
 \label{geo:cramer}
\end{equation}
For $r=0$ the representation is empty.  When $d=0$ this entire
residual step is absent.

Now let $x=(x_T,x_N)\in V$.  Subtract the full pivot labels with
coefficients $a=Ax_N$.  The remainder is $(y,0)$ in the residual
span, and
\[
 \|a\|_1\le2m^2\|x\|_2,
 \qquad \|y\|_2\le(1+2m^2K_*)\|x\|_2.
\]
Represent $y$ by~\eqref{geo:cramer} and substitute
\eqref{geo:residual}.  This uses only the labels in
$\mathscr Q_*=P\cup\{q_1,\ldots,q_r\}$, at most $m+d$ of them,
with coefficient sum at most $\|x\|_2$ times
\[
 2m^2+(1+4m)C_d'(B_*)(1+2m^2K_*).
\]
For $d\ge1$, take one plus this expression as
$\kappa_{d,m,K_*}$; for $d=0$, take $1+2m^2$.
Writing $n_*=1+m+K_*$, use
\[
 1+4m\le5n_*,\qquad
 1+2B_*\le11n_*^2,\qquad
 1+2m^2K_*\le3n_*^3
\]
to obtain~\eqref{geo:conditioning}.  For example,
$C_d=3+30d\,d!\,11^{d-1}$ works for $d\ge1$, and $C_0=3$ works
for $d=0$.  These constants do not depend on which labels are present.
Finally, pair the representation of $x$ with $\nu$ and take the
supremum over $x\in V$ with $\|x\|_2\le1$ to
get~\eqref{geo:restriction}, including when $V=\{0\}$.
\end{proof}

The normal elimination uses only integer coefficients and division by
$2$.  The remaining determinant has size at most the fixed tangential
dimension $d$, which gives the polynomial dependence on $m$ and $K_*$
needed below.

\subsection{A rank hierarchy on compatible action domains}
\label{subsec:geo-hierarchy}

We use Lemma~\ref{lem:conditioning} to choose a finite resonance space
and an action radius on which the remaining divisors stay separated
from zero.  The polynomial conditioning bound will keep these scales
uniform as the normal window grows.

The base frequency $\nu=Dh(I^0)$ is the componentwise sequence
introduced in~\eqref{setup:frequencies}.  Although $\nu\notin\ell^2$,
its restriction to any finite coordinate subspace is well defined.
On a fixed complex $\ell^2$ neighborhood of $\mathcal K$, write
\begin{equation}
 Dh(I)=\nu^{\mathrm{lin}}+Dh_{\mathrm{nl}}(I),
 \qquad \|D^2h_{\mathrm{nl}}(I)\|\le C_h.
 \label{geo:hessian}
\end{equation}
Here $\nu^{\mathrm{lin}}_j=(2\pi j)^3$ is the action-independent
dispersive frequency, indexed by the original physical modes.
The nonlinear gradient is $\ell^2$-valued, so
$Dh(I)-\nu=Dh_{\mathrm{nl}}(I)-Dh_{\mathrm{nl}}(I^0)\in\ell^2$.
The uniform complex Hessian bound follows from analyticity and
compactness, as in~\eqref{setup:concavity}.

Let $I^0$ range over a fixed smaller real neighborhood of $\mathcal K$
with a uniform positive distance from the complement of the complex
neighborhood.  The center may have infinite support.
For every finitely supported label $q$ define
$d_q(I)=Dh(I)\cdot q$.  Whenever the segment $[I^0,I]$ lies in
this complex neighborhood, integration of the nonlinear Hessian gives
\begin{equation}
 |d_q(I)-\nu\cdot q|
 \le C_h\|q\|_2\|I-I^0\|_2.
 \label{geo:variation}
\end{equation}
This estimate holds for real or complex $I$.  For a unary label
$q=(k,\pm e_j)$, the bound $\|q\|_2\le |k|_1+1$ is independent
of the physical normal index $j$.

\begin{proposition}[Compatible hierarchy]\label{prop:hierarchy}
Fix the tangential dimension $d$, a positive integer $L$, and a
positive constant $\mu$.  For $K\ge1$ and an integer $M\ge0$, let
$S_M=\mathcal N\cap[1,M]$, $m=|S_M|$, and $n=d+m$.
Let $\mathscr Q$ be any finite symmetric set of
labels supported on the tangential coordinates and $S_M$, with
$|k|_1\le LK$ and normal component in $\mathscr E_m$.
Choose $C_d\ge1$ as in Lemma~\ref{lem:conditioning} and set
\begin{equation}
 B_Q=LK+2,\qquad
 \kappa=C_d(1+m+LK)^{2d+2},\qquad
 R=\max\{2,32C_hB_Q\kappa/\mu\}.
 \label{geo:ratio}
\end{equation}
For $\alpha>0$ define $\alpha_j=\alpha R^j$ and
\begin{equation}
 V_j=\operatorname{span}_{\R}
 \{q\in\mathscr Q:|\nu\cdot q|\le\alpha_j\},
 \qquad 0\le j\le n+1.
 \label{geo:hierarchy}
\end{equation}
There is $j\in\{0,\ldots,n\}$ for which $V_j=V_{j+1}$.
For any such $j$, put
\begin{equation}
 V=V_j,\qquad \Delta=\alpha_{j+1},\qquad
 \rho=8\kappa\alpha_j/\mu,
 \qquad d_V=\|\nu|_V\|_{V^*}.
 \label{geo:chosen}
\end{equation}
Use $d_{\{0\}}=0$.  Then
\begin{equation}
 d_V\le\mu\rho/8,
 \qquad q\in\mathscr Q\setminus V
       \ \Longrightarrow\ |\nu\cdot q|>\Delta.
 \label{geo:rankgap}
\end{equation}
If the complex ball $\|I-I^0\|_2\le2\rho$ lies in the fixed
neighborhood from~\eqref{geo:hessian}, every
$q\in\mathscr Q\setminus V$ satisfies
\begin{align}
 |d_q(I)-\nu\cdot q|&\le\Delta/4,
 & |d_q(I)|&>3\Delta/4
 &&(\|I-I^0\|_2\le\rho),\label{geo:realdivisor}\\
 |d_q(I)-\nu\cdot q|&\le\Delta/2,
 & |d_q(I)|&>\Delta/2
 &&(\|I-I^0\|_2\le2\rho).
 \label{geo:complexdivisor}
\end{align}
The first line applies on the real ball of radius $\rho$; the second
also holds on the complex ball of radius $2\rho$.

For fixed $b,c>0$, let $0<\lambda<1$,
$\alpha=\lambda^b$, $K=\lceil c\log(1/\lambda)\rceil$, and
$M\asymp\sqrt K$, with fixed comparison constants.  As
$\lambda\downarrow0$, uniformly over $\mathscr Q$, the choice of
$j$, and the admitted base action,
\begin{equation}
 c_1\lambda^b\le\rho,\Delta
 \le\lambda^{b-o(1)},\qquad
 \log R^{n+1}=O(\sqrt K\log(K+1))
             =o(\log(1/\lambda)).
 \label{geo:subpower}
\end{equation}
Here $o(1)$ can be chosen nonnegative and independent of the label
family, the selected index, and the base action.  Thus the upper bound
is $\lambda^b$ times a uniform subpower factor.  In particular, the
required balls eventually lie in the fixed neighborhood and
$0<\Delta\le1$.
\end{proposition}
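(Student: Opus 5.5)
The plan is a pigeonhole argument on the nested spaces, followed by Lemma~\ref{lem:conditioning} for the frequency bound on $V$ and the divisor variation estimate \eqref{geo:variation} for the two ball statements.

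\textbf{Stabilization.} Since $\alpha_j$ increases in $j$, the spaces $V_0\subset V_1\subset\cdots\subset V_{n+1}$ are nested subspaces of $\R^{d+m}=\R^n$. A strictly increasing chain of subspaces can strictly increase at most $n$ times. Among the $n+1$ consecutive pairs $(V_j,V_{j+1})$, $0\le j\le n$, at least one therefore satisfies $V_j=V_{j+1}$.

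\textbf{The two bounds in \eqref{geo:rankgap}.} Fix such a $j$.
\begin{itemize}
\item For the bound on $d_V$, apply Lemma~\ref{lem:conditioning} to the subfamily $\{q\in\mathscr Q:|\nu\cdot q|\le\alpha_j\}$. It applies because $|k|_1\le LK=K_*$ and the normal parts lie in $\mathscr E_m$, so its constant is bounded by $\kappa$. Then \eqref{geo:restriction} gives $d_V\le\kappa\alpha_j=\mu\rho/8$. When the subfamily is empty, $V=\{0\}$ and $d_V=0$.
\item For the gap, take $q\in\mathscr Q\setminus V$. Then $q\notin V_{j+1}$, and since $q$ spans its own line this forces $|\nu\cdot q|>\alpha_{j+1}=\Delta$.
\end{itemize}

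\textbf{Divisor estimates \eqref{geo:realdivisor}--\eqref{geo:complexdivisor}.} Each label satisfies $\|q\|_2\le|k|_1+2\le B_Q$, since its normal part has $\ell^2$ norm at most $2$. By \eqref{geo:variation}, for $\|I-I^0\|_2\le2\rho$,
\[
 |d_q(I)-\nu\cdot q|\le 2C_hB_Q\rho=16C_hB_Q\kappa\alpha_j/\mu .
\]
The choice $R\ge32C_hB_Q\kappa/\mu$ makes the right-hand side at most $\alpha_jR/2=\Delta/2$. On the ball of radius $\rho$ the same computation gives $\Delta/4$. The lower bounds $3\Delta/4$ and $\Delta/2$ then follow from the triangle inequality together with $|\nu\cdot q|>\Delta$.

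\textbf{Asymptotics \eqref{geo:subpower}.} With $K\sim c\log(1/\lambda)$ and $m\le M\asymp\sqrt K$, we have $n=d+m=O(\sqrt K)$. Also $\kappa$, $B_Q$ and $R$ are polynomial in $K$, so $\log R=O(\log(K+1))$. Hence
\[
 \log R^{n+1}=O(\sqrt K\log(K+1))=o(\log(1/\lambda)).
\]
The scales are bounded as follows.
\begin{itemize}
\item Lower bounds: $\rho\ge8\kappa\alpha/\mu\ge c_1\lambda^b$, using $\kappa\ge C_d\ge1$, and $\Delta\ge\alpha R\ge\lambda^b$.
\item Upper bounds: $\rho,\Delta\le C\kappa R^{n+1}\lambda^b$ with $C$ depending on $\mu$, and this prefactor equals $\lambda^{-o(1)}$ uniformly.
\end{itemize}
The $o(1)$ can be taken nonnegative by replacing it with its positive part. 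Its uniformity holds because it depends only on $K$, $M$, $d$, $L$, $C_h$, $\mu$, and not on $\mathscr Q$, $j$ or $I^0$.

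Finally, because $\rho\to0$ and $\Delta\to0$, the $2\rho$ balls eventually fit in the neighborhood, whose margin around admitted $I^0$ is uniform, and $\Delta\le1$ for small $\lambda$.

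\textbf{Main obstacle.} The only delicate point is keeping every constant uniform: applying the conditioning lemma with its chosen-anew spanning subset to each sub-family, and checking that the subpower factor does not depend on the selected $j$.
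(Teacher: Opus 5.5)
Your proposal is correct and follows the paper's proof essentially step by step. The steps are: pigeonhole on the nested chain $V_0\subset\cdots\subset V_{n+1}$; Lemma~\ref{lem:conditioning} applied to the labels defining $V_j$ to get $d_V\le\kappa\alpha_j$; the generator argument for the strict gap; \eqref{geo:variation} with $\|q\|_2\le B_Q$ and the choice of $R$ for the divisor bounds; and polynomial growth of $\kappa,R$ with $n=O(\sqrt K)$ for \eqref{geo:subpower}, where the paper takes $c_1=\min\{1,8/\mu\}$. The only differences are cosmetic: you bound the normal part of $q$ in $\ell^2$ rather than $\ell^1$, and you derive the $\rho$-ball estimate from the $2\rho$-ball one rather than the reverse.
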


\begin{proof}
The spaces $V_j$ are nested subspaces of an $n$-dimensional space.
Among the $n+1$ successive inclusions, at least one is therefore an
equality.  Lemma~\ref{lem:conditioning}, applied to the labels
defining $V_j$, gives $d_V\le\kappa\alpha_j=\mu\rho/8$.
If $q\in\mathscr Q\setminus V$ had $|\nu\cdot q|\le\alpha_{j+1}$, it would
belong to $V_{j+1}=V$, proving~\eqref{geo:rankgap}.

For $q=(k,\ell)\in\mathscr Q$, one has
$\|q\|_2\le |k|_1+|\ell|_1\le B_Q$.  From
\eqref{geo:ratio} and~\eqref{geo:chosen},
\[
 C_hB_Q\rho
 =8C_hB_Q\kappa\alpha_j/\mu
 \le R\alpha_j/4=\Delta/4.
\]
The segment from $I^0$ to $I$ stays in the larger ball, so
\eqref{geo:variation} applies.  The variation is at most $\Delta/4$
on the ball of radius $\rho$ and at most $\Delta/2$ on the ball of
radius $2\rho$.  Combining these bounds with the strict gap
in~\eqref{geo:rankgap} proves the divisor estimates.  The larger
complex ball gives an open neighborhood of the real ball on which
the inverse divisors are analytic.

Since $M\asymp\sqrt K$, we have $n=O(\sqrt K)$.  For fixed
$d,L,C_h,\mu$, the quantities $\kappa$ and $R$ are bounded by
fixed powers of $K+1$.  Hence
\[
 \log(\kappa R^{n+1})=O(\sqrt K\log(K+1)).
\]
Dividing this bound by $\log(1/\lambda)\asymp K$ gives a number tending
to zero.  The formulas for $\rho,\Delta$ therefore imply the uniform
upper bounds in~\eqref{geo:subpower}.  Their lower bounds follow from
$R\ge2$, $\kappa\ge1$ and $j\ge0$; for example one can take
$c_1=\min\{1,8/\mu\}$.  Because $b>0$, the upper bounds tend to
zero, proving the remaining assertions.
\end{proof}

The resonance space is selected once, at the actual initial action.
The choice may depend discontinuously on that action, but all estimates
are uniform in the choice, and no derivative of the selection is used.

\begin{corollary}[High unary labels]\label{geo:cor-highunary}
Suppose the normal frequencies have the form
\[
 \Omega_j(I)=c_0j^3+\beta_j(I),\qquad c_0>0,
\]
with the normal corrections $\beta_j$ uniformly bounded in
$j\in\mathcal N$, and the tangential frequencies uniformly bounded,
on the fixed complex action neighborhood.  With the asymptotic
choices in Proposition~\ref{prop:hierarchy}, take $\lambda$
sufficiently small, uniformly over the admitted centers and selected
indices of the hierarchy.  Every unary label $q=(k,\pm e_j)$ with
$j\in\mathcal N$, $j>M$, and $|k|_1\le LK$ then satisfies
\begin{equation}
 |d_q(I)|\ge c_2j^3,
 \qquad |\nu\cdot q/d_q(I)|\le2
 \qquad (\|I-I^0\|_2\le2\rho),
 \label{geo:highunary}
\end{equation}
where $c_2>0$ is fixed.  These labels are outside the finite space
$V$.  They also satisfy
$|d_q(I)-\nu\cdot q|\le\Delta/2$ on the same complex ball.
\end{corollary}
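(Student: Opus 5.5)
The plan is to compare $d_q(I)$ with its dominant cubic term. We use only the structural bounds on $\omega$ and $\beta$, together with the asymptotic choice $M\asymp\sqrt K$ from Proposition~\ref{prop:hierarchy}. For $q=(k,\pm e_j)$ we have
\[
 d_q(I)=k\cdot\omega(I)\pm\Omega_j(I)=\pm c_0j^3+k\cdot\omega(I)\pm\beta_j(I).
\]
On the fixed complex neighborhood, $|k\cdot\omega|\le LKC_\omega$ and $|\beta_j|\le C_\beta$. Hence
\[
 |d_q(I)|\ge c_0j^3-LKC_\omega-C_\beta .
\]
Since $j>M$ and $M\ge c\sqrt K$ with a fixed comparison constant, we get $j^3>M^3\gtrsim K^{3/2}$. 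This dominates $LKC_\omega+C_\beta$ once $K$ is large, that is, once $\lambda$ is small. Then $|d_q(I)|\ge (c_0/2)j^3$, and we may take $c_2=c_0/2$. The same estimate holds at $I=I^0$ and yields $|\nu\cdot q|\le c_0j^3+LKC_\omega+C_\beta\le 2c_0j^3$. Together these give $|\nu\cdot q/d_q(I)|\le 4$. To get the stated constant $2$, we will instead use the sharper splitting $|\nu\cdot q|\le (1+\eta)c_0j^3$ and $|d_q|\ge(1-\eta)c_0j^3$ with $\eta=1/3$, which is again available for small $\lambda$. We will also confirm that the balls of radius $2\rho$ lie in the fixed neighborhood, using \eqref{geo:subpower}.

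Next we show that such labels are not in $V$. Since $V=V_{j_*}$ for some index $j_*\le n$, every element of $V$ is a combination of labels $q'\in\mathscr Q$ with $|\nu\cdot q'|\le\alpha_{n}\le\Delta\le1$. By the bound just proved, a high unary label has $|\nu\cdot q|\ge c_2j^3>1$. Such a label is therefore not among the generators of $V$. There is a subtlety here: the high unary labels are supported at indices $j>M$, which lie outside $S_M$. The space $V$ is spanned by labels supported on the tangential coordinates and $S_M$. A high unary label has the nonzero normal entry $\pm1$ at the index $j\notin S_M$, so it cannot lie in the span of vectors vanishing in that coordinate. This coordinate-support argument gives $q\notin V$ directly, without relying on the divisor size.

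Finally, the variation bound follows from \eqref{geo:variation} with $\|q\|_2\le |k|_1+1\le LK+1\le B_Q$. Since $C_hB_Q\cdot2\rho\le\Delta/2$, exactly as in the proof of Proposition~\ref{prop:hierarchy}, we obtain $|d_q(I)-\nu\cdot q|\le\Delta/2$ on the complex ball of radius $2\rho$. The bound on $\|q\|_2$ is independent of the physical index $j$, which is what makes this step work.

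The main (though mild) obstacle is making the threshold uniform. The smallness of $\lambda$ must depend only on $L$, $c$, the comparison constant in $M\asymp\sqrt K$, $C_\omega$, $C_\beta$, and $c_0$. It must not depend on $j$, on the center $I^0$, or on the selected hierarchy index. This holds because the estimate $K\ll M^3<j^3$ involves only those fixed constants. Alternatively, this is the content of Lemma~\ref{rw:lem-separation} with $LK$ in place of $K$, since $M^2\ge D(LK+1)$ holds for large $K$ after adjusting the comparison constant.
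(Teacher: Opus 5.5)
Your proof is correct and follows the paper's argument. You use the same triangle-inequality lower bound driven by $M^3/(LK+1)\to\infty$, the same $j$-independent bound $\|q\|_2\le LK+1\le B_Q$ combined with \eqref{geo:variation} for the $\Delta/2$ variation, and the same coordinate-support argument for $q\notin V$. Your preliminary divisor-size remark is superfluous and would not by itself keep $q$ out of the span. The only real difference is the ratio bound: the paper gets it as $|\nu\cdot q|\le|d_q(I)|+\Delta/2\le 2|d_q(I)|$ after a further reduction of $\lambda$, whereas you bound $|\nu\cdot q|$ directly at $I^0$ by $(4/3)c_0j^3$ against $|d_q(I)|\ge(2/3)c_0j^3$. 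Both routes work.
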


\begin{proof}
Choose $C_\omega,C_\beta$ bounding the tangential frequencies and
normal corrections, respectively, on the fixed complex neighborhood.
The triangle inequality gives
\[
 |d_q(I)|\ge c_0j^3-LK C_\omega-C_\beta.
\]
Since $M\asymp\sqrt K$, the ratio $M^3/(LK+1)$ tends to infinity
for fixed $L$.  Thus the right side is at least $c_0j^3/2$ for
$j>M$, uniformly on the ball, once $\lambda$ is small enough.
The index-independent label bound $\|q\|_2\le LK+1\le B_Q$ and
\eqref{geo:variation} give variation at most
$2C_hB_Q\rho\le\Delta/2$.
Because $\Delta\le1$ and the cubic lower bound grows with $M$,
this variation is at most $|d_q(I)|$ after a further uniform reduction
of $\lambda$.  Thus
$|\nu\cdot q|\le |d_q(I)|+\Delta/2\le2|d_q(I)|$, proving
the ratio bound.
Finally, a vector with a nonzero coordinate at $j>M$ cannot belong
to $V$, which is supported on $S_M$ and the tangential coordinates.
\end{proof}

For the averaging construction, choose $\mathscr Q$ to contain all
labels of the original low quadratic jet, all angular labels $(k,0)$
with $|k|_1\le LK$, and all unary labels $(k,\pm e_j)$ with
$|k|_1\le LK$ and $j\in S_M$.
In the degree quotient used in Section~\ref{sec:filtered}, the
retained terms generated through order $L$ have only angular or unary
labels, with $|k|_1\le LK$.  The original quadratic jet is treated
separately.  Thus the finite label family can be chosen before any
averaging coefficients are computed, including labels whose
coefficients vanish.  High unary labels are covered by
Corollary~\ref{geo:cor-highunary}.  Both $V$ and its divisor domain
are therefore fixed before the averaging construction begins.

\subsection{Energy closure on physical action segments}
\label{subsec:geo-energy}

The preceding hierarchy makes the frozen frequency small on $V$.
The next lemma combines this bound with energy conservation to
control the full action increment from its decomposition into a
$V$-component and an error.

\begin{lemma}[Physical energy closure]\label{lem:energyclosure}
Use the frozen frequency $\nu$ and the nonlinear Taylor remainder
$Q_0$ defined in~\eqref{setup:concavity}.  Let
$\mathcal D=(p,J-J^0)\in\mathcal A_3$, $B=\|\mathcal D\|_2$, and
assume that the segment $I^0+t\mathcal D$, $0\le t\le1$, lies in
the physical action neighborhood specified there.  Then
\begin{equation}
 h(I^0+\mathcal D)-h(I^0)
   =\nu\cdot\mathcal D+Q_0(\mathcal D),
 \qquad Q_0(0)=DQ_0(0)=0,
 \label{geo:energyexpansion}
\end{equation}
and $D^2h_{\mathrm{nl}}\le-\mu\operatorname{Id}$ along this segment,
with $\mu>0$.  Suppose the energy bound
\begin{equation}
 |\nu\cdot\mathcal D+Q_0(\mathcal D)|\le C_E\eps.
 \label{geo:energybound}
\end{equation}
holds for $C_E\ge0$.  Let $\mathcal D=A_V+E$, with $A_V\in V$
for a real finite-coordinate subspace $V$, and suppose
\[
 \|E\|_2\le\eta,\qquad |\nu\cdot E|\le\tau,
 \qquad d_V=\|\nu|_V\|_{V^*}.
\]
Since $A_V$ has finite support, $E=\mathcal D-A_V\in\mathcal A_3$,
so all scalar pairings converge absolutely.  No boundedness of $\nu$
as an $\ell^2$ functional is required.  Then
\begin{equation}
 \frac\mu2 B^2
 \le C_E\eps+d_V(B+\eta)+\tau,
 \qquad
 B\le\frac{2d_V}{\mu}
    +\sqrt{\frac{2(d_V\eta+\tau+C_E\eps)}{\mu}}.
 \label{geo:energyclosure}
\end{equation}
In particular, with $\mu>0$ fixed and $C_E$ uniformly bounded, if
$d_V\le\mu\rho/8$, $\eta=o(\rho)$ and
$\tau+\eps=o(\rho^2)$ uniformly as $\rho\to0$, then
$B\le\rho/4+o(\rho)$.
\end{lemma}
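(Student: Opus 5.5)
The expansion \eqref{geo:energyexpansion} is the definition of $Q_0$ in \eqref{setup:concavity}, restated on the weighted action domain. Since $\mathcal D\in\mathcal A_3$, the pairing $\nu\cdot\mathcal D$ converges absolutely, and the linear dispersive part of $h$ cancels exactly against $\nu^{\rm lin}\cdot\mathcal D$. The segment is assumed to lie in the physical neighborhood, so the concavity $D^2h_{\rm nl}\le-\mu\operatorname{Id}$ holds along it by \eqref{setup:concavity}. As in the proof of Lemma~\ref{lem:energy}, Taylor's integral formula then gives
\[
 -Q_0(\mathcal D)=-\int_0^1(1-\theta)D^2h_{\rm nl}(I^0+\theta\mathcal D)[\mathcal D,\mathcal D]\,d\theta\ge\frac\mu2B^2 .
\]

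For the first inequality in \eqref{geo:energyclosure}, I would rewrite the energy bound \eqref{geo:energybound} as
\[
 \frac\mu2B^2\le-Q_0(\mathcal D)\le C_E\eps+\nu\cdot\mathcal D,
\]
so it suffices to bound $\nu\cdot\mathcal D$ from above. Split $\nu\cdot\mathcal D=\nu\cdot A_V+\nu\cdot E$. This split is legitimate because $A_V$ has finite support, so $E\in\mathcal A_3$ and both pairings converge absolutely. The second term is at most $\tau$. For the first, $\nu\cdot A_V$ is the pairing of the restricted functional $\nu|_V$ with $A_V$, so $|\nu\cdot A_V|\le d_V\|A_V\|_2$. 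The triangle inequality gives $\|A_V\|_2\le\|\mathcal D\|_2+\|E\|_2\le B+\eta$. Collecting the terms yields $\frac\mu2B^2\le C_E\eps+d_V(B+\eta)+\tau$.

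The second inequality is the standard quadratic-inequality solution. Write the first inequality as
\[
 B^2-\frac{2d_V}{\mu}B-c\le0,\qquad c=\frac{2(d_V\eta+\tau+C_E\eps)}{\mu}.
\]
This gives
\[
 B\le\frac{d_V}{\mu}+\sqrt{\frac{d_V^2}{\mu^2}+c}\le\frac{2d_V}{\mu}+\sqrt c,
\]
using $\sqrt{a+b}\le\sqrt a+\sqrt b$.

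For the asymptotic statement, substitute $d_V\le\mu\rho/8$, which gives $2d_V/\mu\le\rho/4$. Under the stated hypotheses, $d_V\eta\le\mu\rho\,o(\rho)/8=o(\rho^2)$ and $\tau+C_E\eps=o(\rho^2)$, since $C_E$ is uniformly bounded. The square-root term is therefore $o(\rho)$ uniformly, which gives $B\le\rho/4+o(\rho)$.

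I do not expect a genuine obstacle. The only delicate point is that $\nu$ is not an $\ell^2$ functional, so I would apply it only to $A_V$, which has finite support, and to $E\in\mathcal A_3$. I would never bound $\nu\cdot E$ through $\|E\|_2$; this is why $\tau$ appears as a separate hypothesis.
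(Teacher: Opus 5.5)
Your proposal is correct and follows the paper's proof step for step: the integral Taylor bound $-Q_0(\mathcal D)\ge\frac\mu2B^2$ along the physical segment, the energy bound together with the split $\nu\cdot\mathcal D=\nu\cdot A_V+\nu\cdot E\le d_V(B+\eta)+\tau$, and the solution of the quadratic inequality via $\sqrt{a+b}\le\sqrt a+\sqrt b$. The final $o(\rho)$ bookkeeping, using $2d_V/\mu\le\rho/4$ and $d_V\eta+\tau+C_E\varepsilon=o(\rho^2)$, is also the same as in the paper.
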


\begin{proof}
Apply the integral Taylor formula to $h_{\mathrm{nl}}$ along the
specified physical segment:
\[
 Q_0(\mathcal D)
 =\int_0^1(1-t)D^2h_{\mathrm{nl}}(I^0+t\mathcal D)
               [\mathcal D,\mathcal D]\,dt
 \le-\frac\mu2B^2.
\]
Using~\eqref{geo:energybound}, then the exact decomposition, yields
\[
 \frac\mu2 B^2
 \le -Q_0(\mathcal D)
 \le C_E\eps+\nu\cdot\mathcal D
 \le C_E\eps+d_V\|A_V\|_2+\tau
 \le C_E\eps+d_V(B+\eta)+\tau.
\]
Solving this quadratic inequality for $B\ge0$ gives the upper bound
\[
 \frac{d_V}{\mu}+
 \sqrt{\frac{d_V^2}{\mu^2}
             +\frac{2(d_V\eta+\tau+C_E\eps)}{\mu}}.
\]
The inequality $\sqrt{a+b}\le\sqrt a+\sqrt b$ proves the stated
bound.  For the final assertion, the hypotheses give
$d_V\eta+\tau+C_E\eps=o(\rho^2)$ and $2d_V/\mu\le\rho/4$.
\end{proof}

For the supplied trajectories, the segment from $I^0$ to $I(t)$ is
physical: its normal components are convex combinations of
$J_j^0,J_j(t)\ge0$, and its tangential components retain the positive
margin.  The choice of coordinate tube before~\eqref{setup:concavity}
also keeps this segment in the concavity neighborhood.  The proof
never evaluates $Q_0(A_V)$, so $I^0+A_V$ need not be physical.

By Lemma~\ref{lem:energy}, the uniform bound $|f|\le F_0$ on the
coordinate tube gives \eqref{geo:energybound} with $C_E=2F_0$,
independently of $J^0$.  The bootstrap therefore needs separate
bounds for $\|E\|_2$ and $|\nu\cdot E|$; an unweighted
action-norm bound alone does not control the latter.
% END sections/04_geometry
% BEGIN sections/05_filtered
\section{Finite-order averaging of angular and normal-linear observables}\label{sec:filtered}
\label{the-all-mode-linear-jet-and-its-error-budget}

The window identity~\eqref{rw:alllinearidentity} expresses the action
increment as integrals of the augmented angular and normal-linear jet
$g_{01}$ and the low-mode quadratic jet $g_2^{\mathrm{low}}$, plus
$E_0$. Under Assumption~\ref{ass:contract} and the compatible divisor
bounds below, Proposition~\ref{prop:filtered} reduces these integrals
to a $V$-valued drift and errors with separate action and scalar bounds.
These are the inputs to the energy closure in Section~\ref{sec:closure}.

We use $I=\xi+\mathcal I=I^0+\mathcal D$, with
$\mathcal I=(p,J)$ and $\mathcal D=(p,J-J^0)$. Along the actual strong
trajectory, $I'=\mathcal D'=\varepsilon a_f$, where $a_f=D\pi X_f$
by Lemma~\ref{lem:action-chain}. All integrals are evaluated on this
trajectory; $B=\|\mathcal D\|_2$ and $W=|p|_1+\|z\|_\sigma^2$
retain the normalization of~\eqref{setup:action-norms}.

Fix an integer averaging order $L\ge1$ before choosing the small
parameters.  Let $K,M\ge1$ be integer cutoffs with
$M^2\ge D(K+1)$, where $D$ is the constant in
Lemma~\ref{rw:lem-separation}.  The augmented jet is

\begin{equation}
g_{01}(\varphi,z)=
 \sum_{|k|_1\le K}a_ke^{ik\varphi}
 +\sum_{|k|_1\le K}\sum_{j\in\mathcal N}\sum_{\eta=\pm1}
 b_{k,j,\eta}e^{ik\varphi}\zeta_j^\eta.
 \label{filt:eq1}
\end{equation}

Here $\zeta_j^+=\zeta_j$, $\zeta_j^-=\bar\zeta_j$, and $\eta=\pm1$
denotes the sign.  As in~\eqref{rw:augmented-jet}, $a_k$ is the
pure-angle coefficient of $f(\varphi,0,0)$ plus $(c_{\rm h}^0)_k$,
and $b_{k,j,\eta}$ is the original normal-linear coefficient.
Thus the jet has angular cutoff $K$ and retains every normal-linear
index.  The parameters $\xi,I^0,J^0$ are fixed during differentiation
in the running action $I$, so all such derivatives of these initial
coefficients vanish.

Set $\varsigma=\sigma-1$.  The represented normal-gradient bound in
Assumption~\ref{ass:contract} gives the $b$ sequences uniformly summable
analytic Fourier norms in $h^\varsigma$ on a fixed smaller angular
strip.  The added angular coefficients have norm at most $CW_0$ by
\eqref{rw:frozen-diagonal-bound}.  These bounds are uniform in the
admitted initial tails and the cutoffs.

Write $g_2^{\rm low}$ for the finite quadratic jet in
\eqref{rw:alllinearidentity}, including its terms linear in $p$.
Let $V$ be a real subspace of the action space supported on $A$ and
the normal indices $j\le M$, and let $e_j$ denote the normal
coordinate unit vector.  Let $\mathscr Q$ be the finite family of
all angular labels $(k,0)$ with $|k|_1\le LK$, all unary
labels $(k,\pm e_j)$ with the same angular bound and
$j\in\mathcal N$, $j\le M$, and all original labels of $g_2^{\rm low}$.
Fix $\mathscr Q$ and $V$ before computing the averaging coefficients,
including labels whose coefficients vanish.

Let $\mathscr D$ be a common complex action neighborhood contained
in the fixed analytic neighborhood of Section~\ref{sec:setup}.
For every $q\in\mathscr Q\setminus V$, assume on $\mathscr D$ that
\begin{equation}
|d_q(I)|\ge\Delta/2,\;
 |d_q(I)-\nu q|\le\Delta/2,\;
 \nu=Dh(I^0),\quad d_q(I)=Dh(I)\cdot q,
 \quad0<\Delta\le1.
 \label{filt:eq2}
\end{equation}
For each high unary label $q=(k,\pm e_j)$ with $|k|_1\le LK$ and
$j\in\mathcal N$, $j>M$, assume either~\eqref{filt:eq2} or the
cubic lower bound $|d_q(I)|\ge c j^3$ together with a uniform bound
on $|\nu q/d_q(I)|$, on the same domain $\mathscr D$.
Here $c>0$ and the ratio bound are fixed independently of the label.
The space $V$ and the domain $\mathscr D$ remain fixed throughout
the construction.  Proposition~\ref{prop:hierarchy} and
Corollary~\ref{geo:cor-highunary} supply these hypotheses for the
asymptotic cutoff choices used in Section~\ref{sec:closure}.

Consider a stopped initial interval $[0,T]$ on which
\eqref{rw:stopped} holds, $I(t)\in\mathscr D$, and
$T\le T_0/\varepsilon$.  In particular, $W_0\le s$.
Put $e_K=e^{-aK}$, where $a>0$ is a reduced angular analyticity
width chosen for the fixed order $L$, and $T_0$ is the supplied
time constant.

\begin{proposition}[Finite-order observable reduction]\label{prop:filtered}
Under the preceding hypotheses, for every $0\le t\le T$ there is
an exact decomposition

\begin{equation}
\mathcal D(t)=A_V(t)+E_0(t)+E_{2}(t)+E_{\mathrm{av}}^{[L]}(t),
 \qquad A_V(t)\in V,
 \label{filt:eq3}
\end{equation}

where $E_0$ is the reorganized window error defined
in~\eqref{rw:e0definition}.  Its bound~\eqref{rw:e0budget} gives

\begin{equation}
\|E_0\|_2\le C(\varepsilon+e_K+s^{3/2}),\qquad
 |\nu \cdot E_0|\le C(\varepsilon+e_K+s^{5/2}),
 \label{filt:eq4}
\end{equation}

the contribution $E_2$ of the labels of $g_2^{\rm low}$ outside $V$
satisfies

\begin{equation}
 \begin{aligned}
 \|E_2\|_2&\le C\mathfrak p(M)\varepsilon(\Delta^{-1}+s\Delta^{-2}),\\
 |\nu\cdot E_2|&\le C\mathfrak p(M)\varepsilon(1+s\Delta^{-1}),
 \qquad \mathfrak p(M)=(1+M)^3,
 \end{aligned}
 \label{filt:eq5}
\end{equation}

and the finite-order averaging error satisfies

\begin{equation}
\|E_{\mathrm{av}}^{[L]}\|_2\le C_L\sum_{n=1}^L
 \left(\frac{\varepsilon^n}{\Delta^{2n-1}}
 +\frac{\varepsilon^n(s+e_K)}{\Delta^{2n}}\right)
 +C_L\frac{\varepsilon^L}{\Delta^{2L}},
 \label{filt:eq6}
\end{equation}
\begin{equation}
|\nu \cdot E_{\mathrm{av}}^{[L]}|\le C_L\sum_{n=1}^L
 \left(\frac{\varepsilon^n}{\Delta^{2n-2}}
 +\frac{\varepsilon^n(s+e_K)}{\Delta^{2n-1}}\right)
 +C_L\frac{\varepsilon^L}{\Delta^{2L-1}}.
 \label{filt:eq7}
\end{equation}

All estimates are uniform on $[0,T]$, over the admitted initial data
and the choices of $V$ satisfying the hypotheses.  The scalar
quantities are the absolutely convergent frozen-frequency pairings.
Constants $C,C_L$ may depend on $L,T_0$, the fixed finite-gap family,
the standing analytic bounds and the fixed high-unary separation
and ratio bounds.  They are independent of
$K,M,\varepsilon,\Delta,s$; the polynomial dependence on $M$ in
\eqref{filt:eq5} is displayed explicitly as $\mathfrak p(M)$.
No canonical flow on the PDE phase space is required.
\end{proposition}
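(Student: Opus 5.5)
Starting from the window identity \eqref{rw:alllinearidentity}, the error $E_0$ and its bounds \eqref{filt:eq4} are already given by \eqref{rw:e0budget}. The remaining task is to decompose the two integrals $\eps\int_0^t a_{g_{01}}\,dv$ and $\eps\int_0^t a_{g_2^{\rm low}}\,dv$.

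For $g_2^{\rm low}$, we split its finitely many monomials by label into a part with labels in $V$ and a part with labels outside $V$. By Lemma~\ref{geo:lem-signed}, the drift of the $V$-part is $V$-valued, so it goes into $A_V$. For the part outside $V$, we take the primitive $\chi_2=\sum c_q e^{ik\varphi}p^r\zeta^\alpha\bar\zeta^\beta/(i d_q(I))$ with the running $I$ kept as a parameter, exactly as in Lemma~\ref{rw:lem-primitive}. Differentiating $a_{\chi_2}$ along the supplied trajectory through finite normal compressions (here only modes $\le M$ appear, so this is a finite-dimensional chain rule) gives
\[
\eps\int_0^t a_{g_2^{\rm low,out}}=\eps[a_{\chi_2}]_0^t-\eps^2\int_0^t\mathcal R_2 .
\]
The endpoint term is $O(\eps\Delta^{-1}\mathfrak p(M))$, since the coefficients are controlled by the one-loss jet at weights up to $M^{3}$. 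The residual contains $D_I(1/d_q)[a_f]=O(\Delta^{-2})$ multiplied by a quadratic factor of size $s$, so after time $T_0/\eps$ it contributes $\eps s\Delta^{-2}$. For the scalar pairing we use $\nu\cdot q\,/\,d_q(I)=1+O(1)$, which follows from \eqref{filt:eq2}. This ratio saves one factor of $\Delta^{-1}$ and gives \eqref{filt:eq5}.

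For $g_{01}$ we run an $L$-step averaging in an auxiliary Poisson algebra. The coefficients are functions of $(\varphi,I)$ with $I$ treated as an independent parameter, and we work modulo normal degree $\ge2$. At step $n$ the current jet $G_n$ (angular plus normal-linear, with $|k|_1\le nK$) is split into labels in $V$, which are kept, and labels outside $V$, which are divided by $i d_q(I)$. This uses \eqref{filt:eq2} for low labels and the cubic bound for high unary labels. The new terms are produced by brackets of the primitives with $\eps f$ projected to degree $\le1$, together with the $D_I$ derivatives hitting $\dot I=\eps a_f$. Each step costs $\Delta^{-2}$: one factor from the divisor and one from differentiating the divisor in $I$. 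This accounts for the terms $\eps^n\Delta^{1-2n}$ and $\eps^n\Delta^{-2n}$ in \eqref{filt:eq6}.

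The discarded parts are of three kinds: normal degree $\ge2$ of size $s$, angular tails of size $e_K$, and the final order-$L$ remainder. Each gets integrated against the time $T_0/\eps$; this produces the $(s+e_K)$ factors. Summing the kept $V$-labelled drifts gives the rest of $A_V$, and Lemma~\ref{geo:lem-signed} with its extension to $I$-dependent coefficients shows that this sum is $V$-valued. For \eqref{filt:eq7} we pair each divided term with $\nu$ and again use the ratio bound, which saves one $\Delta^{-1}$ at every order.

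The main obstacle is justifying the chain rule for the corrected observables when the perturbation loses a derivative, and controlling the high normal-linear modes uniformly in $M$. Here I would reuse the compression argument of Lemma~\ref{rw:lem-primitive}. The high unary primitives gain three derivatives via the cubic divisor, as in Lemma~\ref{lem:highlinear}, so their coefficients lie in $h^{\sigma+2}$. Their pairings with $X_f^z\in h^{\sigma-1}$ therefore converge, and the weighted sums in $\mathcal A_{w_*}$ stay bounded independently of $K$ and $M$.
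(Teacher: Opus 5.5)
Most of your outline matches the paper: the start from \eqref{rw:alllinearidentity}, the ratio bound $|\nu\cdot q/d_q|\le2$, and the compression chain rule. The same holds for the one-step averaging of the outside labels of $g_2^{\rm low}$ with divisors at the running action; your $a_{\chi_2}$ is the paper's $U_2$. Its residual also contains $D_pU_2[-f_\varphi]$, of size $\mathfrak p(M)\Delta^{-1}$, which you omit, but it only contributes $\eps\mathfrak p(M)\Delta^{-1}$.

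\textbf{The gap is in the averaging of $g_{01}$.} There you let the new terms at each order be produced by brackets of the primitives with $\eps f$, projected to normal degree $\le1$. The explicit Poisson bracket at fixed $I$ does not respect the degree filtration. Two linear terms bracket to a pure-angle term, and a linear term bracketed with the quadratic jet $\tfrac12\langle C(\varphi)z,z\rangle$ is again linear. So normal degree $\ge2$ is not an ideal for the bracket you use, and the truncated Lie-series computation does not satisfy the covariance identity \eqref{filt:eq17} modulo degree two.

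That identity, with the componentwise action of the outer brackets, is what makes the retained part of $dJ^{[L]}/dt$ $V$-valued. This part is $\mathscr T^{-1}$ applied to the drifts of the kept terms, not just their sum, so Lemma~\ref{geo:lem-signed} alone does not give it.

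Keeping the brackets with the quadratic jet also applies the one-loss Hessian $C(\varphi)$ to the generated linear coefficients. On the window, controlling them in $h^{\sigma-1}$ then in general costs a factor $M$ per such bracket, since $C:h^\sigma\to h^{\sigma-1}$ and $\|c\|_\sigma\le M\|c\|_{\sigma-1}$ there. Otherwise one loses a Sobolev level. Either way, the $M$-independence of $C_L$ in \eqref{filt:eq6}--\eqref{filt:eq7}, which your last paragraph asserts, is not justified.

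\textbf{The missing idea is the paper's separation of the two parts of the physical bracket.}
\begin{itemize}
\item The formal construction normalizes $h+\eps g_{01}$ only in the coefficient--action bracket $\{F,G\}_\times=D_IF[a_G]-D_IG[a_F]$. Under it normal degrees add, so degree $\ge2$ is a Poisson ideal and \eqref{filt:eq17} holds in the quotient.
\item Each generated normal-linear coefficient then contains exactly one original $b_{k,j,\eta}$. It therefore stays in $h^{\sigma-1}$ with $M$-independent bounds.
\item The explicit part is never averaged. In the exact chain rule $\frac{d}{dt}J^{[L]}=\{J^{[L]},h+\eps g_{01}\}_\times+\eps a_*+\eps D_IY[a_*]+\eps D_{\rm exp}Y[X_f]$, the piece $\eps^{n+1}D_{\rm exp}Y_n[X_f]$ costs no inverse divisor. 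It integrates over $[0,T_0/\eps]$ directly to $O(\eps^n\Delta^{-(2n-1)})$, the size of the endpoint $\eps^nY_n$, so averaging it gains nothing.
\item Only the degree-$\le1$ part of $D_IY_n[a_{01}]$, which carries the extra $\Delta^{-1}$, is pushed to the next order.
\item Its degree-two part $T_n$ and the term $D_IY[a_*]$, with $\|a_*\|_2\le C(s+e_K)$, are estimated directly. They produce the $(s+e_K)$ terms.
\end{itemize}
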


\paragraph{Proof of Proposition~\ref{prop:filtered}.}
The construction has four steps.
\begin{enumerate}
\item Subsection~\ref{the-coefficient-action-poisson-algebra} treats the
running action and the explicit variables as independent. Its auxiliary
bracket separates coefficient derivatives from explicit derivatives and
makes terms of normal degree at least two a Poisson ideal. This permits
finite-order averaging in the degree-at-most-one quotient.

\item Subsection~\ref{analytic-module-bounds} proves the uniform
coefficient estimates. Each generated normal-linear coefficient contains
exactly one original $b_{k,j,\eta}$ factor; pure-angle coefficients contain
none. This preserves the available $h^{\sigma-1}$ regularity.

\item Subsection~\ref{finite-formal-normal-form-and-the-scalar-gain}
constructs the corrected observable through order $L$. The compatible
bound on $(\nu\cdot q)/d_q(I)$ saves one inverse-divisor factor in its
frozen-frequency pairing, at every retained order.

\item Subsection~\ref{exact-discarded-terms-and-the-full-trajectory}
restores the discarded brackets and explicit derivatives along the
supplied strong trajectory, estimating the averaging error while keeping
the leading residual integral exact.
Subsection~\ref{the-retained-quadratic-integral} identifies that integral
with the low quadratic drift plus $E_0$ and averages its labels outside
$V$ in one step to obtain $E_2$.
\end{enumerate}

\hypertarget{the-coefficient-action-poisson-algebra}{%
\subsection{The coefficient-action Poisson algebra}\label{the-coefficient-action-poisson-algebra}}

Treat the action parameter $I$ and the explicit variables
$(\varphi,p,\zeta,\bar\zeta)$ as independent; on the complexified
domain, $\zeta$ and $\bar\zeta$ are also independent.  For a monomial
$F=c(I)e^{ik\varphi}p^r\zeta^\alpha\bar\zeta^\beta$, take
$k\in\mathbb Z^A$, $r\in\mathbb Z_{\ge0}^A$, and finitely supported
nonnegative multi-indices $\alpha,\beta$ on $\mathcal N$, and put
$q=(k,\alpha-\beta)$.  The derivations
$D_a=\partial_{\varphi_a}$ for $a\in A$ and
$D_j=i(\zeta_j\partial_{\zeta_j}-\bar\zeta_j\partial_{\bar\zeta_j})$
for $j\in\mathcal N$ satisfy $D_jF=iq_jF$ and extend linearly to
finite sums.  They commute pairwise and with every
$\partial_{I_\ell}$.  Define

\begin{equation}
\{F,G\}_\times
 =\sum_j\bigl(D_jF\,\partial_{I_j}G
                 -\partial_{I_j}F\,D_jG\bigr).
 \label{filt:eq8}
\end{equation}

Here $j$ ranges over all tangential and normal action coordinates.
At a finite explicit normal cutoff, the commuting derivations make
this a Poisson bracket, so the Jacobi identity holds exactly.
Set $a_F=(-D_jF)_j$; for a monomial, $a_F=-iqF$.  Then

\begin{equation}
\{F,G\}_\times=D_IF[a_G]-D_IG[a_F].
 \label{filt:eq9}
\end{equation}

After substituting $I=\xi+\pi$ in both factors, their physical
bracket is the sum of \eqref{filt:eq8} and the explicit bracket at
fixed $I$, evaluated at the substituted action.  Indeed,
$\{F,\pi_j\}_{\rm exp}=D_jF$ by the convention
\eqref{setup:poisson}, so the mixed chain-rule terms give
\eqref{filt:eq8}.  Terms involving two coefficient-action derivatives
vanish because $\{\pi_j,\pi_\ell\}=0$.

The explicit normal degree of a monomial is
$|\alpha|_1+|\beta|_1$; its coefficient $c(I)$ has degree zero.
Both $D_j$ and the coefficient derivatives in $I$ preserve this
degree.  Thus each nonzero bracket term of homogeneous factors has
the sum of their degrees, and terms of degree at least two form a
Poisson ideal for \eqref{filt:eq8}.  The subalgebra with no explicit
$p$-dependence is also closed under this bracket.  We carry out the
finite formal construction in its quotient by that ideal, retaining
only pure-angle and normal-linear terms.  The discarded terms are
restored in Section~\ref{exact-discarded-terms-and-the-full-trajectory}.
This ideal need not be Poisson for the physical bracket.

For a monomial $F$ with label $q$, the integrable homological
derivation is

\begin{equation}
\{F,h\}_\times=i d_q(I)F.
 \label{filt:eq10}
\end{equation}

For the auxiliary construction, fix an integer normal cutoff
$R\ge M$.  Retain only the explicit normal-linear indices
$j\le R$ in $g_{01}$, while keeping its entire convergent frozen
angular coefficient $c_{\rm h}^0$.  Define $h_R$ by restricting the
linear sum in \eqref{setup:integrable-hamiltonian} to $A$ and the
normal indices $j\in\mathcal N$, $j\le R$, while retaining the complete
nonlinear part $h_{\rm nl}(I)$.  Keep the full action parameter
$I$ in $h_{\rm nl}$ and in every coefficient.  Then $h_R$ is
holomorphic on the common complex $\ell^2$ neighborhood, and
$Dh_R(I)q=d_q(I)$ for every retained label.  Its action derivatives
of order at least two are those of $h_{\rm nl}$.  Before norm
estimation, replace each bracket $\{h_R,\chi_n\}_\times$ by its
solved homological term from \eqref{filt:eq13}.

At each fixed formal order, the generated pure-angle coefficients
are independent of $R$, and each retained normal-linear coefficient
at index $j$ stabilizes once $R\ge\max\{M,j\}$.  For each admitted
initial datum, the summable bounds in
Section~\ref{analytic-module-bounds} give convergence of the retained
coefficient series and their required action derivatives; the
trajectory limits are justified in
Section~\ref{exact-discarded-terms-and-the-full-trajectory}.
Equation~\eqref{filt:eq10} is thus interpreted coefficientwise as the
full diagonal homological derivation.  The estimates use the solved
homological terms without requiring a uniform $\ell^2$ bound on
$Dh_R$.

\hypertarget{analytic-module-bounds}{%
\subsection{Analytic module bounds}\label{analytic-module-bounds}}

Use the fixed complex action domain $\mathscr D$ and the divisor
hypotheses of Section~\ref{the-all-mode-linear-jet-and-its-error-budget}.
Fix an available angular width $a_0>a>0$.  For the chosen finite order
$L$, reserve finitely many intermediate widths between $a_0$ and $a$
for bracket estimates and explicit angular derivatives.  These widths
are chosen once, independently of $K,M,\varepsilon,\Delta,s$; $a$ is
the common final width and may be reduced once, depending on $L$.

Recall that $\varsigma=\sigma-1$.  Write $\ell^2_{\C}$ for the
complexified action Hilbert space, and take $\mathcal X=\C$ for
scalar coefficients or $\mathcal X=\ell^2_{\C}$ for action-vector
coefficients.  An $\mathcal X$-valued expression of explicit normal
degree at most one, with no explicit $p$-dependence, has the expansion
\[
 F=\sum_k e^{ik\varphi}\left(c_{k,0}(I)
       +\sum_{\eta=\pm1}\sum_{j\in\mathcal N}
                       c_{k,j,\eta}(I)\zeta_j^\eta\right).
\]
Here $k\in\mathbb Z^A$.  For an integer $m\ge0$, define
\[
 \|c\|_{m;\mathcal X}
 :=\sup_{I\in\mathscr D}
       \|D_I^m c(I)\|_{\mathcal L^m(\ell^2_{\C};\mathcal X)},
 \qquad \mathcal L^0(\ell^2_{\C};\mathcal X):=\mathcal X.
\]
Here $\mathcal L^m$ is the space of bounded $m$-linear maps.  For
$a\le r<a_0$, use the seminorms
\begin{equation}
 \begin{split}
 |F|_{r,m;\mathcal X}
 :=\sum_k e^{r|k|_1}\Bigg(&\|c_{k,0}\|_{m;\mathcal X}\\
 &+\sum_{\eta=\pm1}\left(\sum_{j\in\mathcal N}j^{2\varsigma}
             \|c_{k,j,\eta}\|_{m;\mathcal X}^{\,2}\right)^{1/2}\Bigg).
 \end{split}
 \label{filt:eq11}
\end{equation}
Absent coefficients contribute zero.  Brackets of action-vector
expressions with scalar expressions are taken componentwise.
For the generated corrections $Y_n$ in \eqref{filt:eq14}, a linear
coefficient at index $j$ is supported on the tangential space and
$\operatorname{span}(e_j)$, while a pure-angle coefficient is supported
on the tangential space.  These properties follow from the label-vector
form of $\{I,\chi_n\}_\times$ and the subsequent componentwise brackets.
For $\nu Y_n$, we use the scalar seminorm after the finite-cutoff
pairing in Section~\ref{finite-formal-normal-form-and-the-scalar-gain}.
Its estimates use the divisor cancellation there, without treating
$\nu$ as a bounded $\ell^2$ functional.

At formal order $n$, $1\le n\le L$, it suffices to retain action
derivatives
\[
 0\le m\le m_n:=L-n+2.
\]
Thus at most $L+1$ action derivatives of a coefficient and $L+2$
action derivatives of $h_{\rm nl}$ are required.  The full-trajectory
residuals use at most one action derivative of each $Y_n$.  These are
finite analytic derivative bounds already supplied by the hypotheses;
no additional Sobolev level is imposed.

\paragraph{One-factor invariant.}
Initially the coefficients $b_{k,j,\eta}$ are independent of $I$.
At every finite order of the quotient construction, each summand in
a retained normal-linear coefficient at $(j,\eta)$ contains exactly
one original factor $b_{k_0,j,\eta}$ with the same normal index and
sign; $k_0$ is a summed input Fourier index.  Generated pure-angle
coefficients contain no such factor.  Coefficient-action
differentiation preserves explicit normal degree, index and sign.
Every bracket retaining degree one combines a degree-zero factor
with a degree-one factor, while label selection and homological
division leave the explicit monomial unchanged.  These observations
prove the invariant by induction.

Consequently, each generated linear coefficient is a finite Fourier
convolution of one original $b_{k_0,j,\eta}$ with pure-angle
coefficients, derivatives of $h_{\rm nl}$ and inverse divisors.
The remaining factors have action-derivative bounds uniform in $j$,
with the divisor and angular losses recorded below.  Since the
original $b$ factor is independent of $I$, these bounds give a
majorant for each coefficient derivative after taking the supremum
over $\mathscr D$.  The original $h^\varsigma$ Fourier norms then
control the weighted normal sums in \eqref{filt:eq11} for these
generated coefficients, without strengthening the initial assumptions.

Write $\Pi_{\le1}$ for projection onto explicit normal degrees zero
and one.  All brackets in the next lemma are coefficient-action
brackets, with this projection applied; no estimate for a physical
Poisson bracket is asserted.  For a retained expression $N$ supported
on labels outside $V$, define the homological division operator by
$(\mathcal S N)_q=N_q/(i d_q)$.

\begin{lemma}[Bracket and homological-division estimates]
\label{lem:filtered-module-estimates}
Let $a\le r'<r<a_0$ be two of the reserved angular widths, and let
$m\ge0$ be a fixed integer.  Let $F$ be $\mathcal X$-valued and
$G$ scalar-valued, with explicit normal degree at most one, no
explicit $p$-dependence, and finite seminorms
\eqref{filt:eq11} through order $m+1$.  Then
\begin{equation}\label{filt:bracket-estimate}
 \begin{aligned}
 &|\Pi_{\le1}\{F,G\}_\times|_{r',m;\mathcal X}\\
 &\quad\le C_{m,r-r'}\sum_{\ell=0}^m\binom m\ell
 \Bigl(|F|_{r,\ell+1;\mathcal X}|G|_{r,m-\ell;\C}\\
 &\hspace{43mm}
       +|F|_{r,\ell;\mathcal X}|G|_{r,m-\ell+1;\C}\Bigr).
 \end{aligned}
\end{equation}
For an $\mathcal X$-valued retained expression $N$ whose labels
satisfy the outside-divisor hypotheses, one also has
\begin{equation}\label{filt:division-estimate}
 |\mathcal S N|_{r',m;\mathcal X}
 \le C_{m,r-r'}\sum_{\ell=0}^m
       \Delta^{-(m-\ell+1)}|N|_{r,\ell;\mathcal X}.
\end{equation}
Label selection is a contraction in these seminorms.  The constants
depend only on the indicated derivative order and strip loss, the
fixed tangential dimension, and the standing analytic and separation
bounds; they are independent of $K,M,R,\varepsilon,\Delta,s$.
For expressions generated from $g_{01}$, these operations preserve the
one-factor invariant and, for action-vector coefficients, the output
support described above.
\end{lemma}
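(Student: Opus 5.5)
The plan is to verify both estimates coefficientwise in the expansion \eqref{filt:eq11}. I would first reduce to monomials and then resum using Cauchy estimates in the angle together with the Leibniz rule in $I$.

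\textbf{Step 1: the bracket.} By \eqref{filt:eq8}, a monomial of $F$ with label $q$ and a monomial of $G$ with label $q'$ give
\[
 \{F,G\}_\times=i\bigl(q\cdot\partial_IG\,F-q'\cdot\partial_IF\,G\bigr),
\]
with the obvious meaning of contracting against the coefficient derivative. After applying $\Pi_{\le1}$, only products of a degree-zero factor with a factor of degree at most one survive.

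The label vector has $|q|_1\le|k|_1+1$, so it acts boundedly on $\ell^2_{\C}$ with norm at most $|k|_1+1$. The action derivative $D_Ic[q]$ is therefore bounded by $(|k|_1+1)\|c\|_{1;\mathcal X}$, and more generally $D_I^m$ of the product is controlled by the Leibniz rule. This is the origin of the terms $|F|_{r,\ell+1}|G|_{r,m-\ell}$ and $|F|_{r,\ell}|G|_{r,m-\ell+1}$, with binomial weights.

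The factor $|k|_1+1$ is absorbed by the strip loss through $(|k|_1+1)e^{-(r-r')|k|_1}\le C_{r-r'}$. The Fourier convolution $e^{r'|k+k'|_1}\le e^{r'|k|_1}e^{r'|k'|_1}$ then turns the $\ell^1$-in-$k$ sums into a product.

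For the normal part, a degree-one output term at $(j,\eta)$ comes from a degree-zero coefficient of one factor times a degree-one coefficient at $(j,\eta)$ of the other. So for fixed $k$, the weighted $\ell^2$ norm in $j$ is bounded by a sup-type scalar bound times the $h^{\varsigma}$ norm, via Minkowski inside the Fourier sum. No product of two normal-linear coefficients enters, since such terms have degree two and are discarded by $\Pi_{\le1}$. This same bookkeeping shows that the one-factor invariant is preserved. Support is preserved because $q$ lies in the tangential space plus $\operatorname{span}(e_j)$.

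\textbf{Step 2: homological division.} Each label gets divided by $i d_q(I)$. I would use Faà di Bruno to write $D_I^n(1/d_q)$ as a sum of terms of the form
\[
 d_q^{-1-p}\prod_{i=1}^{p} D_I^{n_i}d_q,\qquad \sum_i n_i=n .
\]
Here $D_I^{n_i}d_q=D^{n_i+1}h_{\rm nl}[q,\dots]$, which is bounded by $C(|k|_1+2)$ by \eqref{rw:divderivative} and the uniform higher derivative bounds of $h_{\rm nl}$ on $\mathscr D$.

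For labels outside $V$ we have $|d_q|\ge\Delta/2$ with $\Delta\le1$, so the worst power of $\Delta^{-1}$ is $\Delta^{-(n+1)}$. Combining with Leibniz gives the factor $\Delta^{-(m-\ell+1)}$ multiplying $|N|_{r,\ell}$. Polynomial factors in $|k|_1$ are again absorbed by the strip loss.

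High unary labels that only satisfy the cubic bound $|d_q|\ge cj^3$ give even better factors, because $c j^3\ge c\ge\Delta/2$ up to constants. Uniformity in $j$ is essential here and does hold, since the label norm of a unary label is independent of $j$. Label selection multiplies coefficients by $0$ or $1$, so it is a contraction.

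\textbf{Main obstacle.} The delicate point is the normal-index uniformity in Step 1 and Step 2. All multipliers must depend on $j$ only through $q$, and $\|q\|_2$ must stay bounded independently of $j$; the weighted $\ell^2$ norm over $j$ must pass through the suprema over $I\in\mathscr D$. I would handle this by bounding each coefficient pointwise by (scalar majorant) $\times\,\|c_{k,j,\eta}\|_{m}$ before taking the square sum. Everything else is routine Cauchy estimates, with constants depending only on $m$, $r-r'$, $d$, and the analytic and separation bounds.
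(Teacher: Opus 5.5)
Your proposal is correct and follows essentially the same route as the paper: coefficientwise Leibniz estimates with the label factor $1+|k|_1$ absorbed by the strip loss $r-r'$, Fourier convolution plus the triangle inequality in the weighted normal $\ell^2$ sum (derivatives of pure-angle coefficients in the direction $e_j$ are controlled uniformly in $j$ by the full $\ell^2$ derivative norm), and the reciprocal bound $\|D_I^m(d_q^{-1})\|\le C_m(1+|k|_1)^m\Delta^{-(m+1)}$, which follows from the $I$-independence of the cubic frequency part and $\|q\|_2\le1+|k|_1$. One small correction concerns the support claim: output support is preserved because the bracket with a scalar $G$, the division, and the selection all act componentwise on the vector coefficients of $F$, and differentiation or scalar multiplication cannot enlarge support; the fact that $q$ lies in the tangential space plus $\operatorname{span}(e_j)$ explains the support of the initial vector $\{I,\chi_n\}_\times$, not why later operations preserve it.
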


\begin{proof}
A retained bracket has degree pattern $(0,0)$, $(0,1)$, or $(1,0)$.
In a bracket between a pure-angle term and a normal-linear term at
$(j,\eta)$, the pure-angle coefficient is differentiated in tangential
directions or in $e_j$, while the linear coefficient is differentiated
only in tangential directions.  The full $\ell^2$ coefficient-derivative
norms control these operations uniformly in $j$.  The explicit
derivations contribute at most one factor $1+|k|_1$ from either
input Fourier index.  Fourier convolution, the triangle inequality
in the weighted normal $\ell^2$ sum, and the loss $r-r'>0$ therefore
bound each product.  Leibniz's rule distributes the $m$ further
action derivatives with the orders displayed in
\eqref{filt:bracket-estimate}.  The discarded degree pattern $(1,1)$
is not included in this estimate.

For an angular label $q=(k,0)$ or unary label $q=(k,\pm e_j)$,
positive-order action derivatives satisfy
$\|D_I^\ell d_q\|\le C_\ell(1+|k|_1)$ for $\ell\ge1$,
uniformly in $j$ on $\mathscr D$.  Indeed, the cubic frequency part
is independent of $I$ and $\|q\|_2\le1+|k|_1$.
The lower divisor bound in \eqref{filt:eq2}, or its high-unary
alternative, consequently gives
\begin{equation}
 \|D_I^m(d_q^{-1})\|\le
 C_m(1+|k|_1)^m\Delta^{-(m+1)}.
 \label{filt:eq12}
\end{equation}
For $m=0$ this is the lower divisor bound.  For $m\ge1$, each
reciprocal-derivative term contains at most $m$ positive-order
derivatives of $d_q$ divided by at most $m+1$ powers of $d_q$;
use $0<\Delta\le1$.  Applying Leibniz's rule to $N_q/d_q$ and
absorbing the resulting Fourier powers in $r-r'$ proves
\eqref{filt:division-estimate}.  Selecting labels only removes
nonnegative summands from the coefficient seminorms.

Finally, a retained degree-one bracket contains one degree-one
factor, and coefficient differentiation and division preserve its
normal index and sign.  Componentwise differentiation cannot enlarge
the output support of a vector coefficient.  This proves the last
assertion, including preservation of the original single $b$ factor.
\end{proof}

Only finitely many applications of this lemma are needed at fixed
order $L$.  Choose enough intermediate widths once for these
applications and for the subsequent explicit angular derivatives.
There is no loss of action-domain width: the action derivatives are
estimated directly using the supplied derivative bounds and
\eqref{filt:eq12}.

All estimates may first be proved at the auxiliary normal cutoff
$R$, with constants independent of $R$ and uniform over the admitted
initial data.  For each fixed datum and formal order, the coefficient
at index $j$ stabilizes once $R\ge\max\{M,j\}$.  The same summable
majorants give convergence of the retained coefficient series and
their required action derivatives in these seminorms.

\hypertarget{finite-formal-normal-form-and-the-scalar-gain}{%
\subsection{Finite formal normal form and the scalar gain}\label{finite-formal-normal-form-and-the-scalar-gain}}

Work first at the auxiliary normal cutoff $R\ge M$ of
Section~\ref{the-coefficient-action-poisson-algebra}, suppressing
the cutoff in the notation: until it is removed below, $h$ denotes
$h_R$ and $g_{01}$ retains only the explicit normal-linear indices
$j\le R$, with its full frozen angular coefficient unchanged.
All calculations take place in the normal-degree quotient and are truncated after
$\varepsilon^L$.  Remainder symbols record higher formal orders;
no convergence of an infinite power series in $\varepsilon$ is used.
At step $n$, the current Hamiltonian is

\[
 h+\sum_{j<n}\varepsilon^j Z_j+\varepsilon^n F_n
       +O(\varepsilon^{n+1}),
\]

where $F_1=g_{01}$ and all labels of each $Z_j$ belong to $V$.
Subscripts $q$ denote label components with their explicit Fourier
and normal factors included.  Split $F_n=Z_n+N_n$ by label membership
in $V$, set $(\chi_n)_q=0$ for $q\in V$, and solve

\begin{equation}
\{\chi_n,h\}_\times=N_n,
 \qquad (\chi_n)_q=(N_n)_q/(i d_q),\quad q\notin V.
 \label{filt:eq13}
\end{equation}

Apply $\exp(\varepsilon^n\mathcal L_{\chi_n})$, where
$\mathcal L_\chi F=\{F,\chi\}_\times$.  The order-$n$ outside term
cancels because $\{h,\chi_n\}_\times=-N_n$.  Conjugation symmetry
of the label sets and divisors preserves reality.  Generated retained
labels have angular size at most $nK$ and normal component zero or
$\pm e_j$, so all divisions lie in the label range fixed in
Section~\ref{the-all-mode-linear-jet-and-its-error-budget}.

Lemma~\ref{lem:filtered-module-estimates} supplies the bracket and
division estimates for the construction.  We record their induction
together with the corrected-observable bounds below, so that the
derivative and divisor counts can be checked in one place.

Let $\mathscr T$ be the composition of these finite formal Poisson
automorphisms, with step $1$ acting first and step $L$ last.  Then

\[
 \mathscr T(h+\varepsilon g_{01})
   =h+\sum_{n=1}^L\varepsilon^n Z_n
       +O(\varepsilon^{L+1})
\]

in the degree-at-most-one quotient. Set

\begin{equation}
J^{[L]}=\operatorname{trunc}_{\le L}(\mathscr T^{-1}I)
       =I+\sum_{n=1}^L\varepsilon^nY_n.
 \label{filt:eq14}
\end{equation}

For the scalar estimate, pair first with $\nu_R=Dh_R(I^0)$.
This is an $\ell^2$ vector and agrees with the full frozen frequency
$\nu$ on the output
coordinates of every retained correction, which are supported on
$A$ and normal indices $j\le R$.  Thus the innermost pairing is

\begin{equation}
\nu\{I,\chi_j\}_\times
 =-i\sum_{q\notin V}(\nu q)(\chi_j)_q.
 \label{filt:eq15}
\end{equation}

Here $q$ is the label of the complete generated component
$(\chi_j)_q$, whose own homological divisor appears in
\eqref{filt:eq13}.  The bound $|\nu q/d_q|\le C$ cancels that
divisor: $C=2$ under \eqref{filt:eq2}, and the high-unary alternative
supplies a fixed bound.  For $\vartheta_q=(\nu q)/d_q$,
\[
 D_I\vartheta_q[v]=-\vartheta_q\,\frac{D_Id_q[v]}{d_q};
\]
$\nu q$ is frozen at $I^0$, so each derivative costs at most one
further inverse divisor while preserving the initial cancellation.
Moreover, for any action-vector expression $U$, the frozen
contraction commutes with every componentwise outer bracket:
$\nu_R\{U,\chi\}_\times=\{\nu_R U,\chi\}_\times$.
Hence the nested scalar brackets preserve the one-divisor gain,
without using a bound on $\|\nu_R\|_2$.  The complete finite-order
bounds on the common final strip are
\begin{equation}
 \begin{aligned}
 |F_n|_{a,m;\C}
   &\le C_{n,m}\Delta^{-2(n-1)-m},\\
 |\chi_n|_{a,m;\C}+|Y_n|_{a,m;\ell^2_{\C}}
   &\le C_{n,m}\Delta^{-(2n-1)-m},\\
 |\nu Y_n|_{a,m;\C}
   &\le C_{n,m}\Delta^{-(2n-2)-m}.
 \end{aligned}
 \label{filt:eq16}
\end{equation}
These hold for $1\le n\le L$ and $0\le m\le m_n=L-n+2$.
Table~\ref{tab:filtered-induction} also records the terminal retained
remainder
$R_L=\Pi_{\le1}\{Y_L,g_{01}\}_\times$, which appears with
prefactor $\varepsilon^{L+1}$ in \eqref{filt:eq18}.  Its scalar
version is formed by the frozen pairing before removal of the cutoff.

\begin{table}[ht]
\centering
\small
\begin{tabular}{@{}llll@{}}
\toprule
Quantity & Formal prefactor & Coefficient-seminorm bound & Action derivatives\\
\midrule
$F_n$ & $\varepsilon^n$ & $C_{n,m}\Delta^{-2(n-1)-m}$ & $0\le m\le m_n$\\
$\chi_n$ & $\varepsilon^n$ & $C_{n,m}\Delta^{-(2n-1)-m}$ & $0\le m\le m_n$\\
$Y_n$ & $\varepsilon^n$ & $C_{n,m}\Delta^{-(2n-1)-m}$ & $0\le m\le m_n$\\
$\nu Y_n$ & $\varepsilon^n$ & $C_{n,m}\Delta^{-(2n-2)-m}$ & $0\le m\le m_n$\\
$R_L$ & $\varepsilon^{L+1}$ & $C_L\Delta^{-2L}$ & $m=0$\\
$\nu R_L$ & $\varepsilon^{L+1}$ & $C_L\Delta^{-(2L-1)}$ & $m=0$\\
\bottomrule
\end{tabular}
\caption{Finite-order bookkeeping on the common final angular strip.
The output norm is $\ell^2_{\C}$ for $Y_n,R_L$ and scalar for
the other rows.  The prefactor for $\chi_n$ refers to its Lie step.}
\label{tab:filtered-induction}
\end{table}

Here is the induction underlying the table.  Initially $F_1=g_{01}$
has a uniform coefficient bound and no action dependence.  For a
bracket between an order-$i$ Hamiltonian term and an order-$j$
generator, with $i+j=n$, \eqref{filt:bracket-estimate} gives the
denominator exponent
\[
 2(i-1)+(2j-1)+1+m=2(n-1)+m.
\]
The extra $1$ is the action derivative in the bracket.  Homological
division adds one inverse power at fixed derivative order, giving
the $\chi_n$ row by \eqref{filt:division-estimate}.  Every bracket
with $h$ is first replaced by its solved homological term from
\eqref{filt:eq13}; no norm of the cubic frequency gradient is used.

In the inverse composition defining $Y_n$, the innermost bracket
$\{I,\chi_j\}_\times$ contributes a label but no coefficient-action
derivative.  Its label factor is absorbed by an angular-strip loss.
Each outer bracket contributes one action derivative and adds the
order of its generator.  Thus the exponent is $2n-1+m$.
The own-divisor cancellation in \eqref{filt:eq15} lowers that
exponent by one for $\nu Y_n$, and the frozen contraction commutes
with every outer bracket.  For $i,j\ge1$, $i+j=n\le L$, and
$m\le m_n$, the inequality $m+1\le m_i,m_j$ verifies that every
input derivative required by the bracket estimate remains available.
This proves \eqref{filt:eq16} with the stated derivative budget.

Finally, $D_Ig_{01}=0$, so $R_L$ uses one action derivative of
$Y_L$, and its scalar version uses one action derivative of
$\nu Y_L$.  The last two rows follow from
\eqref{filt:bracket-estimate}.  On $W\le s\le1$, coefficient
seminorms bound the evaluated retained expressions.  Integration
over $T\le T_0/\varepsilon$ therefore leaves terminal errors
$C_L\varepsilon^L\Delta^{-2L}$ and
$C_L\varepsilon^L\Delta^{-(2L-1)}$, respectively, as in
\eqref{filt:eq6}--\eqref{filt:eq7}.
All constants may depend on $L,m$, the reserved widths and the fixed
analytic, separation and ratio bounds in
Proposition~\ref{prop:filtered}; they are independent of
$K,M,\varepsilon,\Delta,s$ and $R$.

We may now remove the auxiliary cutoff.  Pure-angle coefficients
are independent of $R$, and each linear coefficient at index $j$
stabilizes once $R\ge\max\{M,j\}$.  For each fixed admitted datum,
the summable majorants of Section~\ref{analytic-module-bounds}
give convergence of the scalar coefficient series and their required
action derivatives in the seminorms \eqref{filt:eq11}.  The same
bounds justify the limit of \eqref{filt:eq15}.
Section~\ref{exact-discarded-terms-and-the-full-trajectory} identifies
these scalar observables with the actual frequency pairings along
the trajectory.

For an explicit order-two check, suppose $L\ge2$. In the same
normal-degree quotient, $g_{01}=Z_1+N_1$ and the first Lie step gives
\[
 F_2=\{Z_1,\chi_1\}_\times
            +\tfrac12\{N_1,\chi_1\}_\times.
\]
The inverse observable has coefficients
\[
 \begin{aligned}
 Y_1&=-\{I,\chi_1\}_\times,\\
 Y_2&=-\{I,\chi_2\}_\times
       +\tfrac12\{\{I,\chi_1\}_\times,\chi_1\}_\times.
 \end{aligned}
\]
These formulas fix the signs and exhibit the derivative and divisor
counts used in \eqref{filt:eq16}.

Poisson covariance at finite cutoff, followed by the coefficient
limit, gives the identity below, modulo $\varepsilon^{L+1}$ and
normal degree at least two:

\begin{equation}
\{\mathscr T^{-1}I,h+\varepsilon g_{01}\}_\times
       =\mathscr T^{-1}\{I,\mathscr T(h+\varepsilon g_{01})\}_\times.
 \label{filt:eq17}
\end{equation}

Each vector $\{I,Z_n\}_\times$ is a linear combination of fixed
labels in $V$ with scalar coefficients, so it takes values in the
complexification $V_{\C}$.  Every subsequent scalar Poisson
derivation acts componentwise and preserves this output subspace.
Thus all retained coefficients through order $L$ on the left side
of \eqref{filt:eq17} take values in $V_{\C}$.  By reality, these
terms give the $V$-valued observable $\mathcal V^{[L]}$ in
\eqref{filt:eq18} on the real phase space.
No divisor associated with a label in $V$ is inverted, including
at exact higher resonances.

\hypertarget{exact-discarded-terms-and-the-full-trajectory}{%
\subsection{Exact discarded terms and the full trajectory}\label{exact-discarded-terms-and-the-full-trajectory}}

Let $Y=J^{[L]}-I$, using the degree-at-most-one representatives
$Y_n$ from \eqref{filt:eq14}.  Along the stopped trajectory, write
$Y(b(t))=Y(I(t),\varphi(t),z(t))$, with
$I(t)=\xi+\pi(b(t))$.  Coefficient and explicit derivatives are
taken before this substitution.  Restoring the discarded explicit
normal degrees in the coefficient-action bracket gives the exact identity
below; the stated membership in $V$ is on the real phase space.

\begin{equation}
\{J^{[L]},h+\varepsilon g_{01}\}_\times
 =\mathcal V^{[L]}+\varepsilon^{L+1}R_L
       +\sum_{n=1}^L\varepsilon^{n+1}T_n,
 \qquad \mathcal V^{[L]}\in V.
 \label{filt:eq18}
\end{equation}

Here $R_L$ is the degree-at-most-one part of
$\{Y_L,g_{01}\}_\times$, and $T_n$ is the degree-two part of
$\{Y_n,g_{01}\}_\times$.  The retained terms through order $L$
form $\mathcal V^{[L]}$.  These are all remaining terms: the bracket
has perturbation order at most $L+1$ and explicit normal degree at
most two, while bracketing with $h$ preserves this degree.
By \eqref{filt:eq16}, the evaluated expressions on $W\le s\le1$
satisfy the following bounds, with $\|\cdot\|$ denoting the output
$\ell^2$ norm and $1\le n\le L$ in \eqref{filt:eq19}.
The scalar pairings are justified below.

\[
 \|R_L\|\le C_L\Delta^{-2L},\quad
 |\nu R_L|\le C_L\Delta^{-(2L-1)},
\]
\begin{equation}
\|T_n\|\le C_Ls\Delta^{-2n},\quad
 |\nu T_n|\le C_Ls\Delta^{-(2n-1)}.
 \label{filt:eq19}
\end{equation}

Since $D_Ig_{01}=0$, the quadratic term $T_n$ is obtained by
applying the action derivative of the normal-linear part of $Y_n$
to the normal-linear action drift of $g_{01}$.  The two factors
are bounded by $C_L\Delta^{-2n}\|z\|_\sigma$ and
$C\|z\|_\sigma$, respectively, giving the factor $s$ in
\eqref{filt:eq19}.  The scalar module estimate gains one inverse
power of $\Delta$.

Write \(a_{01}=D\pi X_{g_{01}}\) and \(a_*=a_f-a_{01}\). The all-mode Taylor expansion gives

\begin{equation}
\|a_*\|_2\le C(s+e_K).
 \label{filt:eq20}
\end{equation}

For tangential components, removing the original pure-angle and
normal-linear terms leaves $O(|p|_1+\|z\|_\sigma^2)$.
The frozen angular addition contributes a tangential term bounded
by $CW_0\le Cs$, by \eqref{rw:frozen-diagonal-bound}.
For normal components, the remaining velocity is
$O(|p|_1+\|z\|_\sigma)$ in $h^{\sigma-1}$.  Multiplication by
$z$ and \eqref{setup:weighted-product} give an $O(s)$ action
drift.  The Fourier tail adds $O(e_K)$.  At the selected angular
modes, $g_{01}$ retains every normal-linear index, so no remainder
arises from a cutoff in the normal index.

The explicit derivative at fixed \(I\) is

\begin{equation}
D_{\rm exp}Y[X_f]
  =D_\varphi Y[f_p]+D_zY[X_f^z].
 \label{filt:eq21}
\end{equation}

There is no explicit $p$ dependence in $Y$.  For $Y_n$, this
derivative is bounded by $C_L\Delta^{-(2n-1)}$, and its scalar
pairing by $C_L\Delta^{-(2n-2)}$.  The angular derivative uses
a reserved strip reduction; the normal derivative acts on a linear
expression.  Neither introduces an action-coefficient derivative.
The full $f_p$ and $X_f^z\in h^{\sigma-1}$ are bounded by
Assumption~\ref{ass:contract}.

To justify the chain rules, first use finite explicit normal sums
along the same full trajectory, keeping the full $I(t)$ in every
coefficient.  Lemma~\ref{lem:action-chain} gives
$I\in C^1(\ell^2)$ and $I'=\varepsilon a_f$.
A coefficient in $h^{\sigma-1}$ pairs continuously with
$z'\in h^{\sigma-3}\subset h^{1-\sigma}$, since $\sigma\ge2$;
the normal output components are bounded coordinatewise products
in $\ell^2$.  The module convergence from
Section~\ref{analytic-module-bounds}, including first action
derivatives, therefore gives uniform convergence of the observables
and their derivatives on each fixed stopped interval.  Their limits
are $C^1$.  The integrable contribution is evaluated by
\eqref{filt:eq10} along the supplied trajectory.

The weighted normal space has norm
$\|v\|_{\ell^1_3}=\sum_{j\in\mathcal N}j^3|v_j|$.
For a linear coefficient sequence $c\in h^{\sigma-1}$,
\[
 \sum_jj^3|c_j\zeta_j|
 \le \|c\|_{\sigma-1}\|\zeta\|_{4-\sigma}
 \le C\|c\|_{\sigma-1}\|z\|_\sigma,
\]
since $\sigma\ge2$.  For the generated corrections, the support
property in Section~\ref{analytic-module-bounds} places the normal
output of a linear coefficient at its explicit index $j$; action
derivatives preserve this support.  Applying the displayed estimate
to the coefficient norms therefore gives normal outputs in
$\ell^1_3$.  For $T_n$, apply it to the first action derivative of
the linear coefficient and multiply by the $\ell^2$ norm of the
linear part of $a_{01}$.  This uses operator norms and gives the
same weighted summability for the quadratic terms.

The estimate with $z'$ holds because
$h^{\sigma-3}\subset h^{4-\sigma}$ for $\sigma\ge7/2$,
including equality at the endpoint.  The finite sums for $Y_n$ and
their time derivatives therefore also converge in the weighted action
space $\mathcal A_3$.  Since $|\nu_j|\le Cj^3$ on the normal
coordinates, the generated observables and residuals have absolutely
convergent frequency pairings, equal to the scalar coefficient
limits.  Their sharper bounds use the own-divisor cancellation.
Moreover, \eqref{setup:weighted-product} gives
$a_f,a_{01},a_*\in\mathcal A_{2\sigma-1}\subset\mathcal A_3$,
so $\nu a_*$ is also well defined.

The integrable vector field fixes every action, and its explicit
contribution at fixed $I$ is
\[
 D_{\rm exp}Y[X_h]
 =D_\varphi Y[\omega(I)]+D_zY[X_h^z]
 =\{Y,h\}_\times.
\]
Together with $I'=\varepsilon a_f$ and
$\{Y,g_{01}\}_\times=D_IY[a_{01}]$ (because $D_Ig_{01}=0$),
this gives the full physical chain rule
\[
 \frac{d}{dt}J^{[L]}
   =\{J^{[L]},h+\varepsilon g_{01}\}_\times
     +\varepsilon a_*+\varepsilon D_IY[a_*]
     +\varepsilon D_{\rm exp}Y[X_f].
\]
Inserting \eqref{filt:eq18} gives the exact full-trajectory identity

\begin{equation}
\frac{d}{dt}J^{[L]}
 =\mathcal V^{[L]}+\varepsilon a_*
 +\varepsilon^{L+1}R_L
 +\sum_{n=1}^L\varepsilon^{n+1}T_n
 +\varepsilon D_IY[a_*]
 +\varepsilon D_{\rm exp}Y[X_f].
 \label{filt:eq22}
\end{equation}

Integrating \eqref{filt:eq22}, with $I(0)=I^0$, gives
$\mathcal D(t)$ and the full endpoint difference
$Y(b(t))-Y(b(0))$.  Define $E_{\rm av}^{[L]}$ as the negative
of this endpoint difference plus the integrals of the
$R_L,T_n,D_IY[a_*]$ and $D_{\rm exp}Y[X_f]$ terms, with their
displayed powers of $\varepsilon$.  The integral of
$\mathcal V^{[L]}$ is $V$-valued.  Keep
$\varepsilon\int_0^t a_*$ exact for
Section~\ref{the-retained-quadratic-integral}.

For $0\le t\le T\le T_0/\varepsilon$, the endpoints and explicit
derivative terms contribute
$C_L\sum_{n=1}^L\varepsilon^n\Delta^{-(2n-1)}$.
The quadratic and $D_IY[a_*]$ terms contribute
$C_L\sum_{n=1}^L\varepsilon^n(s+e_K)\Delta^{-2n}$,
and the terminal residual contributes
$C_L\varepsilon^L\Delta^{-2L}$.  The same calculation with the
frozen pairing $\nu=Dh(I^0)$ gains one inverse power in each term.
These are \eqref{filt:eq6}--\eqref{filt:eq7}.

\hypertarget{the-retained-quadratic-integral}{%
\subsection{The retained quadratic integral}\label{the-retained-quadratic-integral}}

We now evaluate the integral of $a_*$ retained in the preceding
subsection.  The augmented window identity \eqref{rw:alllinearidentity} is
\[
 \mathcal D(t)=\varepsilon\int_0^t a_{g_{01}}\,dv
       +\varepsilon\int_0^t a_{g_2^{\rm low}}\,dv+E_0(t).
\]
Here $E_0=E_{\rm old}-\varepsilon\int_0^t a_{g_{1,\rm h}}$;
Lemma~\ref{lem:highlinear} controls this subtracted integral in both
the weighted action norm and the frozen-frequency pairing.  Since
$\mathcal D(t)=\varepsilon\int_0^t a_f$, subtraction yields
\begin{equation}
 \varepsilon\int_0^t a_*\,dv
       =\varepsilon\int_0^t D\pi X_{g_2^{\rm low}}\,dv+E_0(t).
 \label{filt:eq23}
\end{equation}
Here $\mathcal D(t)=I(t)-I^0$, with the initial normal actions
included in $I^0$.

The jet $g_2^{\rm low}$ consists of terms linear in $p$ and
quadratic in $z$, with Taylor coefficients fixed at $\xi$ and
independent of the running action $I$.  Treat $I$ and the explicit
variables $(\varphi,p,z)$ as independent.  Retain all labels in $V$;
every outside label is covered by \eqref{filt:eq2}.
For each outside monomial $m_q$, with its coefficient included,
define $a_q=-iqm_q$ and set
\[
 U_2=\sum_{q\notin V}\frac{a_q}{i d_q(I)}.
\]
The sum runs over individual monomials, so labels may repeat.
At fixed $I$, the integrable rotation fixes $p$ and differentiates
$m_q$ to $i d_q(I)m_q$.  Thus its explicit derivative of $U_2$
is $a_{2,\mathrm{out}}:=\sum_{q\notin V}a_q$.

To track the cutoff dependence, let $C_k$ be the $k$th Fourier
coefficient of the original normal Hessian $D_z\nabla_zf(\varphi,0,0)$
from \eqref{rw:normaljet}.
Assumption~\ref{ass:contract} bounds this operator from $h^\sigma$
to $h^{\sigma-1}$.  Hessian symmetry gives $C_k^T=C_k$, even
when $C_k$ is complex.  Testing on coordinate vectors and using
this symmetry gives, entrywise in the Cartesian blocks at modes $i,j$,
\[
 |(C_k)_{ij}|\le C e^{-a_0|k|_1}
 \min\{i^{1-\sigma}j^\sigma,j^{1-\sigma}i^\sigma\}
 \le C e^{-a_0|k|_1}\min(i,j),
\]
where $a_0>a$ is a fixed available strip width.  For $i\le j$,
$j^{1-\sigma}i^\sigma=i(i/j)^{\sigma-1}\le i$, proving the
last inequality for $\sigma\ge1$.
There are $O(M^2)$ normal quadratic types per Fourier mode, and
summing $\min(i,j)$ over $i,j\le M$ costs at most $C(1+M)^3$.
The finitely many $p$-linear coefficients fit the same bound.
Passing to complex coordinates changes only fixed constants.
All these absolute sums are taken on the finite window.
Since $\|q\|_2\le |k|_1+2$, label factors and angular derivatives
cost only fixed powers of $|k|_1+2$, which are summable against
the unused strip width $a_0-a$, uniformly in $K$.
Selecting any fixed set of labels can only decrease the absolute sum.

On $W\le s\le1$, with the coefficients factored out, each
monomial has size $O(s)$, its $p$ derivative has size $O(1)$,
and its $z$ derivative on $h^{\sigma-1}$ has size $O(\sqrt s)$.
Only the inverse divisors depend on $I$.
The Hessian bound \eqref{geo:hessian} gives
$\|D_Id_q\|\le C(|k|_1+2)$, so differentiating an inverse
divisor costs one further factor $\Delta^{-1}$ and an angular-label
factor already covered by the strip reduction.  With
$\mathfrak p(M)=(1+M)^3$, this gives

\begin{equation}
 \begin{aligned}
 \|U_2\|&\le C\mathfrak p(M)s/\Delta,
 &\quad \|D_IU_2\|&\le C\mathfrak p(M)s/\Delta^2,\\
 \|D_pU_2\|&\le C\mathfrak p(M)/\Delta,
 &\quad \|D_zU_2\|&\le C\mathfrak p(M)\sqrt s/\Delta.
 \end{aligned}
 \label{filt:eq24}
\end{equation}

In \eqref{filt:eq24}, the output norm is $\ell^2$, and the derivative
norms are operator norms for $D_IU_2:\ell^2\to\ell^2$,
$D_zU_2:h^{\sigma-1}\to\ell^2$, and $D_pU_2:\R^A\to\ell^2$,
with the same finite-dimensional domain convention for $D_\varphi U_2$.
The $\varphi$ derivative has the bound $C\mathfrak p(M)s/\Delta$
as well.  The frequency pairings are finite sums, since the normal
output is supported at indices at most $M$.  For the sharper scalar
estimate, use
\[
 \nu U_2=-\sum_{q\notin V}\frac{\nu q}{d_q(I)}m_q.
\]
By \eqref{filt:eq2}, $|\nu q/d_q|\le2$.  Since $\nu=Dh(I^0)$
is frozen, the first action derivative of this ratio is bounded by
$C(|k|_1+2)/\Delta$.  Hence each scalar analogue of
\eqref{filt:eq24}, including the $\varphi$ derivative, gains one
power of $\Delta$.

Write $U_2(t)=U_2(I(t),\varphi(t),p(t),z(t))$ and set
$E_2(t)=\varepsilon\int_0^t a_{2,\mathrm{out}}\,du$.
All partial derivatives are taken before substituting
$I=\xi+\pi(b)$.  The exact full-trajectory chain rule is
\begin{equation}\label{filt:quadraticchain}
 \varepsilon\int_0^t a_{2,\mathrm{out}}\,du
 =\varepsilon\bigl(U_2(t)-U_2(0)\bigr)
 -\varepsilon^2\int_0^t\mathcal R_2\,du,
\end{equation}
where
\begin{equation}\label{filt:quadraticresidual}
 \mathcal R_2=D_\varphi U_2[f_p]+D_pU_2[-f_\varphi]
 +D_zU_2[X_f^z]+D_IU_2[a_f].
\end{equation}
The finite explicit coordinates are $C^1$, and $I\in C^1(\ell^2)$
by Lemma~\ref{lem:action-chain}.  The $p$ derivative differentiates
the explicit monomials, while the full action derivative differentiates
their divisors; both contribute along the physical trajectory.
Assumption~\ref{ass:contract} and \eqref{setup:weighted-product}
bound the full quantities $f_p,f_\varphi,X_f^z,a_f$ in their
stated norms, including every Taylor remainder.  Using $s\le1$ gives
\[
 \|\mathcal R_2\|_2\le C\mathfrak p(M)(\Delta^{-1}+s\Delta^{-2}),
 \qquad |\nu\mathcal R_2|\le C\mathfrak p(M)(1+s\Delta^{-1}).
\]
Both endpoints are bounded by \eqref{filt:eq24} and its scalar
analogue.  Together with $\varepsilon^2t\le T_0\varepsilon$,
these estimates prove \eqref{filt:eq5}.  In particular, the
$D_pU_2$ term contributes $\varepsilon/\Delta$, even though
$p(0)=0$.

Since $V$ is a fixed real subspace, the inside-label selection is
invariant under conjugation.  Lemma~\ref{geo:lem-signed} therefore
makes its real action drift $V$-valued.
Adding its $\varepsilon$-weighted integral to the integral of
$\mathcal V^{[L]}$ defines $A_V$.  Combining \eqref{filt:eq23}
with the integrated identity \eqref{filt:eq22} gives
\eqref{filt:eq3} and completes the proof of
Proposition~\ref{prop:filtered}.
% END sections/05_filtered
% BEGIN sections/06_closure
\section{Two-parameter confinement and longer intervals}\label{sec:closure}

The window, geometry, and observable estimates of
Sections~\ref{sec:window}--\ref{sec:filtered} now close the two-parameter
bootstrap. The finite-gap projection $\xi=(I_A(0),0)$ fixes the chart;
the integrable energy, frozen frequencies $\nu=Dh(I^0)$, and resonance
hierarchy are centered at the actual initial action $I^0$. Constants
remain uniform over the admitted initial tails, and the averaging order
is fixed in terms of the target exponent.

Subsection~\ref{subsec:closure-fixed} proves Theorem~\ref{thm:main} using
$\lambda=\max\{\eps,r_0^{1/\delta}\}$ to choose the cutoffs.
Subsection~\ref{subsec:long-time-budgets} then makes the time dependence
explicit under separately supplied uniform long-interval bounds.
These budgets give the logarithmic extension in
Subsection~\ref{subsec:long-logarithmic} with the original
$h^{\sigma+1}$ bound. An additional $h^{\sigma+2}$ bound improves the
normal interpolation and gives the polynomial extension in
Subsection~\ref{subsec:long-polynomial}.

\subsection{The fixed slow-time theorem}\label{subsec:closure-fixed}

\begin{proof}[Proof of Theorem~\ref{thm:main}]
Fix $0<\delta<1/4$.  Introduce the auxiliary scale
\begin{equation}\label{cl:effective-scale}
 \lambda=\max\{\eps,r_0^{1/\delta}\},\qquad
 W_0=r_0^2\le\lambda^{2\delta},\qquad
 \lambda^{2\delta}=\max\{\eps^{2\delta},r_0^2\}.
\end{equation}
This scale determines cutoffs only.  The perturbation coefficient is
still $\eps$, and the time interval remains $[0,T_0/\eps]$.
Choose
\begin{equation}\label{cl:order}
 2\delta<b<\min\left\{\frac12,\frac52\delta,
                                \frac{1+2\delta}{3}\right\},
 \qquad L\ge1,\qquad L(1-2b)>b.
\end{equation}
The three upper endpoints exceed $2\delta$ by
$1/2-2\delta$, $\delta/2$, and $(1-4\delta)/3$, respectively.
Thus such a $b$ exists, and then a finite integer $L$ can be chosen.
Both are fixed before $r_0$ and $\eps$ are made small.

Set $K=\lceil c\log(1/\lambda)\rceil$, with $c$ large enough that
$e_K=e^{-aK}\le\lambda$, using the final angular width for this
fixed $L$.  Choose $M\asymp\sqrt K$ with
$M^2\ge D(K+1)$, with $D$ as in Lemma~\ref{rw:lem-separation},  sufficient for the window and transfer estimates.
The ratio $M^3/(LK+1)$ tends to infinity, so, uniformly for $j>M$
and $|k|_1\le LK$ on the fixed action neighborhood,
\[
 |k\cdot\omega(I)\pm\Omega_j(I)|
       \ge (2\pi j)^3-CLK-C\ge c_1j^3
\]
when $\lambda$ is small.  All positive-order action derivatives of
the cubic part vanish.  The initial tails change only the bounded
nonlinear frequency correction.

Include in the finite label universe all original low quadratic
labels and all angular and unary labels with $|k|_1\le LK$ and
normal index at most $M$.  The extra frozen diagonal
$c_{\rm h}^0$ has only angular labels already in this universe.
Apply Proposition~\ref{prop:hierarchy} at $I^0$ with
$\alpha=\lambda^b$.  The normal dimension is $O(\sqrt K)$;
the conditioning constant and adjacent-scale ratio are polynomial
in $K$ for fixed $d,L$.  In particular,
\begin{equation}\label{cl:subpower}
 \exp\bigl(O(\sqrt K\log(2+K))\bigr)=\lambda^{-o(1)}.
\end{equation}
The logarithm of any such factor divided by $\log(1/\lambda)$
tends to zero, uniformly over the selected branch and initial data.
Write $V$, $\rho$ and $\Delta$ for the resulting resonance space,
action-increment radius and outside-divisor threshold.  Then
\begin{equation}\label{cl:scales}
 c_2\lambda^b\le\rho,\Delta\le\lambda^{b-o(1)},\qquad
 d_V:=\|\nu|_V\|_{V^*}\le\frac{\mu\rho}{8},\qquad
 \nu=Dh(I^0).
\end{equation}
Use $d_{\{0\}}=0$.  Since
$\|I^0-\xi\|_2\le r_0^2/2$ and $\rho\to0$, the common balls
$\|I-I^0\|_2\le2\rho$ lie in the fixed analytic and curvature
neighborhood of the finite-gap family.  On these balls, the high
unary ratio bound and all the low divisor hypotheses of
Proposition~\ref{prop:filtered} hold uniformly.  No compactness of
the set of frequency centers $I^0$ is needed: the analytic bounds
hold on the common neighborhood of the compact finite-gap bases.

Let $C_{\rm tr}\ge1$ be the cutoff-independent constant in
\eqref{rw:transfer}, and choose $C_s=8C_{\rm tr}$.  Put
\begin{equation}\label{cl:stop-radius}
 s=C_s\lambda^{2\delta}.
\end{equation}
Stop the supplied Cartesian path at the minimum of $T_0/\eps$,
the first hitting times $B=\rho$ and $W=s$, and the first exit from
a smaller fixed positive-action chart.  Estimates are first made on
closed intervals strictly before this minimum.  Initially $B=0$
and $W=W_0\le s/C_s$.  Also $\rho=o(s)$ because $b>2\delta$,
and $s$ is below all fixed local thresholds for small $\lambda$.

Proposition~\ref{prop:filtered}, using the augmented angular/linear
jet, gives the exact increment decomposition
$\mathcal D=A_V+E_0+E_2+E_{\rm av}^{[L]}$.  On a stopped interval set
\begin{equation}\label{cl:errors}
 \eta=\sup\|E_0+E_2+E_{\rm av}^{[L]}\|_2,\qquad
 \tau=\sup|\nu\cdot(E_0+E_2+E_{\rm av}^{[L]})|.
\end{equation}
We estimate these quantities relative to $\rho$ and $\rho^2$,
respectively.  For example, the ordinary order-$n$ terms give
\[
 \frac{\eps^n}{\Delta^{2n-1}\rho},\qquad
 \frac{\eps^n}{\Delta^{2n-2}\rho^2}
       \le\lambda^{n(1-2b)-o(1)},
\]
whereas the corresponding terms containing $s$ are at most
$\lambda^{n(1-2b)+2\delta-b-o(1)}$ after either normalization.
The full list is recorded in Table~\ref{tab:near-exponents}.
Every exponent in the table denotes an upper bound
$C\lambda^{\text{exponent}-o(1)}$; fixed cutoff polynomials are
included in the subpower factor.

\begin{table}[ht]
\centering
\small
\begin{tabular}{@{}lll@{}}
\toprule
Contribution & Vector divided by $\rho$ & Scalar divided by $\rho^2$\\
\midrule
Order-$n$ ordinary terms & $n(1-2b)$ & $n(1-2b)$\\
Order-$n$ terms with $s$ & $n(1-2b)+2\delta-b$ & $n(1-2b)+2\delta-b$\\
Order-$n$ Fourier tails & $n+1-(2n+1)b$ & $n+1-(2n+1)b$\\
Terminal remainder & $L-(2L+1)b$ & $L-(2L+1)b$\\
Finite quadratic terms & $1-2b,\ 1+2\delta-3b$ & $1-2b,\ 1+2\delta-3b$\\
Raw $\eps,e_K$ terms & $1-b$ & $1-2b$\\
Raw Taylor terms & $3\delta-b$ & $5\delta-2b$\\
Energy error & --- & $1-2b$\\
\bottomrule
\end{tabular}
\caption{Normalized error exponents for the two-parameter proof.}
\label{tab:near-exponents}
\end{table}

In particular the terminal terms satisfy
\begin{equation}\label{cl:terminal}
 \frac{\eps^L}{\Delta^{2L}\rho}
 +\frac{\eps^L}{\Delta^{2L-1}\rho^2}
       \le C\lambda^{L-(2L+1)b-o(1)}=o(1).
\end{equation}
Time integration of an $\eps^{n+1}$ term costs one inverse $\eps$,
leaving $\eps^n\le\lambda^n$; no integration over $T_0/\lambda$
has been substituted.  All exponents are strictly positive by
\eqref{cl:order}.  For the terms containing $s$, the worst order is
$n=1$, with exponent $1+2\delta-3b>0$.
For the raw terms $s^{3/2}/\rho$ and $s^{5/2}/\rho^2$, the exponents
are $3\delta-b>0$ and $5\delta-2b>0$.
One common positive margin is
\[
 q_* =\min\{1-2b,\ 1+2\delta-3b,\ 3\delta-b,
                    5\delta-2b,\ L-(2L+1)b\}>0.
\]
The same bounds are uniform for all shorter stopped intervals.
We conclude
\begin{equation}\label{cl:smallerrors}
 \eta=o(\rho),\qquad \tau+2F_0\eps=o(\rho^2)
          \qquad(\lambda\to0).
\end{equation}

The energy is centered at $I^0$.  Lemma~\ref{lem:energyclosure}
therefore gives
\begin{equation}\label{cl:energy}
 \frac\mu2B^2\le2F_0\eps+d_V(B+\eta)+\tau.
\end{equation}
Using \eqref{cl:scales} and \eqref{cl:smallerrors}, its positive root
yields $B\le\rho/4+o(\rho)\le\rho/2$ for small $\lambda$.
This is a bound for the increment $I-I^0$; it does not assert that
the initial total action displacement has size $\rho$.
Since the bound holds at every shorter time, the transfer estimate
applies with $\mathcal B_t\le\rho/2$ and gives
\begin{equation}\label{cl:high}
 \mathcal W_t\le C_{\rm tr}
 \left[\lambda^{2\delta}+K_{S_M}\rho/2+(2\lambda)^{2/3}\right]
       \le s/2.
\end{equation}
Indeed, $K_{S_M}=O((1+M)^{2\sigma+1/2})$ is polynomial,
$K_{S_M}\rho=o(\lambda^{2\delta})$ because $b>2\delta$, and
$\lambda^{2/3}=o(\lambda^{2\delta})$ because $\delta<1/4$.

Both hitting thresholds improve strictly.  The quantities $B$ and
$W$ are continuous in the supplied Cartesian topology: the quadratic
action map is continuous in its weighted $\ell^1$ norm.  Passing to a
candidate first stop through shorter closed intervals preserves these
strict improvements.  The positive tangential margin and
$|p|_1+\|z\|_\sigma^2\le s\to0$ exclude exit from the smaller
fixed chart as well.  Thus the estimates extend to the full supplied
interval, without extending or deriving a PDE lifespan.
We have proved
\begin{equation}\label{cl:finalW}
 \sup_{0\le t\le T_0/\eps}W(t)
       \le C_s\max\{r_0^2,\eps^{2\delta}\}
       \le C_s(r_0^2+\eps^{2\delta}).
\end{equation}
Choose $\lambda_\delta>0$ so all preceding smallness conditions hold,
then take $\eps_\delta\le\lambda_\delta$ and
$r_\delta\le\min\{r_{\rm adm},\lambda_\delta^\delta\}$.
This also covers a fixed small $r_0$ and arbitrarily small $\eps$:
the estimate retains its $r_0$ contribution.

For the weighted increments, nonnegativity gives
\[
 \sum_{j\in\mathcal N}j^{2\sigma}|J_j(t)-J_j^0|
       \le\tfrac12\bigl(\|z(t)\|_\sigma^2+W_0\bigr).
\]
Together with \eqref{cl:finalW}, this proves
\eqref{setup:near-increments}.  To compare spectra, choose any
$w_\xi$ on the finite-gap torus with actions $\xi$.  Apply
Lemma~\ref{lem:spectral} separately to $w(t)$ and $w(0)$, relative
to this same torus.  The triangle inequality gives
\[
 \|\gamma(w(t))-\gamma(w(0))\|_{\ell^2}^2
 \le2\|\gamma(w(t))-\gamma(w_\xi)\|_{\ell^2}^2
      +2\|\gamma(w(0))-\gamma(w_\xi)\|_{\ell^2}^2
 \le2C_\gamma\bigl(W(t)+W_0\bigr).
\]
The initial potential is not being treated as finite-gap in this
application of the lemma.  This proves \eqref{setup:near-gaps} after
fixing a common $C_\delta$.  The empty tangential set is covered by
the same argument with its zero-dimensional conventions.
\end{proof}

\begin{remark}
The finite order $L$ depends on the chosen strict exponent, and no
endpoint estimate at $\delta=1/4$ or endpoint-uniform constants are
claimed.  Exact finite-gap data correspond to $r_0=0$; applying the
proved theorem with a slightly larger strict exponent recovers the
unit constants in Corollary~\ref{cor:shrinking-data}.  For general
initial tails, the initial-size terms cannot be discarded.  The
argument controls actions and distance to a torus, not the phase of
a selected unperturbed orbit, and uses no canonical flow of a
derivative-losing generator.
\end{remark}

\subsection{Time-explicit estimates on supplied intervals}
\label{subsec:long-time-budgets}

To extend the interval, one must track both the integrated remainders and
 the higher-weight transfer.  The next results are conditional on a
\emph{uniform long-interval contract}: replace $[0,T_0/\eps]$ in
Assumption~\ref{ass:contract} by a supplied interval $[0,T_\eps]$, possibly
also depending on the admitted initial datum.  Require the same Cartesian
strong topology, the same two analytic perturbation levels, and the same
stopped-polar formulation there.  The coordinate neighborhoods and angular strips are fixed, with positive
radii and widths independent of $\eps$, the admitted initial tail, and
$\eps T_\eps$. Each fixed-order derivative bound and $H_+$ is bounded
above by a common constant independent of these parameters.
The finite averaging order is fixed before the small parameters.  This is
not implied by bounds established separately for every fixed $T_0$ with
constants depending on $T_0$.

Write
\begin{equation}\label{long:slow-time}
 \mathsf S=\eps T_\eps,\qquad \mathsf Q=1+\mathsf S.
\end{equation}
The typeface distinguishes the slow-time length $\mathsf S$ from a finite
set of normal indices.  All estimates below are first restricted to a
stopped subinterval on which $W\le s\le s_0$, $W_0\le s$, and the
compatible action-divisor hypotheses hold.  Norms of errors denote their
suprema over that subinterval.  The estimates use its length when it is
shorter than $T_\eps$.

\begin{proposition}[Time-explicit observable reduction]
\label{prop:long-budgets}
Under the uniform long-interval contract, the augmented construction of
Sections~\ref{sec:window}--\ref{sec:filtered} gives the exact decomposition
$\mathcal D=A_V+E_0+E_2+E_{\rm av}^{[L]}$, with $A_V\in V$, and
\begin{equation}\label{long:leakage}
 \Gamma_{S_M}(T_\eps)
       \le C\bigl(\eps\mathsf Q+\mathsf S e_K\bigr),
       \qquad e_K=e^{-aK}.
\end{equation}
Its separate vector and scalar error bounds are
\begin{equation}\label{long:window-budget}
 \begin{aligned}
 \|E_0\|_{\mathcal A_{w_*}}
       &\le C\mathsf Q^2(\eps+e_K+s^{3/2}),\\
 |\nu E_0|&\le C\mathsf Q^2(\eps+e_K+s^{5/2}),
       \qquad \nu=Dh(I^0),
 \end{aligned}
\end{equation}
\begin{equation}\label{long:quadratic-budget}
 \begin{aligned}
 \|E_2\|_2&\le C\mathfrak p(M)\eps\mathsf Q
                         (\Delta^{-1}+s\Delta^{-2}),\\
 |\nu E_2|&\le C\mathfrak p(M)\eps\mathsf Q
                         (1+s\Delta^{-1}),
       \qquad\mathfrak p(M)=(1+M)^3,
 \end{aligned}
\end{equation}
and
\begin{equation}\label{long:averaging-budget}
 \begin{aligned}
 \|E_{\rm av}^{[L]}\|_2
  &\le C_L\mathsf Q\left[
    \sum_{n=1}^L\left(\frac{\eps^n}{\Delta^{2n-1}}
        +\frac{\eps^n(s+e_K)}{\Delta^{2n}}\right)
        +\frac{\eps^L}{\Delta^{2L}}\right],\\
 |\nu E_{\rm av}^{[L]}|
  &\le C_L\mathsf Q\left[
    \sum_{n=1}^L\left(\frac{\eps^n}{\Delta^{2n-2}}
        +\frac{\eps^n(s+e_K)}{\Delta^{2n-1}}\right)
        +\frac{\eps^L}{\Delta^{2L-1}}\right].
 \end{aligned}
\end{equation}
The constants depend on the uniform analytic and geometric data and,
where indicated, on $L$, but not on $T_\eps$ or $\mathsf S$.
The energy identity still has error $2F_0\eps$, with no time factor.
With only the original higher bound, the transfer estimate is
\begin{equation}\label{long:critical-transfer}
 \mathcal W_{T_\eps}\le e^{A_*\mathsf S}
 \left[2^{1/3}(W_0+K_{S_M}\mathcal B_{T_\eps})
       +2H_+^{2/3}\Gamma_{S_M}(T_\eps)^{2/3}\right],
 \qquad A_*=\frac{4\sqrt2}{3}H_+C_z.
\end{equation}
\end{proposition}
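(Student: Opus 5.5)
The plan is to rerun the fixed-horizon proofs of Sections~\ref{sec:window} and~\ref{sec:filtered} verbatim, but now to track every place where $\eps T\le T_0$ was used. The pointwise structure is unchanged: the Taylor bounds of Lemma~\ref{rw:lem-taylor}, the smoothing of the primitive in Lemma~\ref{rw:lem-mixed}, the module estimates of Lemma~\ref{lem:filtered-module-estimates}, the divisor hypotheses, and the exact identities~\eqref{rw:rawidentity}, \eqref{rw:highidentity}, \eqref{rw:diagonalidentity}, \eqref{filt:eq22} and~\eqref{filt:quadraticchain}. These are all either pointwise or exact along the supplied path. Under the uniform long-interval contract their constants do not depend on $T_\eps$. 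The only change is therefore that each time integral of a bounded residual now produces $\eps^{k}T_\eps=\eps^{k-1}\mathsf S$ in place of $\eps^{k-1}T_0$. Endpoint terms carry no time factor.

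Summing these gives $\mathsf Q=1+\mathsf S$ factors, which I would collect term by term.

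\textbf{Exterior leakage.} In Proposition~\ref{rw:prop-leakage}, the contribution of $g_K$ comes from~\eqref{rw:highidentity}. Its endpoint is $O(\eps)$ and its residual is $\eps^2T_\eps=\eps\mathsf S$, so together they give $C\eps\mathsf Q$. The Fourier tail gives $C\eps T_\eps e_K=C\mathsf S e_K$. This yields~\eqref{long:leakage}.

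\textbf{Raw error.} In Proposition~\ref{rw:prop-raw}, the vector bound becomes $C\mathsf S s^{3/2}$. The scalar bound becomes $C(\eps s^{3/2}+\mathsf S s^{5/2}+\eps\mathsf S\sqrt s)$.

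\textbf{High terms.} $E_{\rm h}$ and the high-linear integral are $O(\eps\mathsf Q)$. The angular tail is $O(\mathsf S e_K)$.

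\textbf{Diagonal increment.} This is the one place a second factor of $\mathsf Q$ enters. The feedback bound~\eqref{rw:diagonalfeedback} is itself an accumulated quantity, $L_H\le \Gamma_{S_M}+\|E_R\|\le C\mathsf Q(\eps+e_K+s^{3/2})$. It is then integrated again in $E_d$ (vector: $\eps T_\eps L_H$) and in~\eqref{rw:diagonalidentity} (scalar: $\eps T_\eps\, s L_H$). Both contributions are bounded by $C\mathsf Q^2(\cdots)$. Combining these items and subtracting the high-linear integral gives~\eqref{long:window-budget} for $E_0$.

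\textbf{Quadratic error.} For $E_2$, \eqref{filt:quadraticchain} has endpoints of order $\eps$ and residual $\eps^2 T_\eps$. This gives~\eqref{long:quadratic-budget} with a single factor $\mathsf Q$.

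\textbf{Averaging error.} For $E_{\rm av}^{[L]}$, the endpoint differences of $Y$ carry no time factor. The integrated $R_L$, $T_n$, $D_IY[a_*]$ and $D_{\rm exp}Y[X_f]$ terms each have one more power of $\eps$ than their final size and so gain exactly $\mathsf S$. Hence~\eqref{long:averaging-budget}, with the scalar version again gaining one power of $\Delta$ from the frozen-pairing cancellation.

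\textbf{Energy.} The energy identity of Lemma~\ref{lem:energy} is exact conservation. It therefore keeps the error $2F_0\eps$, with no time factor.

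\textbf{Transfer.} For~\eqref{long:critical-transfer}, apply~\eqref{rw:transferexact} at $t=T_\eps$. The Gronwall exponent is $(4\sqrt2/3)H_+C_z\eps T_\eps=A_*\mathsf S$, and nothing else changes.

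The main obstacle is bookkeeping the diagonal feedback honestly, that is, confirming that no nested accumulation produces $\mathsf Q^3$. I would check this by writing $E_d$ and $\nu\cdot E_d$ explicitly with $L_H$ as a supremum. One also has to ensure that every compactness and convergence argument (the projections $P_m$, the Bochner limits) depended only on continuity on the stopped interval and not on its length.
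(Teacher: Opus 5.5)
Your proposal is correct and follows the paper's proof essentially step for step. You rerun the exact identities \eqref{rw:rawidentity}, \eqref{rw:highidentity}, \eqref{rw:diagonalidentity}, \eqref{filt:quadraticchain}, \eqref{filt:eq22} and the integrated transfer inequality with pointwise bounds that are uniform under the contract, so endpoints carry no time factor and each integrated residual contributes $\mathsf S$ where $T_0$ appeared before. You also locate the only second slow-time factor exactly where the paper does: the already integrated diagonal feedback $L_H\le C\mathsf Q(\eps+e_K+s^{3/2})$ is integrated once more in $E_d$ and in its scalar pairing.
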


\begin{proof}
We use the exact identities, not their fixed-$T_0$ corollaries.
The pointwise Taylor bounds and \eqref{rw:rawidentity} give
\begin{equation}\label{long:raw-budget}
 \begin{aligned}
 \|E_R\|_{\mathcal A_{w_*}}&\le C\mathsf S s^{3/2},\\
 |\nu E_R|&\le C\bigl(\eps s^{3/2}
             +\mathsf S s^{5/2}+\eps\mathsf S\sqrt s\bigr).
 \end{aligned}
\end{equation}
The first scalar term comes from the two endpoints.  In
\eqref{rw:highidentity}, the endpoints of $\mathcal U$ are bounded and
its full-trajectory residual is bounded independently of the interval.
Hence both the exterior integral and the high-linear integral are
$O(\eps+\eps^2T_\eps)=O(\eps\mathsf Q)$ in the weighted action norm
and in the frozen-frequency pairing.  The angular tail contributes
$C\eps T_\eps e_K=C\mathsf S e_K$.  The proof of exterior leakage
therefore gives \eqref{long:leakage}, without any assumption that the
initial normal coordinates vanish.

Keep the frozen split \eqref{rw:frozen-diagonal}.  The integrated normal
action equation now gives
\begin{equation}\label{long:diagonal-feedback}
 \sup_{t\le T_\eps}\sum_{j>M}j^{w_*}|J_j(t)-J_j^0|
 \le C\bigl(\eps\mathsf Q+\mathsf S e_K
                         +\mathsf S s^{3/2}\bigr)
 =:C\Lambda_{\mathsf S,s}.
\end{equation}
Thus $|d_{\rm h}^{\rm inc}|+|(d_{\rm h}^{\rm inc})_\varphi|
\le C\Lambda_{\mathsf S,s}$ along the stopped path.  Its action drift
has only tangential components, so its integrated vector error is at
most $C\mathsf S\Lambda_{\mathsf S,s}$.  The scalar identity
\eqref{rw:diagonalidentity}, using also the crude endpoint bound $Cs$
and $\|DQ_0\|_2\le Cs$, gives
\begin{equation}\label{long:diagonal-scalar}
 |\nu\cdot E_d|\le C\bigl(\eps s
       +\mathsf S s\Lambda_{\mathsf S,s}
       +\eps\mathsf S\sqrt s\bigr).
\end{equation}
The second slow-time factor arises from estimating an already
integrated normal increment and then integrating its angular drift.
Combining these bounds with the raw, exterior, high-linear and Fourier
tail contributions proves \eqref{long:window-budget}, since $s\le1$.
The use of $\mathsf Q^2$ is a convenient upper bound, not an optimal
assessment of each individual term.

The coefficient construction in Section~\ref{sec:filtered} contains no
time integration.  All its coefficient constants and divisor ratios
are therefore unchanged under the uniform contract.  In
\eqref{filt:quadraticchain}, bounded endpoints contribute one factor
$\eps$, while the residual integral contributes $\eps^2T_\eps
=\eps\mathsf S$.  This proves \eqref{long:quadratic-budget}.
For \eqref{filt:eq22}, the order-$n$ observable endpoints contribute
$\eps^n$, and every order-$n+1$ residual integrates to
$\eps^{n+1}T_\eps=\mathsf S\eps^n$.  The terminal term integrates
to $\mathsf S\eps^L$.  Retaining both initial endpoints and the
independent vector and scalar coefficient bounds gives
\eqref{long:averaging-budget}.  The added angular coefficient
$c_{\rm h}^0$ is still frozen throughout these calculations.

Autonomy gives \eqref{setup:initial-energy-identity} on the whole
stopped interval; subtracting the two bounded perturbation values
introduces no $\mathsf S$ factor.  Finally, the integrated inequality
\eqref{rw:loweraction} and the proof of \eqref{rw:transferexact} retain
their explicit dependence on $\eps t$.  They give
\eqref{long:critical-transfer} with the stated $A_*$.
\end{proof}

\subsection{A logarithmic extension with the original higher norm}
\label{subsec:long-logarithmic}

\begin{theorem}[Conditional logarithmic-time stability]
\label{thm:long-logarithmic}
Fix $0<\delta<1/4$, $\kappa>\delta$ and $C_0\ge0$.
Suppose a collection of trajectories with
$r_0\le C_0\eps^\kappa$ satisfies the uniform long-interval contract
on its supplied intervals $[0,T_\eps]$.  There are constants $c>0$
and $\eps_0>0$, depending only on the uniform data and these fixed
parameters, such that, for $0<\eps<\eps_0$, the unit-constant bounds
\begin{equation}\label{long:log-conclusion}
 \begin{gathered}
 |p(t)|_1\le\eps^{2\delta},\qquad
 \sum_{j\notin A}j^{2N+1}J_j(t)\le\eps^{2\delta},\\
 \|\gamma(w(t))-\gamma(w(0))\|_2\le\eps^\delta,
 \qquad 0\le t\le\widehat T_\eps,\\
 \widehat T_\eps=
 \min\{T_\eps,c\eps^{-1}\log(1/\eps)\}
 \end{gathered}
\end{equation}
hold, and the fixed coordinate tube is preserved on that interval.
Exact finite-gap data are included by taking $C_0=0$.
There is no arithmetic restriction on the base frequencies.
\end{theorem}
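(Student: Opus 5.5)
The plan is to rerun the bootstrap from the proof of Theorem~\ref{thm:main}, now feeding in the time-explicit budgets of Proposition~\ref{prop:long-budgets} with slow-time length $\mathsf S=\eps\widehat T_\eps\le c\log(1/\eps)$. Fix $\delta_1\in(\delta,\min\{\kappa,1/4\})$ and run the argument at the exponent $\delta_1$. Since $r_0\le C_0\eps^\kappa$ and $\kappa>\delta_1$, for small $\eps$ we have $r_0^{1/\delta_1}\le\eps$, so $\lambda=\eps$. Choose $b,L$ as in \eqref{cl:order} with $\delta_1$ in place of $\delta$. Set $K=\lceil c'\log(1/\eps)\rceil$ and $M\asymp\sqrt K$. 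Apply Proposition~\ref{prop:hierarchy} at $I^0$ with $\alpha=\eps^b$ to obtain $V,\rho,\Delta$ satisfying \eqref{cl:scales}, and take $s=C_s\eps^{2\delta_1}$. Stop at the first of $\widehat T_\eps$, $B=\rho$, $W=s$, or exit from the chart.

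\textbf{Vector and scalar error budgets.} Every error term in Proposition~\ref{prop:long-budgets} carries at most the extra factor $\mathsf Q^2\le C\log^2(1/\eps)=\eps^{-o(1)}$. The leakage bound \eqref{long:leakage} also carries the factor $\mathsf S e_K$. Taking $c'$ large makes $e_K\le\eps$, so $\mathsf S e_K\le\eps^{1-o(1)}$. Consequently every entry of Table~\ref{tab:near-exponents} survives with the same strictly positive exponent, up to a subpower loss, and \eqref{cl:smallerrors} still holds. The energy error $2F_0\eps$ has no time factor. Lemma~\ref{lem:energyclosure} therefore gives $B\le\rho/2$ exactly as before, uniformly in $\mathsf S\le c\log(1/\eps)$.

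\textbf{High-weight transfer (expected main obstacle).} In \eqref{long:critical-transfer} the Gronwall factor becomes $e^{A_*\mathsf S}\le\eps^{-cA_*}$, a genuine power loss that cannot be absorbed into $o(1)$. The plan is to choose $c$ small enough that this loss fits inside the gap between $\delta_1$ and $\delta$. The bracket in \eqref{long:critical-transfer} is bounded by
\[
 C\bigl[\eps^{2\kappa}+K_{S_M}\rho+(\eps^{1-o(1)})^{2/3}\bigr]
   \le C\eps^{\min\{2\kappa,\,b,\,2/3\}-o(1)}.
\]
All three exponents exceed $2\delta_1$: for $b$ because $b>2\delta_1$, and for $2/3$ because $\delta_1<1/4$. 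Pick $\theta>0$ with $\min\{2\kappa,b,2/3\}-\theta>2\delta_1$, and then $c$ with $cA_*<\theta/2$. This yields $\mathcal W\le\eps^{2\delta_1}\le s/2$ for small $\eps$. Compatibility of $s$ with the window estimates needs $s$ only to be smaller than fixed thresholds, which holds. Both hitting thresholds then improve strictly, and the continuity argument from the fixed-time proof extends the stopped interval to $[0,\widehat T_\eps]$ and keeps the trajectory in the tube.

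\textbf{Conclusion.} The estimates give $W(t)\le C\eps^{2\delta_1}\le\eps^{2\delta}/4$ for small $\eps$, which yields the two action bounds in \eqref{long:log-conclusion}. Lemma~\ref{lem:spectral}, applied to $w(t)$ and to $w(0)$ as in the fixed-time proof, gives
\[
 \|\gamma(w(t))-\gamma(w(0))\|_2^2\le 2C_\gamma(W(t)+W_0)\le\eps^{2\delta}.
\]
The delicate point is the bookkeeping in the transfer step. The factor $e^{A_*\mathsf S}$ must be absorbed entirely by the strict margin $\delta_1-\delta$ and by $\kappa>\delta$, which is why $c$ depends on $\delta,\kappa,H_+,C_z$. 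One also has to check that no other budget hides an exponential rather than a polynomial dependence on $\mathsf S$. Proposition~\ref{prop:long-budgets} asserts polynomial dependence for all of them, so only the transfer step needs this choice of $c$.
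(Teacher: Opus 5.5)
Your proposal is correct and is essentially the paper's proof. Your stopping radius $C_s\eps^{2\delta_1}$ and your choice of $b,L$ from \eqref{cl:order} at exponent $\delta_1$ coincide with the paper's $s=\eps^\zeta$, $\zeta=2\delta_1$, and its constraints \eqref{long:log-orders}, and choosing $c$ against $e^{A_*\mathsf S}\le\eps^{-cA_*}$ in \eqref{long:critical-transfer} is exactly \eqref{long:log-time-constant}. One narrative point needs correcting: your own computation shows that this power loss is absorbed by the margins $b-2\delta_1$, $2\kappa-2\delta_1$ and $2/3-2\delta_1$ above the stopping exponent, not by the gap $\delta_1-\delta$, which is used only at the end to absorb constants into the unit bounds.
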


\begin{proof}
Choose exponents
\begin{equation}\label{long:log-orders}
 2\delta<\zeta<\min\{2\kappa,1/2\},\qquad
 \zeta<b<\min\{5\zeta/4,(1+\zeta)/3,1/2\},\qquad
 L(1-2b)>b.
\end{equation}
All these strict choices are possible; for exact data the restriction
$\zeta<2\kappa$ can be omitted.  Fix $L$, choose
$K\asymp\log(1/\eps)$ so that $e_K\le\eps$, and use the compatible
hierarchy at $I^0$ with initial scale $\eps^b$ and $M\asymp\sqrt K$.
Thus $c_1\eps^b\le\rho,\Delta\le\eps^{b-o(1)}$ and
$d_V\le\mu\rho/8$.  The full high-unary estimates hold on the same
action ball.  Stop at $B=\rho$, $W=s:=\eps^\zeta$, or chart exit,
no later than $\widehat T_\eps$.

In Proposition~\ref{prop:long-budgets}, use this restricted interval:
$\mathsf S\le c\log(1/\eps)$, so both $\mathsf Q$ and
$\mathsf Q^2$ are subpower factors.  After division by $\rho$ for
the vector errors and by $\rho^2$ for their scalar pairings, a common
positive lower bound for the leading exponents is
\[
 \min\{1-2b,\ 1+\zeta-3b,\ 3\zeta/2-b,\ 5\zeta/2-2b,
                                      \ L(1-2b)-b\}>0.
\]
Higher averaging orders and Fourier tails only improve the relevant
minimum.  Hence $\eta=o(\rho)$ and
$\tau+2F_0\eps=o(\rho^2)$.  Lemma~\ref{lem:energyclosure} gives
$B\le\rho/4+o(\rho)\le\rho/2$, uniformly on shorter stopped intervals.

Leakage is at most $C\eps\mathsf Q$.  The remaining transfer is
\[
 \mathcal W_{\widehat T_\eps}
 \le C\eps^{-A_*c}
   \left(C_0^2\eps^{2\kappa}
               +\eps^{b-o(1)}+\eps^{2/3-o(1)}\right).
\]
Choose, for example,
\begin{equation}\label{long:log-time-constant}
 0<c<\frac{\min\{b-\zeta,\ 2\kappa-\zeta,\ 2/3-\zeta\}}
                         {2\max\{1,A_*\}},
\end{equation}
omitting the initial-tail entry when $C_0=0$.  This yields $W=o(s)$.
Initially $B=0$ and $W_0=o(s)$, so both stops improve strictly.
The Cartesian continuity and positive-action margin argument used
above excludes the remaining exit.  Thus the estimates hold on
$[0,\widehat T_\eps]$.  Applying Lemma~\ref{lem:spectral} at both
endpoints relative to the same finite-gap torus bounds the squared
gap error by $C(W+W_0)=o(\eps^{2\delta})$, because
$\zeta>2\delta$.  This also absorbs all constants in the action bounds.
\end{proof}

In particular, supplied slow-time lengths satisfying
$\eps T_\eps=o(\log(1/\eps))$ uniformly over the admitted collection
are covered under the same uniform hypotheses.  The condition $r_0=O(\eps^{1/4})$ admits every fixed
output exponent $\delta<1/4$.  In contrast, the factor
$e^{A_*\mathsf S}W_0$ does not give the fixed-horizon coefficient of
$r_0^2$ uniformly on growing slow-time intervals.  The condition
$r_0\le c_\delta\eps^\delta$ from Corollary~\ref{cor:shrinking-data}
cannot simply be carried unchanged into this logarithmic estimate.
These are restrictions of the transfer bound, not instability claims.

\subsection{Polynomial intervals with one additional supplied derivative}
\label{subsec:long-polynomial}

A stronger upper moment changes the critical interpolation exponent.
Only the solution bound is strengthened; the two analytic perturbation
levels remain those of Assumption~\ref{ass:contract}.

\begin{lemma}[Transfer from the stronger upper moment]
\label{lem:long-strong-transfer}
On a supplied stopped interval $[0,T]$ with $W\le s$, suppose
\begin{equation}\label{long:stronger-bound}
 \sup_{0\le t\le T}\|z(t)\|_{\sigma+2}\le H_{++}.
\end{equation}
For every finite set $S$ of normal indices one then has
\begin{equation}\label{long:superlinear-transfer}
 \mathcal W_T\le W_0+K_S\mathcal B_T
  +2^{4/5}H_{++}^{2/5}
        \left(\Gamma_S(T)+C_z\eps T s^{3/2}\right)^{4/5}.
\end{equation}
No continuity in $h^{\sigma+2}$, and no extra analytic mapping level
for the perturbation, is required by this estimate.
\end{lemma}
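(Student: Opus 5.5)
The plan follows the proof of Proposition~\ref{prop:transfer}, with the H\"older interpolation step rebalanced to use the stronger upper moment. Gronwall is replaced by a direct a priori bound on the cubic remainder term. Set $J_j^+(v)=(J_j(v)-J_j^0)_+$. As before, the integrated action equation from Lemma~\ref{lem:action-chain}, the splitting $f=g+r$, and the weighted Cauchy--Schwarz estimate \eqref{rw:cs} together with $\|\nabla_zr\|_{\sigma-1}\le C_zW$ give
\[
 L_S(v)=\sum_{j\notin S}j^{2\sigma-1}J_j^+(v)\le\Gamma_S(v)+C_z\eps\int_0^vW(u)^{3/2}\,du.
\]
On the stopped interval we have $W\le s$. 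The integral is therefore at most $\eps T s^{3/2}$, so for all $v\le T$ we get $L_S(v)\le\Gamma_S(T)+C_z\eps Ts^{3/2}$, using that $\Gamma_S$ is a running supremum. This is why no Gronwall step, and hence no exponential factor, appears in \eqref{long:superlinear-transfer}.

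Next comes the interpolation between weights $2\sigma-1$ and $2\sigma+4$. Since $0\le J_j^+\le J_j$, hypothesis \eqref{long:stronger-bound} gives $2\sum_j j^{2\sigma+4}J_j^+(v)\le H_{++}^2$. Write $2\sigma=\tfrac45(2\sigma-1)+\tfrac15(2\sigma+4)$. H\"older's inequality with exponents $5/4$ and $5$ then gives
\[
 2\sum_{j\notin S}j^{2\sigma}J_j^+
 \le2\Bigl(\sum j^{2\sigma-1}J_j^+\Bigr)^{4/5}
   \Bigl(\sum j^{2\sigma+4}J_j^+\Bigr)^{1/5}
 \le2\,L_S^{4/5}(H_{++}^2/2)^{1/5}.
\]
The prefactor is $2\cdot2^{-1/5}=2^{4/5}$, which matches the constant in \eqref{long:superlinear-transfer}. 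This bound uses only the pointwise supremum of the $h^{\sigma+2}$ norm, not continuity in that space.

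The finite part is treated exactly as in Proposition~\ref{prop:transfer}. By Cauchy--Schwarz, $|p|_1+2\sum_{j\in S}j^{2\sigma}J_j^+\le K_SB(v)$, because replacing signed increments by their positive parts does not increase the $\ell^2$ norm. We also have $J_j\le J_j^0+J_j^+$. Combining these yields, for every $v\le T$,
\[
 W(v)\le W_0+K_S B(v)+2^{4/5}H_{++}^{2/5}\bigl(\Gamma_S(T)+C_z\eps Ts^{3/2}\bigr)^{4/5}.
\]
Taking the supremum over $v$ and using $B(v)\le\mathcal B_T$ gives the claim. The one step that needs care is bounding the integral by $\eps Ts^{3/2}$ using the stopping bound $W\le s$ rather than feeding it back into a Gronwall argument. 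The exponent bookkeeping in the H\"older step is the only other point to check, and no serious obstacle is expected.
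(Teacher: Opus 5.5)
Your proposal is correct and follows the paper's proof essentially step for step. The lower moment \eqref{rw:loweraction} is bounded directly on the stop by $\Gamma_S(T)+C_z\eps Ts^{3/2}$, with no Gronwall step. H\"older with $2\sigma=\tfrac45(2\sigma-1)+\tfrac15(2\sigma+4)$ and $\sum_j j^{2\sigma+4}J_j^+\le H_{++}^2/2$ then gives the factor $2^{4/5}H_{++}^{2/5}$, and the unchanged finite-coordinate bound $W\le W_0+K_SB+2\sum_{j\notin S}j^{2\sigma}J_j^+$ completes the estimate.
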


\begin{proof}
Use the positive excess $J_j^+=(J_j-J_j^0)_+$ from
Proposition~\ref{prop:transfer}.  Its integrated lower moment obeys
\eqref{rw:loweraction}; on the stop its right side is at most
$\Gamma_S(T)+C_z\eps T s^{3/2}$.  Since $0\le J_j^+\le J_j$,
\[
 \sum_j j^{2\sigma+4}J_j^+\le H_{++}^2/2,\qquad
 2\sigma=\frac45(2\sigma-1)+\frac15(2\sigma+4).
\]
H\"older therefore gives
\[
 2\sum_{j\notin S}j^{2\sigma}J_j^+
 \le 2^{4/5}H_{++}^{2/5}
       \left(\sum_{j\notin S}j^{2\sigma-1}J_j^+\right)^{4/5}.
\]
Combining this with the unchanged finite-coordinate bound
$W\le W_0+K_SB+2\sum_{j\notin S}j^{2\sigma}J_j^+$ proves
\eqref{long:superlinear-transfer}.  No positive part is differentiated.
\end{proof}

The raw feedback in \eqref{long:superlinear-transfer} has power
$(3/2)(4/5)=6/5>1$, in contrast to the critical power
$(3/2)(2/3)=1$ in \eqref{long:critical-transfer}.  Relative to $s$,
it becomes small when $(\eps T)s^{1/4}\to0$.  This permits a positive
power of the inverse perturbation parameter in slow time.

\begin{theorem}[Conditional polynomial-time two-parameter stability]
\label{thm:long-polynomial}
Fix parameters
\begin{equation}\label{long:polynomial-range}
 0<\delta<1/4,\qquad
 0<\alpha<\min\{\delta/2,\ 1/2-2\delta\},\qquad
 \lambda=\max\{\eps,r_0^{1/\delta}\}.
\end{equation}
Suppose the uniform long-interval contract holds on
\begin{equation}\label{long:polynomial-interval}
 [0,T_{\eps,r_0}],\qquad
 T_{\eps,r_0}=\eps^{-1}\lambda^{-\alpha},
\end{equation}
and additionally \eqref{long:stronger-bound} holds there with a
constant $H_{++}$ independent of $\eps,r_0$ and the admitted datum.
There are independent positive thresholds for $r_0$ and $\eps$ and a
constant $C_{\delta,\alpha}$ such that throughout this interval
\begin{equation}\label{long:polynomial-conclusion}
 \begin{gathered}
 W(t)\le C_{\delta,\alpha}(r_0^2+\eps^{2\delta}),\\
 |p(t)|_1+\sum_{j\notin A}j^{2N+1}|J_j(t)-J_j^0|
                 \le C_{\delta,\alpha}(r_0^2+\eps^{2\delta}),\\
 \|\gamma(w(t))-\gamma(w(0))\|_{\ell^2}^2
                 \le C_{\delta,\alpha}(r_0^2+\eps^{2\delta}).
 \end{gathered}
\end{equation}
The constants depend only on the uniform contract, $H_{++}$ and the
fixed parameters.  All admitted phases and resonances are included,
and the fixed coordinate tube is preserved.
\end{theorem}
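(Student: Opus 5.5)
The plan is to repeat the bootstrap of the proof of Theorem~\ref{thm:main}, with two changes: the time-explicit budgets of Proposition~\ref{prop:long-budgets} replace the fixed-horizon ones, and Lemma~\ref{lem:long-strong-transfer} replaces the critical transfer estimate \eqref{long:critical-transfer}. We keep the auxiliary scale $\lambda=\max\{\eps,r_0^{1/\delta}\}$ and the stop radius $s=C_s\lambda^{2\delta}$. On $[0,T_{\eps,r_0}]$ the slow time is $\mathsf S=\lambda^{-\alpha}$. We choose
\[
 2\delta<b<\min\Bigl\{\tfrac12-\alpha,\ \tfrac52\delta-\alpha,\ \tfrac{1+2\delta-\alpha}{3}\Bigr\},
 \qquad L(1-2b)>b+\alpha .
\]
Such a $b$ exists because $\alpha<1/2-2\delta$ and $\alpha<\delta/2$. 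The cutoffs are $K=\lceil c\log(1/\lambda)\rceil$ with $e_K\le\lambda^{2}$ (so that $\mathsf S e_K$ is negligible) and $M\asymp\sqrt K$. We then run the hierarchy of Proposition~\ref{prop:hierarchy} at $I^0$ with $\alpha$-scale $\lambda^b$, giving $\rho,\Delta\asymp\lambda^{b\mp o(1)}$ and $d_V\le\mu\rho/8$. The process is stopped at $B=\rho$, at $W=s$, or at chart exit.

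\textbf{Step 1: energy closure.} Every term of Proposition~\ref{prop:long-budgets} carries at most a factor $\mathsf Q^2\lesssim\lambda^{-2\alpha}$ relative to the fixed-horizon table. Where only one $\mathsf Q$ appears, the cost is one power of $\alpha$. I would re-derive Table~\ref{tab:near-exponents} with these factors.
\begin{itemize}
\item The vector terms divided by $\rho$ and the scalar terms divided by $\rho^2$ lose $\alpha$ for the averaging and quadratic contributions. They need $1-2b-\alpha>0$, $1+2\delta-3b-\alpha>0$, and $L(1-2b)-b-\alpha>0$.
\item The window terms lose $2\alpha$. The raw Taylor pieces need $3\delta-b-2\alpha>0$ and $5\delta-2b-2\alpha>0$.
\end{itemize}
This last pair is the tightest, and it is where $\alpha<\delta/2$ is used. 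If the crude $\mathsf Q^2$ is too lossy, I would instead split $E_0$ into its raw part, which carries only $\mathsf S$ by \eqref{long:raw-budget}, and the diagonal part. The diagonal part is the only genuine $\mathsf S^2$ term, and it multiplies $s\Lambda$, which carries an extra power of $s$. With this split, $\alpha<\delta/2$ should suffice. Granted these margins, $\eta=o(\rho)$ and $\tau+2F_0\eps=o(\rho^2)$, so Lemma~\ref{lem:energyclosure} gives $B\le\rho/2$.

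\textbf{Step 2: transfer.} We apply \eqref{long:superlinear-transfer} with $S=S_M$. The finite part $W_0+K_{S_M}\rho$ is $\le\lambda^{2\delta}+\lambda^{b-o(1)}$. The leakage, by \eqref{long:leakage}, is $C(\eps\mathsf Q+\mathsf S e_K)\lesssim\lambda^{1-\alpha}$; its $4/5$ power is $o(\lambda^{2\delta})$ since $\tfrac45(1-\alpha)>\tfrac12>2\delta$.

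\textbf{Step 3: raw feedback (main obstacle).} The remaining term is $C_z\eps T s^{3/2}=C\lambda^{-\alpha}s^{3/2}$. Raised to the power $4/5$, it gives $\lambda^{-4\alpha/5}s^{6/5}$, and we need this to be $o(s)$, i.e.\ $\lambda^{-4\alpha/5+2\delta/5}\to0$, i.e.\ $\alpha<\delta/2$. This is exactly the stated range, so the superlinear power $6/5$ absorbs the polynomial time. This step, together with checking that no window term really carries a worse time factor, is where I expect the difficulty. With it, $\mathcal W\le C_{\rm tr}'\lambda^{2\delta}\le s/2$ once $C_s$ is chosen large relative to the constants in \eqref{long:superlinear-transfer}.

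\textbf{Step 4: conclusion.} Both stops improve strictly, and chart exit is excluded as in the fixed-horizon proof. Hence $W\le C(r_0^2+\eps^{2\delta})$ on the whole interval. The increment bound then follows from nonnegativity of the actions, as in the proof of Theorem~\ref{thm:main}. The gap bound follows by applying Lemma~\ref{lem:spectral} at $w(t)$ and $w(0)$ relative to the same torus.
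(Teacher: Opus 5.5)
Your proposal is correct and follows the paper's proof essentially step for step. It uses the same scale $\lambda$, the hierarchy at $I^0$ with initial scale $\lambda^b$, the time-explicit budgets of Proposition~\ref{prop:long-budgets} with the same normalized exponents, and the $4/5$-power transfer of Lemma~\ref{lem:long-strong-transfer} with the raw-feedback margin $\frac25\delta-\frac45\alpha$. Your omission of the paper's constraint $b<3\delta-2\alpha$ is harmless, since it follows from $b<\frac52\delta-\alpha$ when $\alpha<\delta/2$. The contemplated splitting of $E_0$ is also unnecessary, because the crude $\mathsf Q^2$ factor already closes in this range.
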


\begin{proof}
Choose
\begin{equation}\label{long:polynomial-orders}
 \begin{gathered}
 2\delta<b<\min\left\{3\delta-2\alpha,\ \frac52\delta-\alpha,
          \ \frac12-\alpha,\ \frac{1+2\delta-\alpha}{3}\right\},\\
 L(1-2b)>b+\alpha.
 \end{gathered}
\end{equation}
Each upper endpoint exceeds $2\delta$ under
\eqref{long:polynomial-range}; the last difference is
$(1-4\delta-\alpha)/3>0$.  Fix $b,L$, take
$K\asymp\log(1/\lambda)$ with $e_K\le\lambda$ and
$M\asymp\sqrt K$, and apply the hierarchy at $I^0$ with initial
scale $\lambda^b$.  The same compatible divisor region gives
$c_1\lambda^b\le\rho,\Delta\le\lambda^{b-o(1)}$ and
$d_V\le\mu\rho/8$.  Stop at $B=\rho$, $W=s:=8\lambda^{2\delta}$,
or chart exit, no later than $T_{\eps,r_0}$.

Here $\mathsf S=\lambda^{-\alpha}$ and
$\mathsf Q\le2\lambda^{-\alpha}$.  The time-explicit error bounds
give the following normalized exponents.  As before each denotes an
upper bound by a constant times $\lambda^{\text{exponent}-o(1)}$.
\begin{center}
\small
\begin{tabular}{@{}lll@{}}
\toprule
Contribution & Vector divided by $\rho$ & Scalar divided by $\rho^2$\\
\midrule
Window $\eps,e_K$ terms & $1-2\alpha-b$ & $1-2\alpha-2b$\\
Window Taylor terms & $3\delta-b-2\alpha$ & $5\delta-2b-2\alpha$\\
Order-$n$ ordinary terms & $n(1-2b)-\alpha$ & $n(1-2b)-\alpha$\\
Order-$n$ terms with $s$ & $n(1-2b)+2\delta-b-\alpha$
                        & $n(1-2b)+2\delta-b-\alpha$\\
Terminal remainder & $L(1-2b)-b-\alpha$ & $L(1-2b)-b-\alpha$\\
Finite quadratic terms & $1-\alpha-2b,\ 1-\alpha+2\delta-3b$
                       & $1-\alpha-2b,\ 1-\alpha+2\delta-3b$\\
Energy error & --- & $1-2b$\\
\bottomrule
\end{tabular}
\end{center}
All are positive by \eqref{long:polynomial-orders}.  The averaging
Fourier-tail exponent is $n+1-(2n+1)b-\alpha$; it exceeds the
ordinary one by $1-b>0$.  The worst nonterminal averaging order is
$n=1$.  Thus $\eta=o(\rho)$ and
$\tau+2F_0\eps=o(\rho^2)$, giving $B\le\rho/2$ by the recentered
energy inequality.  No time factor is applied to that conserved
energy error.

Equation~\eqref{long:leakage} gives
$\Gamma_{S_M}(T_{\eps,r_0})\le C\lambda^{1-\alpha}$.
In \eqref{long:superlinear-transfer}, the finite-coordinate and two
tail contributions, divided by $s$, have positive exponents
\begin{equation}\label{long:transfer-margins}
 b-2\delta,\qquad
 \frac45(1-\alpha)-2\delta,\qquad
 \frac45(3\delta-\alpha)-2\delta
                         =\frac25\delta-\frac45\alpha.
\end{equation}
The middle exponent is positive since $\alpha<\delta/2$ and
$\delta<1/4$; the other two follow directly from the choices.
The fixed factor $H_{++}^{2/5}$ is absorbed by taking the independent
smallness thresholds sufficiently small.  Since $W_0\le s/8$, we
obtain $W\le s/2$ on the stopped interval.  Both improvements are
uniform at shorter endpoints.  Cartesian continuity and the positive
tangential margin exclude the first exit, as in the proof of
Theorem~\ref{thm:main}.

It follows that $W\le8\lambda^{2\delta}
\le8(r_0^2+\eps^{2\delta})$ throughout
\eqref{long:polynomial-interval}.  Nonnegativity gives the weighted
increment bound.  Lemma~\ref{lem:spectral}, applied at both endpoints
relative to the same finite-gap torus, gives the squared gap bound.
Choose a common $\lambda_*>0$ for all inequalities and then
$\eps_*\le\lambda_*$ and
$r_*\le\min\{r_{\rm adm},\lambda_*^\delta\}$ to obtain independent
thresholds.  A fixed small $r_0$ is allowed; its slow-time factor is
then radius-dependent but does not diverge as $\eps\to0$.
\end{proof}

\begin{corollary}[Unit rates on a polynomially longer interval]
\label{cor:long-polynomial-rates}
Fix $0<\delta<1/4$ and
\begin{equation}\label{long:polynomial-unit-range}
 0<\alpha<\min\{1/10,\ 1/2-2\delta\}.
\end{equation}
For exact finite-gap data or $r_0\le C_0\eps^{1/4}$, the
three unit-constant action and gap inequalities in
\eqref{long:log-conclusion} hold throughout
$[0,\eps^{-1-\alpha}]$, provided the uniform long-interval contract
and the bound \eqref{long:stronger-bound} are supplied there.
The constants and thresholds may depend on $C_0,\delta,\alpha$.
\end{corollary}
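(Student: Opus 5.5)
The plan is to deduce the corollary from Theorem~\ref{thm:long-polynomial}, applied with an auxiliary exponent $\delta_1$ that is slightly larger than $\delta$, and then to absorb the constants, as in the proof of Corollary~\ref{cor:shrinking-data}. The interval supplied by the theorem is $[0,\eps^{-1}\lambda^{-\alpha_1}]$ with $\lambda=\max\{\eps,r_0^{1/\delta_1}\}$. So two things are needed: $\lambda^{-\alpha_1}\ge\eps^{-\alpha}$, so that this interval contains $[0,\eps^{-1-\alpha}]$, and a bound $r_0^2+\eps^{2\delta_1}=O(\eps^{2\delta_1})$, so that $C_{\delta_1,\alpha_1}(r_0^2+\eps^{2\delta_1})\le\eps^{2\delta}/2$ once $\eps$ is small.

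\emph{Choice of parameters.} Take $\delta_1\in(\delta,1/4)$ and $\alpha_1\ge\alpha$. These must satisfy $\alpha_1<\min\{\delta_1/2,1/2-2\delta_1\}$. The data condition $r_0\le C_0\eps^{1/4}$ also matters: it gives $r_0^{1/\delta_1}\le C\eps^{1/(4\delta_1)}\le\eps$ for small $\eps$, because $\delta_1<1/4$. Hence $\lambda=\eps$ eventually, the theorem's interval is $[0,\eps^{-1-\alpha_1}]$, and $r_0^2\le C_0^2\eps^{1/2}=o(\eps^{2\delta_1})$. With the choice $\alpha_1=\alpha$ it remains to find $\delta_1$ with $\delta<\delta_1<1/4$, $\alpha<\delta_1/2$ and $\alpha<1/2-2\delta_1$.

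\emph{The main obstacle.} The condition $\alpha<\delta_1/2$ requires $\delta_1>2\alpha$, and $2\alpha$ can exceed $\delta$ when $\delta$ is small, since only $\alpha<1/10$ is assumed. I would therefore take $\delta_1=\max\{\delta,2\alpha\}+\theta$ with $\theta>0$ small. Then $\delta_1<1/4$, because $2\alpha<1/5$. The condition $1/2-2\delta_1>\alpha$ holds if $\delta_1=\delta+\theta$, by the hypothesis $\alpha<1/2-2\delta$. If instead $\delta_1=2\alpha+\theta$, it holds because $1/2-4\alpha-2\theta>\alpha$ is equivalent to $\alpha<1/10$ up to the small $\theta$. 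This is exactly where the bound $1/10$ enters.

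\emph{Conclusion.} The three conclusions of Theorem~\ref{thm:long-polynomial} then give $W\le C(C_0^2\eps^{1/2}+\eps^{2\delta_1})$, and this is at most $\eps^{2\delta}$ for small $\eps$, since $\delta_1>\delta$ and $1/2>2\delta$. The same bound controls $|p|_1$ and the weighted normal sum $\sum_{j\notin A}j^{2N+1}J_j\le W/2$. The squared gap variation is likewise at most $\eps^{2\delta}$. The thresholds for $r_0$ are implied by $r_0\le C_0\eps^{1/4}$ once $\eps$ is small, and $r_0=0$ is the special case $C_0=0$. The remaining hypotheses, namely the contract and \eqref{long:stronger-bound} on $[0,\eps^{-1-\alpha}]$, are assumed in the statement and restrict to the theorem's interval, which equals $[0,\eps^{-1-\alpha}]$ since $\alpha_1=\alpha$.
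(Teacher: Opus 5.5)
Your proposal is correct and follows the paper's proof essentially verbatim. The paper also applies Theorem~\ref{thm:long-polynomial} with the same $\alpha$ and an auxiliary $\delta_1\in(\max\{\delta,2\alpha\},\,1/4-\alpha/2)$. It then uses $r_0\le C_0\eps^{1/4}$ to force $\lambda=\eps$ for small $\eps$, so the interval is $[0,\eps^{-1-\alpha}]$, and absorbs constants via $r_0^2+\eps^{2\delta_1}=o(\eps^{2\delta})$. Your observation that $\alpha<1/10$ is exactly what keeps the window for $\delta_1$ nonempty when $2\alpha\ge\delta$ matches the paper's remark that this interval is nonempty precisely under \eqref{long:polynomial-unit-range}.
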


\begin{proof}
Choose
\[
 \max\{\delta,2\alpha\}<\delta_1<1/4-\alpha/2.
\]
The interval is nonempty exactly under the strict sufficient conditions
\eqref{long:polynomial-unit-range}.  Theorem~\ref{thm:long-polynomial}
applies with $\delta_1$ and $\alpha$.  Since $\delta_1<1/4$,
$r_0^{1/\delta_1}=o(\eps)$ uniformly, so its effective scale is
$\lambda=\eps$ for sufficiently small $\eps$.  Its interval is
therefore $[0,\eps^{-1-\alpha}]$.  Moreover,
$r_0^2+\eps^{2\delta_1}=o(\eps^{2\delta})$, which absorbs all
constants in the total-tail, tangential-action and gap estimates.
\end{proof}

For example, a target $\delta=1/8$ and time exponent $\alpha=1/20$
are consistent with $\delta_1=1/5$, $b=17/40$ and $L=4$ in the
proof.  Under the stronger uniform contract, the spectral error is
at most $\eps^{1/8}$ and the action errors at most $\eps^{1/4}$
through time $\eps^{-21/20}$.  The stated exponent ranges are
sufficient consequences of the conservative window bounds, not
optimality assertions.

With the original higher moment alone and $A_*>0$,
\eqref{long:critical-transfer} at slow time $\eps^{-\alpha}$ contains
$\exp(A_*\eps^{-\alpha})$.  Increasing the fixed averaging order
only improves the terminal remainder: it does not remove this factor
or the fixed raw Taylor powers.  No generic $\eps^{-2}$ or
exponentially long interval follows from these estimates.  Special
cancellations, for instance a vanishing raw normal remainder, can
remove this particular obstruction.  All longer-time conclusions
above still require the independent strong solution and the stated
uniform bounds on their proposed intervals.

% END sections/06_closure
% BEGIN sections/07_application
\section{Examples of Hamiltonian perturbations}
\label{sec:application}

We now verify the analytic and evolution hypotheses of
Assumption~\ref{ass:contract} for several concrete classes of local and
nonlocal autonomous Hamiltonian perturbations. Throughout this section we
denote the physical potential by $u(x,t)$, and write $\|\cdot\|_0$ for the
$L^2(\T)$ norm.

The table below summarizes the main application classes, their sufficient
initial Sobolev bounds, and the resulting stability intervals. All initial
data $u_0$ are smooth and mean zero, with $I_A(u_0)\in\mathcal K$, and each
stated initial Sobolev norm is bounded uniformly over the prescribed family,
independently of $\eps$. Stability holds for sufficiently small $r_0,\eps$
and any fixed $0<\delta<1/4$.

\begin{center}
\small
\begin{tabular}{@{}>{\raggedright\arraybackslash}p{.32\linewidth}
 >{\raggedright\arraybackslash}p{.23\linewidth}
 >{\raggedright\arraybackslash}p{.37\linewidth}@{}}
\toprule
Class and reference & Initial control & Stability interval \\
\midrule
Additive or transport/smoothing
(Theorem~\ref{thm:additive}, Proposition~\ref{prop:app-transport},
Corollary~\ref{cor:app-nearby}(1))
& $H^{N+2}$
& $[0,T_0/\eps]$ for every prescribed fixed $T_0>0$ \\[3pt]
The same classes
(Corollary~\ref{cor:app-growing-horizon})
& $H^{N+2}$; $r_0=0$ or \mbox{$r_0\le C_0\eps^\kappa$}, $\kappa>\delta$
& $[0,\mathsf S(\eps)/\eps]$, with an unspecified
\mbox{$\mathsf S(\eps)\to\infty$} \\[3pt]
Analytic local densities
(Corollary~\ref{cor:app-local-density},
Corollary~\ref{cor:app-nearby}(2))
& $H^{N+2}$
& $[0,T_*/\eps]$ for some fixed $T_*>0$ \\[3pt]
Positive quartic Gardner
(Proposition~\ref{prop:app-gardner},
Corollary~\ref{cor:app-nearby}(3),
Theorem~\ref{thm:app-gardner-alltime})
& $H^{N+3}$
& Every fixed $[0,T_0/\eps]$ by averaging; all $t\ge0$ using the integrable
Gardner structure \\
\bottomrule
\end{tabular}
\end{center}

For fixed horizons, constants and smallness thresholds may depend on $T_0$.
The growing factor $\mathsf S(\eps)$ has no asserted logarithmic or
polynomial rate. Exact finite-gap data in the prescribed compact family
automatically satisfy all the initial Sobolev bounds in the table.

\paragraph{Organization of Section~\ref{sec:application}.}
The section proceeds in three stages:
\begin{enumerate}
\item \emph{General physical-to-coordinate criteria
(Subsection~\ref{subsec:app-common}).}
Lemma~\ref{lem:app-physical-contract} shows that if a real-analytic
Hamiltonian perturbation has a vector field mapping $H^s_0\to H^{s-1}_0$
analytically for $s=N,N+1$ and the perturbed PDE admits smooth solutions on
$[0,T_0/\eps]$ with a uniform $H^{N+2}$ bound, then Rellich compactness in
$H^{N+1}_0$ and continuity of $\mathcal B_{N+1}$ imply precompactness of the
trajectories in $h^{\sigma+1}$ and verify Assumption~\ref{ass:contract}.
Lemma~\ref{lem:app-local} supplies the common local smooth well-posedness and
$H^4$ continuation criterion.

\item \emph{Baseline evolution estimates for exact finite-gap data
(Subsections~\ref{subsec:app-additive}--\ref{subsec:app-local-densities}).}
We first establish the uniform $H^{N+2}$ bounds on the exact finite-gap
family~\eqref{app:initial} ($r_0=0$). Subsection~\ref{subsec:app-additive}
treats smooth additive forcing $\eps a_x(x)$ on $[0,T/\eps]$ for every fixed
$T>0$ by an inductive Gronwall estimate on positively shifted KdV
conservation laws (Theorem~\ref{thm:additive}).
Subsection~\ref{subsec:app-transport} extends this arbitrary-horizon bound to
spatial transport $\partial_x(Vu)$ plus analytic smoothing fields
$\mathcal R(u)$ (Proposition~\ref{prop:app-transport}), using the derivative
cancellation~\eqref{app:transport-lower}, and
Subsection~\ref{subsec:app-examples} illustrates this class with local and
nonlocal examples. Subsection~\ref{subsec:app-gardner} treats the defocusing
quartic Gardner perturbation $6\eps u^2u_x$ via the Gardner-to-KdV
transformation (Proposition~\ref{prop:app-gardner}), while
Subsection~\ref{subsec:app-local-densities} establishes a fixed short
slow-time horizon $[0,T_*/\eps]$ for general analytic local densities
$P(x,u)$ by bootstrapping a closed window of KdV conservation laws
(Corollary~\ref{cor:app-local-density}).

\item \emph{Extensions to nearby infinite-gap data and longer horizons
(Subsections~\ref{subsec:app-nearby}--\ref{subsec:app-growing-horizon}).}
Subsection~\ref{subsec:app-nearby} observes that the preceding Sobolev
estimates depend only on uniform initial $H^{N+2}$ (or $H^{N+3}$) bounds and
therefore extend immediately to nearby initial data with $r_0>0$ and
infinitely many open normal gaps (Corollary~\ref{cor:app-nearby}).
Subsection~\ref{subsec:app-gardner-alltime} then exploits the integrable
Gardner structure and translation invariance of the KdV actions to prove
all-time $O(\eps)$ action stability and $O(\eps(r_0+\eps))$ normal-action
stability (Theorem~\ref{thm:app-gardner-alltime}). Finally,
Subsection~\ref{subsec:app-growing-horizon} uses a diagonal argument to obtain
stability on a growing horizon $[0,\mathsf S(\eps)/\eps]$ with
$\mathsf S(\eps)\to\infty$ for the arbitrary-horizon classes
(Corollary~\ref{cor:app-growing-horizon}).
\end{enumerate}

\subsection{Common verification of the coordinate hypotheses}
\label{subsec:app-common}

The following lemma transfers independently established physical existence
and higher Sobolev bounds into the coordinate contract.  Each application
supplies these bounds and the analytic mapping properties below.
All coefficients, functionals and smoothing operators in this section
are fixed independently of $\eps$.

\begin{lemma}[Physical bounds imply the coordinate contract]
\label{lem:app-physical-contract}
Let $\mathscr U$ be a collection of smooth mean-zero initial data with
$I_A(u_0)\in\mathcal K$ and $r_0\le r_{\rm adm}$, using
$\xi=(I_A(u_0),0)$ for its charts.  Let $\mathcal F$ be a
real-analytic functional on $H^N_0$ whose
Hamiltonian vector field $G=\partial_x\nabla\mathcal F$, initially
evaluated on smooth functions, has compatible real-analytic extensions
\[
 G:H^N_0\longrightarrow H^{N-1}_0,
 \qquad G:H^{N+1}_0\longrightarrow H^N_0.
\]
Assume local holomorphic extensions at the relevant real points.  Suppose
that, for some fixed $T_0>0$ and $\eps_*>0$, the perturbed equation
\begin{equation}\label{app:general-pde}
 u_t=-u_{xxx}+6uu_x+\eps G(u)
\end{equation}
has smooth solutions for the admitted data on $[0,T_0/\eps]$ for
$0<\eps\le\eps_*$, with
\begin{equation}\label{app:physical-upper}
 \sup_{u_0\in\mathscr U}
 \ \sup_{0<\eps\le\eps_*}\ \sup_{0\le t\le T_0/\eps}
                    \|u(t)\|_{H^{N+2}}<\infty.
\end{equation}
Then Assumption~\ref{ass:contract} holds uniformly for this
collection.  For sufficiently small $r_0$ and $\eps$, Theorem~\ref{thm:main}
gives \eqref{setup:near-actions}--\eqref{setup:near-gaps} with
this $T_0$.  For exact finite-gap data, or the shrinking neighborhoods
in Corollary~\ref{cor:shrinking-data}, it also gives
\eqref{app:actions}--\eqref{app:gaps} with $T=T_0$.
\end{lemma}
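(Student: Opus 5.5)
The plan is to verify the two parts of Assumption~\ref{ass:contract} in turn and then quote Theorem~\ref{thm:main} and Corollary~\ref{cor:shrinking-data}.

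\textbf{Part (1), two-level analyticity.} We transport the physical mapping properties of $G$ to the local charts, as sketched after Assumption~\ref{ass:contract}. Write $\mathcal B_s$ for the Birkhoff map at level $s$. The compatible real-analytic diffeomorphisms $\mathcal B_s:H^s_0\to h^{s+1/2}$ are used for $s=N-1,N,N+1$. The pushed-forward vector field is
\[
 X=D\mathcal B\circ G\circ\mathcal B^{-1}.
\]
It maps $h^\sigma\to h^{\sigma-1}$ and $h^{\sigma+1}\to h^\sigma$ analytically, with compatible restrictions. Composing with the inverse differential of the finite polar chart \eqref{setup:polar-map} is harmless, because the chart is uniformly analytic on the tube thanks to the positive-action margin. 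Canonicity then identifies the normal part with $\mathcal J\nabla_zf$. This gives the represented gradient \eqref{setup:two-level-gradient}, first on finitely supported variations and then by density.

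The uniform local holomorphic bounds come from compactness. The base points $\mathcal K\times(\R/2\pi\Z)^A$ form a compact set in every $h^q$, since finite-gap potentials are smooth. We cover it by finitely many balls on which the holomorphic extensions are bounded; Cauchy estimates then give all fixed-order derivative bounds and a uniform angular strip. At the upper level the neighborhood must also contain the stopped trajectories and their radial Taylor segments. This is where \eqref{app:physical-upper} enters.

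\textbf{Part (2), strong trajectories.} Smooth solutions with the $H^{N+2}$ bound \eqref{app:physical-upper} give $\|z(t)\|_{\sigma+1}\le H_+$ by local Lipschitz bounds of $\mathcal B_{N+1}$ on bounded sets. The regularity $\mathcal Z\in C(h^\sigma)\cap C^1(h^{\sigma-3})$ follows from $u\in C(H^N_0)\cap C^1(H^{N-3}_0)$, since $u_t=-u_{xxx}+\dots$ and $D\mathcal B$ acts compatibly at level $N-3$. Hamilton's equations in the polar variables follow from the chain rule through $\mathcal B$ while the path stays in the positive-action tube.

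\textbf{Main obstacle.} The hardest step is the uniformity of the upper-level neighborhoods. The collection of trajectory points is only bounded in $h^{\sigma+1}$, and bounded sets are not covered by finitely many analyticity balls. The plan is to use compactness instead. The $H^{N+2}$ bound makes the set of all trajectory points precompact in $H^{N+1}_0$ by Rellich, and continuity of $\mathcal B_{N+1}$ makes its image precompact in $h^{\sigma+1}$. The radial segments $(\varphi,\theta p,\theta z)$, $\theta\in[0,1]$, are continuous images of this set times $[0,1]$, so they are precompact as well. We then take a finite cover by balls on which the holomorphic extensions of $G$ and of $\mathcal B^{\pm1}$ are bounded, and choose a uniform radius by a Lebesgue-number argument. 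This produces the fixed upper-level neighborhood required in Assumption~\ref{ass:contract}(1).

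With both parts verified uniformly over $\mathscr U$ and $0<\eps\le\eps_*$, Theorem~\ref{thm:main} gives \eqref{setup:near-actions}--\eqref{setup:near-gaps}, and Corollary~\ref{cor:shrinking-data} gives the unit-constant versions with $T=T_0$.
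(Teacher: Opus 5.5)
Your proposal is correct and follows essentially the paper's own route: push $G$ forward through the compatible Birkhoff maps and the polar chart, identify the represented normal gradient by canonicity and density, get the upper-level neighborhoods from Rellich precompactness in $H^{N+1}_0$ plus continuity of $\mathcal B_{N+1}$ (with the Taylor segments, and the angular translates needed for the Fourier estimates, treated as continuous images of compact sets), and obtain the strong regularity and Hamilton's equations by the chain rule. One correction: deduce $H_+$ from that same precompactness of the image in $h^{\sigma+1}$, not from ``local Lipschitz bounds of $\mathcal B_{N+1}$ on bounded sets,'' because analyticity alone gives no bounds on arbitrary bounded infinite-dimensional sets, which is why the paper argues by compactness.
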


\begin{proof}
\emph{Canonical maps and represented gradients.}
Put $\sigma=N+1/2$ and denote by
$\mathcal B_s:H^s_0\to h^{s+1/2}$ the compatible normalized Birkhoff
maps, with the complex identification of Section~\ref{sec:setup}; see
\cite[Theorem~1.3]{KST2013}.  For $s=N,N+1$, the coordinate field is
\begin{equation}\label{app:general-coordinate-force}
 P_s(\mathcal Z)=D\mathcal B_{s-1}(\mathcal B_s^{-1}\mathcal Z)
              [G(\mathcal B_s^{-1}\mathcal Z)]
       :h^{s+1/2}\longrightarrow h^{s-1/2}.
\end{equation}
Here $\mathcal B_s^{-1}\mathcal Z\in H^s_0$ is included in $H^{s-1}_0$
when evaluating $D\mathcal B_{s-1}$, and $G$ takes values in
$H^{s-1}_0$.  Analytic differentiation and composition prove these two
mapping properties; compatibility identifies the coordinate components.
Write $P$ for the resulting compatible field and let $\Theta_\xi$ be the
finite polar and normal Cartesian chart of \eqref{setup:polar-map}.  Then
\begin{equation}\label{app:chartforce}
 f_\xi=\mathcal F\circ\mathcal B^{-1}\circ\Theta_\xi,
 \qquad
 X_{f_\xi}=(D\Theta_\xi)^{-1}P\circ\Theta_\xi.
\end{equation}
The fixed positive tangential action margin makes this chart and its
inverse differential analytic uniformly near the compact base family.
Canonicity, first at smooth base points on finitely supported real normal
variations and then by continuity and density, gives
\begin{equation}\label{app:representedgradient}
 D_zf_\xi(b)[v]
   =\langle-\mathcal J(X_{f_\xi})^z(b),v\rangle_{\ell^2}.
\end{equation}
The representing sequence belongs to $h^{s-1/2}$, so its ordinary
sequence pairing is continuous on $h^{s+1/2}$.  Thus
\begin{equation}\label{app:twolevels}
 \nabla_zf_\xi:h^\sigma\to h^{\sigma-1},\qquad
 \nabla_zf_\xi:h^{\sigma+1}\to h^\sigma.
\end{equation}
The scalar and finite-variable derivatives are analytic as well.
A finite cover of the compact base family supplies a common lower-level
neighborhood, angular strip and fixed-order derivative bounds.

\emph{Uniform upper neighborhoods.}
Let $\Pi_R$ retain the physical Fourier modes $|k|\le R$, $R\ge1$.
The independent estimate \eqref{app:physical-upper} gives, uniformly
over the entire collection of states,
\[
 \|(1-\Pi_R)u(t)\|_{H^{N+1}}^2
       \le CR^{-2}\|u(t)\|_{H^{N+2}}^2.
\]
Bounded low Fourier projections and uniformly small tails show that
this collection has compact closure in $H^{N+1}_0$.  By continuity of
$\mathcal B_{N+1}$, its Birkhoff image has compact closure in
$h^{\sigma+1}$; in particular $\|z(t)\|_{\sigma+1}\le H_+$ uniformly.
On a smaller stopped chart with a positive tangential margin, include
all radial normal Taylor segments, finite tangential segments towards
the base torus, and real angular translates used in the estimates.
These operations preserve the margin and are continuous maps of the
preceding compact closure and compact finite-dimensional parameter
sets.  Their union therefore has compact closure in the upper coordinate
topology.  The analytic maps \eqref{app:chartforce}--\eqref{app:twolevels}
are defined near that closure.  A finite cover supplies a common complex
neighborhood, angular strip, and bounds for every fixed finite collection
of derivatives.  No boundedness on arbitrary bounded infinite-dimensional
sets is inferred from analyticity.

\emph{Strong paths and the stopped chart.}
For each fixed $\eps$, the supplied smooth solution and its equation give
$u\in C([0,T_0/\eps];H^{N+1}_0)\cap
C^1([0,T_0/\eps];H^{N-2}_0)$.  The Banach chain rule for
$\mathcal B_{N-2}$, with $N-2\ge1$, and the compatible unforced conjugacy
yield
\begin{equation}\label{app:coordinatepath}
 \mathcal Z\in C([0,T_0/\eps];h^{\sigma+1})
       \cap C^1([0,T_0/\eps];h^{\sigma-2}),\qquad
 \dot{\mathcal Z}_j=i\Omega_j(I)\mathcal Z_j+\eps P_j(\mathcal Z).
\end{equation}
This is a global Cartesian statement with stronger regularity than
\eqref{setup:strong-solution}.  On every subinterval before chart exit,
the finite polar map gives $(\varphi,p)\in C^1$ and the corresponding
Hamiltonian equations.  No positive tangential margin beyond that
stopped interval is assumed.  These independent verifications establish
Assumption~\ref{ass:contract}; Theorem~\ref{thm:main} then gives the
two-parameter estimates and preserves the chart margin.  For the exact
or shrinking initial families, Corollary~\ref{cor:shrinking-data} gives
the asserted unit-constant estimates.
\end{proof}

For use in the evolution estimates, the local existence argument can be
made directly at high regularity for the equations considered here.

\begin{lemma}[Local spatially smooth solutions and continuation]
\label{lem:app-local}
Let $\mathfrak a,\mathfrak b\in C^\infty(\T\times\R;\R)$, with
bounded derivatives on each compact range of their scalar argument.
Let $\mathcal R:L^2_0\to H^q_0$ be $C^1$ for every nonnegative integer
$q$, with its value and first derivative bounded on each $L^2$ ball.
Suppose that the expression in brackets below has zero spatial mean
for smooth mean-zero $u$.  The equation
\begin{equation}\label{app:local-equation}
 u_t=-u_{xxx}+6uu_x+
       \eps\bigl[\mathfrak a(x,u)u_x+\mathfrak b(x,u)+\mathcal R(u)\bigr]
\end{equation}
has, for every smooth mean-zero datum, a unique local solution on
$[0,t_{\rm loc}]$, where $t_{\rm loc}>0$ is uniform for $0<\eps\le1$
on bounded $H^4$ sets of such data, and
\[
 u\in\bigcap_{\substack{q\in\N\\q\ge4}}
 \left(C([0,t_{\rm loc}];H^q_0)
       \cap C^1([0,t_{\rm loc}];H^{q-3}_0)\right).
\]
Such spatially smooth solutions continue in these spaces through a
finite endpoint whenever their $H^4$ norm stays bounded up to that
endpoint.  Higher spatial regularity is propagated without an additional
smallness condition on $\eps$.  No $C^\infty$ time regularity is asserted
under the merely $C^1$ hypothesis on $\mathcal R$.
\end{lemma}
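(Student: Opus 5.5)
The plan is a standard quasilinear energy argument at a fixed high Sobolev level $q\ge4$, carried out on a Galerkin (or parabolic) regularization, followed by the Bona--Smith/Kato approach to continuity and propagation of regularity. I would write \eqref{app:local-equation} as $u_t=-u_{xxx}+6uu_x+\eps\mathfrak a(x,u)u_x+\eps\mathfrak N(u)$. Here $\mathfrak N(u)=\mathfrak b(x,u)+\mathcal R(u)$ contains no derivative of $u$ in its first, local term; its second term $\mathcal R(u)$ is smoothing. The mean-zero hypothesis on the bracket keeps the flow in the mean-zero spaces. For existence I would use the regularization $u_t=-\nu\partial_x^4u-u_{xxx}+\dots$ with $\nu>0$, solvable by a contraction in $C([0,t];H^q_0)$ for each fixed $\nu$ and $q$. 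The main work is then a $\nu$-uniform $H^q$ estimate.

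The key step is the energy estimate. Apply $\partial_x^q$ and pair with $\partial_x^qu$. The term $-u_{xxx}$ is skew, and $-\nu\partial_x^4$ is dissipative. For $6uu_x$ and $\eps\mathfrak a(x,u)u_x$, the top-order contribution $\int c(x,u)\partial_x^{q+1}u\,\partial_x^qu$ integrates by parts to $-\tfrac12\int (c(x,u))_x(\partial_x^qu)^2$. The remaining commutators are handled by Kato--Ponce/Moser estimates together with $H^1\subset L^\infty$. This gives
\[
 \frac{d}{dt}\|u\|_{H^q}^2\le C\bigl(\|u\|_{W^{1,\infty}}\bigr)\bigl(1+\|u\|_{H^q}^2\bigr)
 +\eps C\bigl(\|u\|_{L^2}\bigr)\|u\|_{H^q},
\]
using the bounded derivatives of $\mathfrak a,\mathfrak b$ on the compact range $|u|\le\|u\|_{L^\infty}$ and the bound on $\mathcal R$ on $L^2$ balls. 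At $q=4$ the constant depends only on the $H^4$ norm. A Gronwall step then produces $t_{\rm loc}$, uniform for $0<\eps\le1$ on bounded $H^4$ sets. For each $q>4$ the estimate is linear in $\|u\|_{H^q}^2$, with coefficient depending on the $H^4$ norm. Hence higher norms stay finite on the whole $H^4$ existence interval, with no smallness of $\eps$ required. Passing $\nu\to0$ by compactness yields a solution in $L^\infty H^q$ for all $q$.

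Uniqueness follows from an $L^2$ estimate for the difference of two solutions. The quasilinear term again integrates by parts, and the $C^1$ hypothesis on $\mathcal R$ gives a Lipschitz bound on $L^2$ balls. For continuity in time, a solution bounded in $H^{q+1}$ is weakly continuous into $H^q$, and the energy identity makes the norm continuous. Since every higher norm is bounded, this gives strong continuity in each $H^q$. The $C^1([0,t];H^{q-3})$ statement then comes from reading off the equation. Only $C^1$ regularity of $\mathcal R$ is used, so no higher time regularity is claimed.

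Continuation is the standard argument. If $\|u\|_{H^4}$ stays bounded up to a finite $T^*$, restart at a time within $t_{\rm loc}$ of $T^*$, where $t_{\rm loc}$ comes from the uniform $H^4$ bound. The higher norms are controlled by the linear Gronwall bounds, so the solution continues past $T^*$ in all the stated spaces.

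I expect the main obstacle to be the commutator bookkeeping in the top-order estimate for $\eps\mathfrak a(x,u)u_x$. One has to check that the constant depends only on $\|u\|_{W^{1,\infty}}\lesssim\|u\|_{H^4}$, and that it does so uniformly in the regularization parameter. I would handle this with the Moser composition estimate $\|\mathfrak a(\cdot,u)\|_{H^q}\le C(\|u\|_{L^\infty})(1+\|u\|_{H^q})$ combined with the Kato--Ponce commutator bound.
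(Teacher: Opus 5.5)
Your proposal is correct and follows essentially the same route as the paper. Both use the $H^q$ energy estimate in which the top-order term $\int c(x,u)\,\partial_x^{q+1}u\,\partial_x^qu$ is removed by integration by parts, and commutator and composition bounds giving $\bigl|\tfrac{d}{dt}\|u\|_{H^q}^2\bigr|\le C_{q,R}(1+\|u\|_{H^q}^2)$ on $H^4$ balls uniformly in $0<\eps\le1$, followed by compactness, $L^2$-Gronwall uniqueness, and continuation from the $H^4$ bound. The only differences are minor: the paper regularizes by Fourier (Galerkin) truncation rather than by $-\nu\partial_x^4$, and it gets strong time continuity directly by interpolating with one extra bounded Sobolev order, so no Bona--Smith or weak-continuity step is needed.
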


\begin{proof}
Use spatial Fourier projections in \eqref{app:local-equation} and
project the nonlinear terms orthogonally onto the retained modes.
The projections commute with derivatives.  The cubic dispersive part
is skew in every integer Sobolev norm.  If $A(x)=\mathfrak a(x,u(x))$,
the highest-order first-order term is controlled by
\[
 2\int_\T u^{(q)}A u^{(q+1)}\,dx
       =-\int_\T(\partial_xA)(u^{(q)})^2\,dx.
\]
The one-dimensional product, composition and commutator estimates give,
for every integer $q\ge4$, on $\|u\|_{H^4}\le R$,
\begin{equation}\label{app:local-energy}
 \left|\frac{d}{dt}\|u\|_{H^q}^2\right|
        \le C_{q,R}\bigl(1+\|u\|_{H^q}^2\bigr).
\end{equation}
For example, the commutator is bounded by
\[
 \|[\partial_x^q,A]u_x\|_0
 \le C_q\bigl(\|\partial_xA\|_\infty\|u\|_{H^q}
                     +\|A\|_{H^q}\|u_x\|_\infty\bigr),
\]
and $\|A\|_{H^q}\le C_{q,R}(1+\|u\|_{H^q})$ on that set.
The same calculation treats $6uu_x$ and $\mathfrak b(x,u)$; the
smoothing term is bounded in $H^q$ on the fixed $L^2$ ball.  The
constants in \eqref{app:local-energy} are independent of the projection
and of $0<\eps\le1$.

The $q=4$ estimate gives a common local time, and the estimates at each
higher order give uniform bounds on that same time interval for smooth
data.  The projected time derivatives are bounded three orders lower.
Compactness and a diagonal extraction therefore give convergence at
each lower Sobolev order.  Using one additional bounded order gives
strong continuity at every fixed order in the limit; the equation
gives $C^1$ regularity three orders lower.  This constructs a spatially
smooth solution with the stated time regularity for the full PDE, not
a Banach-space ODE for a derivative-losing vector field.

For uniqueness, the difference $w=u-v$ satisfies a linear first-order
energy inequality.  Indeed,
\[
 \mathfrak a(x,u)u_x-\mathfrak a(x,v)v_x
   =\mathfrak a(x,u)w_x
       +(\mathfrak a(x,u)-\mathfrak a(x,v))v_x.
\]
Integration by parts in the first term, bounded derivatives of the
coefficients on the solution range, and the $L^2$ Lipschitz bound for
$\mathcal R$ give $d\|w\|_0^2/dt\le C\|w\|_0^2$ on each compact
local interval.  Gronwall proves uniqueness.  If the $H^4$ norm stays
bounded up to a finite endpoint, \eqref{app:local-energy} bounds every
higher norm there.  The equation and interpolation give a spatially
smooth endpoint datum, and the same local construction restarts the
solution in the stated spaces.  When $\mathcal R$ is $C^\infty$ into
every $H^q_0$, as in the applications below, repeated differentiation
of the equation gives higher time regularity as well.  All conservation-law
identities below are applied to the actual solution; they are not
asserted for the Galerkin equations.
\end{proof}

\subsection{The additive-forcing baseline}
\label{subsec:app-additive}

For the applications below, the initial family is \eqref{app:initial}
unless an enlarged collection $\mathscr U$ is specified.

Let $a\in C^\infty(\T;\R)$ be fixed independently of $\eps$.
We apply Theorem~\ref{thm:main} to
\begin{equation}\label{app:pde}
 u_t=-u_{xxx}+6uu_x+\eps a_x(x),\qquad u(0)=u_0,
\end{equation}
with initial data from the finite-gap family~\eqref{app:initial} below.
On the smooth phase space used in this application, its Hamiltonian is
\begin{equation}\label{app:hamiltonian}
 \mathcal H_\eps(u)=\mathcal H_{\rm KdV}(u)+\eps F_a(u),
 \qquad F_a(u)=\int_\T a(x)u(x)\,dx,
 \qquad X_{F_a}=a_x.
\end{equation}
The mean is preserved.  The scalar functional $F_a$ is continuous
linear; spatial analyticity of $a$ is unnecessary.

\begin{theorem}[Additive-forcing application]\label{thm:additive}
Fix $N\ge3$, a finite tangential set $A\subset\N$, and a compact
positive action family $\mathcal K$ as in Theorem~\ref{thm:main}.
Let $u_0$ range over all finite-gap potentials with
\begin{equation}\label{app:initial}
 I_a(u_0)=\xi_a\quad(a\in A),\qquad
 I_j(u_0)=0\quad(j\notin A),\qquad \xi\in\mathcal K,
\end{equation}
including every phase.  For each fixed $T>0$ and $0<\delta<1/4$,
there is $\eps_0>0$ such that, for $0<\eps<\eps_0$, the unique
smooth solution of~\eqref{app:pde} satisfies
\begin{align}
 \sum_{a\in A}|I_a(u(t))-\xi_a|&\le\eps^{2\delta},&
 \sum_{j\notin A}j^{2N+1}I_j(u(t))&\le\eps^{2\delta},
 \label{app:actions}\\
 \|\gamma(u(t))-\gamma(u_0)\|_{\ell^2}&\le\eps^\delta,
 &&0\le t\le T/\eps.\label{app:gaps}
\end{align}
Moreover,
\begin{equation}\label{app:strong}
 \begin{gathered}
 u\in C([0,T/\eps];H^{N+1}_0)
       \cap C^1([0,T/\eps];H^{N-2}_0),\\
 \sup_{0\le t\le T/\eps}\|u(t)\|_{H^{N+1}}\le C_+.
 \end{gathered}
\end{equation}
The constants are uniform over $\xi\in\mathcal K$ and all phases;
they depend only on $N,a,A,\mathcal K,T,\delta$.
The family may have arbitrary fixed amplitudes and resonances.
The case $A=\varnothing$ is the single zero torus.
\end{theorem}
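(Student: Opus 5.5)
The plan is to reduce Theorem~\ref{thm:additive} to Lemma~\ref{lem:app-physical-contract} with $G(u)=a_x$ and $T_0=T$. The corollary-level conclusions \eqref{app:actions}--\eqref{app:gaps} then follow from Corollary~\ref{cor:shrinking-data} with $r_0=0$. The analytic hypotheses are immediate. $F_a$ is continuous linear, hence real analytic with entire extension. Its field $X_{F_a}=a_x$ is a constant smooth mean-zero function, so it maps $H^N_0\to H^{N-1}_0$ and $H^{N+1}_0\to H^N_0$ analytically, with trivially uniform holomorphic bounds. The exact finite-gap family \eqref{app:initial} consists of smooth potentials. It is compact in every $H^q_0$, being the continuous image of the compact set $\mathcal K\times(\R/2\pi\Z)^A$ under the real-analytic inverse Birkhoff map into each $h^{q+1/2}$. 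So all initial Sobolev norms are uniformly bounded. Local smooth existence, uniqueness and the $H^4$ continuation criterion come from Lemma~\ref{lem:app-local} with $\mathfrak a=0$, $\mathfrak b=a_x$, $\mathcal R=0$, where mean zero holds since $\int a_x=0$. What remains is the uniform bound \eqref{app:physical-upper}: $\sup_{t\le T/\eps}\|u(t)\|_{H^{N+2}}\le C$, uniformly in the family and $0<\eps\le\eps_*$. Given this, \eqref{app:strong} follows from the regularity statement in the proof of Lemma~\ref{lem:app-physical-contract}.

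For the a priori bound I would use the KdV hierarchy of conserved quantities $\mathcal H_n(u)=\int_\T\bigl(\tfrac12(\partial_x^nu)^2+c_nu(\partial_x^{n-1}u)^2+\cdots\bigr)dx$, $n\ge0$, with $\mathcal H_0=\tfrac12\|u\|_0^2$ and $\mathcal H_1=\mathcal H_{\rm KdV}$. Each $\mathcal H_n$ Poisson-commutes with the KdV Hamiltonian. Along \eqref{app:pde} this gives the exact identity
\[
\frac{d}{dt}\mathcal H_n(u)=\eps\int_\T\nabla\mathcal H_n(u)\,a_x\,dx
=-\eps\int_\T\partial_x\nabla\mathcal H_n(u)\,a\,dx .
\]
Since $a$ is smooth, integrating by parts repeatedly moves all derivatives onto $a$, modulo lower-order nonlinear terms. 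This yields $|\tfrac{d}{dt}\mathcal H_n|\le\eps C_n\bigl(1+\|u\|_{H^n}+P_n(\|u\|_{H^{n-1}})\bigr)$, where $P_n$ is a polynomial. The $L^2$ level is explicit: $\tfrac{d}{dt}\tfrac12\|u\|_0^2=\eps\int ua_x$, so $\|u(t)\|_0\le\|u_0\|_0+\eps t\|a_x\|_0$, which is bounded on $[0,T/\eps]$.

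Induction on $n$ then goes as follows. Assume $\|u\|_{H^{n-1}}\le C_{n-1}(T)$ on $[0,T/\eps]$. Gagliardo--Nirenberg gives $\mathcal H_n(u)+C'\ge\tfrac14\|\partial_x^nu\|_0^2$ minus a quantity controlled by $\|u\|_{H^{n-1}}$. I would therefore work with the positively shifted quantity $Y_n=\mathcal H_n+\Lambda_n$, with $\Lambda_n$ depending on $C_{n-1}$. This gives $\dot Y_n\le\eps C(1+Y_n)$, and Gronwall on $[0,T/\eps]$ yields $Y_n\le(Y_n(0)+1)e^{CT}$, which is uniform in $\eps$. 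I would run this up to $n=N+2$.

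The main obstacle is the step at $n=1$. There the cubic term $\int u^3$ is not controlled by lower levels alone. I would handle it with $\|u\|_{L^\infty}^3\lesssim\|u\|_0^{5/2}\|u_x\|_0^{1/2}$ and Young, which absorbs it into $\tfrac14\|u_x\|_0^2$ using the $L^2$ bound. A second point needing care is that the forcing pairing at level $n$ involves at most $\|u\|_{H^n}$ linearly, after integration by parts against smooth $a$, so that the Gronwall inequality is linear. The constants then depend only on $N,a,A,\mathcal K,T$. The smooth solution also exists on all of $[0,T/\eps]$, since its $H^4$ norm stays bounded and Lemma~\ref{lem:app-local} continues it. Lemma~\ref{lem:app-physical-contract} then completes the proof.
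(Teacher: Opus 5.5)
Your proposal is correct and follows essentially the same route as the paper. You reduce to Lemma~\ref{lem:app-physical-contract} with $G=a_x$ and $T_0=T$. You obtain the uniform $H^{N+2}$ bound from the $L^2$ estimate together with an induction on positively shifted KdV conservation laws and Gronwall on $[0,T/\eps]$, using compactness of the finite-gap family in every $H^m_0$. You then conclude via Corollary~\ref{cor:shrinking-data} with $r_0=0$.

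One small correction: in the $n=1$ step the inequality you want is $|\int_\T u^3\,dx|\le\|u\|_{L^\infty}\|u\|_0^2\lesssim\|u\|_0^{5/2}\|u_x\|_0^{1/2}$, not a bound on $\|u\|_{L^\infty}^3$. With that fix, the Young absorption into $\tfrac14\|u_x\|_0^2$ works exactly as you describe.
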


\begin{proof}
\emph{Physical solutions and bounds.}
Local smooth existence, uniqueness, and the $H^4$ continuation criterion for
\eqref{app:pde} follow from Lemma~\ref{lem:app-local} with $\mathfrak a=0$,
$\mathfrak b=a_x$, and $\mathcal R=0$ (see also
\cite[Proposition~0.3 and Assumption~A(ii)]{Huang2013}).
We give the uniform Sobolev estimate explicitly, using a positive shift
rather than the final shifted inequality in Section~5 of \cite{Huang2013}.
The argument is independent of Theorem~\ref{thm:main} and is applied on the
local lifespan, with $0<\eps\le1$ and $\eps t\le T$.

Set $b_a=a_x$. Integration by parts gives
\[
 \frac{d}{dt}\|u\|_0^2
   =2\eps\langle u,b_a\rangle
   \le\eps\bigl(\|u\|_0^2+\|b_a\|_0^2\bigr).
\]
Hence
\[
 \|u(t)\|_0^2
 \le e^{\eps t}\|u_0\|_0^2+(e^{\eps t}-1)\|b_a\|_0^2
 \le e^T\|u_0\|_0^2+(e^T-1)\|b_a\|_0^2.
\]
For $m\ge1$, put $X_m=\|\partial_x^m u\|_0^2$ and normalize the
$m$-th KdV conservation law as
$\mathscr E_m(u)=X_m+\mathscr P_m(u)$. Here
$\mathscr P_1(u)=2\int_\T u^3\,dx$, and for $m\ge2$ the lower part
has the form
\[
 \mathscr P_m(u)=\int_\T
 \left(c_m u(\partial_x^{m-1}u)^2
       +Q_m(u,\ldots,\partial_x^{m-2}u)\right)\,dx,
\]
where $c_m$ is real and $Q_m$ is a polynomial; see
\cite[Remark~3.3 and equation~(4.21)]{HK2014}. For $m=1$,
the one-dimensional interpolation and Young inequalities give
$|\mathscr P_1(u)|\le X_1/2+C$, using the preceding $L^2$ bound,
and $D\mathscr P_1(u)[b_a]=6\int_\T u^2b_a\,dx$ is bounded.
For $m\ge2$, assume inductively that $\|u\|_{H^{m-1}}$ is bounded.
Derivatives through order $m-2$ are then bounded in $L^\infty$;
the two factors of order $m-1$ are controlled in $L^2$.
Thus both $\mathscr P_m(u)$ and $D\mathscr P_m(u)[b_a]$ are bounded
in terms of the preceding Sobolev bound and the fixed smooth force.
The leading derivative satisfies
$2|\langle\partial_x^m u,\partial_x^m b_a\rangle|
\le X_m+\|\partial_x^m b_a\|_0^2$.

Since $\mathscr E_m$ is conserved by unforced KdV, these estimates
supply constants $A_m,B_m\ge0$, independent of $\eps$, such that
\[
 \begin{gathered}
 \tfrac12X_m-A_m\le\mathscr E_m(u)\le2X_m+A_m,\\
 \frac{d}{dt}\mathscr E_m(u(t))
   =\eps D\mathscr E_m(u(t))[b_a]\le\eps(X_m+B_m).
 \end{gathered}
\]
The constants may depend on $m,T,a$ and the preceding bounds on a
bounded initial set. Define the positively shifted quantity
\[
 Y_m(t)=\mathscr E_m(u(t))+A_m+\tfrac12B_m+1.
\]
Then $Y_m\ge X_m/2+B_m/2+1$, so
\[
 Y_m'\le2\eps Y_m,\qquad
 X_m(t)\le2Y_m(t)\le2Y_m(0)e^{2\eps t}\le2Y_m(0)e^{2T}.
\]
This closes the induction. On the mean-zero space the homogeneous
$m$-th derivative norm is equivalent to the $H^m$ norm.
The bounds exclude high-Sobolev blow-up while $\eps t\le T$;
the local continuation criterion therefore supplies the unique
solution through $T/\eps$ and, for every fixed integer $m\ge4$,
\begin{equation}\label{app:slowbound}
 \sup_{0\le t\le T/\eps}\|u(t)\|_{H^m}
 \le C_m(T,a,\|u_0\|_{H^m}),
\end{equation}
uniformly for $0<\eps\le1$ on bounded sets of smooth initial data.
The constants need not be uniform in $m$; no order-dependent
smallness threshold is used to propagate smoothness.

The family~\eqref{app:initial} is compact in every $H^m_0$:
its Birkhoff image has fixed finite support and compact parameters,
and the inverse Birkhoff maps are continuous at every order
\cite[Theorem~1.3]{KST2013}. Thus~\eqref{app:slowbound} is uniform
at each fixed order. Uniqueness identifies the solutions constructed
at different orders. Their equation and higher bounds give
$u\in W^{1,\infty}([0,T/\eps];H^{q-3}_0)$ for every integer $q\ge4$.
Interpolation with the bound at order $q+1$ yields
\[
 \begin{aligned}
 \|u(t)-u(t')\|_{H^q}
 &\le C\|u(t)-u(t')\|_{H^{q-3}}^{1/4}
          \|u(t)-u(t')\|_{H^{q+1}}^{3/4}\\
 &\le C_q|t-t'|^{1/4}.
 \end{aligned}
\]
Thus $u\in C([0,T/\eps];H^q_0)$, and the equation gives $C^1$
regularity in $H^{q-3}_0$. Taking $q=N+1$ proves~\eqref{app:strong}.
This is propagation of regularity from the smooth initial family,
not smoothing of arbitrary $L^2$ data.

\emph{Application of the common criterion.}
The functional $F_a$ and its constant Hamiltonian field $G(u)=a_x$
have the analytic physical mapping properties of
Lemma~\ref{lem:app-physical-contract}; smoothness of $a$ suffices at
both required Sobolev levels.  Estimate~\eqref{app:slowbound} at
$m=N+2$ verifies \eqref{app:physical-upper}, uniformly for the family
\eqref{app:initial}.  The common lemma therefore establishes
Assumption~\ref{ass:contract} with $T_0=T$.
Corollary~\ref{cor:shrinking-data} with $r_0=0$ gives
\eqref{app:actions}--\eqref{app:gaps} on the full interval,
uniformly over the prescribed family.
\end{proof}

\subsection{Spatial transport and Hamiltonian smoothing terms}
\label{subsec:app-transport}

The following sufficient criterion extends the additive-forcing result
while retaining every prescribed fixed slow-time horizon.

\begin{proposition}[An arbitrary-horizon application class]
\label{prop:app-transport}
Let $V\in C^\infty(\T;\R)$ and let $\mathcal F_{\rm sm}$ be a
real-analytic functional on $L^2_0$.  Suppose its Hamiltonian field
$\mathcal R=\partial_x\nabla\mathcal F_{\rm sm}$ is real analytic
from $L^2_0$ to $H^m_0$ for every nonnegative integer $m$, and that
\begin{equation}\label{app:smoothing-ball}
 \sup_{\|u\|_0\le R}
 \left(\|\mathcal R(u)\|_{H^m}
       +\|D\mathcal R(u)\|_{\mathcal L(L^2_0,H^m_0)}\right)
       \le C_{m,R}<\infty
 \qquad(m\ge0,\ R>0).
\end{equation}
Assume also that, for a fixed $C_0\ge0$,
\begin{equation}\label{app:smoothing-radial}
 2\langle u,\mathcal R(u)\rangle
                   \le C_0(1+\|u\|_0^2)\qquad(u\in L^2_0).
\end{equation}
Then the Hamiltonian perturbation
\begin{equation}\label{app:transport-hamiltonian}
 \mathcal F(u)=\frac12\int_\T V(x)u(x)^2\,dx+\mathcal F_{\rm sm}(u),
 \qquad G(u)=\partial_x(Vu)+\mathcal R(u)
\end{equation}
satisfies Assumption~\ref{ass:contract} for the initial family
\eqref{app:initial} on $[0,T/\eps]$ for every fixed $T>0$.
The physical solutions exist smoothly there for $0<\eps\le1$ and
have uniform bounds at every fixed Sobolev order.  For each
$0<\delta<1/4$ the conclusions \eqref{app:actions}--\eqref{app:gaps}
follow for sufficiently small $\eps$.  The constants may depend on
$V,\mathcal R,T,N,A,\mathcal K$ and the stated bounds, but are uniform
in the initial phases.  No sign condition is imposed on $V$.
\end{proposition}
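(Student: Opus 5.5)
The plan is to reduce Proposition~\ref{prop:app-transport} to Lemma~\ref{lem:app-physical-contract}, just as in the proof of Theorem~\ref{thm:additive}.

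\emph{Analytic mapping properties.} The transport part $u\mapsto\partial_x(Vu)$ is a bounded linear map $H^s_0\to H^{s-1}_0$ for $s=N,N+1$. It has zero mean because it is an exact derivative. The smoothing part $\mathcal R$ is real analytic from $L^2_0$ into every $H^m_0$. So $G$ has the required compatible analytic extensions with local holomorphic bounds. Local smooth existence, uniqueness and the $H^4$ continuation criterion then follow from Lemma~\ref{lem:app-local}, taking $\mathfrak a(x,u)=V(x)$, $\mathfrak b(x,u)=V_x(x)u$ and the given $\mathcal R$. The mean-zero hypothesis holds because $\partial_x(Vu)$ and $\mathcal R(u)=\partial_x\nabla\mathcal F_{\rm sm}$ are derivatives.

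\emph{Uniform Sobolev bounds on $[0,T/\eps]$.} This is the main work, and I will follow the inductive positively shifted conservation-law argument from Theorem~\ref{thm:additive}.

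For the $L^2$ level, the transport cancellation gives
\[
 2\langle u,\partial_x(Vu)\rangle=\int_\T V_x u^2\,dx,
\]
which is bounded by $\|V_x\|_\infty\|u\|_0^2$. Combined with \eqref{app:smoothing-radial}, this yields
\[
 \frac{d}{dt}\|u\|_0^2\le C\eps(1+\|u\|_0^2),
\]
and Gronwall bounds $\|u(t)\|_0$ while $\eps t\le T$. This is exactly why \eqref{app:smoothing-radial} is assumed: nothing beyond the radial estimate is available at this level. Once the $L^2$ ball is fixed, \eqref{app:smoothing-ball} makes $\mathcal R(u)$ bounded in every $H^m$ uniformly along the trajectory, so the smoothing field behaves like the forcing $b_a$ in the additive case.

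For $m\ge1$, I differentiate $\mathscr E_m(u)=X_m+\mathscr P_m(u)$ along the flow. The unforced KdV part contributes nothing, so only $\eps D\mathscr E_m(u)[G(u)]$ remains.
\begin{itemize}
\item The lower part $D\mathscr P_m(u)[G(u)]$ contains at most $m$ derivatives of $u$ in total, with at most one extra derivative landing through $\partial_x(Vu)$. I expect to control it by $X_m+C$ using the inductive $H^{m-1}$ bound and interpolation. This is where care is needed: the $c_m u(\partial_x^{m-1}u)^2$ term paired with $\partial_x(Vu)$ produces order-$m$ factors, and I need to handle them by integration by parts, or else bound them by $\|u\|_{H^m}\|u\|_{H^{m-1}}$-type products.
\item The leading term is
\[
 2\langle\partial_x^m u,\partial_x^{m+1}(Vu)\rangle .
\]
The one-derivative loss here is removed by the same cancellation as \eqref{app:transport-lower}: integrating by parts gives $\int V_x(\partial_x^m u)^2$, and the commutator terms are bounded by $C\,X_m$ plus lower-order quantities.
\end{itemize}
Together these give $Y_m'\le C\eps Y_m$ for a suitably shifted $Y_m$. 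Gronwall then bounds $X_m$ by $e^{CT}$ times its initial value, uniformly over the compact family \eqref{app:initial}, which is compact in every $H^m_0$. The continuation criterion extends the solution through $T/\eps$, and the H\"older-interpolation argument from Theorem~\ref{thm:additive} gives the strong time continuity.

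\emph{Conclusion.} The bound at $m=N+2$ verifies \eqref{app:physical-upper}. Lemma~\ref{lem:app-physical-contract} then yields Assumption~\ref{ass:contract} with $T_0=T$, and Corollary~\ref{cor:shrinking-data} with $r_0=0$ gives \eqref{app:actions}--\eqref{app:gaps}. The main obstacle I anticipate is the top-order estimate for $D\mathscr P_m(u)[\partial_x(Vu)]$. If direct integration by parts fails for that term, I will fall back on the projected energy estimate \eqref{app:local-energy}, using it only to bound the order-$m$ factors linearly in $X_m$ given the $H^{m-1}$ bound.
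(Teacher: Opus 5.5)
Your proposal is correct and follows essentially the paper's own proof. The shared steps are the $L^2$ bound from \eqref{app:smoothing-radial}, the leading identity producing $(2m+1)\int_\T V_x(u^{(m)})^2\,dx$ plus lower terms, and the positively shifted Gronwall induction on $\mathscr E_m$, closed by Lemma~\ref{lem:app-physical-contract} and Corollary~\ref{cor:shrinking-data}. The term you flag is handled in the paper by the integration by parts you suggest: the $Vu_x(u^{(m-1)})^2$ contributions in $D\bigl(\int u(u^{(m-1)})^2\bigr)[\partial_x(Vu)]$ cancel, leaving a quantity bounded on $H^{m-1}$ balls. Your cruder product bound would also close, since the Gronwall step tolerates growth linear in $X_m$, so the fallback to \eqref{app:local-energy}, which carries no factor $\eps$ and presupposes an $H^4$ bound, is unnecessary.
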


\begin{proof}
The scalar perturbation and its field have the two analytic physical
mapping properties of Lemma~\ref{lem:app-physical-contract}.
In particular, $\partial_x(Vu):H^s_0\to H^{s-1}_0$ is a bounded
linear map.  The assumptions on $\mathcal R$ are explicit sufficient
bounds, not consequences of its analyticity on bounded sets.
Lemma~\ref{lem:app-local} supplies the local smooth solution.  We now
establish its independent bounds while $\eps t\le T$.

Periodic integration by parts gives
\begin{equation}\label{app:transport-mass}
 \frac{d}{dt}\|u\|_0^2
   =\eps\int_\T V_xu^2\,dx
         +2\eps\langle u,\mathcal R(u)\rangle
   \le C\eps(1+\|u\|_0^2).
\end{equation}
Thus $1+\|u(t)\|_0^2\le(1+\|u_0\|_0^2)e^{C\eps t}$.
The state remains in a fixed $L^2$ ball on this slow-time interval,
so \eqref{app:smoothing-ball} bounds $\mathcal R(u(t))$ in each fixed
Sobolev norm.

Use the normalized conservation laws
$\mathscr E_m=X_m+\mathscr P_m$ introduced in the proof of
Theorem~\ref{thm:additive}.  Write $u^{(m)}=\partial_x^m u$ and
$L_Vu=\partial_x(Vu)$.  The leading quadratic derivative satisfies
\begin{equation}\label{app:transport-leading}
 \begin{split}
 2\langle u^{(m)},\partial_x^m L_Vu\rangle
    ={}&(2m+1)\int_\T V_x(u^{(m)})^2\,dx\\
       &+2\sum_{j=2}^{m+1}\binom{m+1}{j}
          \int_\T u^{(m)}V^{(j)}u^{(m+1-j)}\,dx.
 \end{split}
\end{equation}
It is bounded by $C_{m,V}(X_m+\|u\|_{H^{m-1}}^2)$.  Pairing the
leading derivative with $\mathcal R(u)$ gives at most $X_m+C$.

For $r=m-1\ge1$, the distinguished lower term has the cancellation
\begin{equation}\label{app:transport-lower}
 \begin{split}
 D\!\left(\int_\T u(u^{(r)})^2\,dx\right)[L_Vu]
    ={}&2(r+1)\int_\T V_xu(u^{(r)})^2\,dx\\
       &+2\sum_{j=2}^{r+1}\binom{r+1}{j}
          \int_\T u\,u^{(r)}V^{(j)}u^{(r+1-j)}\,dx.
 \end{split}
\end{equation}
Indeed, the terms containing $V u_x(u^{(r)})^2$ cancel after
integration by parts.  The right side is bounded on each bounded
$H^{m-1}$ set.  This includes $m=2$ without assuming an
$L^\infty$ bound on $u_x$ from $H^1$.  Differentiating the remaining
polynomial $Q_m$ along $L_Vu$ produces at most one factor of order
$m-1$, while its other factors have order at most $m-2$.  The former
is in $L^2$ and the latter in $L^\infty$ under the same preceding
Sobolev bound.  Derivatives of the lower terms along $\mathcal R(u)$
are bounded by \eqref{app:smoothing-ball}.

At $m=1$, the mass bound gives
$|\mathscr P_1|\le X_1/2+C$, and
\[
 D\mathscr P_1(u)[L_Vu]=4\int_\T V_xu^3\,dx,
 \qquad
 |D\mathscr P_1(u)[L_Vu]|\le C(1+X_1).
\]
The smoothing contribution $6\int u^2\mathcal R(u)$ is bounded.
Induction therefore gives constants independent of $\eps$ with
\begin{equation}\label{app:transport-hierarchy}
 \tfrac12X_m-A_m\le\mathscr E_m(u)\le2X_m+A_m,
 \qquad
 \mathscr E_m'(t)\le\eps C_m(1+X_m).
\end{equation}
For $Z_m=\mathscr E_m+A_m+1$, this implies
\begin{equation}\label{app:transport-gronwall}
 Z_m'\le2\eps C_mZ_m,
 \qquad X_m(t)\le2Z_m(0)e^{2C_mT}\quad(\eps t\le T).
\end{equation}
The constants may depend on the preceding bounds and on $m,T$;
no uniformity as $m\to\infty$ is claimed.  These bounds prevent
finite-time loss of smoothness before $T/\eps$, by
Lemma~\ref{lem:app-local}.  Compactness of the initial finite-gap
family makes them uniform at every fixed order.  In particular,
\eqref{app:physical-upper} holds.  Lemma~\ref{lem:app-physical-contract}, with
$\mathscr U$ equal to the family \eqref{app:initial}, and then
Corollary~\ref{cor:shrinking-data} with $r_0=0$ give the conclusions.
\end{proof}

\subsection{Concrete nonlinear and nonlocal examples}
\label{subsec:app-examples}

Each of the following classes meets the sufficient conditions of
Proposition~\ref{prop:app-transport}.  Thus every fixed $T>0$ is
allowed, with constants depending on that horizon.  They may also be
combined with each other by finite sums and with the original force
$a_x$.

\paragraph{A spatially varying quadratic potential.}
For any fixed real $V\in C^\infty(\T)$, take $\mathcal R=0$:
\begin{equation}\label{app:variable-potential}
 \mathcal F_V(u)=\tfrac12\int_\T Vu^2\,dx,
 \qquad X_{\mathcal F_V}=Vu_x+V_xu.
\end{equation}
For example, $V(x)=\cos(2\pi x)$ gives
$\cos(2\pi x)u_x-2\pi\sin(2\pi x)u$.  This is a spatially
inhomogeneous first-order perturbation.  Both terms in
$\partial_x(Vu)$ are required; $Vu_x$ alone does not give this
Hamiltonian field.  Adding $\int au$ to the functional adds $a_x$
to the field.  Neither $a$ nor $V$ needs to be analytic in $x$.

\paragraph{Nonlinear finite-rank Hamiltonians.}
Let $a_1,\ldots,a_r\in C^\infty(\T;\R)$ and set
$\ell_j(u)=\int_\T a_ju\,dx$.  For a real-analytic
$\Phi:\R^r\to\R$ satisfying $|\nabla\Phi(y)|\le C(1+|y|)$, put
\begin{equation}\label{app:finite-rank}
 \mathcal F_\Phi(u)=\Phi(\ell_1(u),\ldots,\ell_r(u)),
 \qquad
 X_{\mathcal F_\Phi}(u)=\sum_{j=1}^r
                     \partial_j\Phi(\ell(u))\,a_j'.
\end{equation}
The finite-dimensional argument ranges over a compact set on each
$L^2$ ball, giving all fixed-order local analytic bounds.  Moreover,
$\|X_{\mathcal F_\Phi}(u)\|_{H^m}\le C_m(1+\|u\|_0)$, which
implies \eqref{app:smoothing-radial}.  A nonlinear example is
\begin{equation}\label{app:finite-rank-sine}
 \mathcal F(u)=1-\cos\!\left(\int_\T au\,dx\right),
 \qquad
 X_{\mathcal F}(u)=\sin\!\left(\int_\T au\,dx\right)a_x.
\end{equation}
Its forcing amplitude depends on the current solution.  An arbitrary
superlinear $\nabla\Phi$ is not covered by this growth argument without
an independent replacement for \eqref{app:smoothing-radial}.

\paragraph{Smooth-kernel interactions.}
For a fixed real smooth kernel $\mathsf K(x,y)=\mathsf K(y,x)$ on
$\T^2$, set
\begin{equation}\label{app:smooth-kernel}
 \begin{split}
 \mathcal F_{\mathsf K}(u)
    &=\tfrac12\int_\T\int_\T
                         \mathsf K(x,y)u(x)u(y)\,dx\,dy,\\
 X_{\mathcal F_{\mathsf K}}(u)(x)
    &=\int_\T\partial_x\mathsf K(x,y)u(y)\,dy.
 \end{split}
\end{equation}
The field is a bounded linear map $L^2_0\to H^m_0$ for every $m$,
so \eqref{app:smoothing-ball}--\eqref{app:smoothing-radial} follow.
The kernel may have infinite rank; its symmetry gives the stated
Hamiltonian gradient.

\paragraph{Nonlinear spatially smoothed functionals.}
Let $S$ be a fixed real self-adjoint convolution operator on $L^2(\T)$
which is bounded into $H^q(\T)$ for every nonnegative integer $q$.
For example, take the heat smoother $S=e^{\ell^2\partial_x^2}$ with
fixed $\ell>0$.  For a real-analytic $P:\R\to\R$, define
\begin{equation}\label{app:smoothed-density}
 \mathcal F_{S,P}(u)=\int_\T P(Su)\,dx,
 \qquad X_{\mathcal F_{S,P}}(u)=\partial_xS[P'(Su)].
\end{equation}
On each $L^2$ ball the range of $Su$ is in a compact real interval,
where $P$ has uniform local complex extensions and fixed-order
derivative bounds.  The two smoothing factors give
\eqref{app:smoothing-ball} and analyticity of the functional and field.
Self-adjointness and commutation with $\partial_x$ give
\begin{equation}\label{app:smoothed-mass}
 \langle u,X_{\mathcal F_{S,P}}(u)\rangle
      =-\langle(Su)_x,P'(Su)\rangle
      =-\int_\T\partial_xP(Su)\,dx=0.
\end{equation}
Thus the radial condition holds even for superlinear $P'$.  In
particular,
\[
 \mathcal F(u)=\tfrac14\int_\T(Su)^4\,dx,
 \qquad X_{\mathcal F}(u)=\partial_xS[(Su)^3]
\]
is a nonlinear infinite-dimensional example.  The cancellation concerns
the $L^2$ norm and does not assert conservation of every KdV action.
Spatial inhomogeneity is also permitted, for instance
\[
 \mathcal F(u)=\int_\T a(x)(1-\cos(Su(x)))\,dx,
 \qquad X_{\mathcal F}(u)=\partial_xS[a\sin(Su)].
\]
Here the real output is bounded in every $H^m$ by a fixed constant,
and its derivative has the required bounds on $L^2$ balls.  This
verifies the radial condition without the cancellation
\eqref{app:smoothed-mass}.

\subsection{A local nonlinear Gardner perturbation}
\label{subsec:app-gardner}

The next example has a local nonlinear first-order field.  Its
independent bounds use an integrable transformation rather than the
smoothing criterion.

\begin{proposition}[Positive quartic perturbation]
\label{prop:app-gardner}
The Hamiltonian perturbation
\begin{equation}\label{app:gardner-pde}
 \mathcal F_{\rm G}(u)=\tfrac12\int_\T u^4\,dx,
 \qquad
 u_t=-u_{xxx}+6uu_x+6\eps u^2u_x
\end{equation}
satisfies Assumption~\ref{ass:contract} for the initial family
\eqref{app:initial} on every prescribed $[0,T/\eps]$, for all
sufficiently small positive $\eps$.  In fact the corresponding smooth
solutions have bounds at every fixed Sobolev order uniform for all
$t\ge0$ and $0<\eps\le\eps_*$, for a fixed $\eps_*>0$.
Consequently \eqref{app:actions}--\eqref{app:gaps} hold for every
fixed $T>0$ and $0<\delta<1/4$ after reducing the threshold.
\end{proposition}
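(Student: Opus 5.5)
The plan is to obtain the uniform higher Sobolev bounds from integrability and then invoke Lemma~\ref{lem:app-physical-contract}. The analytic hypotheses there are routine. The field $G(u)=6u^2u_x=\partial_x(2u^3)$ is a cubic polynomial in $u$ followed by one derivative, so it maps $H^s_0\to H^{s-1}_0$ analytically for $s=N,N+1$, since $H^s(\T)$ is an algebra for $s\ge1$. It has zero mean, and its local holomorphic bounds follow from the multilinear estimates. Local smooth existence, uniqueness and the $H^4$ continuation criterion come from Lemma~\ref{lem:app-local} with $\mathfrak a(x,u)=6u^2$, $\mathfrak b=0$, $\mathcal R=0$. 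Everything therefore reduces to proving $\sup_{t\ge0}\|u(t)\|_{H^m}\le C_m$ for every fixed $m$, uniformly for $0<\eps\le\eps_*$ over the compact family \eqref{app:initial}. Taking $m=N+2$ then gives \eqref{app:physical-upper}, and Corollary~\ref{cor:shrinking-data} with $r_0=0$ gives \eqref{app:actions}--\eqref{app:gaps}.

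\emph{Low norms.} For $H^1$ I would use the conserved Gardner quantities: the mass, the $L^2$ norm, and the defocusing energy $\int(\tfrac12u_x^2+u^3+\tfrac\eps2u^4)\,dx$. The quartic term is nonnegative and can be dropped. By Gagliardo--Nirenberg, $|\int u^3|\le\|u\|_0^{5/2}\|u_x\|_0^{1/2}$, which is absorbed with Young's inequality. This gives $\|u(t)\|_{H^1}\le C$ for all $t\ge0$, uniformly in small $\eps$.

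\emph{Higher norms via the Gardner map.} Set $v=u+\eps u^2+\theta\sqrt\eps\,u_x$, with the sign $\theta=\pm1$ fixed by the conventions $u_t=-u_{xxx}+6uu_x+6\eps u^2u_x$ and $v_t=-v_{xxx}+6vv_x$; I will verify it by direct substitution. The mean of $\eps u^2$ equals $\eps\|u\|_0^2$, which is conserved. So $\tilde v=v-\eps\|u_0\|_0^2$ is mean zero and solves KdV after a Galilean change of frame, i.e.\ a time-dependent spatial translation, which preserves every homogeneous Sobolev norm and every KdV conservation law. The conservation laws $\mathscr E_m$ of the proof of Theorem~\ref{thm:additive}, with lower parts $\mathscr P_m$ controlled inductively, then give $\sup_t\|\tilde v(t)\|_{H^m}\le C(\|\tilde v(0)\|_{H^m})\le C(\|u_0\|_{H^{m+1}})$. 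This is the source of the $H^{N+3}$ initial requirement in the table.

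\emph{Inverting the map, and the main obstacle.} The step I expect to be hardest is recovering $\|u^{(m)}\|_0$ from $\|v^{(m)}\|_0$. The $\sqrt\eps u_x$ term makes the map a degenerate perturbation of the identity, so a naive inversion loses a factor $\eps^{-1/2}$. I would instead expand
\[
 \|v^{(m)}\|_0^2=\|u^{(m)}+\eps(u^2)^{(m)}\|_0^2+\eps\|u^{(m+1)}\|_0^2
  +2\theta\sqrt\eps\langle u^{(m)}+\eps(u^2)^{(m)},u^{(m+1)}\rangle .
\]
The key cross term $\langle u^{(m)},u^{(m+1)}\rangle$ vanishes by periodicity. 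The remaining term $\eps^{3/2}\langle(u^2)^{(m)},u^{(m+1)}\rangle$ equals $\eps^{3/2}\langle 2uu^{(m)},u^{(m+1)}\rangle$ plus lower-order pieces; the main part integrates by parts to $-\eps^{3/2}\int u_x(u^{(m)})^2$, and the lower-order pieces involve at most order $m$. All of these are bounded by $C\eps^{1/2}\|u^{(m)}\|_0^2$ plus constants, by induction from the $H^{m-1}$ bound and $L^\infty$ control of $u,u_x$ (for $m=2$, use the $H^1$ bound together with Gagliardo--Nirenberg). The positive term $\eps\|u^{(m+1)}\|_0^2$ can simply be dropped. For $\eps\le\eps_*$ small this yields $\|u^{(m)}\|_0^2\le2\|v^{(m)}\|_0^2+C_m$, closing the induction in $m$ for all $t\ge0$. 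Compactness of the finite-gap family in every $H^m_0$ makes all constants uniform over $\xi\in\mathcal K$ and over the phases.
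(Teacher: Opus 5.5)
Your route is the paper's: the conserved mass and positive-quartic energy give the $H^1$ and $L^\infty$ bounds; the transform $q=u+\sqrt\eps\,u_x+\eps u^2$ and the Galilean correction of its conserved mean give uniform $H^m$ bounds for $q$ through the KdV hierarchy; and an induction in $m$ recovers $u$. Either sign of the $u_x$ term works, because both equations are invariant under $x\mapsto-x$, $t\mapsto-t$. The only real difference is the recovery step, and there your base case $m=2$ does not close as written.

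\emph{The problem at $m=2$.} Expanding $\|v_{xx}\|_0^2$ leaves the cross term
\[
 2\theta\eps^{3/2}\langle(u^2)_{xx},u_{xxx}\rangle=-10\theta\eps^{3/2}\int_\T u_xu_{xx}^2\,dx .
\]
With only the $H^1$ bound, Gagliardo--Nirenberg gives $\|u_x\|_\infty\le C\|u_{xx}\|_0^{1/2}$. The term is then bounded only by $C\eps^{3/2}\|u_{xx}\|_0^{5/2}$, which is superquadratic in the unknown. It is not of the form $C\eps^{1/2}\|u_{xx}\|_0^2+C$ that you claim. For $m\ge3$ your claim is fine, since $H^{m-1}\subset W^{1,\infty}$ and the remaining pieces either have a factor in $L^\infty$ or are exact derivatives.

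\emph{The repair.} Keep the positive term you dropped. Cauchy--Schwarz gives $2\eps^{3/2}|\langle(u^2)^{(m)},u^{(m+1)}\rangle|\le\eps\|u^{(m+1)}\|_0^2+\eps^2\|(u^2)^{(m)}\|_0^2$, and hence $\|v^{(m)}\|_0^2\ge\|u^{(m)}\|_0^2+2\eps\langle u^{(m)},(u^2)^{(m)}\rangle$. At $m=2$ the right side equals $\int_\T(1+4\eps u)u_{xx}^2\,dx$, because $\int_\T u_x^2u_{xx}\,dx=0$. This is the paper's device in disguise. The paper pairs
\[
 q^{(m)}=(1+2\eps u)u^{(m)}+\sqrt\eps\,u^{(m+1)}+\eps R_m
\]
with $u^{(m)}$ instead of squaring it. Then the $\sqrt\eps$ term vanishes identically, no $u^{(m+1)}$ factor ever appears, and at $m=2$ the remainder $\langle 2u_x^2,u_{xx}\rangle$ vanishes exactly. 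Together with $1+2\eps u\ge\tfrac12$, this gives $\|u_{xx}\|_0\le2\|q_{xx}\|_0$ directly, after which the induction proceeds as you describe. Alternatively, a continuity argument in $t$ on $\|u_{xx}(t)\|_0$ would also close your version, using the $\eps^{3/2}$ prefactor and the uniform initial bound.
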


\begin{proof}
The perturbing field is a polynomial map $H^s_0\to H^{s-1}_0$ at the
two required levels.  Lemma~\ref{lem:app-local} applies with
$\mathfrak a(x,u)=6u^2$ and the other perturbative terms zero.
On its smooth lifespan, the solution conserves $\|u\|_0^2$ and
\begin{equation}\label{app:gardner-energy}
 H_\eps(u)=\tfrac12\|u_x\|_0^2+\int_\T u^3\,dx
                              +\tfrac\eps2\int_\T u^4\,dx.
\end{equation}
For the bounded initial mass, interpolation and Young's inequality give
$|\int u^3|\le\|u_x\|_0^2/4+C$.  The quartic term is nonnegative,
and the initial energy is bounded uniformly for $0<\eps\le1$.
There is therefore a uniform $H^1$ and $L^\infty$ bound on the
available lifespan.  Reduce a fixed $\eps_*>0$ so that
\begin{equation}\label{app:gardner-positive}
 1+2\eps u(x,t)\ge\tfrac12\qquad(0<\eps\le\eps_*).
\end{equation}

Put $\lambda=\sqrt\eps$ and
\begin{equation}\label{app:gardner-transform}
 q=u+\lambda u_x+\lambda^2u^2.
\end{equation}
This is the Gardner-to-KdV transformation with the present signs; see
\cite[Section~10, equations~(119)--(122), preprint numbering]{HeremanGoktas2024}.
Here only the algebraic transformation is used, not a real-line
well-posedness result.  Direct differentiation, without using the
equation first, gives
\begin{equation}\label{app:gardner-intertwining}
 \begin{split}
 q_t+q_{xxx}-6qq_x
   ={}&(1+\lambda\partial_x+2\lambda^2u)\\
      &\quad\cdot
       \bigl(u_t+u_{xxx}-6uu_x-6\lambda^2u^2u_x\bigr).
 \end{split}
\end{equation}
Thus $q$ solves ordinary KdV on the same interval.  Its initial values
are uniformly bounded in each $H^m$, using the compact initial family
for $u$ at order $m+1$.  Its mean is
$c=\eps\int_\T u_0^2\,dx$, not necessarily zero.  The function
$q(x-6ct,t)-c$ solves mean-zero KdV.  The conserved KdV hierarchy and
its successive coercive bounds therefore give uniform, time-independent
$H^m$ bounds for $q$, including the bounded range of $c$.

For $m\ge1$, differentiate \eqref{app:gardner-transform}:
\begin{equation}\label{app:gardner-recovery}
 \begin{split}
 q^{(m)}&=(1+2\eps u)u^{(m)}
             +\sqrt\eps\,u^{(m+1)}+\eps R_m,\\
 R_m&=\sum_{j=1}^{m-1}\binom mj u^{(j)}u^{(m-j)}.
 \end{split}
\end{equation}
Pairing with $u^{(m)}$ removes the derivative term by periodic
integration by parts.  At $m=2$, the remainder is $R_2=2u_x^2$, and
its pairing also vanishes exactly:
\[
 \int_\T R_2u_{xx}\,dx
   =2\int_\T u_x^2u_{xx}\,dx
   =\frac23\int_\T\partial_x(u_x^3)\,dx=0.
\]
Using \eqref{app:gardner-positive} in the remaining pairing gives
\[
 \tfrac12\|u_{xx}\|_0^2
   \le\langle q_{xx},u_{xx}\rangle
   \le\|q_{xx}\|_0\|u_{xx}\|_0,
 \qquad \|u_{xx}\|_0\le2\|q_{xx}\|_0.
\]
For $m\ge3$, the preceding $H^{m-1}$ bound controls $R_m$ in $L^2$:
one factor has order at most $m-2$ and is in $L^\infty$, while the
other is in $L^2$.  The lower bound \eqref{app:gardner-positive} then
yields uniform bounds for $u$ at each fixed order, successively.  This
calculation is on the available lifespan and hence, by
Lemma~\ref{lem:app-local}, continues the solution for all $t\ge0$.
Lemma~\ref{lem:app-physical-contract} gives the required coordinate
contract; Corollary~\ref{cor:shrinking-data} with $r_0=0$ gives
the asserted unit-constant estimates.
\end{proof}

The positive quartic sign is used in \eqref{app:gardner-energy}; the
opposite sign is not included in this argument.  The example is
integrable and is not offered as a new nonintegrable stability
phenomenon.  The auxiliary $\sqrt\eps$ in
\eqref{app:gardner-transform} does not make the original functional
$\mathcal F_{\rm G}$ depend on $\eps$.

\subsection{General analytic local densities on a fixed short slow-time interval}
\label{subsec:app-local-densities}

Let $P(x,v)$ be real, period one and smooth in $x$, and real analytic
in $v\in\R$.  More precisely, for every compact real interval in $v$
and every fixed finite number of $x$ derivatives, assume a common
holomorphic extension in $v$ with uniform bounds for those derivatives.
Finite polynomials in $v$ with smooth spatial coefficients, and
$a(x)(1-\cos v)$, satisfy this condition.  Set
\begin{equation}\label{app:local-density}
 \mathcal F_P(u)=\int_\T P(x,u(x))\,dx,
 \qquad
 G_P(u)=P_{xv}(x,u)+P_{vv}(x,u)u_x.
\end{equation}
This is the local-density perturbation class considered in
\cite[equations~(1.4)--(1.7)]{KM2021}, with analyticity in the phase
variable required here.  We do not use its nonresonant stability
theorem to supply evolution for the prescribed all-phase family.
The following independent estimate suffices for the present contract.

\begin{corollary}[A fixed short slow-time horizon]
\label{cor:app-local-density}
For each fixed $N\ge3$, $P$ as above and the initial family
\eqref{app:initial}, there exists $T_*>0$, independent of
$0<\eps\le1$, such that the equation with perturbation $G_P$ has
smooth solutions on $[0,T_*/\eps]$ satisfying the uniform bound
\eqref{app:physical-upper} with $T_0=T_*$.
Assumption~\ref{ass:contract} therefore holds, and for each
$0<\delta<1/4$ the conclusions \eqref{app:actions}--\eqref{app:gaps}
hold with $T=T_*$ and sufficiently small $\eps$.  No arbitrary
prescribed slow-time horizon is asserted for this general class.
\end{corollary}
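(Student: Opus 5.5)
Because Lemma~\ref{lem:app-physical-contract} reduces the statement to two inputs, we supply those two and then invoke it. The first input is the pair of analytic mapping properties of $G_P$. The second is the uniform bound \eqref{app:physical-upper} for smooth solutions on some interval $[0,T_*/\eps]$, with $T_*$ independent of $\eps$.

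\emph{Mapping properties.} For $s=N,N+1\ge3$, $H^s_0$ embeds in $L^\infty$. The maps $u\mapsto P_{xv}(x,u)$ and $u\mapsto P_{vv}(x,u)$ are then analytic from $H^s$ to $H^s$ on bounded sets, because $P$ has uniform holomorphic extensions in $v$ over compact ranges. Multiplication $H^s\times H^{s-1}\to H^{s-1}$ is bounded for $s-1\ge1$. Hence $G_P:H^s_0\to H^{s-1}_0$ is analytic, with the local holomorphic bounds the lemma asks for. The field has zero mean because it equals $\partial_x(P_v(x,u))$. Local existence, uniqueness and $H^4$ continuation follow from Lemma~\ref{lem:app-local} with $\mathfrak a=P_{vv}$, $\mathfrak b=P_{xv}$ and $\mathcal R=0$.

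\emph{Evolution bound.} No global $L^2$ control is available, so we bootstrap on a closed energy window. Take $X_m=\|\partial_x^mu\|_0^2$ and the normalized KdV laws $\mathscr E_m=X_m+\mathscr P_m$ for $1\le m\le N+2$, together with the mass. Fix $R$ larger than the initial $H^{N+2}$ bound of the compact family \eqref{app:initial}; for instance, $R$ equals twice the supremum of a positively shifted combination of these laws at $t=0$. On the set where $\|u\|_{H^{N+2}}\le 2R$, we estimate $\frac{d}{dt}\mathscr E_m$ along $\eps G_P$, and likewise the derivative of the mass. The leading term $2\langle u^{(m)},\partial_x^m(P_{vv}u_x)\rangle$ would lose a derivative, but integration by parts as in Lemma~\ref{lem:app-local} reduces it to $-\int\partial_x(P_{vv}(x,u))(u^{(m)})^2$. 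What remains are commutator terms bounded by $C_R X_m$. The lower part $\mathscr P_m$ carries the distinguished term $c_m\int u(u^{(m-1)})^2$. Its derivative along $P_{vv}u_x$ has one too many derivatives, and we remove it with an integration by parts analogous to \eqref{app:transport-lower}: the top terms combine into a total derivative times bounded coefficients. The polynomial $Q_m$ contributes only bounded terms on the window. The result is $|\frac{d}{dt}\mathscr E_m|\le\eps C_R$ for $m\le N+2$.

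Equivalence of $\mathscr E_m$ with $X_m$, up to constants depending on $R$, then shows that the shifted total stays below $R$ for $\eps t\le T_*:=c/C_R$. A continuity argument closes the bootstrap, and the continuation criterion of Lemma~\ref{lem:app-local} provides the solution on $[0,T_*/\eps]$. Uniformity over the data comes from compactness of \eqref{app:initial} in $H^{N+2}$. With \eqref{app:physical-upper} established, Lemma~\ref{lem:app-physical-contract} yields Assumption~\ref{ass:contract}, and Corollary~\ref{cor:shrinking-data} with $r_0=0$ gives \eqref{app:actions}--\eqref{app:gaps}.

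The main obstacle is the derivative count in the lower terms $\mathscr P_m$ at the top order $m=N+2$. There the variation $D\mathscr P_m[G_P]$ can involve $u^{(m)}$ paired against terms of order $m-1$ with variable coefficients. We check that every such term either is controlled by $X_m$ with bounded coefficients or cancels by integration by parts, exactly as in the transport case.
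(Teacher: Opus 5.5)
Your proposal follows essentially the paper's route: a closed window on the KdV laws $\mathscr E_0,\dots,\mathscr E_{N+2}$ over the compact initial family, a drift bound on the resulting $H^{N+2}$ ball with integration by parts only for the top term $2\langle u^{(m)},P_{vv}u^{(m+1)}\rangle$, a slow-time horizon $T_*$ fixed by that drift constant, and then Lemma~\ref{lem:app-physical-contract} with Corollary~\ref{cor:shrinking-data}. Two small corrections: first, the lower term $c_m\int u(u^{(m-1)})^2$ is not an obstacle, since its variation along $G_P$ involves at most $u^{(m)}$ and is bounded directly on the $H^{N+2}$ ball (your extra cancellation is harmless but unnecessary); second, the window is best imposed on the values $|\mathscr E_r(u)|\le M_r+1$, as in the paper, which yields an $H^{N+2}$ radius independent of the bootstrap and removes the circularity in your phrase ``constants depending on $R$''.
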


\begin{proof}
The assumed holomorphic bounds and Sobolev multiplication give the
compatible analytic maps $G_P:H^s_0\to H^{s-1}_0$ for $s=N,N+1$,
and analyticity of $\mathcal F_P$.  Since $G_P=\partial_xP_v(x,u)$,
it preserves the mean.  Local smooth solutions follow from
Lemma~\ref{lem:app-local} with $\mathfrak a=P_{vv}$,
$\mathfrak b=P_{xv}$ and $\mathcal R=0$.

Fix $m=N+2\ge5$, put $\mathscr E_0=\|u\|_0^2$, and use the
normalized laws $\mathscr E_1,\ldots,\mathscr E_m$ above.  Choose
$M_r\ge0$ bounding $|\mathscr E_r(u_0)|$ on the compact initial
family.  The closed window
\begin{equation}\label{app:local-window}
 |\mathscr E_r(u)|\le M_r+1\qquad(0\le r\le m)
\end{equation}
implies $\|u\|_{H^m}\le R_*$ for a fixed $R_*$.  Indeed,
$\mathscr E_0$ bounds $L^2$, the cubic interpolation estimate and
$\mathscr E_1$ bound $H^1$, and the lower polynomial in each
$\mathscr E_r$ is bounded on bounded $H^{r-1}$ sets for $r\ge2$.
This bounds all subsequent norms by induction.

On this $H^m$ ball there is a fixed constant $C_*$ such that
\begin{equation}\label{app:local-drift}
 |D\mathscr E_r(u)[G_P(u)]|\le C_*
                  \qquad(0\le r\le m).
\end{equation}
For the highest term, write $A(x)=P_{vv}(x,u(x))$.  In
$\partial_x^rG_P$, only $A u^{(r+1)}$ could exceed the available
order.  Its energy pairing is
\[
 2\int_\T u^{(r)}A u^{(r+1)}\,dx
      =-\int_\T\bigl(P_{xvv}(x,u)+P_{vvv}(x,u)u_x\bigr)
                                  (u^{(r)})^2\,dx.
\]
The commutator and composition estimates from
Lemma~\ref{lem:app-local} bound the other terms using only $H^m$.
The derivatives of the lower polynomial in $\mathscr E_r$ likewise
involve no more than $r\le m$ spatial derivatives of $u$ after
substituting $G_P$.  Sobolev multiplication bounds them on the same
ball; in particular $u_x\in L^\infty$ since $m\ge5$.
This proves \eqref{app:local-drift} without an $H^{m+1}$ estimate.

Along the actual smooth PDE the unforced contribution vanishes, so
\[
 \frac{d}{dt}\mathscr E_r(u(t))
       =\eps D\mathscr E_r(u(t))[G_P(u(t))].
\]
Choose $T_*=[2(1+C_*)]^{-1}$.  Before a first exit from
\eqref{app:local-window}, and for $t\le T_*/\eps$,
\begin{equation}\label{app:local-improvement}
 |\mathscr E_r(u(t))-\mathscr E_r(u_0)|
       \le\eps t C_*\le\frac{C_*}{2(1+C_*)}<\frac12.
\end{equation}
Every window inequality improves strictly.  The $H^m$ bound and
Lemma~\ref{lem:app-local} continue the solution through $T_*/\eps$
and propagate higher smoothness for each datum and fixed $\eps$.
The uniform bound at $m=N+2$ verifies \eqref{app:physical-upper};
Lemma~\ref{lem:app-physical-contract} and
Corollary~\ref{cor:shrinking-data} with $r_0=0$ give the asserted
stability estimates.
\end{proof}

For example, the corollary applies to
\begin{equation}\label{app:local-examples}
 \begin{aligned}
 P(x,v)=\tfrac14a(x)v^4
   &:\quad G_P=a_xu^3+3a u^2u_x,\\
 P(x,v)=a(x)(1-\cos v)
   &:\quad G_P=a_x\sin u+a\cos u\,u_x.
 \end{aligned}
\end{equation}
The fixed $T_*$ is already sufficient for Theorem~\ref{thm:main}.
The arbitrary-horizon results of
Propositions~\ref{prop:app-transport} and~\ref{prop:app-gardner}
use their additional structure and do not follow merely from the
analytic mapping property of a general local density.

\subsection{Nearby initial data and infinitely many open gaps}
\label{subsec:app-nearby}

The finite-gap restriction in the preceding evolution arguments was
used to obtain uniform high initial norms.  The estimates themselves
apply to smooth nearby data with the corresponding initial bounds.
The following statement makes that enlargement explicit; it does not
infer high norms from closeness at the lower level.

\begin{corollary}[Applications to nearby and infinite-gap data]
\label{cor:app-nearby}
Fix $N\ge3$, $A$, $\mathcal K$, and $R_*>0$.  Let $\mathscr U$
be any collection of smooth mean-zero initial potentials with
$I_A(u_0)\in\mathcal K$.  Use the associated finite-gap projection
$\xi=(I_A(u_0),0)$ and put $r_0=\|z(0)\|_\sigma$.
The following independent initial bounds suffice for the two-parameter
conclusions \eqref{setup:near-actions}--\eqref{setup:near-gaps},
with constants uniform over $\mathscr U$ whenever $r_0$ and $\eps$
are sufficiently small.
\begin{enumerate}
\item For additive forcing and the transport/smoothing class of
Proposition~\ref{prop:app-transport}, assume
$\sup_{u_0\in\mathscr U}\|u_0\|_{H^{N+2}}\le R_*$.
Every prescribed fixed $T_0>0$ is permitted.
\item For the local densities of Corollary~\ref{cor:app-local-density},
the same $H^{N+2}$ initial bound gives a fixed $T_*>0$, depending
on $N,R_*$ and the density but not on $\eps$, with $T_0=T_*$.
No arbitrary prescribed horizon is asserted for this class.
\item For the positive quartic Gardner equation, it is sufficient to
assume $\sup_{u_0\in\mathscr U}\|u_0\|_{H^{N+3}}\le R_*$.
Every prescribed fixed $T_0>0$ is permitted.  The extra initial order
is used by the displayed transform proof, not claimed to be necessary.
\end{enumerate}
No finite-support condition is imposed on the initial normal actions.
If $r_0\le c_\delta\eps^\delta$ with a sufficiently small uniform
prefactor, Corollary~\ref{cor:shrinking-data} gives the unit-constant
bounds \eqref{setup:main-actions}--\eqref{setup:main-gaps}.
\end{corollary}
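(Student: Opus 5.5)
The plan is to reduce each of the three items to Lemma~\ref{lem:app-physical-contract}, applied to the enlarged collection $\mathscr U$, with $\mathscr U$ in place of the finite-gap family \eqref{app:initial}. That lemma needs two inputs: the two analytic mapping levels of the field $G$, and the uniform upper bound \eqref{app:physical-upper} on $[0,T_0/\eps]$. The mapping properties were already checked in Theorem~\ref{thm:additive}, Proposition~\ref{prop:app-transport}, Corollary~\ref{cor:app-local-density} and Proposition~\ref{prop:app-gardner}; they are properties of $G$ alone and do not involve the data. So the only new task is the evolution bound.

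In the finite-gap proofs, the special structure of the data enters only through one fact: compactness of \eqref{app:initial} in each $H^m_0$, which makes the initial values $\mathscr E_r(u_0)$, the shifted quantities $Y_m(0)$ and $Z_m(0)$, and the window constants $M_r$ uniformly bounded. I will go through each argument at the fixed orders $m\le N+2$, or $m\le N+3$ for the Gardner case, and check that a uniform bound on $\|u_0\|_{H^{m}}$ already controls these initial quantities. The lower polynomials $\mathscr P_m$ are bounded on $H^{m-1}$ balls, so $|\mathscr E_m(u_0)|\le C(R_*)$.

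\emph{Item (1).} The inductive Gronwall estimates \eqref{app:slowbound} and \eqref{app:transport-gronwall}, run up to order $N+2$, then give $\sup_{t\le T_0/\eps}\|u(t)\|_{H^{N+2}}\le C(T_0,R_*)$. By Lemma~\ref{lem:app-local}, smooth data continue as smooth solutions.

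\emph{Item (2).} The closed window \eqref{app:local-window} uses $M_r=\sup_{\mathscr U}|\mathscr E_r(u_0)|\le C(R_*)$. Hence $R_*$ in that proof, then $C_*$, then $T_*=[2(1+C_*)]^{-1}$ depend only on $N$, $R_*$ and $P$, as asserted.

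\emph{Item (3).} Here I need the initial values $q(0)$ bounded in $H^{m}$ for $m\le N+2$. Since $q=u+\sqrt\eps u_x+\eps u^2$ costs one derivative, this is exactly where the $H^{N+3}$ assumption on $u_0$ is used. The energy bound \eqref{app:gardner-energy}, the positivity condition \eqref{app:gardner-positive}, and the recovery identities \eqref{app:gardner-recovery} then go through unchanged.

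Once \eqref{app:physical-upper} holds, Lemma~\ref{lem:app-physical-contract} gives Assumption~\ref{ass:contract} uniformly over $\mathscr U$, using charts at $\xi=(I_A(u_0),0)$ and $r_0\le r_{\rm adm}$. Its proof needs the Rellich compactness step, which uses only the uniform $H^{N+2}$ bound and not the finite-gap structure. Theorem~\ref{thm:main} then yields \eqref{setup:near-actions}--\eqref{setup:near-gaps} for small $r_0$ and $\eps$, with no support condition on $J^0$. The last sentence of the statement follows from Corollary~\ref{cor:shrinking-data}.

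The main obstacle I expect is a uniformity point in Lemma~\ref{lem:app-physical-contract}. The base set $\{(\xi,\varphi)\}$ is compact, but the initial tails now form an infinite-dimensional family. So I must check that the finite-cover argument for the upper neighborhoods still applies. My plan is to use the uniform $H^{N+2}$ bound to place the whole trajectory set, and not just its base points, in a precompact subset of $H^{N+1}_0$, exactly as in that lemma's proof. The remaining verification is that this argument never used compactness of the initial set at order $N+2$ itself.
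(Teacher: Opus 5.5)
Your proposal is correct and follows the paper's proof essentially step by step. The mapping levels depend only on $G$. The uniform initial $H^{N+2}$ bound (or, for Gardner, the $H^{N+3}$ bound, used through the transformed datum $q(0)$) controls the finitely many initial conservation-law values and the window constants $M_r$, which yields \eqref{app:physical-upper}. Lemma~\ref{lem:app-physical-contract}, Theorem~\ref{thm:main}, and Corollary~\ref{cor:shrinking-data} then finish the argument.

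The uniformity concern you flag needs no extra work. Lemma~\ref{lem:app-physical-contract} is stated for an arbitrary collection, and it derives precompactness of the entire trajectory set in $H^{N+1}_0$ solely from the uniform $H^{N+2}$ bound, so compactness of $\mathscr U$ itself is never used.
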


\begin{proof}
For the first class, the mass estimate and the conservation-law
induction through order $N+2$ require only a bound on the initial
$H^{N+2}$ norm, the fixed coefficients, and the chosen $T_0$.
The initial values of the finitely many conservation laws are bounded
on that Sobolev ball by their displayed differential-polynomial
structure.  Thus the additive argument and
\eqref{app:transport-hierarchy}--\eqref{app:transport-gronwall}
give a uniform $H^{N+2}$ bound on $[0,T_0/\eps]$ for
$0<\eps\le1$.  For each smooth datum, the local tame estimates
propagate its higher regularity over this interval; no uniform bound
at all higher orders is being inferred for $\mathscr U$.

For the second class, take $m=N+2$.  The initial $H^m$ bound supplies
uniform numbers $M_r$ for $0\le r\le m$ in
\eqref{app:local-window}.  The same window then gives a uniform
$R_*$-dependent drift bound and the same strictly positive slow-time
horizon.  The strict improvement and $H^4$ continuation proof do not
use finite support of the initial actions.

For Gardner, the initial mass and Hamiltonian are uniformly bounded
on the $H^{N+3}$ ball.  The positive quartic energy gives the common
$H^1$ bound and one fixed $\eps_*>0$ in
\eqref{app:gardner-positive}.  The transformed initial potential
$u_0+\sqrt\eps\,\partial_xu_0+\eps u_0^2$ is uniformly bounded in
$H^{N+2}$.  The KdV hierarchy through that order, the constant-mean
correction, and the recovery argument in
\eqref{app:gardner-recovery} give the uniform $H^{N+2}$ bound for
$u$ on every time interval.  Higher regularity is propagated
separately for each smooth datum by Lemma~\ref{lem:app-local}.

These bounds verify \eqref{app:physical-upper} for the enlarged
collection.  Restricting to $r_0\le r_{\rm adm}$,
Lemma~\ref{lem:app-physical-contract}, Theorem~\ref{thm:main}, and
Corollary~\ref{cor:shrinking-data} give the stated conclusions.
\end{proof}

For a concrete infinite-gap family, prescribe arbitrary tangential
phases with actions $\xi\in\mathcal K$ and, for $j\notin A$, put
\[
 z_j(0)=\eps^\kappa e^{-j}
                   (\cos\vartheta_j,\sin\vartheta_j),\qquad \kappa>0.
\]
Every $J_j^0=\eps^{2\kappa}e^{-2j}/2$ is positive for $\eps>0$.
The exponential envelope makes the union of these initial Birkhoff
sequences, including all normal phases and $0\le\eps\le1$,
compact in every fixed sequence-space order: finite projections have
compact parameter sets and the tails converge uniformly.  By the
compatible inverse Birkhoff maps its physical image is compact in
every fixed Sobolev order.  Thus all three initial-bound conditions
above hold and $r_0=O(\eps^\kappa)$.  Any fixed $\kappa>\delta$
gives the original strict output exponent $\delta$ for these
truly infinite-gap data.  This is a neighborhood-stability statement,
not a persistence assertion for an infinite-dimensional invariant
torus.

\subsection{All-time stability of the original KdV actions for Gardner}
\label{subsec:app-gardner-alltime}

The positive quartic example admits a stronger conclusion than the
general averaging argument.  Its Gardner transform differs from the
original potential at order $\sqrt\eps$, but that leading difference
is an infinitesimal spatial translation.  Removing the translation
before comparing actions improves the difference to order $\eps$.

\begin{theorem}[All-time Gardner action and gap estimates]
\label{thm:app-gardner-alltime}
Fix $N\ge3$, a finite set $A$, a compact positive-action family
$\mathcal K$, and $R_*>0$.  Let $\mathscr U$ be a collection of smooth
mean-zero data with $I_A(u_0)\in\mathcal K$ and
$\sup_{u_0\in\mathscr U}\|u_0\|_{H^{N+3}}\le R_*$.
There are constants $r_*>0$, $\eps_*>0$, and $C<\infty$ such that
solutions of the positive quartic Gardner equation
\eqref{app:gardner-pde}, with $r_0\le r_*$ and $0<\eps\le\eps_*$,
exist smoothly for all $t\ge0$ and satisfy
\begin{equation}\label{app:gardner-alltime-actions}
 \sup_{t\ge0}\sum_{j\ge1}j^{2N+1}
                 |I_j(u(t))-I_j(u_0)|\le C\eps,
\end{equation}
\begin{equation}\label{app:gardner-alltime-normal}
 \begin{gathered}
 \sup_{t\ge0}\|z_u(t)\|_\sigma\le r_0+C\eps,\\
 \sup_{t\ge0}\sum_{j\notin A}j^{2N+1}
                |I_j(u(t))-I_j(u_0)|\le C\eps(r_0+\eps),
       \qquad\sigma=N+1/2,
 \end{gathered}
\end{equation}
and
\begin{equation}\label{app:gardner-alltime-gaps}
 \sup_{t\ge0}\|\gamma(u(t))-\gamma(u_0)\|_{\ell^2}
                  \le C(r_0+\eps).
\end{equation}
Here $z_u$ is the normal part of the real canonical Birkhoff coordinates,
so that $\|z_u\|_\sigma^2=2\sum_{j\notin A}j^{2N+1}I_j(u)$.
Constants are uniform in $\mathscr U$, its phases and its admitted
initial tails.  The solution remains in the fixed finite-gap coordinate
tube.  In particular, for exact finite-gap data,
\begin{equation}\label{app:gardner-exact-alltime}
 \begin{gathered}
 \sup_{t\ge0}|p(t)|_1\le C\eps,\qquad
 \sup_{t\ge0}\sum_{j\notin A}j^{2N+1}I_j(u(t))\le C\eps^2,\\
 \sup_{t\ge0}\|\gamma(u(t))-\gamma(u_0)\|_{\ell^2}\le C\eps.
 \end{gathered}
\end{equation}
\end{theorem}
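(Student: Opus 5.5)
Following the strategy announced at the start of this subsection, we compare $u(t)$ with a spatial translate of the Gardner transform $q(t)=u+\sqrt\eps\,u_x+\eps u^2$, which solves KdV by \eqref{app:gardner-intertwining}. Its mean $c=\eps\int u_0^2$ is conserved. Set $\tilde q(x,t)=q(x-6ct,t)-c$, a mean-zero KdV solution. Its Birkhoff actions are therefore conserved for all $t\ge0$.

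\textbf{Step 1: an $O(\eps)$ comparison.} By Corollary~\ref{cor:app-nearby}(3) and the proof of Proposition~\ref{prop:app-gardner}, the solution exists for all $t\ge0$ with $\sup_t\|u(t)\|_{H^{N+2}}\le C$. The key observation is that $u+\sqrt\eps u_x=u(\cdot+\sqrt\eps)+O(\eps)$ in $H^N$. Indeed, Taylor's formula with integral remainder gives
\[
 \|u(\cdot+\sqrt\eps)-u-\sqrt\eps u_x\|_{H^N}\le\tfrac12\eps\|u\|_{H^{N+2}},
\]
and $\|\eps u^2-c\|_{H^N}\le C\eps$. Hence $v(t):=q(t)-c$ satisfies
\[
 \|v(t)-u(\cdot+\sqrt\eps,t)\|_{H^N}\le C\eps\quad\text{uniformly in }t\ge0 .
\]
Moreover $v(t)$ is a translate of $\tilde q(t)$, and $u(\cdot+\sqrt\eps,t)$ is a translate of $u(t)$. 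Birkhoff actions are translation invariant: translation rotates each Birkhoff pair, as follows from the isospectral-fiber statement used in Lemma~\ref{lem:spectral}, or directly since the spectrum of the Hill operator is translation invariant and $I_j$ is a spectral invariant. Therefore $I(v(t))=I(\tilde q(t))=I(\tilde q(0))=I(v(0))$.

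\textbf{Step 2: transfer to weighted actions.} All the functions involved lie in a fixed compact neighbourhood of the tube in $H^N_0$. Uniform local Lipschitz continuity of $\mathcal B:H^N_0\to h^\sigma$ therefore gives
\[
 \|\mathcal B(v(t))-\mathcal B(u(\cdot+\sqrt\eps,t))\|_\sigma\le C\eps .
\]
For the weighted action differences we use, with $x,y$ the Birkhoff pairs,
\[
 \bigl|j^{2\sigma}(|x_j|^2-|y_j|^2)\bigr|\le j^{2\sigma}|x_j-y_j|\,(|x_j|+|y_j|).
\]
Cauchy--Schwarz then gives $\sum_j j^{2\sigma}|I_j(a)-I_j(b)|\le C\eps$. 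Combining this at time $t$ and at time $0$ with the conservation $I(v(t))=I(v(0))$ yields \eqref{app:gardner-alltime-actions}.

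For the normal part we rotate the modulus, since translation preserves $|z_j|$. The normal norm then satisfies
\[
 \|z_u(t)\|_\sigma\le\|z_{\tilde q}(0)\|_\sigma+C\eps\le r_0+2C\eps ,
\]
which is the first bound in \eqref{app:gardner-alltime-normal}. In the Cauchy--Schwarz step restricted to $j\notin A$, the factor $|x_j|+|y_j|$ is bounded by $r_0+C\eps$. This gives the refined $C\eps(r_0+\eps)$ bound.

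\textbf{Step 3: gaps and tube, and the main obstacle.} The tangential drift is $O(\eps)$ and the normal size is at most $r_0+C\eps$. Hence the trajectory stays in the tube for all $t\ge0$ once $r_*,\eps_*$ are small, using a continuity argument together with the tube's positive-action margin. The gap estimate \eqref{app:gardner-alltime-gaps} follows from Lemma~\ref{lem:spectral} applied at times $t$ and $0$ relative to the torus $\mathcal T_\xi$. A sharper route is available: $\gamma$ is translation invariant, so the KdV isospectrality of $\tilde q$ gives $\gamma(u(t))=\gamma(v(t))+O(\eps)$. For this we need a Lipschitz bound on $\gamma$ in $L^2$ near the family; on the open gaps this follows from \eqref{setup:open-gaps}, but the closed gaps are not Lipschitz. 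The safe route is therefore Lemma~\ref{lem:spectral}, whose square-root form gives $C(r_0+\eps)$. For exact data the bounds \eqref{app:gardner-exact-alltime} follow by setting $r_0=0$ in the preceding estimates.

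The main obstacle is the uniform all-time Lipschitz transfer. It requires $\sup_{t\ge0}\|u(t)\|_{H^{N+2}}$, which is exactly why the initial $H^{N+3}$ bound is assumed. It also requires that all compared points, including the translates and $v$, stay in a common neighbourhood on which $\mathcal B$ has a uniform Lipschitz constant; these points are translates of tube points plus $O(\eps)$, so this holds.
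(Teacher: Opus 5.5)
Your proposal is correct and follows the paper's proof essentially step for step: the Taylor removal of the $\sqrt\eps$ translation, conservation of the actions of the translated mean-zero KdV solution, a uniform Lipschitz transfer through $\mathcal B:H^N_0\to h^\sigma$, the weighted Cauchy--Schwarz action estimate (restricted to normal indices for the $C\eps(r_0+\eps)$ bound), and Lemma~\ref{lem:spectral} at both endpoints.

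Two points need tightening. First, take the uniform Lipschitz constant from precompactness in $H^N$ of the $H^{N+2}$-bounded orbit and of $\{v(t)\}$, as the paper does, rather than from tube membership of $u(t)$, so that tube confinement becomes a consequence rather than an input. Second, in the gap step you must use the first inequality of \eqref{setup:spectral-transfer}, the one with $|p|_1^2$; the $C_\gamma W$ form only yields $C(r_0+\sqrt\eps)$.
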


\begin{proof}
The independent Gardner evolution argument in the proof of
Corollary~\ref{cor:app-nearby} applies to the present $H^{N+3}$ initial
bound.  It gives a fixed $\eps_*>0$ satisfying
\eqref{app:gardner-positive} and a uniform all-time $H^{N+2}$ bound.
Lemma~\ref{lem:app-local} then yields global smooth solutions,
propagating higher regularity separately for each datum.

Put $h=\sqrt\eps$, $q=u+hu_x+h^2u^2$,
$c=\eps\int_\T u_0^2\,dx$ and $v=q-c$.  Equation
\eqref{app:gardner-intertwining} shows that $q$ solves KdV and
$q(x-6ct,t)-c$ solves mean-zero KdV.  Spatial translations preserve
all KdV actions: translation conjugates the Hill operators unitarily,
and the actions are spectral invariants, as in the action formulas of
\cite[Section~3]{KST2013}.  Therefore
\begin{equation}\label{app:gardner-conserved-comparison}
 I_j(v(t))=I_j(v(0))\qquad(j\ge1,\ t\ge0).
\end{equation}
No claim that $v$ itself follows the mean-zero KdV flow without the
constant-speed translation is required.

Let $\tau_hu(x)=u(x+h)$.  The integral Taylor formula in $H^N$ gives
\begin{equation}\label{app:gardner-translation-remainder}
 \tau_hu-u-hu_x=\int_0^h(h-y)\tau_yu_{xx}\,dy,\qquad
 \|\tau_hu-u-hu_x\|_{H^N}
                 \le\tfrac12h^2\|u\|_{H^{N+2}}.
\end{equation}
The mass is conserved, so $c=\eps\overline{u(t)^2}$, where the bar
means spatial average.  Hence
\begin{equation}\label{app:gardner-close-translate}
 v(t)-\tau_hu(t)
    =\eps\bigl(u(t)^2-\overline{u(t)^2}\bigr)
                         -\bigl(\tau_hu(t)-u(t)-hu_x(t)\bigr),
 \quad\sup_{t\ge0}\|v(t)-\tau_hu(t)\|_{H^N}\le C\eps.
\end{equation}
The Sobolev product estimate and the uniform $H^{N+2}$ bound justify
the last line.  This step removes, rather than estimates as an error,
the order-$\sqrt\eps$ translation.

We next transfer this comparison through the global Birkhoff map.
The family of translated $u(t)$ is bounded in $H^{N+2}$ and the family
of $v(t)$ is bounded in $H^{N+1}$.  Their closures are compact in
$H^N$.  The analytic Birkhoff map
$\mathcal B:H^N_0\to h^\sigma$ therefore has a uniform derivative
bound on sufficiently small neighborhoods of these compact sets;
see \cite[Theorem~1.3]{KST2013}.  A finite cover and the smallness in
\eqref{app:gardner-close-translate} put each connecting segment in a
neighborhood with such a bound.  This uses compactness, not a general assertion of boundedness on
arbitrary bounded sets.

Use real canonical pairs $\mathbf Z(w)=\mathcal B(w)$ in this
paragraph, with $I_j(w)=|\mathbf Z_j(w)|^2/2$.  Equation
\eqref{app:gardner-close-translate} yields
\begin{equation}\label{app:gardner-birkhoff-comparison}
 \|\mathbf Z(v(t))-\mathbf Z(\tau_hu(t))\|_\sigma\le C\eps.
\end{equation}
Writing $I_j(\mathbf Z)=|\mathbf Z_j|^2/2$ for sequences,
weighted Cauchy--Schwarz gives
\begin{equation}\label{app:weighted-action-lipschitz}
 \sum_jj^{2\sigma}|I_j(\mathbf Z)-I_j(\widetilde{\mathbf Z})|
 \le\tfrac12(\|\mathbf Z\|_\sigma+\|\widetilde{\mathbf Z}\|_\sigma)
                        \|\mathbf Z-\widetilde{\mathbf Z}\|_\sigma.
\end{equation}
The sequence norms are uniformly bounded on the compact images.
Compare $u(t)$ to $v(t)$ by translation invariance, use
\eqref{app:gardner-conserved-comparison}, and then compare $v(0)$
to $u_0$.  This proves \eqref{app:gardner-alltime-actions}.

Restrict \eqref{app:gardner-birkhoff-comparison} to the normal indices.
The normal norm of $\mathbf Z(v(t))$ is independent of $t$ by
\eqref{app:gardner-conserved-comparison}; translations preserve the
normal norm of $\mathbf Z(u(t))$.  The triangle inequality at times
$t$ and $0$ therefore gives
\[
 \|z_u(t)\|_\sigma
  \le\|z_v(t)\|_\sigma+C\eps
  =\|z_v(0)\|_\sigma+C\eps
  \le r_0+2C\eps.
\]
The same weighted product estimate, restricted to normal indices,
bounds each normal action comparison by $C\eps(r_0+\eps)$.
Adding the comparisons at the two endpoints proves
\eqref{app:gardner-alltime-normal}, after fixing a common constant.

The tangential drift is at most $C\eps$, while the normal size is
at most $r_0+C\eps$.  Thus sufficiently small $r_*,\eps_*$ put the
entire trajectory in the fixed positive-action tube independently of
a stopping argument.  Let $u_\xi$ be any point of the finite-gap torus
with $\xi=(I_A(u_0),0)$.  Use the sharper first inequality in
\eqref{setup:spectral-transfer} at both endpoints, before replacing
$|p|_1^2$ by $|p|_1$.  It gives
\[
 \begin{aligned}
 \|\gamma(u(t))-\gamma(u_0)\|_2^2
 &\le 2\|\gamma(u(t))-\gamma(u_\xi)\|_2^2
        +2\|\gamma(u_0)-\gamma(u_\xi)\|_2^2\\
 &\le C\bigl(|p(t)|_1^2+\|z_u(t)\|_{1/2}^2
                                      +\|z_u(0)\|_{1/2}^2\bigr)\\
 &\le C(r_0+\eps)^2.
 \end{aligned}
\]
This proves \eqref{app:gardner-alltime-gaps}.  If $r_0=0$, the normal
bound is $C\eps$, its squared action moment is $C\eps^2$, and
\eqref{app:gardner-exact-alltime} follows.  The argument also covers
$A=\varnothing$ by omitting the tangential coordinates.
\end{proof}

This all-time estimate uses the integrable Gardner transform and
translation comparison.  For general perturbations,
Theorem~\ref{thm:long-polynomial} still requires its separately supplied
uniform $h^{\sigma+2}$ bound on the longer interval.

\subsection{A nonquantitative growing slow-time horizon}
\label{subsec:app-growing-horizon}

The arbitrary-fixed-horizon applications have a further quantifier
consequence even without uniform analytic constants on an explicitly
specified growing interval.  It is weaker than the logarithmic and
polynomial conditional results above.

\begin{corollary}[An unspecified growing slow-time factor]
\label{cor:app-growing-horizon}
Fix an additive or transport/smoothing perturbation covered by
Corollary~\ref{cor:app-nearby}, together with its uniform initial
$H^{N+2}$ bound, compact tangential family and $0<\delta<1/4$.
Restrict to exact finite-gap data or to a fixed shrinking family
$r_0\le C_0\eps^\kappa$ with $\kappa>\delta$.
There are $\eps_*>0$ and a positive step function
$\mathsf S(\eps)\to\infty$ as $\eps\downarrow0$ such that the
unit-constant action and gap estimates hold on
\begin{equation}\label{app:growing-interval}
 [0,\mathsf S(\eps)/\eps]\qquad(0<\eps\le\eps_*),
\end{equation}
uniformly over that family.  No explicit lower growth rate for
$\mathsf S$ is asserted.
\end{corollary}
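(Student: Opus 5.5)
The plan is a diagonal argument over the fixed horizons. For each integer $k\ge1$, the relevant fixed-horizon result applies with $T_0=k$ and the same uniform initial $H^{N+2}$ bound: Corollary~\ref{cor:app-nearby}(1) for nearby data, or Theorem~\ref{thm:additive} and Proposition~\ref{prop:app-transport} for exact data. Combined with Corollary~\ref{cor:shrinking-data}, this yields a threshold $\eps_k>0$ with the following property. For $0<\eps<\eps_k$, every datum in the family, whether exact ($r_0=0$) or with $r_0\le C_0\eps^\kappa$, satisfies the three unit-constant bounds \eqref{setup:main-actions}--\eqref{setup:main-gaps} on $[0,k/\eps]$.

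For the shrinking family, Corollary~\ref{cor:shrinking-data} requires $r_0\le c_\delta\eps^\delta$, where $c_\delta$ may depend on $k$ through $C_{\delta_1}$, and hence through $T_0=k$. Since $\kappa>\delta$, this condition holds once $C_0\eps^{\kappa-\delta}\le c_\delta(k)$. That is a further smallness condition on $\eps$, and I absorb it into $\eps_k$. The admissibility condition $r_0\le r_{\rm adm}$ is absorbed the same way.

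Next I make the thresholds strictly decreasing to zero by replacing $\eps_k$ with $\min\{\eps_1,\dots,\eps_k,1/k\}$. This replacement preserves the property, because it only shrinks the admissible range of $\eps$.

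I set $\eps_*=\eps_1/2$ and define the step function
\[
 \mathsf S(\eps)=\max\{k\ge1:\ \eps<\eps_k\}.
\]
The maximum exists because $\eps_k\to0$ and $\eps<\eps_1$. Then $\mathsf S(\eps)\ge1$ is positive and integer-valued, and it is nonincreasing in $\eps$. It tends to infinity as $\eps\downarrow0$: for each $k$, every $\eps<\eps_k$ has $\mathsf S(\eps)\ge k$.

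Now fix $\eps\le\eps_*$ and put $k=\mathsf S(\eps)$. Then $\eps<\eps_k$, so the property gives the unit-constant bounds on $[0,k/\eps]=[0,\mathsf S(\eps)/\eps]$, uniformly over the family. The uniformity holds because each $\eps_k$ was chosen uniformly over the data.

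The only delicate point is ensuring that every threshold at level $k$ is chosen uniformly over the collection before the diagonalization. This includes the shrinking-prefactor condition, whose constant depends on $T_0=k$; the fixed exponent gap $\kappa>\delta$ handles it. No explicit rate is obtained, because the dependence of $\eps_k$ on $k$ is not quantified.
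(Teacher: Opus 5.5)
Your proposal is correct and follows the paper's proof. Both arguments obtain, for each integer horizon, a threshold that is uniform over the family, absorbing the horizon-dependent shrinking-prefactor condition through $\kappa>\delta$, and then diagonalize; you define $\mathsf S(\eps)$ as the largest admissible level, whereas the paper picks an explicit sequence $e_n\le\min\{2^{-n},e_{n-1}/2,\eps_0(n)/2\}$ and sets $\mathsf S=n$ on $(e_{n+1},e_n]$. One small correction: your $\min\{\eps_1,\dots,\eps_k,1/k\}$ is only nonincreasing, not strictly decreasing, but that is all your maximum definition requires.
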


\begin{proof}
For each integer $n\ge1$, the arbitrary-horizon physical estimates
and Corollary~\ref{cor:shrinking-data} give a threshold
$\eps_0(n)>0$ for the desired conclusion through $n/\eps$.
For shrinking data, $C_0\eps^\kappa\le c_\delta(n)\eps^\delta$
holds uniformly below a further positive threshold, which is
incorporated in $\eps_0(n)$.  Choose a strictly decreasing sequence
$e_n>0$, with $e_0=1$, such that
\[
 e_n\le\min\{2^{-n},e_{n-1}/2,\eps_0(n)/2\}.
\]
Set $\mathsf S(\eps)=n$ when $e_{n+1}<\eps\le e_n$.
Then $\mathsf S(\eps)\to\infty$ and $\eps<\eps_0(n)$ on each
such interval.  The conclusion for that fixed integer $n$ proves
\eqref{app:growing-interval}.  This construction uses the theorem
separately at each horizon; it does not iterate the action bound
along the trajectory.
\end{proof}

The thresholds and higher-norm constants in the fixed-horizon
applications can depend on the horizon.  The preceding diagonal
selection therefore does not identify $\mathsf S$ as logarithmic or
polynomial and does not verify the uniform long-interval contract.
It does not apply to the general local-density class, for which only
one short fixed slow-time horizon has been proved.  Repeatedly applying
a two-parameter bound with a factor $C>1$ could instead multiply the
initial occupation by $C$ at each step; such an iteration is not used.

All applications above are autonomous and have at most the stipulated
one-derivative loss.  A density involving $u_x$ need not belong to this
class: $\frac12\int a(x)u_x^2\,dx$ has field
$-\partial_x^2(a u_x)$, which is generally third order.  Explicitly
time-dependent or dissipative forcing would also require a separate
argument; it is not included by the examples in this section.

% END sections/07_application

\appendix
% BEGIN sections/08_finite_gap_hessian
\section{The Hessian at an arbitrary finite-gap action}
\label{sec:finitegap-hessian}

We prove the coercivity statement used in the physical energy estimate
without a smallness restriction on the finite-gap base.  Throughout this
appendix the KdV equation has period one and zero mean.  Write
\begin{equation}
 h(I)=\sum_{n\ge1}(2\pi n)^3I_n-V(I),\qquad
 V=-h_{\mathrm{nl}},\qquad
 \omega_n^*(I)=\omega_n(I)-(2\pi n)^3.
 \label{fg:conventions}
\end{equation}
The function $V$ has a real-analytic extension to a neighborhood of the
positive cone in $\ell^2_{\R}$, and
\begin{equation}
 H(I):=D^2V(I)=-D\omega^*(I),\qquad H(0)=6\operatorname{Id}.
 \label{fg:hessian}
\end{equation}
Here $H(I)$ is a bounded self-adjoint operator.  The full dispersive sum
in~\eqref{fg:conventions} need not converge for an arbitrary
$I\in\ell^2$; we use the full Hamiltonian only on finite-dimensional
finite-gap families.  The analytic extension and the normalization at
zero are established in \cite{KMMT2016,KappelerMolnar2018}.  Global weak
convexity of $V$ on $\ell^2_+$ follows from
\cite[Corollary~1.2]{KorotyaevKuksin2011}.

\begin{theorem}[Finite-gap coercivity]\label{thm:finitegap-coercivity}
For every $\xi\in\ell^2_+$ with finite support, there exists $m_\xi>0$
such that
\begin{equation}
 \langle D^2V(\xi)u,u\rangle\ge m_\xi\|u\|_2^2
 \qquad(u\in\ell^2_{\R}).
 \label{fg:coercivity}
\end{equation}
The constant is uniform when $\xi$ ranges over a compact subset of
$\ell^2_+$ consisting of finite-gap actions.
\end{theorem}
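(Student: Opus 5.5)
The proof rests on a structural decomposition of the Hessian at a finite-gap action. Let $S=\operatorname{supp}\xi$, which is finite. The plan is to show that $H(\xi)=D^2V(\xi)$ splits as
\[
 H(\xi)=\mathcal D_\xi+\mathcal P_\xi ,
\]
where $\mathcal D_\xi$ is diagonal with entries $\ge6$ on the closed-gap indices $n\notin S$, and $\mathcal P_\xi$ is a nonnegative finite-rank operator whose range is spanned by finitely many vectors attached to the open gaps. The intended source of this decomposition is the representation of the frequencies $\omega_n$ at a finite-gap potential as periods of normalized holomorphic differentials on the genus-$|S|$ spectral curve. On that curve, the closed gaps $n\notin S$ become double points. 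Differentiating $\omega^*$ with respect to $I_m$ at $\xi$ then amounts to opening one double point. We perturb the curve by a small handle at $\lambda_m$ and read off the first-order change in the period normalizations.

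The computation of the closed-gap block is the following. For $m,n\notin S$, opening gap $m$ changes $\omega_n$ only through the local period around the double point, so the cross terms $m\ne n$ in the closed block vanish. The diagonal entry can be computed as a residue of the form $-\partial\omega_m/\partial I_m = 6\,c_m(\xi)$. Here $c_m(\xi)$ is the value at $\lambda_m$ of a quotient of the type $\prod_{k\in S}(\lambda_m-\lambda_k^-)(\lambda_m-\lambda_k^+)\big/\prod_{k\in S}(\lambda_m-\tau_k)^2$, with $\tau_k$ the gap centers. The reality of the spectrum, together with the interlacing of band edges, makes each such factor at least $1$. As a normalization check, at $\xi=0$ this entry is exactly $6$. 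The coupling between the closed index $m$ and the open indices $a\in S$ enters through the normalization of the $|S|$ $a$-cycles. Eliminating that normalization through the inverse period matrix (Riemann bilinear relations, so that the period matrix is positive) produces a term of the form $G^{\mathsf T}\Pi^{-1}G$, which is $\ge0$ and of rank at most $|S|$. The open-open block $D\omega^*|_{S\times S}$ combines with this term, and the result is the Hessian restricted to the isospectral family.

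Coercivity then follows in three pieces. For $u=(u_S,u_N)$, we have $\langle Hu,u\rangle\ge6\|u_N\|^2+\langle\mathcal P u,u\rangle$, and it remains to control $u_S$. The positivity on the open modes comes from the Bikbaev--Kuksin / Bobenko--Kuksin frequency nondegeneracy \cite{BikbaevKuksin1993,BobenkoKuksin1991}. The relevant matrix is the $S\times S$ Hessian of $V$ restricted to finite-gap potentials with support $S$, i.e.\ the Schur complement appearing in $\mathcal P$. This matrix is nondegenerate, and by the weak convexity of Korotyaev--Kuksin \cite{KorotyaevKuksin2011} it is positive semidefinite, hence positive definite. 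A standard Schur-complement argument on $\R^S\oplus\ell^2(\N\setminus S)$ then combines the diagonal bound $6$ on the $N$-block with the positive-definite reduced $S$-block, giving $m_\xi>0$. The off-diagonal block is bounded because $H(\xi)$ is bounded on $\ell^2$.

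For uniformity over compact sets $\mathcal K$ of finite-gap actions, note first that coercivity is an open condition: by the analyticity of $H$ in operator norm \cite{KMMT2016}, the bound $m_\xi/2$ holds on an $\ell^2$-ball around each $\xi$. Covering $\mathcal K$ by finitely many such balls gives a uniform constant. Supports may vary across $\mathcal K$, and this is harmless because the argument is pointwise.

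The main obstacle is the exact closed-gap computation: showing that the diagonal entries are $\ge6$ and that there is no closed-closed off-diagonal coupling, uniformly in the index $m$. This requires a careful degeneration analysis of the normalized differentials as a double point opens, including control of the error in $\ell^2$ so that $\mathcal D_\xi-6\,\mathrm{Id}$ on the closed block is genuinely nonnegative rather than merely asymptotically so. I would first try to obtain these entries from the explicit trace and product formulas for $\partial_{I_m}\omega_n$ in Kappeler--Möhr-type finite-gap computations.
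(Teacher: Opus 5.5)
Your architecture is the paper's: an exact splitting of $H(\xi)$ into a diagonal part (zero on $S$, at least $6$ on $\N\setminus S$) plus a nonnegative operator of rank at most $g=|S|$. Positivity of $H_{SS}$ comes from frequency nondegeneracy and Korotyaev--Kuksin convexity, then a block estimate finishes coercivity, and operator-norm continuity with compactness gives uniformity. The block estimate and the uniformity step are fine.

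The gap is the splitting itself, and in particular its signs, which you defer as ``the main obstacle.'' The mechanisms you sketch for it do not work.
\begin{enumerate}
\item The closed--closed off-diagonal entries of $H$ do not vanish. Opening gap $m$ forces all $g$ open-gap periods of $q_n$ to be renormalized. By \eqref{fg:entries}, $H_{nj}=\sum_k\beta_kq_n(c_k)q_j(c_k)$ for distinct closed $n,j$, which is in general nonzero; this coupling belongs to the finite-rank part.
\item Your diagonal factor points the wrong way. Each factor satisfies $(\lambda_m-\lambda_k^-)(\lambda_m-\lambda_k^+)/(\lambda_m-\tau_k)^2=1-\gamma_k^2/(4(\lambda_m-\tau_k)^2)<1$. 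Interlacing would therefore give $H_{mm}<6$, whereas in fact $H_{mm}\ge6$.
\item Nothing in the sketch connects a positive period matrix to the actual rank-$g$ term. In the exact formula, that term is built from evaluations at the $g$ critical points $c_k$ of the quasimomentum, weighted by coefficients $\beta_k$ extracted from second-kind differentials. The sign of the $\beta_k$ is not visible from their construction.
\end{enumerate}

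The missing idea is that one sign fact controls every piece, and it is derived from the open block rather than proved independently. The exact formula is $H_{nj}=\sum_{k=1}^g\beta_kq_n(c_k)q_j(c_k)+\delta_{nj}d_n$, with $d_n=0$ on $S$ and $d_n=6-\tfrac12\sum_k\beta_k/(c_k-t_n)^2$ off $S$. The universal $6$ comes from the self-opening correction $e_{nj}/[2(\lambda-t_n)^2]$ at infinity, and the correction to it has a priori unknown sign.

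Writing $q_n(c_k)=iR_{kn}$ turns the open block into the congruence $H_{SS}=-R_S^{\mathsf T}\operatorname{diag}(\beta)R_S$ of \eqref{fg:congruence}. Here $R_S$ is invertible, because the normalized numerators $N_s$, $s\in S$, form a basis of polynomials of degree $<g$, and evaluation at $g$ distinct points is an isomorphism. Positivity of $H_{SS}$ then forces every $\beta_k<0$. This gives the finite-rank part the form $T^*T\ge0$ with $T_S$ invertible, and at the same time makes $d_n=6+\tfrac12\sum_k|\beta_k|/(c_k-t_n)^2\ge6$.

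In your plan, the Bikbaev--Kuksin positivity is used only at the end to control $u_S$. Without the congruence, it gives you neither of the two signs your decomposition needs. Also, the $S\times S$ Hessian on the support-$S$ family is $H_{SS}$ itself, not a Schur complement; the relevant Schur complement is the normal one, which equals $\operatorname{diag}(d_n)$.
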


The proof gives an exact diagonal-plus-finite-rank formula for $H(\xi)$.
Only the frequency map on the genuinely open stratum is imported as a
nondegeneracy result; the transverse directions, including the opening
of a collapsed gap, are treated below.

\subsection{The reduced curve and normalized differentials}

Fix $\xi$, set $S=\{s:\xi_s>0\}$ and $g=|S|$, and first suppose $g>0$.
Let $e_0$ be the lowest spectral endpoint and $e_s^-<e_s^+$ the endpoints
of each open gap.  On the reduced curve define
\begin{equation}
 Y(\lambda)^2=(\lambda-e_0)
       \prod_{s\in S}(\lambda-e_s^-)(\lambda-e_s^+),
 \qquad p=iF,\qquad a=2p'=\frac{P}{Y}.
 \label{fg:curve}
\end{equation}
The infinity sheet is chosen so that $Y\sim\lambda^{g+1/2}$ and
$a\sim\lambda^{-1/2}$.  Thus $P$ is a monic polynomial of degree $g$.
Its simple zeros $c_1,\ldots,c_g$ lie one in each open gap and are not
branch points.  The symbol $F$ denotes the standard primitive of the
spectral differential used in \cite{KappelerMolnar2018}; $p=iF$ is the
quasimomentum in the present convention.  Real signs in other
square-root conventions are absorbed in the numerator polynomials.

Let $\Gamma_s$ be a positively oriented contour around the $s$-th gap.
For every action index $n$ put
\begin{equation}
 q_n=\frac{i\psi_n}{\sqrt[c]{\Delta^2-4}},\qquad
 \oint_{\Gamma_s}q_n\,d\lambda=2\pi i\delta_{ns}.
 \label{fg:normalized}
\end{equation}
Here $\Delta$ is the Hill discriminant, $\psi_n$ is the normalized
entire numerator, and the superscript $[c]$ specifies the canonical
square root.  These spectral identities, their finite-product forms,
and the expansion at infinity follow from
\cite[equations~(29), (33), and (52)--(54),
arXiv:1605.06690v1]{KappelerMolnar2018}; the action interpretation is
that of \cite[Theorem~1.3 and Section~3]{KST2013}.
For $n\in S$, write $q_n=N_n/Y$, where $N_n$ is a real polynomial of
degree at most $g-1$.  For $n\notin S$, the same notation has $N_n$
real rational, with its only finite pole a simple pole at the closed
node $t_n=\tau_n$, where $\tau_k$ denotes the midpoint of the $k$-th gap.
On the selected sheet the residue of $q_n$ at
$t_n$ is $+1$; it is $-1$ on the other sheet.

\begin{lemma}[Finite period normalization]\label{fg:normalization}
The matrix
\begin{equation}
 \mathcal A=\left(
   \oint_{\Gamma_s}\frac{\lambda^\ell}{Y(\lambda)}\,d\lambda
                 \right)_{s\in S,\ 0\le\ell<g}
 \label{fg:period-matrix}
\end{equation}
is invertible.  More generally, adjoining any finite set of collapsed
gaps gives an invertible finite normalization matrix at the collapsed
base.  In a fixed family with active indices $E\supset S$, the
normalized differentials depend analytically on its finite action
parameters, including at zero actions.
\end{lemma}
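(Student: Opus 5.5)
The plan is to prove invertibility of $\mathcal A$ through the standard Riemann bilinear argument on the reduced hyperelliptic curve $Y^2=(\lambda-e_0)\prod_{s\in S}(\lambda-e_s^-)(\lambda-e_s^+)$, which has genus $g$. The differentials $\lambda^\ell\,d\lambda/Y$, $0\le\ell<g$, form a basis of holomorphic differentials on this curve. The point at infinity is a branch point, and $\lambda^{g-1}/Y\sim\lambda^{-3/2}$ there, so these differentials are holomorphic at infinity as well.

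The contours $\Gamma_s$ around the open gaps are homologous, up to sign, to the $a$-cycles of a canonical basis; the remaining gap between $e_0$ and the first open gap is not encircled. Suppose a real or complex combination $\omega=\sum_\ell c_\ell\lambda^\ell d\lambda/Y$ has all $\Gamma_s$-periods zero. The bilinear relation $\int\omega\wedge\bar\omega=\sum_s(A_s\bar B_s-B_s\bar A_s)$ then vanishes. Since this integral is a nonzero multiple of $\|\omega\|^2$, we get $\omega=0$, so $\mathcal A$ has trivial kernel. Alternatively, for real coefficients, one can argue directly: $\oint_{\Gamma_s}$ equals twice the integral over the real gap $[e_s^-,e_s^+]$ of a real function. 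A polynomial of degree $<g$ with vanishing integrals over $g$ disjoint gaps, against a weight $1/Y$ of constant sign on each gap, must change sign in every gap. That forces $g$ zeros, which is a contradiction. I will use this real sign-change argument, since the nondegenerate gaps are real.

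For adjoined collapsed gaps with indices $n\in E\setminus S$, the extra normalization conditions involve the contours $\Gamma_n$ around the nodes $\tau_n$. I enlarge the basis by the differentials $d\lambda/((\lambda-\tau_n)Y)$, whose $\Gamma_n$-period is $\pm2\pi i/Y(\tau_n)\neq0$ because $\tau_n$ is not a branch point. Each such differential has zero period around every other node, since it has no pole there. The enlarged matrix is therefore block triangular, with the invertible block $\mathcal A$ and a diagonal block of nonzero residues; the coupling terms lie in one off-diagonal block only. This gives invertibility at the collapsed base.

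For analyticity in a family with active set $E$, I treat the full curve $\prod_{n\in E}(\lambda-e_n^-)(\lambda-e_n^+)$ with all gaps in $E$ possibly open. The normalization matrix $\bigl(\oint_{\Gamma_s}\lambda^\ell/Y\,d\lambda\bigr)_{s\in E}$ depends analytically on the endpoints for fixed contours, even as $e_n^\pm$ merge, because the contours stay away from the collapsing pair and $\sqrt{(\lambda-e_n^-)(\lambda-e_n^+)}$ is analytic in symmetric functions off the contour. At a collapsed base this matrix reduces, after factoring $(\lambda-\tau_n)$ out of $Y$, to the invertible enlarged matrix above, so it stays invertible nearby. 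Solving the normalizations by Cramer's rule then gives normalized differentials analytic in the endpoint parameters. Composing with the real-analytic dependence of the endpoints, through $\gamma_n^2$ and the midpoints, on the finite actions, as provided by \cite[Proposition~3.1]{KST2013} (the quotient $I_n/\gamma_n^2$ is analytic and nonvanishing), transfers analyticity to the actions, including at zero actions.

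I expect the main obstacle to be this last step. The difficulty is handling the square-root behaviour in $\gamma_n$ so that the dependence is genuinely analytic in $I_n$ rather than in $\sqrt{I_n}$. I would first verify that the differentials and periods depend only on $\gamma_n^2$ and $\tau_n$, which are even under swapping $e_n^\pm$.
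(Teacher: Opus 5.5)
Your two invertibility arguments are correct and essentially the paper's. The sign-change argument for $\mathcal A$ is the one used there. Because the entries are complex, you should add that each row of $\mathcal A$ is a fixed phase times a real row; the paper removes these phases and treats real and imaginary parts separately. Alternatively, rely on your bilinear-relation argument. Your block-triangular basis $\lambda^\ell/Y$, $1/((\lambda-t_n)Y)$ repackages the paper's observation that a vanishing period around a collapsed node forces the numerator to vanish at that node, after which the factor $\lambda-t_k$ cancels. You should check that your functions span $\{C/Y_E:\deg C\le|E|-1\}$; evaluating a vanishing combination at the distinct nodes shows they do.

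The gap is in the analyticity step, exactly where you expected it. \cite[Proposition~3.1]{KST2013} says only that $I_n/\gamma_n^2$ is an analytic nonvanishing function of the potential $w$. It does not say that $e_0$, $\tau_k$, $\gamma_k^2$ are analytic functions of the finite action vector $I_E$. Writing $\gamma_n^2=I_n\,(\gamma_n^2/I_n)(w)$ just moves the question to whether $(\gamma_n^2/I_n)(w)$ is analytic in $I_E$, which is the original problem. It also says nothing about $\tau_n$, $e_0$ or the open-gap data.

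Your proposed remedy, that the periods depend only on the swap-invariant data $\tau_n,\gamma_n^2$, addresses a different step: the passage from endpoints to symmetric data. That step is already built into the factorization \eqref{fg:unreduced}. It does not explain why these symmetric data are analytic in $I_n$ rather than in $\sqrt{I_n}$ at $I_n=0$.

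The missing idea, which the paper uses, is to pass through Birkhoff coordinates:
\begin{itemize}
\item The quantities $e_0,\tau_k,\gamma_k^2$ are analytic functions of $w$, as symmetric functions of isolated eigenvalue groups.
\item The potential $w$ is analytic in the finitely many real Birkhoff coordinates $(x_k,y_k)_{k\in E}$, because $\mathcal B^{-1}$ is analytic.
\item As spectral invariants, these quantities are constant on the isospectral fixed-modulus tori, hence invariant under rotation of each pair $(x_k,y_k)$.
\item Restricted to real amplitudes $y_k=0$, they are therefore even analytic functions of each $x_k$. Hence they are analytic functions of $x_k^2=2I_k$, including at $I_k=0$.
\end{itemize}
With this in place, the rest of your argument goes through. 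Your fixed-contour periods, the persistence of invertibility near the collapsed base, and Cramer's rule give coefficients $C_n(I,\lambda)$ analytic in $I_E$. You should also state, as the paper does, that the finite-product identities identify $C_n/Y_E$ with the spectrally defined $q_n$ of \eqref{fg:normalized}.
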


\begin{proof}
For a real polynomial $B$ of degree less than $g$, each gap period of
$B/Y$ is a nonzero constant imaginary factor times an integral of $B$
against a strictly positive weight on that gap.  If all periods vanish,
$B$ has a zero in each of the $g$ disjoint open gaps and hence is zero.
For complex coefficients, remove the fixed row phases and apply the
same argument to real and imaginary parts.  This proves invertibility
of~\eqref{fg:period-matrix}.

At a base with additional collapsed gaps indexed by $E\setminus S$,
the unreduced expression is
\[
 Y_E=\pm Y\prod_{k\in E\setminus S}(\lambda-t_k).
\]
For a numerator polynomial of degree at most $|E|-1$, a zero period
around a collapsed gap forces its value at $t_k$ to vanish.  Cancel
these factors.  The remaining polynomial has degree at most $g-1$ and
zero original periods, so the first argument applies.  This proves
invertibility at the collapsed base.  This is a period normalization
matrix; no Hessian nondegeneracy has been used.

For the analytic assertion write, on fixed contours,
\begin{equation}
 Y_E(I,\lambda)=\sqrt{\lambda-e_0(I)}
    \prod_{k\in E}
       \sqrt{(\lambda-\tau_k(I))^2-\gamma_k(I)^2/4}.
 \label{fg:unreduced}
\end{equation}
The symmetric spectral variables in this formula are analytic in the
finite actions.  At a zero action this can also be seen by taking real
Birkhoff amplitudes: an isospectral analytic quantity is even in each
amplitude by rotation invariance, and therefore descends analytically
to its squared amplitude, hence to its action.  The square roots on the
contours are separated from zero, so the same assertion holds for the
contour integrals.  Finite matrix inversion now gives the unique
polynomial $C_n(I,\lambda)$, of degree at most $|E|-1$, satisfying
\begin{equation}
 \oint_{\Gamma_k}\frac{C_n(I,\lambda)}{Y_E(I,\lambda)}\,d\lambda
     =2\pi i\delta_{kn}\qquad(k\in E).
 \label{fg:finite-normalization}
\end{equation}
Its coefficients are analytic.  The exact finite-product identities
and normalization identify $q_n=C_n/Y_E$ for $n\in E$.
\end{proof}

We also justify the action derivative of the primitive, keeping the
spectral definition~\eqref{fg:normalized} independent of this identity.
First take every action indexed by $E$ strictly positive.  In the
Poisson-bracket convention of \cite[Lemma~2.5 and the proof of
Lemma~3.2]{KappelerMolnar2018},
\[
 \{h,\theta_n\}_{\rm src}=\omega_n,\qquad
 \{\theta_n,F\}_{\rm src}
 =\frac{\{\theta_n,\Delta\}_{\rm src}}
        {\sqrt[c]{\Delta^2-4}},\qquad
 2\{\theta_n,\Delta\}_{\rm src}=\psi_n.
\]
Here $\theta_n$ is the source's Birkhoff angle.  Since $F$ is
isospectral, the first convention gives
$\{\theta_n,F\}_{\rm src}=-\partial_{I_n}F$.  Consequently
\begin{equation}
 \partial_{I_n}F=-\frac{\psi_n}{2\sqrt[c]{\Delta^2-4}},
 \qquad -2\partial_{I_n}p=q_n.
 \label{fg:action-variation}
\end{equation}
On fixed contours, the primitive has the same finite-action analytic
dependence as its spectral data, including the even-amplitude descent
at collapsed gaps.  Lemma~\ref{fg:normalization} proves this analytic
dependence of $q_n$ directly from its spectral periods, without using
\eqref{fg:action-variation}.  The identity therefore extends to the
collapsed base by continuity of these finite-action derivatives.

We shall apply this lemma with $E=S\cup\{n,j\}$.  Consequently,
$\partial_{I_j}q_n$ at the base is rational times $1/Y$, with possible
singularities only at the original branch points and the selected
closed nodes, and is $O(\lambda^{-3/2})$ at infinity.  The construction
requires neither analytic individual numerator zeros nor a common
spectral differentiation domain for infinitely many action directions.

For a nonbranch point $t$ on a selected local sheet, define
\begin{equation}
 K_t(\lambda)=
 \frac{Y(t)+(\lambda-t)Y'(t)}{(\lambda-t)^2Y(\lambda)},\qquad
 D_t=K_t+\frac{L_t}{Y},\quad \deg L_t\le g-1,
 \qquad \oint_{\Gamma_s}D_t\,d\lambda=0.
 \label{fg:second-kind}
\end{equation}
Lemma~\ref{fg:normalization} determines $L_t$ uniquely.  Expanding $Y$
at $t$ shows that the differential $D_t\,d\lambda$ has double principal
part $+(\lambda-t)^{-2}d\lambda$ and no residue on that sheet, and the
opposite principal part on the other sheet.  It is holomorphic
elsewhere, including at the branch points in their square-root local
coordinates and at infinity.  Uniqueness follows from the same period
matrix.  For $t=c_k$, the contours are kept outside a local disk about
$c_k$; passing a contour across this double pole does not alter a
period because its residue is zero.
Set
\begin{equation}
 b_k=[\lambda^{-5/2}]D_{c_k}(\lambda),\qquad
 \beta_k=\frac{8b_k}{a'(c_k)},
 \label{fg:beta}
\end{equation}
where brackets denote the indicated coefficient in the expansion on
the infinity sheet.

\subsection{The complete two-index variation}

Let $e_{nj}=1$ when $n=j\notin S$ and $e_{nj}=0$ otherwise.  Terms
involving $t_n$ below are omitted when $e_{nj}=0$.  Define
\begin{align}
 W_{nj}(\lambda)
   &=\frac{q_n(\lambda)q_j(\lambda)
                    -e_{nj}/[2(\lambda-t_n)^2]}{a(\lambda)},
 \label{fg:W}\\
 r_{nj,k}
   &=\frac{q_n(c_k)q_j(c_k)
                    -e_{nj}/[2(c_k-t_n)^2]}{a'(c_k)}.
 \label{fg:residue}
\end{align}

The global infinity normalization remains fixed when the chosen local
lift of $c_k$ is moved to the other sheet.  The values $q_n(c_k)$ and
$a'(c_k)$ then change sign.  Both $Y(c_k)$ and $Y'(c_k)$ in
\eqref{fg:second-kind} change sign, so uniqueness of the zero-period
normalization also changes $D_{c_k}$ and $b_k$ to their negatives.
The numerator of $r_{nj,k}$ is unchanged and its denominator changes
sign.  Consequently $\beta_k$, $\beta_kq_n(c_k)q_j(c_k)$, and
$r_{nj,k}D_{c_k}$ are independent of this local sheet choice.

\begin{lemma}[Two-index variation]\label{fg:variation}
For all $n,j\ge1$, with differentiation in the finite action family
$S\cup\{n,j\}$,
\begin{equation}
 \partial_{I_j}q_n
    =-\partial_\lambda W_{nj}
       -\sum_{k=1}^g r_{nj,k}D_{c_k}.
 \label{fg:variation-identity}
\end{equation}
\end{lemma}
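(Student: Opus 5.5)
The approach is to identify both sides of \eqref{fg:variation-identity} as meromorphic differentials on the reduced curve. We then show that their difference is a holomorphic differential of the first kind with vanishing gap periods, so it is zero by Lemma~\ref{fg:normalization}. The starting point is the identity $-2\partial_{I_n}p=q_n$ from \eqref{fg:action-variation}, together with $a=2p'$. Differentiating in $I_j$ and using that mixed partials commute gives
\[
\partial_{I_j}q_n=-2\partial_{I_j}\partial_{I_n}p .
\]
Symmetrically,
\[
\partial_\lambda(\partial_{I_n}p)=-\tfrac12 q_n',\qquad \partial_{I_j}a=2\partial_\lambda\partial_{I_j}p=-q_j'.
\]
To express the second action derivative of $p$ locally, we will work with $p$ as a function of $\lambda$ along a level set. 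Varying $I_j$ at fixed $p$ moves $\lambda$ by $\delta\lambda=-\partial_{I_j}p/p'=q_j/a$. The transport term $-\partial_\lambda(q_nq_j/a)=-\partial_\lambda W_{nj}$ (before correction) should then account for the variation of $q_n$, modulo differentials whose only poles are double poles at the zeros $c_k$ of $a$.

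The key steps, in order, are as follows. First, we show that
\[
\Phi:=\partial_{I_j}q_n+\partial_\lambda W_{nj}
\]
is holomorphic away from the $c_k$. At the branch points, we use the square-root local coordinate. The differential $q_nq_j\,d\lambda/a$ is regular there, since each $q$ carries one factor $1/Y$ and $a$ carries one factor $1/Y$, so $W_{nj}$ is $O(1/Y)$. Its $\lambda$-derivative is therefore of exactly the same local type as $\partial_{I_j}q_n$ (which is rational times $1/Y$ by the analyticity in Lemma~\ref{fg:normalization} with $E=S\cup\{n,j\}$), and the leading branch-point singularities must cancel. This cancellation is forced because the branch points $e_s^\pm$ themselves move with $I_j$, and $\delta\lambda=q_j/a$ is precisely the motion that keeps the local structure. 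At the closed node $t_n$ when $n=j\notin S$, the subtraction $e_{nj}/[2(\lambda-t_n)^2]$ is designed to remove the double pole of $q_n^2/a$. We will check the residue against the $+1$ residue normalization of $q_n$. At infinity, $q_nq_j/a=O(\lambda^{-3/2})\cdot O(\lambda^{-3/2})/\lambda^{-1/2}=O(\lambda^{-5/2})$, so its derivative is $O(\lambda^{-7/2})$. This combines with $\partial_{I_j}q_n=O(\lambda^{-3/2})$ to keep $\Phi$ holomorphic at infinity in the local parameter.

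Second, at each $c_k$, the function $W_{nj}$ has a simple pole with residue $r_{nj,k}$ from \eqref{fg:residue}, so $\partial_\lambda W_{nj}$ has the double pole $-r_{nj,k}(\lambda-c_k)^{-2}$ with zero residue. Subtracting $r_{nj,k}D_{c_k}$, which has principal part $+(\lambda-c_k)^{-2}$ by \eqref{fg:second-kind}, cancels it on the chosen sheet. The sheet-independence remark preceding the lemma guarantees consistency on the other sheet. Thus
\[
\Phi+\sum_k r_{nj,k}D_{c_k}
\]
is a holomorphic differential, of the form $B/Y$ with $\deg B\le g-1$.

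Third, we compute its gap periods. Each $D_{c_k}$ has zero periods by construction, and $\partial_\lambda W_{nj}$ is exact, so its periods vanish provided $W_{nj}$ is single valued on the contours, which it is because the contours avoid the $c_k$. The periods of $\partial_{I_j}q_n$ are $\partial_{I_j}(2\pi i\delta_{ns})=0$ on fixed contours by \eqref{fg:normalized}. Lemma~\ref{fg:normalization} then gives $B=0$.

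The main obstacle is the first step at the branch points and at the closed nodes $t_n,t_j$. We must verify that no singular terms survive from the motion of $e_s^\pm$ and, in the collapsed directions, from the opening of the gap at $t_j$. We would handle this by doing the computation in the unreduced variables $Y_E$ of \eqref{fg:unreduced}, where everything is analytic in the finite actions. There we compare local expansions, for example $q_n\sim\pm\tfrac12(\lambda-e)^{-1/2}c$, and only afterward specialize to the base with zero actions in $E\setminus S$. If this direct local check becomes unwieldy, the fallback is to prove the identity first on the open stratum, where all actions in $E$ are positive, and pass to the collapsed base by the continuity argument used after \eqref{fg:action-variation}.
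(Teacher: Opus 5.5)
Your overall strategy matches the paper's. You show that $\partial_{I_j}q_n+\partial_\lambda W_{nj}+\sum_k r_{nj,k}D_{c_k}$ has no principal part anywhere, so it equals $P_0/Y$ with $\deg P_0\le g-1$; its original gap periods vanish, so it is zero by Lemma~\ref{fg:normalization}. Your treatment of the points $c_k$, of infinity and of the periods is fine.

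The gap is that the content of the lemma lies in the local principal-part comparisons, which you only assert. In the decisive self-opening case $n=j\notin S$, your description of the mechanism is also wrong. The correction term does not remove the double pole of $q_n^2/a$. With $x=\lambda-t_n$, $q_n=x^{-1}+q_0+O(x)$ and $a=a_0+a_1x+O(x^2)$, one finds
\[
 W_{nn}=\frac{1}{2a_0}x^{-2}+\Bigl(\frac{2q_0}{a_0}-\frac{a_1}{2a_0^2}\Bigr)x^{-1}+O(1).
\]
So only half of that double pole is removed, and $-\partial_\lambda W_{nn}$ keeps the cubic pole $x^{-3}/a_0$. The identity holds because $\partial_{I_n}q_n$ itself has exactly this cubic pole and the matching double pole, and these come from the opening of the $n$-th gap, not from transport. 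The uncorrected transport term $-\partial_\lambda(q_n^2/a)$ has cubic coefficient $2/a_0$, twice the true one, and the factor $\tfrac12$ in $W_{nn}$ compensates exactly. Taken literally, your plan would require $\partial_{I_n}q_n$ to have no cubic pole at $t_n$, which is false, so the check at $t_n$ cannot be completed as you describe it.

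What is missing is the explicit local computation. Factor locally $p-n\pi=M\sqrt{(\lambda-t_n(I))^2-v_n(I)}$, with $v_n=\gamma_n^2/4$ and $M=a_0/2+a_1x/4+O(x^2)$ at the base. The pole and constant terms of $\partial_{I_n}p=-q_n/2$ then give $\partial_{I_n}v_n=2/a_0$ and $\partial_{I_n}t_n=q_0/a_0-a_1/(2a_0^2)$. Write $q_n=B/\sqrt{(\lambda-t_n)^2-v_n}$ with $B=1+q_0x+O(x^2)$, and use that the residue of $\partial_{I_n}q_n$ vanishes because the period is constant. This yields $\partial_{I_n}q_n=a_0^{-1}x^{-3}+\bigl(2q_0/a_0-a_1/(2a_0^2)\bigr)x^{-2}+O(1)$, which is precisely the principal part of $-\partial_\lambda W_{nn}$.

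The other local checks need analogous explicit inputs:
\begin{itemize}
\item at an original branch point $e$: $\partial_{I_j}e=N_j(e)/P(e)$, from $\partial_{I_j}a=-q_j'$;
\item for $n\notin S$, $j\ne n$: $\partial_{I_j}t_n=q_j(t_n)/a(t_n)$, from $p(t_n)=n\pi$;
\item for $j\notin S$, $n\ne j$: $\partial_{I_j}v_j=2/a(t_j)$, together with a vanishing simple residue from the differentiated $j$-period.
\end{itemize}
Your heuristic $\delta\lambda=q_j/a$ does reproduce the first two rates, but not the gap-opening ones.

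Finally, your fallback is not the continuity argument used after \eqref{fg:action-variation}. That fallback proves the identity on the open stratum and lets the extra actions tend to zero. On the open stratum the right-hand side has $|E|$ critical points and no node terms. As $I_n\to0$, the critical point in the $n$-th gap merges with $t_n$, while $r_{nn,c}$ blows up and $D_c$ degenerates. Recovering the self-correction term that way would require a separate degeneration analysis.
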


\begin{proof}
We check all possible singularities and then determine the global
remainder by its periods.  Write $\partial_j=\partial_{I_j}$.

At an original branch point $e$, the numerators of $a=P/Y$ and
$q_j=N_j/Y$ are regular, and $P(e)\ne0$.  Since
$\partial_j a=-q_j'$, comparison of the leading terms gives
\begin{equation}
 \partial_j e=\frac{N_j(e)}{P(e)}.
 \label{fg:endpoint-motion}
\end{equation}
Thus the leading term of $\partial_jq_n$ is
$N_n(e)N_j(e)/[2P(e)Y(\lambda)(\lambda-e)]$, the same as that of
$-\partial_\lambda(q_nq_j/a)$.  Their difference, multiplied by
$d\lambda$, is regular in the coordinate $\sqrt{\lambda-e}$.  The
self correction in~\eqref{fg:W} also gives a regular differential
there.

If $n\notin S$ and $j\ne n$, the $n$-th gap remains closed along this
variation.  Differentiating $p(t_n)=n\pi$ and using
$\partial_jp=-q_j/2$ gives
\begin{equation}
 \partial_jt_n=\frac{q_j(t_n)}{a(t_n)}.
 \label{fg:node-motion}
\end{equation}
The residue of $q_n$ is constantly one, so $\partial_jq_n$ has a double
pole with this coefficient and no simple pole.  These are the principal
parts of $-\partial_\lambda(q_nq_j/a)$ at $t_n$.

If $j\notin S$ and $n\ne j$, set $v_j=\gamma_j^2/4$.  Comparison of
the double-pole terms in $\partial_ja=-q_j'$, equivalently comparison
in the local gap factor, gives
\begin{equation}
 \partial_jv_j=\frac{2}{a(t_j)}.
 \label{fg:gap-opening}
\end{equation}
Differentiating the new square-root denominator then gives a double
pole of $\partial_jq_n$ with coefficient $q_n(t_j)/a(t_j)$.  Its simple
residue vanishes by differentiation of the $j$-period
in~\eqref{fg:finite-normalization}.  These are again the principal
parts of $-\partial_\lambda(q_nq_j/a)$.  When $n,j$ are distinct closed
indices, this checks both nodes.

It remains to consider $n=j\notin S$.  Put $x=\lambda-t_n$ and write
\begin{equation}
 q_n=x^{-1}+q_0+O(x),\qquad a=a_0+a_1x+O(x^2).
 \label{fg:self-expansion}
\end{equation}
Along the opening family, set $t_n(I)=\tau_n(I)$ and
$v_n(I)=\gamma_n(I)^2/4$.  There is a local factorization
\begin{equation}
 p(I,\lambda)-n\pi
  =M(I,\lambda)
       \sqrt{(\lambda-t_n(I))^2-v_n(I)}.
 \label{fg:local-factorization}
\end{equation}
Indeed, $(p-n\pi)^2$ is analytic locally and vanishes at the two
endpoints.  Division by their quadratic factor gives a nonzero analytic
quotient, whose local analytic square root is $M$.  At the base,
$M(\xi,\lambda)=a_0/2+(a_1/4)x+O(x^2)$.
The pole and constant terms in $\partial_np=-q_n/2$ give
\begin{equation}
 \partial_nv_n=\frac2{a_0},\qquad
 \partial_nt_n=\frac{q_0}{a_0}-\frac{a_1}{2a_0^2}.
 \label{fg:self-motion}
\end{equation}
For completeness, locally write the normalized differential as
$B(I,\lambda)/\sqrt{(\lambda-t_n(I))^2-v_n(I)}$, where at the base
$B=1+q_0x+O(x^2)$.  Its derivative has cubic coefficient
$(\partial_nv_n)/2=1/a_0$ and double coefficient
$\partial_nt_n+q_0(\partial_nv_n)/2$.  Its residue is zero by the
constant period.  Hence
\begin{equation}
 \partial_nq_n=
 \frac1{a_0x^3}
  +\left(\frac{2q_0}{a_0}-\frac{a_1}{2a_0^2}\right)x^{-2}
  +O(1).
 \label{fg:self-principal}
\end{equation}
In comparison,
\[
 -\partial_\lambda(q_n^2/a)=
 \frac2{a_0x^3}
   +\left(\frac{2q_0}{a_0}-\frac{a_1}{a_0^2}\right)x^{-2}
   +O(1).
\]
The missing principal part is exactly
\begin{equation}
 -\frac1{a_0x^3}+\frac{a_1}{2a_0^2x^2}
   =\operatorname{pp}_{x=0}
        \partial_\lambda\frac1{2a(\lambda)x^2}.
 \label{fg:self-correction}
\end{equation}
This accounts for the factor $1/2$ and the sign of the self-opening
term in~\eqref{fg:W}.

At a critical point $c_k$, the actual derivative $\partial_jq_n$ is
regular.  The function $W_{nj}$ has residue $r_{nj,k}$, so
$-\partial_\lambda W_{nj}$ has double coefficient $+r_{nj,k}$.  It is
canceled by $-r_{nj,k}D_{c_k}$, with no simple residue.  All these
statements hold on both sheets because the differentials are odd.

It follows that
\[
 \mathcal R=\partial_jq_n+\partial_\lambda W_{nj}
                +\sum_k r_{nj,k}D_{c_k}
\]
becomes a holomorphic odd differential after multiplication by
$d\lambda$.  At infinity, the finite-action construction and the
formulas for $W_{nj}$ and $D_{c_k}$ give
$\mathcal R(\lambda)=O(\lambda^{-3/2})$.  In the local coordinate
$s=\lambda^{-1/2}$,
\[
 d\lambda=-2s^{-3}\,ds,\qquad
 \mathcal R(\lambda)\,d\lambda=O(1)\,ds,
\]
so the differential is regular there as well.  More explicitly, the
finite construction in Lemma~\ref{fg:normalization} makes $\mathcal R$
rational times $1/Y$, and the preceding principal-part checks imply
$\mathcal R=P_0/Y$ for a polynomial of degree at most $g-1$.  Its
original gap periods vanish: the first term has differentiated constant
periods, the second is the derivative of a single-valued meromorphic
function, and the remaining differentials have zero periods by
definition.  Invertibility of~\eqref{fg:period-matrix} forces $P_0=0$.
This proves~\eqref{fg:variation-identity}.
\end{proof}

\subsection{The Hessian formula and its operator meaning}

\begin{proposition}[Complete Hessian formula]\label{fg:hessian-formula}
At the fixed finite-gap base, for every $n,j\ge1$,
\begin{equation}
 H_{nj}=\sum_{k=1}^g\beta_kq_n(c_k)q_j(c_k)+\delta_{nj}d_n,
 \qquad
 d_n=\begin{cases}
 0,&n\in S,\\[2pt]
 6-\displaystyle\frac12\sum_{k=1}^g
       \frac{\beta_k}{(c_k-t_n)^2},&n\notin S.
 \end{cases}
 \label{fg:entries}
\end{equation}
The right-hand side defines a bounded diagonal operator plus an
operator of rank at most $g$ on $\ell^2$.
\end{proposition}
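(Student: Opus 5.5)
The plan is to compute $H_{nj}=-\partial_{I_j}\omega^*_n$ at the base by relating the frequencies to the expansion of the normalized differentials $q_n$ at infinity, and then to read off each term of Lemma~\ref{fg:variation}. The first step is to express $\omega_n$ as a fixed coefficient of $q_n$ on the infinity sheet. Since $q_n\sim\lambda^{-1/2}\cdot(\text{series in }\lambda^{-1})$, the standard KdV identity gives $\omega_n$ as a constant multiple of $[\lambda^{-5/2}]q_n$, up to the linear dispersive term. I will calibrate the constant against $H(0)=6\operatorname{Id}$ and the cubic term $(2\pi n)^3$. Applying $\partial_{I_j}$ then gives $H_{nj}=-c\,[\lambda^{-5/2}]\partial_{I_j}q_n$, with the same fixed constant $c$ up to sign, valid in the finite action family $S\cup\{n,j\}$.

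Next I insert \eqref{fg:variation-identity}. For the term $-\sum_k r_{nj,k}D_{c_k}$, the $\lambda^{-5/2}$ coefficient is $-\sum_k r_{nj,k}b_k$. Using \eqref{fg:beta} and \eqref{fg:residue}, $r_{nj,k}b_k=\tfrac18\beta_k\bigl(q_n(c_k)q_j(c_k)-e_{nj}/[2(c_k-t_n)^2]\bigr)$. After the constant $c$ this should produce $\sum_k\beta_kq_n(c_k)q_j(c_k)$ together with the diagonal correction $-\tfrac12\sum_k\beta_k/(c_k-t_n)^2$ for $n=j\notin S$. The remaining term is $-\partial_\lambda W_{nj}$. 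At infinity $q_nq_j/a$ behaves like $\lambda^{-1/2}$ times a series in $\lambda^{-1}$, so $\partial_\lambda$ of it contributes to $\lambda^{-5/2}$ only through the $\lambda^{-3/2}$ coefficient of $W_{nj}$.

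I then need to show that this coefficient vanishes except for the self-opening correction. The leading asymptotics $q_n=\lambda^{-1/2}(\mathrm{const}_n\lambda^{-1}+\dots)$ are independent of the base, and the constants normalize $\omega$. Because $q_nq_j/a\sim\lambda^{-3/2}\cdot\mathrm{const}_n\mathrm{const}_j/\cdots$, this could produce a rank-one term. I expect it is absorbed because the leading coefficient of $q_n$ at order $\lambda^{-1/2}$ is zero: $\int q_n$ has zero period sums, so $q_n=O(\lambda^{-3/2})$. If so, $q_nq_j/a=O(\lambda^{-5/2})$, and its $\lambda$-derivative does not reach $\lambda^{-5/2}$. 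The self term $-e_{nj}/[2(\lambda-t_n)^2a]$ does contribute, since $1/a\sim\lambda^{1/2}$ makes it $\sim-\tfrac12\lambda^{-3/2}$. Its derivative then gives a constant diagonal contribution, which I will match to the value $6$ using the zero-base check $g=0$, where $H=6\operatorname{Id}$. For $n\in S$ we have $e_{nn}=0$, which gives $d_n=0$.

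Finally comes boundedness. The vector $(q_n(c_k))_n$ must lie in $\ell^2$ for each $k$. For $n\notin S$, $q_n(c_k)=N_n(c_k)/Y(c_k)$ with a simple pole at $t_n\sim n^2$, so it is $O(1/t_n)=O(n^{-2})$, and the $g$ finite-rank terms are therefore bounded. For the diagonal, $\sum_k\beta_k/(c_k-t_n)^2=O(n^{-4})$, so $d_n$ is bounded. The main obstacle is the constant bookkeeping at infinity: fixing the proportionality $\omega_n\leftrightarrow[\lambda^{-5/2}]q_n$ with correct signs and branch, and verifying that $W_{nj}$ contributes only the self-opening $6$. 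I would anchor both using the zero-potential case and $D^2h_{\rm nl}(0)=-6\operatorname{Id}$.
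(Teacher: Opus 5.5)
Your derivation of the entries follows the paper's proof. You read off the $\lambda^{-5/2}$ coefficient of \eqref{fg:variation-identity} using $[\lambda^{-5/2}]\partial_{I_j}q_n=-H_{nj}/8$, observe that $q_nq_j/a=O(\lambda^{-5/2})$ drops out, and use $8r_{nj,k}b_k=\beta_k\bigl(q_n(c_k)q_j(c_k)-e_{nj}/[2(c_k-t_n)^2]\bigr)$.

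Two justifications should be repaired.
\begin{itemize}
\item The bound $q_n=O(\lambda^{-3/2})$ does not come from period conditions. It comes from the degree bound on the numerator, or equivalently from $q_n=-2\partial_{I_n}p$ and the action-independent leading term $\sqrt\lambda$ in \eqref{fg:infinity-p}.
\item The constant $6$ needs no calibration. Since $a=\lambda^{-1/2}(1+O(\lambda^{-1}))$, one has $\partial_\lambda\bigl[2a(\lambda-t_n)^2\bigr]^{-1}=-\tfrac34\lambda^{-5/2}+O(\lambda^{-7/2})$, and $-8\cdot(-\tfrac34)=6$. The factor $8$ itself must come from the full-frequency identity \eqref{fg:infinity-q}, valid at every finite-gap base. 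An identity holding only ``up to the dispersive term'', combined with $H(0)=6\operatorname{Id}$, pins down only the product of your two unknown constants.
\end{itemize}

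The genuine gap is the boundedness step. A simple pole at $t_n\sim n^2$ gives $q_n(c_k)=O(1/t_n)$ only if the numerator is bounded uniformly in $n$, and it is not. Write $q_n=B_n/((\lambda-t_n)Y)$ with $\deg B_n\le g$. The residue normalization forces $B_n(t_n)=Y(t_n)\sim t_n^{g+1/2}$, so the coefficients of $B_n$ must grow at least like $n$.

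What is missing is the paper's uniform control of this numerator:
\begin{itemize}
\item The $g$ zero-period conditions, multiplied by $-t_n$, converge to the invertible system \eqref{fg:period-matrix}.
\item Hence $B_n=\frac{Y(t_n)}{A_n(t_n)}A_n$, with $A_n$ monic with bounded coefficients and prefactor $O(n)$.
\item Combined with $|c_k-t_n|\sim n^2$ and $Y(c_k)\ne0$, this yields $q_n(c_k)=O(n^{-1})$, which suffices for the rows to lie in $\ell^2$.
\end{itemize}
Your rate $O(n^{-2})$ is not actually false. The limit of $A_n$ is the monic numerator $P$ of $a=2p'$, whose gap periods vanish, and $P(c_k)=0$. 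But that cancellation is a further argument your sketch does not contain.

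You should also note why each $d_n$ is finite: no closed node lies in an open gap, so $c_k\ne t_n$.
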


\begin{proof}
At a finite-gap base, quasimomentum has a convergent expansion at
infinity.  In the conventions of \cite[Lemma~2.5(v), equation~(33), and the
proof of Lemma~3.2]{KappelerMolnar2018},
$F^4=\lambda^2-H_0-h/(4\lambda)+O(\lambda^{-2})$.  Since $p=iF$,
\begin{equation}
 p=\sqrt\lambda-
       \frac{H_0}{4\lambda^{3/2}}
       -\frac{h}{16\lambda^{5/2}}+O(\lambda^{-7/2}),
 \qquad H_0=\sum_{n\ge1}2\pi nI_n.
 \label{fg:infinity-p}
\end{equation}
The coefficients are analytic in the fixed finite family used in
Lemma~\ref{fg:variation}.  Differentiation gives
\begin{equation}
 q_n=\pi n\lambda^{-3/2}
     +\frac{\omega_n}{8}\lambda^{-5/2}+O(\lambda^{-7/2}),
 \qquad
 [\lambda^{-5/2}]\partial_jq_n=-\frac{H_{nj}}8.
 \label{fg:infinity-q}
\end{equation}
Now $-\partial_\lambda(q_nq_j/a)=O(\lambda^{-7/2})$, whereas
\begin{equation}
 \partial_\lambda\frac1{2a(\lambda)(\lambda-t_n)^2}
     =-\frac34\lambda^{-5/2}+O(\lambda^{-7/2}).
 \label{fg:infinity-self}
\end{equation}
Extracting the $\lambda^{-5/2}$ coefficient
in~\eqref{fg:variation-identity} and multiplying by $-8$
proves~\eqref{fg:entries}.  In particular the closed diagonal has the
universal term $6$ before its critical-point corrections.

To justify the operator assertion, fix $c_k$.  For a large closed
index $n$, write
\[
 q_n(\lambda)=\frac{B_n(\lambda)}{(\lambda-t_n)Y(\lambda)},
 \qquad \deg B_n\le g.
\]
The $g$ zero original periods determine its lower coefficients in
terms of its leading coefficient.  After multiplying these equations
by $-t_n$, their coefficient matrix converges
 to~\eqref{fg:period-matrix}.  Consequently the resulting monic
polynomial $A_n$ has uniformly bounded coefficients.  The residue
normalization gives
\[
 B_n(\lambda)=\frac{Y(t_n)}{A_n(t_n)}A_n(\lambda).
\]
Since $Y(t_n)\sim t_n^{g+1/2}$, $A_n(t_n)\sim t_n^g$, and
$t_n\sim\pi^2n^2$, the prefactor is $O(n)$.  At $\lambda=c_k$,
$|c_k-t_n|$ has order $n^2$ and $Y(c_k)\ne0$.  We obtain
\begin{equation}
 q_n(c_k)=O_\xi(n^{-1}),\qquad
 d_n=6+O_\xi(n^{-4})\quad(n\notin S,\ n\to\infty).
 \label{fg:row-decay}
\end{equation}
All remaining denominators are nonzero: a closed node cannot lie in an
open gap.  Thus each of the finitely many evaluation rows belongs to
$\ell^2$ and $(d_n)$ is bounded.  Formula~\eqref{fg:entries} first
gives equality on finite sequences.  Both sides are bounded operators,
so density proves equality on all of $\ell^2$.
\end{proof}

\subsection{Nondegeneracy on the genuine open stratum}

\begin{lemma}[Tangential positivity]\label{fg:tangential-positivity}
For the open support $S$, the matrix $H_{SS}(\xi)$ is positive definite.
\end{lemma}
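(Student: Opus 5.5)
The plan is to combine two facts about the $g\times g$ matrix $H_{SS}(\xi)$: it is positive semidefinite by global convexity, and it is nonsingular by nondegeneracy of the frequency map on the open stratum. A symmetric positive semidefinite matrix that is invertible is positive definite, so these two facts suffice.

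\emph{Step 1: semidefiniteness.} Consider the $g$-dimensional face $\Sigma_S=\{I\in\ell^2_+:\ I_j=0\ (j\notin S)\}$. The base action $\xi$ lies in its relative interior, since $\xi_s>0$ for $s\in S$. By \cite[Corollary~1.2]{KorotyaevKuksin2011}, $V=-h_{\rm nl}$ is convex on $\ell^2_+$, so its restriction $V|_{\Sigma_S}$ is a convex real-analytic function of the finitely many variables $I_S$ near $\xi$. Hence
\[
 \langle H_{SS}(\xi)u,u\rangle=\partial_t^2V(\xi+tu)\big|_{t=0}\ge0
 \qquad (u\in\R^S),
\]
because for $|t|$ small the segment $\xi+tu$ stays in $\Sigma_S$.

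\emph{Step 2: nonsingularity.} On the $g$-gap stratum the tangential frequencies are $\omega_s(I)=(2\pi s)^3-\partial_{I_s}V(I)$. The dispersive term is independent of $I$, so by \eqref{fg:hessian}
\[
 \partial_{I_S}\omega_S(\xi)=-H_{SS}(\xi).
\]
The frequency map of a fixed finite-gap family $I_S\mapsto\omega_S(I_S,0)$ is nondegenerate, meaning its Jacobian determinant is nonzero, at every point of the open stratum. This is the nondegeneracy result of \cite{BobenkoKuksin1991,BikbaevKuksin1993,Kuksin2000}; it is exactly the open-stratum input announced after the statement of Theorem~\ref{thm:finitegap-coercivity}. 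The point to check here is that the cited statement applies at every finite-gap action with open set $S$, not only generically and not only for small amplitude. It also must concern the $I$-Jacobian in the present period-one, zero-mean normalization, rather than some other parametrization such as by branch points. I expect this matching of hypotheses and conventions to be the main obstacle, so it should be verified carefully.

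\emph{Fallback for Step 2.} If the imported result turns out to be stated only in other coordinates, I would argue directly from Proposition~\ref{fg:hessian-formula}. Restricting \eqref{fg:entries} to $n,j\in S$ gives
\[
 H_{SS}=Q^{\mathsf T}\operatorname{diag}(\beta_k)Q,\qquad Q_{kn}=q_n(c_k)\quad (k\le g,\ n\in S).
\]
Write $q_n=N_n/Y$. By Lemma~\ref{fg:normalization}, the numerators $\{N_n\}_{n\in S}$ form a basis of the polynomials of degree at most $g-1$, being dual to the invertible period matrix. The critical points $c_k$ are $g$ distinct non-branch points, one in each open gap, so $Y(c_k)\ne0$. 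It follows that $Q$ is invertible, since $Q$ equals the invertible matrix $\operatorname{diag}(Y(c_k)^{-1})$ times a change-of-basis Vandermonde matrix. Nonsingularity of $H_{SS}$ then reduces to showing $\beta_k\ne0$ for every $k$. With Step 1 and the invertibility of $Q$, this upgrades to the positivity $\beta_k>0$, which is equivalent to the claim. To get $\beta_k\ne0$, I would compute $b_k$ from the normalized second-kind differential $D_{c_k}$ in \eqref{fg:second-kind}, for instance through a Riemann bilinear relation pairing $D_{c_k}$ with $a\,d\lambda$.

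Either route combined with Step 1 proves that $H_{SS}(\xi)$ is positive definite.
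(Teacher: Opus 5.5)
Your main route is the paper's proof: nonsingularity of $H_{SS}(\xi)=-D_{I_S}\omega_S(\xi_S)$ comes from open-stratum frequency nondegeneracy, which the paper imports from \cite[Lemma~2.1]{KappelerKramer2024} in exactly the form you were worried about (period-one Birkhoff actions, arbitrary finite $S$, every point of $(0,\infty)^S$), and semidefiniteness comes from convexity, where your restriction to the face $\Sigma_S$ is a slightly more direct substitute for the paper's approximation by interior points. In your unfinished fallback, $q_n(c_k)$ is purely imaginary at a real critical point in an open gap, so $H_{SS}=-R_S^{\mathsf T}\operatorname{diag}(\beta_k)R_S$ with $R_S$ real, and positivity therefore corresponds to $\beta_k<0$, not $\beta_k>0$.
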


\begin{proof}
For every finite set $S\subset\N$, the action--frequency map
\[
 \omega_S:(0,\infty)^S\longrightarrow\R^S,
 \qquad I_S\longmapsto
       \bigl(\omega_s(I_S,0)\bigr)_{s\in S},
\]
is a local diffeomorphism by
\cite[Lemma~2.1]{KappelerKramer2024}. This is the finite-gap
nondegeneracy result of \cite{BikbaevKuksin1993}; see also
\cite[Appendix~6]{Kuksin2000}. The cited formulation is already in
Birkhoff actions for the period-one KdV equation and applies to
arbitrary finite gap labels, with no nonresonance assumption.
Since the dispersive part of $h$ is linear in the actions,
\[
 H_{SS}(\xi)=-D_{I_S}\omega_S(\xi_S),
\]
so $H_{SS}(\xi)$ is nonsingular.

Global convexity gives $H\ge0$ on the physical cone, including
its boundary. To see the boundary assertion directly, take a finite
test direction and a nearby action with all its relevant coordinates
strictly positive. Convexity gives a nonnegative quadratic form there;
continuity gives the same at the base, and density treats every
$\ell^2$ direction. Thus $H_{SS}\ge0$, and its nonsingularity makes
it positive definite.
\end{proof}

\subsection{Strict signs and the uniform energy defect}

\begin{proof}[Proof of Theorem~\ref{thm:finitegap-coercivity}]
At a real critical point in an open gap, $Y(c_k)$ is purely imaginary
and the normalized numerators are real.  Write
\[
 q_n(c_k)=iR_{kn},\qquad R_{kn}\in\R.
\]
The numerators $N_s$, $s\in S$, form a basis of the polynomials of
degree at most $g-1$, by their normalized periods.  Evaluation at the
$g$ distinct points $c_k$ is invertible.  Hence the real matrix
$R_S=(R_{ks})_{k,s}$ is invertible.  Restricting~\eqref{fg:entries}
to $S$ gives
\begin{equation}
 H_{SS}=-R_S^{\mathsf T}
             \operatorname{diag}(\beta_1,\ldots,\beta_g)R_S.
 \label{fg:congruence}
\end{equation}
Inversion first shows that each $\beta_k$ is real.  By
Lemma~\ref{fg:tangential-positivity}, the congruence then implies
$\beta_k<0$ for every $k$.

Put $\alpha_k=-\beta_k>0$ and define the bounded map
$T:\ell^2_{\R}\to\R^g$ by
\begin{equation}
 (Tu)_k=\sqrt{\alpha_k}\sum_{n\ge1}R_{kn}u_n.
 \label{fg:T}
\end{equation}
Its restriction $T_S$ is invertible.  The boundedness was proved
in~\eqref{fg:row-decay}.  The exact operator formula becomes
\begin{equation}
 \langle Hu,u\rangle=
   \sum_{n\notin S}
    \left(6+\frac12\sum_{k=1}^g
                  \frac{\alpha_k}{(c_k-t_n)^2}\right)u_n^2
      +\|Tu\|_{\R^g}^2.
 \label{fg:positive-factorization}
\end{equation}
In particular the normal diagonal is at least $6$.
For $g>0$, let $D=\operatorname{diag}(d_n)_{n\notin S}$ on the normal
subspace $\ell^2_{\R}(S^c)$, and write $T=(T_S,T_{S^c})$.
The block form of \eqref{fg:positive-factorization} gives
$H_{SS}=T_S^*T_S$, $H_{SS^c}=T_S^*T_{S^c}$, and
$H_{S^cS^c}=D+T_{S^c}^*T_{S^c}$.  Since $T_S$ is invertible,
$T_S(T_S^*T_S)^{-1}T_S^*=\operatorname{Id}_{\R^g}$; hence the normal
Schur complement is
\[
 H_{S^cS^c}-H_{S^cS}H_{SS}^{-1}H_{SS^c}
   =D\ge6\operatorname{Id}_{\ell^2_{\R}(S^c)}.
\]
This lower bound concerns the normal Schur complement, not $H$ on
the full action space.

For an explicit lower bound let
\begin{equation}
 C=\|T_S^{-1}\|,\qquad M=\|T_{S^c}\|,\qquad
 m_\xi=\left[\max\left\{2C^2,
                        \frac{1+2C^2M^2}{6}\right\}\right]^{-1}.
 \label{fg:explicit-bound}
\end{equation}
Writing $u=(x,y)$ in $S\oplus S^c$ gives
\[
 \|x\|^2\le2C^2\|Tu\|^2+2C^2M^2\|y\|^2,
 \qquad
 \langle Hu,u\rangle\ge\|Tu\|^2+6\|y\|^2.
\]
These inequalities prove~\eqref{fg:coercivity}.  When $g=0$, the
normalization~\eqref{fg:hessian} gives $H=6\operatorname{Id}$ and
$m_\xi=6$.

For the uniformity statement, the map
$\xi\mapsto H(\xi)$ is continuous in operator norm.  The infimum of
the quadratic form over the unit sphere is a $1$-Lipschitz function of
a bounded self-adjoint operator.  It is strictly positive at every
finite-gap base by the preceding argument, so its minimum on a compact
set of such bases is strictly positive.  This also covers compact
families in which gaps close; the explicit constants
in~\eqref{fg:explicit-bound} need not be continued across a change of
support.
\end{proof}

\begin{corollary}[Uniform physical energy defect]
\label{cor:finitegap-tube}
Let $\mathcal K\subset\ell^2_+$ be compact and consist entirely of
finite-gap actions.  There exist $m_{\mathcal K}>0$ and $r_{\mathcal K}>0$
such that for every $\xi\in\mathcal K$ and every real
$v\in\ell^2$ with $\|v\|_2<r_{\mathcal K}$ and $\xi+v\ge0$,
\begin{equation}
 V(\xi+v)-V(\xi)-DV(\xi)v
       \ge\frac{m_{\mathcal K}}4\|v\|_2^2.
 \label{fg:energy-defect}
\end{equation}
The Hessian at every $\xi\in\mathcal K$ is bounded below by
$m_{\mathcal K}\operatorname{Id}$.
\end{corollary}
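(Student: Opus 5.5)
The plan is to obtain the corollary from Theorem~\ref{thm:finitegap-coercivity} by a compactness argument, followed by Taylor's formula along physical segments.

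\emph{Step 1: uniform Hessian lower bound on $\mathcal K$.} The theorem gives $m_\xi>0$ at each $\xi\in\mathcal K$ and states that this constant is uniform on compact sets of finite-gap actions. Concretely, the map $\xi\mapsto\inf_{\|u\|_2=1}\langle D^2V(\xi)u,u\rangle$ is continuous, since $D^2V$ is operator-norm continuous on the analytic neighborhood of the positive cone and the infimum is $1$-Lipschitz in the operator. This map is strictly positive on $\mathcal K$. Compactness then gives a minimum, which we call $m_{\mathcal K}>0$. This proves the last sentence of the corollary.

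\emph{Step 2: extension to a uniform neighborhood.} Since $D^2V$ is continuous, each $\xi\in\mathcal K$ has an open $\ell^2$ ball $B(\xi,r_\xi)$, inside the analytic neighborhood, on which $D^2V\ge\frac{m_{\mathcal K}}{2}\operatorname{Id}$. Cover $\mathcal K$ by the balls $B(\xi,r_\xi/2)$ and extract a finite subcover. By a Lebesgue-number argument this yields $r_{\mathcal K}>0$ such that $\|\eta-\xi\|_2<r_{\mathcal K}$ with $\xi\in\mathcal K$ implies $D^2V(\eta)\ge\frac{m_{\mathcal K}}2\operatorname{Id}$.

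The argument needs a uniform modulus of continuity for $D^2V$ near $\mathcal K$, and it is not automatic in infinite dimensions. I expect this to be the main point to check. I would get it from analyticity: the holomorphic extension has locally uniform bounds, so Cauchy estimates bound $D^3V$ on small complex balls around each point of $\mathcal K$, and finitely many such balls suffice. Only real points with $\xi+v\ge0$ are used, so the analytic neighborhood of the positive cone is enough.

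\emph{Step 3: Taylor.} For $\|v\|_2<r_{\mathcal K}$ with $\xi+v\ge0$, the whole segment $\xi+\theta v$, $\theta\in[0,1]$, lies within distance $r_{\mathcal K}$ of $\xi$ and also in the real domain of $V$. The integral Taylor formula then gives
\[
 V(\xi+v)-V(\xi)-DV(\xi)v=\int_0^1(1-\theta)\langle D^2V(\xi+\theta v)v,v\rangle\,d\theta\ge\frac{m_{\mathcal K}}2\cdot\frac12\|v\|_2^2,
\]
which is \eqref{fg:energy-defect}. The constraint $\xi+v\ge0$ is needed only so that the segment stays in the physical domain, where the analytic extension and its bounds are available.
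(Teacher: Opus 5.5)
Your proposal is correct and follows essentially the same route as the paper: the uniform Hessian bound on $\mathcal K$ from Theorem~\ref{thm:finitegap-coercivity}, then a common tube around $\mathcal K$ on which $D^2V\ge(m_{\mathcal K}/2)\operatorname{Id}$, then the integral Taylor formula along the segment $\xi+\theta v$. The Cauchy-estimate detour in Step~2 is unnecessary, because your finite-subcover argument only needs operator-norm continuity of $D^2V$ at each point of $\mathcal K$, not a uniform modulus of continuity.
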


\begin{proof}
The last conclusion follows from
Theorem~\ref{thm:finitegap-coercivity}.  Analyticity and compactness give
a common tube around $\mathcal K$, within the domain of the extension,
on which $D^2V\ge(m_{\mathcal K}/2)\operatorname{Id}$.  Choose
$r_{\mathcal K}$ so that every segment $\xi+tv$, $0\le t\le1$, in the
statement stays in this tube.  Taylor's integral formula gives
\[
 V(\xi+v)-V(\xi)-DV(\xi)v
    =\int_0^1(1-t)\langle D^2V(\xi+tv)v,v\rangle\,dt,
\]
which proves~\eqref{fg:energy-defect}.
\end{proof}

The constants in these statements may depend on the chosen family.
Compactness is in $\ell^2$, and every member of the compact set is
required to be finite-gap.  A precompact set whose closure contains an
infinite-gap action is not covered by this argument.  The theorem
supplies the geometric energy estimate; perturbation-domain and
solution bounds in the dynamical results remain separate hypotheses.
% END sections/08_finite_gap_hessian

\begingroup
\small
\setlength{\baselineskip}{11.4pt}
\setlength{\bibsep}{0pt}

\endgroup
\end{document}